\documentclass{amsart}
\usepackage[utf8]{inputenc}
\usepackage{mathrsfs}
\usepackage[all,cmtip]{xy} 
\usepackage{amssymb}
\usepackage{tikz}
\usepackage{caption}
\usepackage{bookmark}

\newtheorem*{prop*}{Proposition}
\newtheorem*{theorem*}{Theorem}
\newtheorem{theorem}{Theorem}[section]
\newtheorem{definition}[theorem]{Definition}

\newtheorem{prop}[theorem]{Proposition}
\newtheorem{lemma}[theorem]{Lemma}


\DeclareMathOperator{\Map}{Map}
\DeclareMathOperator{\g}{\mathscr{G}}

\DeclareMathOperator{\gq}{\mathscr{G}_Q}
\DeclareMathOperator{\Mapq}{\Map_{Q}}
\DeclareMathOperator{\Mappq}{\Map^*_{Q}}

\DeclareMathOperator{\ev}{ev}
\DeclareMathOperator{\id}{id}
\DeclareMathOperator{\sw}{sw}
\DeclareMathOperator{\he}{he}
\DeclareMathOperator{\Mape}{\Map_{\mathbb{Z}_2}}
\DeclareMathOperator{\Mappe}{\Map^*_{\mathbb{Z}_2}}

\DeclareMathOperator{\bin}{\sigma}
\DeclareMathOperator{\tin}{\tilde \sigma}


\newcommand{\pullbackcorner}[1][dr]{\save*!/#1-1.2pc/#1:(-1,1)@^{|-}\restore}

\newcommand{\leqnomode}{\tagsleft@true}
\newcommand{\reqnomode}{\tagsleft@false}

\definecolor{colour1}{rgb}{0,0,1}

\begin{document}
\author{Michael West}
\address{School of Mathematics, Uni.\ of Southampton, Salisbury Rd, SO17 1BJ}
\email{michael.west@soton.ac.uk}
\title{Homotopy Decompositions of Gauge Groups over Real Surfaces}
\begin{abstract}
 We analyse the homotopy types of gauge groups of principal $U(n)$-bundles associated to pseudo Real vector bundles in the sense of Atiyah \cite{atiyahktheoryandreality}.  We provide satisfactory homotopy decompositions of these gauge groups into factors in which the homotopy groups are well known. Therefore, we substantially build upon the low dimensional homotopy groups as provided in \cite{biswasstablevectorbundles}.
\end{abstract}
\maketitle

\renewcommand{\arraystretch}{1.8}

\tableofcontents
 \section{Introduction}  \label{chapterintroduction}
 Recently, the topology of gauge groups over Real surfaces has received widespread interest due to their intimate ties with the moduli spaces of stable vector bundles (see  \cite{biswasstablevectorbundles} and \cite{schaffhausermodulispace}).  Indeed, there have been explicit calculations of some of the topological invariants of these gauge groups.  For instance, Real vector bundles over Real surfaces were originally classified in \cite{biswasstablevectorbundles} but more recently in \cite{arxivgeorgievatopogrealbundlepairs}. Cohomology calculations of the classifying spaces appeared in \cite{liuyangmills},\cite{bairdmodulispace} and \cite{arxivbairdcohommodulispace}.  Furthermore, some of the low dimensional homotopy groups were presented in \cite{biswasstablevectorbundles}. The purpose of this paper is to extend the calculations of these homotopy groups by providing homotopy decompositions of the gauge groups into products of known factors.
 
 In the coming section, we define our objects of interest and state their classification results.  We go on to state the results of this paper, and then proofs are provided in Section \ref{chapterresults}.  In Section \ref{appendixa}, we present tables of homotopy groups, and compare them to those provided in \cite{biswasstablevectorbundles} in which we highlight a discrepancy.

\subsection*{Acknowledgements}
I would like to thank Prof.~ Tom Baird and the referee of this paper for suggesting the proofs of Propositions \ref{propunpointedimmediatefromnonequiv} and \ref{propunpointedcomponentequiv}.  Additionally, I further thank Prof.~ Baird for a suggested reformulation of Theorem \ref{thme} and for providing interesting conversations surrounding the topics in this paper. I would also like to thank Prof.~ Stephen Theriault for suggesting this project, along with his continued support and guidance.

 \subsection{Definitions}

 The pair $(X,\sigma)$, where $X$ is a compact connected Riemann surface, and $\sigma$ is an antiholomorphic involution, will be called a \textit{Real surface}.  
 
To a Real surface $(X,\sigma)$ we associate the following triple $(g(X),r(X),a(X))$ where
\begin{itemize}
\item $g(X)$ is the genus of $X$;
\item $r(X)$ is number of path components of the fixed set $X^\sigma$;
\item $a(X)=0$ if $X/\sigma$ is orientable and $a(X)=1$ otherwise.
\end{itemize}

We note that the path components of $X^\sigma$ are each homeomorphic to $S^1$.  The following classification of Real surfaces was studied in \cite{WKlein}.

\begin{theorem}[Weichold] \label{thmweichold} Let $(X,\sigma)$ and $(X',\sigma')$ be Real surfaces then there is a isomorphism $X\rightarrow X'$ (in the category of Real surfaces) if and only if 
\[(g(X),r(X),a(X))=(g(X'),r(X'),a(X')).\]
Furthermore, if a triple $(g,r,a)$ satisfies one of the following conditions
\begin{enumerate}
\item if $a=0$, then $1\leq r \leq g+1$ and $r\equiv (g+1) \mod 2$;
\item if $a=1$, then $0\leq r \leq g$;
\end{enumerate}
then there is a Real surface $(X,\sigma)$ such that $(g,r,a)=(g(X),r(X),a(X))$. \qed
\end{theorem}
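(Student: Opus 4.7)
The plan is to reduce the classification to that of compact surfaces with boundary by passing to the quotient $Y := X/\sigma$. First I would observe that the triple $(g,r,a)$ is manifestly invariant under isomorphisms of Real surfaces: $g$ is a topological invariant of $X$, $r$ is the number of components of $X^\sigma$, and $a$ records whether $Y$ is orientable. Since $X^\sigma$ is a disjoint union of $r$ circles and $\sigma$ acts freely on the complement, $Y$ is a compact surface whose boundary $\partial Y$ is the image of $X^\sigma$ and has $r$ components.

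Next I would extract the numerical constraints from an Euler characteristic computation. The quotient $X \to Y$ is an unbranched double cover away from $X^\sigma$, and since $X^\sigma$ and $\partial Y$ are disjoint unions of circles, both have Euler characteristic zero, so $\chi(X) = 2\chi(Y)$ and hence $\chi(Y) = 1-g$. Combined with the classification of compact surfaces with boundary: if $a=0$ then $Y$ is orientable of some genus $g' \geq 0$, so $2 - 2g' - r = 1-g$ yields $r + 2g' = g+1$, whence $r \leq g+1$ and $r \equiv g+1 \pmod{2}$; if $a=1$ then $Y$ is non-orientable of crosscap number $k \geq 1$, so $2 - k - r = 1 - g$ yields $r + k = g + 1$, whence $r \leq g$. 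The lower bound $r \geq 1$ in the case $a=0$ follows from the observation that a free orientation-reversing involution on an orientable surface necessarily has non-orientable quotient.

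For the existence half, given an admissible triple I would first construct a compact topological surface $Y$ of the prescribed homeomorphism type, form a suitable double of $Y$ (the Schottky double when $a=0$, and a related construction passing through the orientation cover when $a=1$) to produce an orientable $X$ with swap involution $\sigma$, and then exhibit a compatible complex structure with real-analytic boundary so that $\sigma$ is antiholomorphic. For the uniqueness half, two Real surfaces with equal triples have homeomorphic quotients $Y$ and $Y'$ by the classification of surfaces with boundary, and such a homeomorphism lifts through the branched double covers to an equivariant isomorphism $X \to X'$.

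The main subtlety I anticipate is the construction in the case $a=1$, where one must realise an orientable $X$ together with an orientation-reversing involution whose quotient is a prescribed non-orientable surface with a prescribed number of fixed circles. Simultaneously ensuring the orientability of $X$ and the non-orientability of $Y$ requires an explicit model (one can, for instance, proceed by equivariantly attaching $1$-handles to a Real structure on a lower genus surface), and verifying that $\sigma$ is genuinely antiholomorphic rather than merely a smooth involution demands that the complex structure on $Y$ be threaded compatibly across $\partial Y$ in the doubling construction.
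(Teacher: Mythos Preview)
The paper does not prove this theorem at all: it is stated as a classical result due to Weichold (the reference \cite{WKlein}, from 1883) and is closed with a \qed\ symbol immediately after the statement, with no argument given. There is therefore nothing in the paper to compare your proposal against.

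For what it is worth, your sketch is the standard modern route to Weichold's classification and the numerical part is correct: passing to the quotient $Y=X/\sigma$, using $\chi(X)=2\chi(Y)$ to get $\chi(Y)=1-g$, and reading off the constraints from the classification of compact surfaces with boundary recovers exactly the conditions in the statement. Your observation that $a=0$ forces $r\geq 1$ (since a free orientation-reversing involution on an orientable closed surface has non-orientable quotient) is also the right reason for that bound. The existence and uniqueness halves you outline are genuinely more delicate than the numerics, and you have correctly flagged the $a=1$ construction and the passage from a smooth to an antiholomorphic involution as the places requiring care; but since the paper simply quotes the result, none of this is needed here.
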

\noindent Therefore a Real surface $(X,\sigma)$ is completely determined by its triple $(g,r,a)$ which we call the \textit{type} of the Real surface.

Let $\pi \colon P\rightarrow X$ be a principal $U(n)$-bundle over the underlying Riemann surface $X$ of the Real surface $(X,\sigma)$. A \textit{lift} of $\sigma$ is a map $\tilde{\sigma}\colon P\rightarrow P$ satisfying 
\begin{enumerate}
\item $\bin\pi = \pi \tin$;
\item $\tilde{\sigma}(p\cdot g)=\tilde{\sigma}(p)\cdot \overline{g}$ for all $p \in P, g \in U(n)$;
\end{enumerate} 
where $\overline g$ represents the entry-wise complex conjugate of $g\in U(n)$. We remark that, due to property $2$ of a lift, the fixed point set $P^{\tin}$ has the structure of a principal $O(n)$-bundle over the real points $X^\sigma$.

Let $\tin$ be a lift then we say that $(P,\tin)\rightarrow (X,\bin)$ is a \textit{Real principal $U(n)$-bundle} (or \textit{Real bundle}) if $\tin$ further satisfies
\begin{enumerate}
\item[$3.$] $\tilde{\sigma}^2(p)=p \mbox{ for all } p\in P ;$ 
\end{enumerate}
or if $n$ is even we say that $(P,\tin)\rightarrow (X,\bin)$ is a \textit{Quaternionic principal $U(n)$-bundle} (or \textit{Quaternionic bundle}) if $\tin$ satisfies
\begin{enumerate}
\item[$3'.$] $\tilde{\sigma}^2(p)=p\cdot (-I_n) \mbox{ for all } p\in P .$ 
\end{enumerate}
where $I_n$ represents the $n\times n$ identity matrix.  Such bundles were classified in \cite{biswasstablevectorbundles}.

\begin{prop}[Biswas, Huisman, Hurtubise]\label{proprealbundleclass}
Let $(X, \sigma)$ be a type $(g,r,a)$ Real surface and denote its fixed components by $X_i$ for $1\leq i \leq r$. Then Real principal $U(n)$-bundles $(P,\tilde\sigma)\rightarrow (X,\sigma)$ are classified by the first Stiefel-Whitney classes of the restriction to bundles $P_i\rightarrow X_i$ over the fixed components
\[\omega_1(P_i)\in H^1(X_i,\mathbb{Z}/2)\cong \mathbb{Z}/2\]
and the first Chern classes of the bundle over $X$
\[c_1(P)\in H^2(X,\mathbb{Z})\cong\mathbb{Z}\]
subject to the relation 
\begin{equation*}\label{eq:relation} c_1(P)\equiv\sum w_1(P_i) \mod (2).
 \end{equation*}

Furthermore, given any such characteristic classes then there is a Real principal $U(n)$-bundle that realises them. \qed
\end{prop}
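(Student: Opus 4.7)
The plan is to reinterpret the classification equivariantly and then apply obstruction theory. A Real principal $U(n)$-bundle $(P,\tin)\to(X,\bin)$ is the same datum as a $\mathbb{Z}/2$-equivariant principal $U(n)$-bundle, with $\mathbb{Z}/2$ acting on $X$ by $\bin$, on $P$ by $\tin$, and on $U(n)$ by complex conjugation. Such objects are classified by equivariant homotopy classes of maps from $X$ to $BU(n)$ equipped with the involution induced by conjugation, and the crucial fact I would rely on is that $BU(n)^{\mathbb{Z}/2}=BO(n)$. The invariants emerge naturally from this picture: restriction to a fixed circle $X_i\cong S^1$ factors through $BO(n)$ and classifies a principal $O(n)$-bundle $P_i\to X_i$ with $w_1(P_i)\in H^1(X_i,\mathbb{Z}/2)\cong\mathbb{Z}/2$, while forgetting the Real structure returns the underlying complex bundle with Chern class $c_1(P)\in H^2(X,\mathbb{Z})\cong\mathbb{Z}$.

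To verify the relation $c_1(P)\equiv\sum w_1(P_i)\bmod 2$, I would work with an equivariant CW decomposition of $X$ in which each fixed circle is a subcomplex and the involution acts freely on the complement (such decompositions exist because, near each fixed circle, a tubular neighbourhood is equivariantly a cylinder with reflection). A Mayer--Vietoris argument, comparing equivariant tubular neighbourhoods of the $X_i$ with the complementary free region, should identify the mod-$2$ reduction of $c_1(P)$, which equals the second Stiefel--Whitney class of the underlying $2n$-dimensional real bundle, with the sum of the $w_1(P_i)$ concentrated along the fixed locus. I expect this compatibility check to be the principal obstacle of the proof, since it requires careful tracking of how the Chern class restricts across the interface between fixed and free regions.

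For existence, I would construct Real bundles realising any permissible data, reducing first to the $n=1$ case: working with $\bin$-invariant divisors on $X$ together with a handful of explicit twisting Real line bundles, each detecting precisely one of the $w_1(P_i)$, one realises any pair $(c_1,\{w_1(P_i)\})$ satisfying the relation on a Real line bundle. The higher rank statement then follows by direct-summing with the trivial Real rank $(n-1)$ bundle. Uniqueness is a consequence of standard equivariant obstruction theory applied to equivariant classifying maps $X\to BU(n)$: two such maps agreeing on the fixed circles and inducing the same Chern class are equivariantly homotopic, since the remaining obstructions live in equivariant cohomology groups of $X$ that vanish by dimension because $X$ is a surface.
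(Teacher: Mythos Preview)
The paper does not prove this proposition. It is stated with attribution to Biswas, Huisman, and Hurtubise, ends with a \qed, and is cited from \cite{biswasstablevectorbundles}; no argument is given in the paper itself. So there is no ``paper's own proof'' to compare against.

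That said, your outline is a reasonable sketch and is in fact compatible with the framework the paper adopts elsewhere: Theorem~\ref{thmbaird} (due to Baird) is precisely the statement that Real bundles are classified by $\mathbb{Z}_2$-equivariant homotopy classes of maps $X\to BU(n)$ with the conjugation involution, and the paper uses $BU(n)^{\varsigma}=BO(n)$ throughout. Your identification of the invariants (restrict to $X^\sigma$ to get $w_1(P_i)$, forget equivariance to get $c_1$) is exactly how the paper interprets them. The existence argument via Real line bundles and stabilisation is standard and appears in the original source. The one place where your sketch is thin is the uniqueness step: saying ``the remaining obstructions vanish by dimension'' glosses over the fact that the top obstruction lives in degree~$2$ with coefficients in $\pi_2(BU(n))\cong\mathbb{Z}$, which does not vanish on a surface; rather, it is precisely detected by $c_1$, so one must argue that matching $c_1$ kills it. You should also be explicit that on the free part the equivariant obstruction theory reduces to ordinary obstruction theory on the quotient $X/\sigma$ relative to the boundary circles, which is what makes the bookkeeping tractable.
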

We write \[(c,w_1,w_2,\dots,w_r):=(c_1(P),w_1(P_1),w_1(P_2),\dots,w_1(P_r))\]
and we will refer to the tuple $(c,w_1,w_2,\dots,w_r)\in \mathbb{Z}\times \prod_r\mathbb{Z}_2$ as the \textit{class} of the Real principal $U(n)$-bundle $(P,\tin)$.

\begin{prop}[Biswas, Huisman, Hurtubise]\label{propquatbundlesclass}
Let $(X,\bin)$ be a Real surface of type $(g,r,a)$ and let $n$ be even. Then Quaternionic principal $U(n)$-bundles $(P,\tin)\rightarrow (X,\bin)$ are classified by their first Chern class which must be even.  Furthermore, given any such Chern class then there is a Quaternionic principal $U(n)$-bundle that realises it. \qed
\end{prop}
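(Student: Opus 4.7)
The plan is to follow the strategy of Proposition \ref{proprealbundleclass}, exploiting the simplification that for a Quaternionic bundle the fixed set $P^{\tilde\sigma}$ is empty (since $\tilde\sigma^2(p) = p\cdot(-I_n) \neq p$), so no local $O(n)$-subbundle over $X_i$ contributes a Stiefel--Whitney invariant. After trivializing $P|_{X_i}$---possible since $\pi_0(U(n)) = 0$---a Quaternionic lift corresponds to a continuous map $g_0 \colon X_i \to Q_n$ where
\[Q_n := \{g \in U(n) : g\bar g = -I_n\},\]
modulo the twisted gauge action $g_0 \mapsto h g_0 \bar h^{-1}$ for $h \colon X_i \to U(n)$. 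Fixing a basepoint $J \in Q_n$ (which exists precisely because $n$ is even), the twisted stabilizer is the symplectic subgroup $Sp(n/2) \subset U(n)$, so $Q_n \cong U(n)/Sp(n/2)$ and $\pi_1(Q_n) \cong \mathbb{Z}$ by the fibration's long exact sequence.

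The crucial computation is that $\det \colon Q_n \to U(1)$ induces multiplication by $2$ on $\pi_1$: tracking the generator $\delta(x) := \mathrm{diag}(e^{ix}, I_{n-1})$ of $\pi_1(U(n))$ through the orbit map $h \mapsto hJ\bar h^{-1}$ gives
\[\det\bigl(\delta(x) J \bar\delta(x)^{-1}\bigr) = \det(\delta(x))^2 \det(J) = e^{2ix}\det(J),\]
which has winding $2$. Consequently, $\det g_0$ has even winding for every Quaternionic lift. Since $n$ is even, $\det(-I_n) = 1$, so the induced involution on $\det P$ squares to the identity and $\det P$ is a \emph{Real} $U(1)$-bundle. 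The invariant $w_1(\det P|_{X_i})$ is precisely the winding of $\det g_0$ modulo $2$ and therefore vanishes, so Proposition \ref{proprealbundleclass} applied to $\det P$ yields the parity constraint
\[c_1(P) = c_1(\det P) \equiv \sum_i w_1(\det P|_{X_i}) \equiv 0 \pmod{2}.\]

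For realization, given any even $c$, take a complex line bundle $L \to X$ with $c_1(L) = c/2$ and equip $E := L \oplus \sigma^*\bar L$ with the Quaternionic lift $(v, w) \mapsto (\bar w, -\bar v)$, which squares to $-\id$. Since $\sigma$ reverses orientation and complex conjugation negates $c_1$, we have $c_1(\sigma^*\bar L) = -\sigma^* c_1(L) = c_1(L)$, so $c_1(E) = 2 c_1(L) = c$; higher rank is obtained by direct-summing with the trivial Quaternionic $U(n-2)$-bundle $X \times \mathbb{C}^{n-2}$ carrying the lift $(x, v) \mapsto (\sigma x, J_{n-2}\bar v)$. To complete the classification one must verify that $c_1$ is a complete invariant---equivalently, since $U(n)$-bundles over $X$ are already classified by $c_1$, that the space of Quaternionic lifts on a fixed $U(n)$-bundle is a single gauge orbit. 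This injectivity step is the main obstacle: the local analysis above supplies gauge-equivalence over a tubular neighborhood of each $X_i$, and over $X \setminus X^\sigma$ the free action of $\tilde\sigma$ reduces the problem to ordinary $U(n)$-bundle isomorphism on the quotient $X/\sigma$; however, assembling these local equivalences into a global equivariant gauge transformation demands the vanishing of a resulting $H^1$-type obstruction on the quotient orbifold, which I expect to follow from the local triviality together with the topology of the fundamental domain.
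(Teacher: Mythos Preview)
The paper does not prove this proposition: it is quoted from Biswas--Huisman--Hurtubise with a terminal \qed, so there is no ``paper's own proof'' to compare against. Your write-up is therefore an independent attempt at the cited result.

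Your argument for the parity constraint and for realization is essentially sound. The identification $Q_n\cong U(n)/Sp(n/2)$ via the twisted orbit map, the long exact sequence giving $\pi_1(Q_n)\cong\mathbb{Z}$, and the winding-number computation showing $\det_*$ is multiplication by~$2$ are all correct, and feeding this into Proposition~\ref{proprealbundleclass} for the Real line bundle $\det P$ is a clean way to force $c_1(P)$ even. The realization via $L\oplus\sigma^*\bar L$ with the explicit lift is standard and checks out, including the Chern-class calculation using that $\sigma$ reverses orientation.

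The genuine gap is exactly the one you flag: injectivity. You reduce it to patching a local gauge equivalence near $X^\sigma$ with a non-equivariant bundle isomorphism on the complement, and then appeal to an unspecified vanishing of an obstruction class on the quotient. That step is the substance of the classification and is not a formality; in the original reference it is handled by a careful analysis of equivariant transition data (or, equivalently, by computing $\pi_0$ of the relevant equivariant mapping space into $BU(n)$ with the Quaternionic involution). As written, your sketch does not pin down the obstruction group or explain why it vanishes, so the proof is incomplete precisely at the point where the work lies. If you want to close this gap in the spirit of the present paper, one route is to compute $\pi_0\bigl(\Map_{\mathbb{Z}_2}(X,BU(n))\bigr)$ directly from the $\mathbb{Z}_2$-CW structures of Section~\ref{sectionkleinsurfacesasz2complexes} together with the connectivity of $BSp(n/2)=BU(n)^{\varsigma_Q}$; this bypasses the patching argument entirely.
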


We recall that we only cater for Quaternionic bundles of even rank.  However, a similar result for the case when $n$ is odd is also handled in \cite{biswasstablevectorbundles}.

Writing $c=c_1(P)$, we will therefore refer to $c\in 2\mathbb{Z}$ as the \textit{class} of the Quaternionic principal $U(n)$-bundle $(P,\tin)$.

Let $(P,\tin)\rightarrow (X,\bin)$ be a Real or Quaternionic principal $U(n)$-bundle.  An \textit{automorphism} of $(P,\tin)$ is a $U(n)$-equivariant map $\phi \colon P\rightarrow P$ such that the following diagrams commute
\[ \begin{gathered}\xymatrix{ P \ar[r]^-\phi \ar[d] & P \ar[d] \\ X \ar[r]^-{\id_{X}} & X}\end{gathered} \hspace{1cm} \mbox{ and } \hspace{1cm} \begin{gathered}\xymatrix{ P \ar[r]^-\phi \ar[d]^{\tin} & P \ar[d]^-{\tin} \\ P\ar[r]^-{\phi} & P.}\end{gathered}\]

Let $\Map(P,P)$ denote the set of self maps of $P$ endowed with the compact open topology.

\begin{definition} The \textit{(unpointed) gauge group} $\g(P,\tin)$ is the subspace of $\Map(P,P)$ whose elements are automorphisms of $(P,\tin)$.

\end{definition}

It will be convenient to provide decompositions for certain subspaces of the gauge group.  

\begin{definition} \label{defpgg} Choose a basepoint $*_X$ of $(X,\bin)$ such that $\bin(*_X)=*_X$ if $r>0$. Then the \textit{(single)-pointed gauge group} $\g^*(P,\tilde\sigma)$ consists of the elements of $\g(P,\tilde\sigma)$ that restrict to the identity above $*_X$.
\end{definition}

Another pointed gauge group of interest was considered in \cite{biswasstablevectorbundles}.  Let $(X,\bin)$ be a Real surface of type $(g,r,a)$, then for each $1\leq i\leq r$ choose a designated point $*_i$ contained in the fixed component $X_i$. Further, if $a=1$, choose another designated point $*_{r+1}$ that is not fixed by the involution. By convention, we choose $*_1$ to be $*_X$ as in Definition \ref{defpgg}.
\begin{definition}\label{defragg} The \textit{$(r+a)$-pointed gauge group} $\g^{*(r+a)}(P,\tilde\sigma)$ consists of the elements of $\g(P,\tilde\sigma)$ that restrict to the identity above these $(r+a)$ designated points of $(X,\bin)$.  
\end{definition}

\subsection{Main Results for Real Bundles}\label{sectionmainresultsforrealbundles}
In this section, we aim to present the main results pertaining to homotopy decompositions of gauge groups of Real principal $U(n)$-bundles. To ease notation we will sometimes use the following
\begin{itemize}
\item $\g((g,r,a);(c,w_1,w_2,\dots,w_r))$ to represent the unpointed gauge group of a Real bundle of class $(c,w_1,w_2,\dots,w_r)$ over a Real surface of type $(g,r,a)$;
\item $\g^*((g,r,a);(c,w_1,w_2,\dots,w_r))$ to represent the single-pointed gauge group of the Real bundle as above;
\item $\g^{*(r+a)}((g,r,a);(c,w_1,w_2,\dots,w_r))$ to represent the $(r+a)$-pointed gauge group of the Real bundle as above.
\end{itemize}

We first present the results relating to when gauge groups of different Real bundles have the same homotopy type. For $(r+a)$-pointed gauge groups this is always the case.
\begin{prop}\label{proprealpointedpathequal}
 Let $(P,\tilde\sigma)$ and  $(P',\sigma ')$ be Real principal $U(n)$-bundles over a Real surface $(X,\sigma)$ of arbitrary type $(g,r,a)$, then there is a homotopy equivalence
 \[\hyperref[proofofproprealpointedpathequal]{B\g^{*(r+a)}(P,\tilde\sigma)\simeq B\g^{*(r+a)}(P',\sigma')}.\]
 \end{prop}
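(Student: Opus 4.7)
The plan is to identify the classifying space $B\g^{*(r+a)}(P,\tin)$ with a connected component of a fixed equivariant pointed mapping space, and then show that all components of that mapping space share the same homotopy type, independently of the Real bundle class.

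First, I would establish a $\mathbb{Z}_2$-equivariant analogue of the Atiyah--Bott identification. Endowing $BU(n)$ with the involution induced by complex conjugation, one obtains a homotopy equivalence
\[
B\g^{*(r+a)}(P,\tin) \ \simeq\ \Map_{\mathbb{Z}_2}^{*(r+a)}(X,BU(n))_{[P]},
\]
where the subscript $[P]$ picks out the connected component containing the classifying map of $(P,\tin)$ and the superscript $*(r+a)$ requires each of the $r+a$ designated points of $X$ to be carried to the basepoint of $BU(n)$. Under this translation the two classifying spaces in the statement appear as two components of a single mapping space, so it suffices to show that any two components of $\Map_{\mathbb{Z}_2}^{*(r+a)}(X,BU(n))$ are homotopy equivalent.

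Next, I would equip $X$ with an equivariant CW structure adapted to the designated points: each fixed component $X_i\cong S^1$ contributes the fixed 0-cell $*_i$ together with one fixed 1-cell; if $a=1$ the orbit $\{*_{r+1},\sigma(*_{r+1})\}$ provides a free 0-cell orbit; finitely many free 1-cell orbits are added to produce a connected 1-skeleton $X^{(1)}$; and $X$ is recovered from $X^{(1)}$ by attaching a single free 2-cell orbit. The equivariant cofibration $X^{(1)}\hookrightarrow X$ then yields, via $\Map_{\mathbb{Z}_2}^{*(r+a)}(-,BU(n))$, a fibration
\[
\Map_{\mathbb{Z}_2}^{*(r+a)}(X/X^{(1)},BU(n))\to \Map_{\mathbb{Z}_2}^{*(r+a)}(X,BU(n))\to \Map_{\mathbb{Z}_2}^{*(r+a)}(X^{(1)},BU(n)).
\]
The base splits up to homotopy as a product with one factor of $\Omega BO(n)\simeq O(n)$ per fixed component $X_i$ and one factor of $\Omega BU(n)\simeq U(n)$ per free 1-cell orbit; each such factor is a topological group, so its components are mutually homotopy equivalent by translation. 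The fiber, being pointed equivariant maps out of a free equivariant 2-sphere, identifies with $\Omega^2 BU(n)\simeq \Omega U(n)$, again a topological group whose components are all equivalent.

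Finally, because the fibration is obtained by functorially applying a mapping-space construction to an equivariant cofibration, translations by group elements in the relevant loop spaces assemble into self-equivalences of the total space that permute its components compatibly with those of the base and fiber; this furnishes a self-equivalence of $\Map_{\mathbb{Z}_2}^{*(r+a)}(X,BU(n))$ carrying any given component to any other. The main obstacle I anticipate is twofold: first, making the equivariant Atiyah--Bott identification precise, including tracking basepoints through the adjoint-bundle construction; and second, verifying that the component-permuting self-equivalences of the base and fiber assemble into a single self-equivalence of the total space, with the $a=1$ case requiring slightly more care since the free basepoint $*_{r+1}$ must be arranged compatibly with the free 2-cell orbit in the equivariant CW decomposition.
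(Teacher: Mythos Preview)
Your first step, identifying $B\g^{*(r+a)}(P,\tin)$ with a component of the equivariant pointed mapping space $\Mappe(\overline{X},BU(n))$, is exactly how the paper begins (this is Baird's Theorem~\ref{thmbaird}). From there, however, the strategies diverge.

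The paper does not use the restriction fibration at all. Instead it follows Sutherland's pinch-map method: one constructs explicit self-equivalences of the total mapping space by pinching off an equivariant sphere and using it to ``add'' a class. Concretely, for $r=0$ one pinches $\overline{X}\to\overline{X}\vee S^2\vee\sigma(S^2)$ and composes with $f\vee\alpha\vee\overline{\alpha}$ and the fold, realising the action of $\alpha\in\pi_2(BU(n))$ on components by $c\mapsto c+2\alpha$. For $r>0$ one additionally pinches off a copy of $(S^2,\he)$ meeting the $i$-th fixed circle and maps it via an equivariant extension $\tilde\beta$ of $\beta\in\pi_1(BO(n))$; this shifts $(c,w_1,\dots,w_r)$ to $(c\pm 1,w_1,\dots,w_i+1,\dots,w_r)$. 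Together these two actions reach every component.

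Your acknowledged ``main obstacle''---assembling translations in the base and fibre groups into a self-equivalence of the total space---is the whole difficulty, and your fibration setup does not by itself resolve it. Knowing that the components of $B$ and of $F$ are mutually equivalent does not imply the same for $E$ in a fibration $F\to E\to B$; one needs an actual self-map of $E$ covering the translation in $B$, and there is no formal reason such a lift exists. The pinch construction is precisely the missing ingredient: it produces the self-map of $E=\Mappe(\overline{X},BU(n))$ directly, without ever decomposing $E$ as a fibration. So your plan is not wrong in spirit, but the step you flagged as an obstacle is not a detail to be filled in---it is the content of the proof, and the paper's pinch-map argument is the standard way to supply it.
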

 However, this is not necessarily the case for the single-pointed and unpointed gauge groups, although we do have the following results.
 \begin{prop}\label{propsinglepointedcomponent}
 For any $c, c',w_1, w_1'$ there is a homotopy equivalence
 \[\hyperref[proofofpropsinglepointedcomponent]{B\g^{*}((g,r,a);(c,w_1,w_2,\dots,w_r))\simeq B\g^{*}((g,r,a);(c',w_1',w_2,\dots,w_r))}.\]
 \end{prop}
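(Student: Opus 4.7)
The plan is to identify $B\g^*(P,\tin)$ with a path component of a pointed equivariant mapping space, and then to construct self-homotopy equivalences of that mapping space which permute components by the desired shift in $(c,w_1)$. By the universal bundle construction one has
\[B\g^*(P,\tin)\simeq \Mappe(X,BU(n))_{[P,\tin]},\]
where $\mathbb{Z}_2$ acts on $X$ by $\sigma$ and on $BU(n)$ by entry-wise complex conjugation (with fixed set $BO(n)$), and the subscript denotes the path component of the equivariant classifying map of $(P,\tin)$.

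I would choose a small $\sigma$-invariant closed disk $D\subset X$ disjoint from the fixed components $X_2,\dots,X_r$, with $*_X$ lying at one of the two $\sigma$-fixed points of $\partial D$ (moving $*_X$ within $X_1$ if necessary, which preserves the homotopy type of $B\g^*(P,\tin)$). Since $D$ contains a $\sigma$-fixed interior point, it is equivariantly contractible, so the inclusion $\partial D\hookrightarrow X$ is equivariantly null-homotopic, giving a pointed $\mathbb{Z}_2$-equivariant pinch map
\[p\colon X\longrightarrow X/\partial D\simeq X\vee S^2,\]
where $S^2$ carries the Real structure of a type-$(0,1,0)$ surface whose equator is the image of the diameter $D\cap X_1$.

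For any pointed equivariant $g\colon S^2\to BU(n)$ classifying a Real bundle on $S^2$ of class $(c_0,w_0)$ (subject to $c_0\equiv w_0\bmod 2$), define
\[\mu_g\colon \Mappe(X,BU(n))\longrightarrow \Mappe(X,BU(n)),\qquad f\longmapsto(f\vee g)\circ p.\]
Additivity of $c_1$ over the wedge, together with the fact that $X_i\cap D=\varnothing$ for $i\geq 2$ while $D\cap X_1$ is sent to the equator of the $S^2$ summand, shows that $\mu_g$ sends the component of class $(c,w_1,w_2,\dots,w_r)$ to that of class $(c+c_0,w_1+w_0,w_2,\dots,w_r)$. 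Taking $g'$ to classify a Real bundle on $S^2$ of the inverse class $(-c_0,w_0)$ (which exists by Proposition~\ref{proprealbundleclass} applied to the type-$(0,1,0)$ surface), the composite $\mu_{g'}\circ\mu_g$ is homotopic to $\mu$ of a constant map, which is in turn homotopic to the identity since pinching and then collapsing the $S^2$ summand recovers the original map; hence $\mu_g$ is a self-homotopy equivalence.

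Finally, given $(c,w_1,w_2,\dots,w_r)$ and $(c',w_1',w_2,\dots,w_r)$ both satisfying the parity constraint of Proposition~\ref{proprealbundleclass}, the difference $(c'-c,w_1'-w_1)$ satisfies $c'-c\equiv w_1'-w_1\bmod 2$, so there is a Real bundle on $S^2$ realising this class; the corresponding $\mu_g$ restricts to the required homotopy equivalence between the two path components, which under the identification of the first paragraph is the desired equivalence. The hard part will be the careful verification that $\mu_g$ shifts $w_1$ but not the $w_i$ for $i\geq 2$; this hinges on the geometric control of where $D$ meets each fixed component, and on expressing $w_1$ of the pinched loop as the concatenation of the original $w_1$ contribution (coming from the portion of $X_1$ outside $D$, closed up through the wedge point) with the $w_0$ contribution from the $S^2$ equator.
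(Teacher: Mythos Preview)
Your proposal is correct and takes essentially the same approach as the paper. The paper's proof (which simply refers back to the argument for Proposition~\ref{proprealpointedpathequal}) likewise uses a pinch-and-act construction on $\Mappe(X,BU(n))$, separately invoking an action of $\pi_2(BU(n))$ to shift $c$ and an action of $\pi_1(BO(n))$ via a map $(S^2,\he)\to BU(n)$ to shift $(c,w_1)$; your version simply packages both into a single action by Real bundle classes on the type-$(0,1,0)$ sphere, which is a cosmetic difference.
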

 
  \begin{prop}\label{propunpointedimmediatefromnonequiv}
 Let the following be classifying spaces of rank $n$ gauge groups. Then there are isomorphisms of gauge groups
 \[\hyperref[proofofprop19]{\g((g,r,a);(c,w_1,w_2,\dots,w_r)) \cong \g((g,r,a);(c+2n,w_1,w_2,\dots,w_r))}.\]
 \end{prop}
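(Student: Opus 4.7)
The plan is to exhibit an explicit isomorphism by tensoring with a suitable Real line bundle. Applying Proposition \ref{proprealbundleclass} with $n=1$, I would first select a Real principal $U(1)$-bundle $(L, \tilde\tau)$ over $(X, \bin)$ of class $(2, 0, \dots, 0)$; that is, $c_1(L) = 2$ and $w_1(L|_{X_i}) = 0$ for every fixed component $X_i$. Such an $L$ exists because the parity relation $2 \equiv 0 \pmod{2}$ is trivially satisfied.

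Given any Real principal $U(n)$-bundle $(P, \tin)$ of class $(c, w_1, \dots, w_r)$, I would form the tensor product $P \otimes L$ as the principal $U(n)$-bundle $(P \times_X L)/U(1)$, where $U(1)$ acts diagonally by $(p, \ell)\cdot z = (p \cdot zI_n,\, \ell \cdot z^{-1})$, the $U(n)$-action is inherited from $P$, and a Real structure descends from $\tin \otimes \tilde\tau$. The standard determinant identities yield
\[
c_1(P \otimes L) = c_1(P) + n\, c_1(L) = c + 2n
\quad\text{and}\quad
w_1\bigl((P \otimes L)|_{X_i}\bigr) = w_i + n\, w_1(L|_{X_i}) = w_i,
\]
so $P \otimes L$ is a Real principal $U(n)$-bundle of class $(c+2n, w_1, \dots, w_r)$. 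The claimed isomorphism is then
\[
\Phi\colon \g(P, \tin) \longrightarrow \g(P \otimes L,\, \tin \otimes \tilde\tau), \qquad \phi \longmapsto \phi \otimes \id_L,
\]
where $(\phi \otimes \id_L)[p, \ell] := [\phi(p), \ell]$. Routine checks using the $U(n)$-equivariance of $\phi$ and its compatibility with the Real structures show that $\Phi$ is a continuous group homomorphism, and tensoring with $\id_{L^{-1}}$ provides a continuous inverse (under the identification $(P \otimes L) \otimes L^{-1} \cong P$); hence $\Phi$ is an isomorphism of topological groups.

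The only subtlety lies in the choice of $L$, not in any subsequent calculation. When $n$ is odd, a nontrivial $w_1(L|_{X_i})$ would alter the Stiefel-Whitney part of the twisted class, destroying the argument. Being able to arrange $c_1(L) = 2$ together with $w_1(L|_{X_i}) = 0$ for every $i$ is exactly what the parity condition in Proposition \ref{proprealbundleclass} permits, and this is the hinge of the proof.
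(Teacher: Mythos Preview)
Your proof is correct and follows essentially the same approach as the paper: both tensor $(P,\tin)$ with a Real $U(1)$-bundle of class $(2,0,\dots,0)$, compute that the result has class $(c+2n,w_1,\dots,w_r)$, and define the isomorphism as $\phi \mapsto \phi \otimes \id$. Your description of the inverse via tensoring with $L^{-1}$ amounts to the paper's formulation using the conjugate inclusion $a \mapsto \operatorname{diag}(\overline{a},\dots,\overline{a})$ of $U(1)$ into $U(n)$.
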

 
 \begin{prop}\label{propunpointedcomponentequiv}
 Let $n$ be odd then there are isomorphisms of rank $n$ gauge groups
 
 \begin{enumerate}
 \item \hyperref[proofofpropunpointedcomponentequiv]{$\g((g,r,a);(c,w_1,w_2,\dots,w_r)) \cong \g((g,r,a);(c, \sum_{i=1}^r w_i,0,\dots,0))$};
 \item \hyperref[proofofpropunpointedcomponentequiv]{$\g^{*}((g,r,a);(c,w_1,w_2,\dots,w_r)) \cong \g^{*}((g,r,a);(c, \sum_{i=1}^r w_i,0,\dots,0))$}.
 \end{enumerate} 
 
 \end{prop}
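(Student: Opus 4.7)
The plan, suggested by Baird, is to twist by a suitable Real line bundle. Given a Real $U(n)$-bundle $(P, \tin)$ and a Real $U(1)$-bundle $(L, \tau)$ over $(X, \bin)$, I would form the Hermitian tensor product $E_P \otimes E_L$ of the associated vector bundles, equipped with the involution $\tin \otimes \tau$ (which squares to the identity since each factor does), and let $P \otimes L$ denote the resulting Real $U(n)$-bundle obtained as its unitary frame bundle. The heart of the argument is that this twist does not change the adjoint bundle: there is a canonical isomorphism $\mathrm{End}(E_P \otimes E_L) \cong \mathrm{End}(E_P) \otimes \mathrm{End}(E_L) \cong \mathrm{End}(E_P)$, where the last step uses that $E_L$ is a line bundle so $\mathrm{End}(E_L)$ is canonically the trivial complex line bundle. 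Under this identification the two Real structures coincide, because the two copies of $\tau$ cancel in the conjugation $\psi \mapsto (\tin \otimes \tau)\,\psi\,(\tin \otimes \tau)^{-1}$. Passing to unitary automorphisms and taking global sections fixed by the involution, the assignment $\phi \mapsto \phi \otimes \id_{E_L}$ then provides a topological group isomorphism $\g(P, \tin) \cong \g(P \otimes L, \tin \otimes \tau)$.

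Next, I would read off the class of $P \otimes L$ from standard formulae: $c_1(P \otimes L) = c_1(P) + n\, c_1(L)$, and restricting to each fixed circle $X_i$, the real-tensor-product formula gives $w_1((P \otimes L)|_{X_i}) = w_1(P|_{X_i}) + n\, w_1(L|_{X_i}) \equiv w_i + w_1(L|_{X_i}) \pmod{2}$, where the congruence uses that $n$ is odd. Invoking Proposition \ref{proprealbundleclass} I would then produce a Real line bundle $L$ with $c_1(L) = 0$, $w_1(L|_{X_1}) = \sum_{i \geq 2} w_i$ and $w_1(L|_{X_i}) = w_i$ for $i \geq 2$; the realisability relation $c_1(L) \equiv \sum_i w_1(L|_{X_i}) \pmod{2}$ is satisfied because $\sum_i w_1(L|_{X_i}) = 2 \sum_{i \geq 2} w_i \equiv 0$. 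For this choice $P \otimes L$ has class exactly $(c, \sum_{i=1}^{r} w_i, 0, \ldots, 0)$, which combined with the first step proves part (1).

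For part (2), the same construction restricts to single-pointed gauge groups: under the identification of fibres $(E_P \otimes E_L)_{*_X} = (E_P)_{*_X} \otimes (E_L)_{*_X}$, the identity on the left corresponds to $\id \otimes \id$ on the right, so $\phi \mapsto \phi \otimes \id_{E_L}$ sends $\g^*(P, \tin)$ isomorphically onto $\g^*(P \otimes L, \tin \otimes \tau)$. The principal obstacle will be the first step: checking that the formal endomorphism identification is genuinely compatible with the Real structures and yields a homeomorphism of gauge groups rather than merely a set-theoretic bijection. Once that naturality is in place, the choice of $L$ in the second step is pure $\mathbb{Z}/2$-linear bookkeeping powered by the oddness of $n$.
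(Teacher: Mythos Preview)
Your proposal is correct and follows essentially the same approach as the paper: tensor $(P,\tin)$ with a Real $U(1)$-bundle of class $(0,\sum_{i\geq 2} w_i, w_2,\dots,w_r)$ and observe that this twist induces an isomorphism of gauge groups while changing the class to $(c,\sum_i w_i,0,\dots,0)$ precisely when $n$ is odd. The only cosmetic difference is that the paper works in the principal-bundle formalism (defining $\phi\mapsto\Delta^*(\phi\times\id)$ and exhibiting an explicit inverse via the conjugate inclusion $U(1)\hookrightarrow U(n)$), whereas you phrase the same construction through the associated vector bundles and the canonical identification $\mathrm{End}(E_P\otimes E_L)\cong\mathrm{End}(E_P)$.
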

It would be better to provide stronger statements of Propositions \ref{proprealpointedpathequal} and \ref{propsinglepointedcomponent}, such as in the form of the isomorphisms of Propositions \ref{propunpointedimmediatefromnonequiv} and \ref{propunpointedcomponentequiv}.  Indeed, the proofs of the latter invoke a conceptually simple argument and it may be the case that Propositions \ref{proprealpointedpathequal} and \ref{propsinglepointedcomponent} can be given stronger statements using a similar approach.

We state homotopy decompositions for $(r+a)$-pointed gauge group in the form of Theorem \ref{thma}.

\begin{theorem}\label{thma} Let $(P,\tilde\sigma)$ be of arbitrary class then there are integral homotopy decompositions
\[\begin{array}{|c||c|}\hline 
\mbox{ Type }& \mbox{ Decompositions for } \g^{*(r+a)}(P,\tilde\sigma)  \\
\hline\hline 
 (g,0,1) \mbox{ for } g \mbox{ even }  & \hyperref[thm:generalsplitting]{\g^*((0,0,1);0)\times \underset{g}{\prod} \Omega U(n)} \\ \hline
 (g,0,1) \mbox{ for } g \mbox{ odd } &  \hyperref[thm:generalsplitting]{\g^*((1,0,1);0)\times \underset{g-1}{\prod} \Omega U(n)} \\ \hline
 (g,r,0) &  \hyperref[thmsplittingtype1fixedcircles]{\Omega^2(U(n)/O(n))}\times \underset{\hyperref[thm:generalsplitting]{(g-r+1)}+ \hyperref[thmsplittingtype1fixedcircles]{(r-1)}}{\prod} \Omega U(n) \times \hyperref[thmsplittingtype1fixedcircles]{\underset{r-1}{\prod} \Omega O(n)} \\ \hline
 \begin{gathered}\begin{array}{c}
(g,r,1) \\[-9pt]
g-r \mbox{ even}
\end{array}\end{gathered} &  \hyperref[thm111pointed]{\g^*((1,1,1);(0,0))}\times \underset{\hyperref[thm:generalsplitting]{(g-r)}+\hyperref[thmtype2mainsplitting]{(r-1)}+\hyperref[thm111pointed]{1}}{\prod} \Omega U(n) \times \hyperref[thmtype2mainsplitting]{\underset{r-1}{\prod}\Omega O(n)}  \\ \hline
\begin{gathered}\begin{array}{c}
(g,r,1) \\[-9pt]
g-r \mbox{ odd}
\end{array}\end{gathered} &  \hyperref[thm211111integral]{\g^*((1,1,1);(0,0))}\times \underset{\hyperref[thm:generalsplitting]{(g-r-1)}+\hyperref[thmtype2mainsplitting]{(r-1)}+\hyperref[thm211111integral]{2}}{\prod} \Omega U(n) \times \hyperref[thmtype2mainsplitting]{\underset{r-1}{\prod}\Omega O(n)}   \\ \hline
\end{array}\]
\end{theorem}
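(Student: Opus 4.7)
The plan is to prove Theorem \ref{thma} by assembling a collection of splitting results, one tailored to each of the five surface types in the table. By Proposition \ref{proprealpointedpathequal} the homotopy type of $B\g^{*(r+a)}(P,\tin)$ is independent of the class $(c,w_1,\ldots,w_r)$, so for each type I may work with a convenient representative bundle. The starting point is the standard identification
\[B\g^{*(r+a)}(P,\tin) \simeq \Map^{*(r+a)}_{\mathbb{Z}_2}\bigl(X,BU(n)\bigr),\]
in which $\mathbb{Z}_2$ acts on $BU(n)$ by entry-wise complex conjugation, so that $BU(n)^{\mathbb{Z}_2} \simeq BO(n)$.

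The core strategy is to exhibit, for each type $(g,r,a)$, a sequence of $\mathbb{Z}_2$-equivariant cofibrations that successively peel off combinatorial pieces of $(X,\bin)$. Applying $\Map^{*(r+a)}_{\mathbb{Z}_2}(-,BU(n))$ sends each cofibration to a fibration, and the aim is to show that every such fibration splits integrally. Three archetypal equivariant cells drive the calculation: a free circle of the form $\mathbb{Z}_2\times S^1$ contributes a factor $\Omega U(n)$ because its equivariant pointed mapping space into $BU(n)$ is $\Omega BU(n) \simeq U(n)$, which loops down to $\Omega U(n)$ at the gauge-group level; a fixed circle with trivial action contributes $\Omega O(n)$, since the fixed-point set of $BU(n)$ is $BO(n)$; and a fixed $2$-cell attached along a fixed circle yields the factor $\Omega^{2}(U(n)/O(n))$ via the fibration sequence $BO(n)\to BU(n)\to U(n)/O(n)$.

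Carrying this out case by case: for type $(g,0,1)$ the free involution reduces everything to a non-equivariant computation on the Klein-type quotient, leaving the irreducible factor $\g^*((0,0,1);0)$ or $\g^*((1,0,1);0)$ (depending on the parity of $g$) together with the $g$ or $g-1$ free-handle contributions produced by Theorem \ref{thm:generalsplitting}. For type $(g,r,0)$ I cut the surface along the $r$ fixed circles, so that Theorem \ref{thm:generalsplitting} handles the non-fixed handles while Theorem \ref{thmsplittingtype1fixedcircles} is responsible for the $\Omega^{2}(U(n)/O(n))$ and $\Omega O(n)$ contributions. Types $(g,r,1)$ are treated analogously but with a non-orientable quotient, which forces one irreducible piece $\g^*((1,1,1);(0,0))$ (isolated by Theorems \ref{thm111pointed} and \ref{thm211111integral}); the parity of $g-r$ then dictates whether one or two additional $\Omega U(n)$ factors appear, with the fixed-circle factors supplied by Theorem \ref{thmtype2mainsplitting}.

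The main obstacle is showing that each cofibration-to-fibration sequence splits \emph{integrally}, not merely rationally. The splittings coming from free circles are essentially formal, arising from explicit choices of loops in the quotient, but the splittings off the fixed circles and, especially, off the top $2$-cell require constructing an equivariant section of $BU(n)$ or proving that the relevant equivariant attaching map is null-homotopic after an appropriate reduction. Isolating the genuinely indecomposable pieces $\g^*((0,0,1);0)$, $\g^*((1,0,1);0)$, and $\g^*((1,1,1);(0,0))$ as irreducible base factors, rather than attempting to decompose them further, is precisely what allows the theorem to hold over the integers.
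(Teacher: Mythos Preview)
Your outline is correct and matches the paper's approach: Theorem~\ref{thma} is not proved in one place but is assembled from exactly the ingredients you cite --- Proposition~\ref{proprealpointedpathequal} to normalise the class, Theorem~\ref{thmbaird} for the mapping-space model, and then Theorems~\ref{thm:generalsplitting}, \ref{thmsplittingtype1fixedcircles}, \ref{thmtype2mainsplitting}, and \ref{thm211111integral} to peel off the factors via $\mathbb{Z}_2$-cofibrations whose connecting maps are shown to be equivariantly null. Your identification of the three archetypal contributions (free $(S^1\vee S^1,\sw)$ giving $\Omega U(n)$, fixed $(S^1,\id)$ giving $\Omega O(n)$, and $(S^2,\he)$ giving $\Omega^2(U(n)/O(n))$ via the pullback of Lemma~\ref{claimsphere}) is exactly right.

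One small point of emphasis: the paper's mechanism for the $\Omega U(n)$ factors from the handle cells $\alpha_i,\beta_i$ is not quite that ``free circles are formal'', but rather Proposition~\ref{propgtrivial}, which collapses the remaining $1$-skeleton to reduce to $(\Sigma_{g'/2}\vee\Sigma_{g'/2},\sw)$ and then invokes the classical fact that the attaching map of a Riemann surface is a sum of Whitehead products and hence suspends to zero. Similarly, the $\gamma_i$ and $\delta_i$ splittings in Propositions~\ref{propgtrivialfortype1} and \ref{propreducetohesphere} are obtained by constructing explicit equivariant left inverses (the maps $j_r$), not by analysing attaching maps directly. Since you reference the theorems that carry out this work, your assembly is complete; just be aware that the actual null-homotopies require these specific geometric constructions rather than a general principle.
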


In the single-pointed case we have to be a little more careful with regards to the class of the underlying Real bundle. For the cases where $\g^{*(r+a)}(P,\tilde\sigma)\neq \g^*(P,\tilde\sigma)$, that is when $r+a>1$, we have the results in Theorem \ref{thmb}.

\setlength{\arraycolsep}{5pt}
\begin{theorem}\label{thmb} Let $n$ be odd or let $(P,\tilde\sigma)$ be of class $(c,w_1,0,\dots, 0)$. Let $r+a>1$ then there are integral homotopy decompositions  \small
\[\begin{array}{|c || c |}\hline
\mbox{ Type }& \mbox{ Decompositions for } \g^*(P,\tilde\sigma) \\
\hline\hline 
 (g,r,0) &  \hyperref[thmsplittingtype1fixedcircles]{\Omega^2(U(n)/O(n))}\times \hyperref[thm:generalsplitting]{\underset{g-r+1}{\prod} \Omega U(n)} \times \hyperref[thmsplittingtype1fixedcircles]{\underset{r-1}{\prod} \Omega O(n)} \times  \hyperref[thmsplittingtype1fixedcircles]{\underset{r-1}{\prod} \Omega (U(n)/O(n))} \\
 \hline
 \begin{gathered}\begin{array}{c}
(g,r,1) \\[-7pt]
g-r \mbox{ even}
\end{array}\end{gathered}  &  \hyperref[thmtype2mainsplitting]{\g^*((1,1,1);(0,0))}\times \hyperref[thm:generalsplitting]{\underset{g-r}{\prod} \Omega U(n)} \times \hyperref[thmtype2mainsplitting]{\underset{r-1}{\prod}\Omega O(n)} \times \hyperref[thmtype2mainsplitting]{\underset{r-1}{\prod} \Omega (U(n)/O(n))}  \\ \hline
 \begin{gathered}\begin{array}{c}
(g,r,1) \\[-7pt]
g-r \mbox{ odd}
\end{array}\end{gathered}  &  \hyperref[thm211111integral]{\g^*((1,1,1);(0,0))}\times \underset{\hyperref[thm:generalsplitting]{(g-r-1)}+\hyperref[thm211111integral]{1}}{\prod} \Omega U(n) \times \hyperref[thmtype2mainsplitting]{\underset{r-1}{\prod}\Omega O(n)} \times \hyperref[thmtype2mainsplitting]{\underset{r-1}{\prod} \Omega (U(n)/O(n))} \\  \hline

\end{array}
\]
\end{theorem}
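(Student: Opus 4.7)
The plan is to deduce the decomposition of $\g^*(P,\tilde\sigma)$ from that of $\g^{*(r+a)}(P,\tilde\sigma)$ (established in Theorem~\ref{thma}) via the evaluation fibration at the $r+a-1$ extra designated basepoints. By hypothesis, either $n$ is odd or the class already has $w_2=\cdots=w_r=0$; in the former case, Proposition~\ref{propunpointedcomponentequiv}(2) furnishes an isomorphism reducing us to the latter. Thus Theorem~\ref{thma} supplies a starting decomposition
\[\g^{*(r+a)}(P,\tilde\sigma)\simeq F\times\prod_{r-1}\Omega U(n)\times\prod_{a}\Omega U(n),\]
where $F$ denotes the remaining factors in the appropriate row of the table of Theorem~\ref{thma}.

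Next, we consider the evaluation fibration
\[\g^{*(r+a)}(P,\tilde\sigma)\longrightarrow\g^*(P,\tilde\sigma)\xrightarrow{\;\ev\;}\prod_{i=2}^{r}O(n)\times\prod^{a}U(n),\]
in which the map to each factor records the value of an automorphism at the corresponding extra basepoint: at a fixed basepoint this lies in $O(n)$ by $\tilde\sigma$-equivariance, and at the non-fixed basepoint (when $a=1$) in $U(n)$. Extending to a homotopy fiber sequence yields a connecting map $\partial\colon \Omega O(n)^{r-1}\times\Omega U(n)^{a}\to\g^{*(r+a)}(P,\tilde\sigma)$. The central claim to be verified is that, under the splitting above, $\partial$ factors through the product $\prod_{r-1}\Omega U(n)\times\prod_{a}\Omega U(n)$ of factors, sending each $\Omega O(n)$ via the standard inclusion $\Omega O(n)\hookrightarrow\Omega U(n)$ and each $\Omega U(n)$ via the identity. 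Geometrically, $\partial$ sends a loop in the structure group at an extra basepoint to the gauge transformation obtained by \emph{spinning} the frame in a small neighbourhood of that basepoint.

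Granted this, the evaluation fibration decomposes into a product of subfibrations. For each extra fixed basepoint, the subfibration $\Omega U(n)\to E\to O(n)$ with connecting map $\Omega O(n)\hookrightarrow\Omega U(n)$ is precisely the fibration $\Omega U(n)\to\Omega(U(n)/O(n))\to O(n)$ obtained by looping $O(n)\to U(n)\to U(n)/O(n)$, so $E\simeq\Omega(U(n)/O(n))$. For the non-fixed basepoint (when $a=1$), the subfibration $\Omega U(n)\to E\to U(n)$ with identity connecting map is the path-loop fibration and has contractible total space. Combining, we obtain
\[\g^*(P,\tilde\sigma)\simeq F\times\prod_{r-1}\Omega(U(n)/O(n)),\]
matching each case of the table.

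The main obstacle will be the identification of $\partial$ with the standard inclusion into the specific $\Omega U(n)$ factors of Theorem~\ref{thma}'s splitting. This is expected to require unpacking the proof of Theorem~\ref{thma} to locate, among its $\Omega U(n)$ factors, the $r+a-1$ that arise from mapping spaces near the extra designated basepoints, and confirming that these are precisely those hit by the geometric description of $\partial$ above.
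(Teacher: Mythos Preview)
Your approach via the evaluation fibration at the extra basepoints is genuinely different from the paper's route, and the gap you flag at the end is real: as written, the proposal is an outline rather than a proof.

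The paper does \emph{not} deduce the single-pointed decomposition from the $(r+a)$-pointed one. Instead it proves both in parallel, working directly at the level of the $\mathbb{Z}_2$-space $X$. The same equivariant retractions constructed in Propositions~\ref{propgtrivial}, \ref{propgtrivialfortype1}, \ref{propreducetohesphere}, \ref{propgtrivialfortype2}, \ref{propgtrivialfixedtype2} and \ref{prop211integral} split $\Sigma X$ (not only $\Sigma\overline{X}$) as a $\mathbb{Z}_2$-wedge; applying $\Mappe(-,BU(n))$ then gives the single-pointed result directly. The $(r-1)$ factors of $\Omega(U(n)/O(n))$ arise because in $X$ the cells $\gamma_i\cup\sigma(\gamma_i)$ form copies of $(S^1,\he)$, and Lemma~\ref{claim:righthandfactor}(1) identifies $\Mappe((S^1,\he),BU(n))\simeq U(n)/O(n)$ via an elementary pullback. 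Collapsing the extra fixed point $*_i$ turns each $(S^1,\he)$ into $(S^1\vee S^1,\sw)$, which yields $\Omega U(n)$ instead (Lemma~\ref{claim:righthandfactor}(2)) --- this is precisely the passage you are trying to reverse-engineer through the connecting map.

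To complete your argument you would need more than the homotopy class of $\partial$: you must show that the evaluation fibration itself decomposes as a product of subfibrations over the individual $O(n)$ and $U(n)$ factors of the base. Concretely, this means checking that the splitting maps produced in the proof of Theorem~\ref{thma} are compatible with the quotient $\Mappe(\overline{X},BU(n))\to\Mappe(X,BU(n))$ induced by $X\to\overline{X}$. That verification essentially amounts to rerunning the paper's geometric argument on $X$ rather than $\overline{X}$, at which point the direct route is shorter. Your approach buys a conceptually clean reduction to Theorem~\ref{thma}, but the bookkeeping you defer is where all the content lives.
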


\setlength{\arraycolsep}{6pt}

The remaining cases seem to integrally indecomposable, however we will obtain the following localised homotopy decompositions for odd rank gauge groups. 

\begin{theorem} \label{thmc}
Let $p\neq 2 $ be prime and let $n$ be odd, then there are the following $p$-local homotopy equivalences
\begin{enumerate}
\item $\hyperref[thmsphereantipointsplitting]{\g^*((0,0,1);c)\simeq_p \Omega^2 (U(n)/O(n))\times \Omega (U(n)/O(n))}$;
\item $\hyperref[thmtorusantipointsplitting]{\g^*((1,0,1);c) \simeq_p \Omega^2 (U(n)/O(n))\times \Omega (U(n)/O(n)) \times \Omega U(n)}$;
\item $\hyperref[thm111pointed]{\g^*((1,1,1);(c,w_1))\simeq_p \Omega^2 (U(n)/O(n)) \times \Omega (U(n)/O(n))\times \Omega O(n)}$.
\end{enumerate}
\end{theorem}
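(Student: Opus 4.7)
The plan is to deduce each of the three $p$-local splittings from an integral description of the corresponding gauge group that will be established earlier in the paper (namely, the sphere theorem for case (1), the torus theorem for case (2), and the genus-one-with-one-fixed-circle theorem for case (3)). In each instance, the integral result presents $\g^*(P,\tilde\sigma)$ as the total space of a principal fibration whose fibre and base are built from loop spaces of $U(n)$, $O(n)$, and $U(n)/O(n)$, possibly twisted by a non-trivial integral $k$-invariant.

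First, I would record the classical $p$-local splitting at odd primes. From the fibration
\[O(n)\to U(n)\to U(n)/O(n)\]
one obtains, for every odd prime $p$, a $p$-local homotopy equivalence
\[U(n)\simeq_p O(n)\times U(n)/O(n),\]
and hence $\Omega^k U(n)\simeq_p \Omega^k O(n)\times \Omega^k(U(n)/O(n))$ for every $k\geq 0$. This is the mechanism by which factors of $\Omega U(n)$ appearing in the integral descriptions can be exchanged at odd primes for products of $\Omega O(n)$ and $\Omega(U(n)/O(n))$, and by which the relevant $k$-invariants of the integral fibrations become null-homotopic.

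Second, I would treat the three cases in turn. For case (1), the quotient $S^2/\sigma$ is $\mathbb{RP}^2$ and the integral fibration for $\g^*((0,0,1);c)$ involves only double-loop factors on $U(n)/O(n)$ together with a correcting term; after $p$-localising, the connecting map dies and the result is $\Omega^2(U(n)/O(n))\times \Omega(U(n)/O(n))$. Case (2) differs by the presence of an extra $\Omega U(n)$ factor arising from the genus of the torus, which persists $p$-locally. Case (3) introduces one fixed circle, contributing an additional $\Omega O(n)$-factor via the $O(n)$-principal structure on $P^{\tilde\sigma}\to X^\sigma$.

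The main obstacle is verifying that in each integral fibration the $k$-invariant becomes null-homotopic after inverting $2$. This amounts to showing the relevant obstruction class is $2$-torsion, which for odd $n$ can be read off from the low-dimensional homotopy groups of $U(n)$, $O(n)$ and $U(n)/O(n)$: the comparison exact sequence associated to $O(n)\to U(n)\to U(n)/O(n)$ shows that the non-trivial connecting maps are concentrated in $2$-torsion classes when $n$ is odd. Once this is established, the claimed products follow by routine rearrangement of the loop-space factors in the integral descriptions.
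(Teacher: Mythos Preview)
Your proposal has a genuine gap: you are assuming the existence of ``integral descriptions'' of $\g^*((0,0,1);c)$, $\g^*((1,0,1);c)$ and $\g^*((1,1,1);(c,w_1))$ as total spaces of principal fibrations with identifiable $2$-torsion $k$-invariants, but the paper establishes no such thing. On the contrary, the paper explicitly remarks that these three cases ``seem to be integrally indecomposable,'' and the earlier integral theorems (Theorems~\ref{thma} and~\ref{thmb}) reduce \emph{to} these cases rather than decomposing them. So there is no prior fibration to which your argument about killing a $k$-invariant can be applied.

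The actual proof is considerably more hands-on. For case~(1), one builds a homotopy pullback square for $B\g^*(P,\tilde\sigma)$ by restricting to the upper hemisphere and to the $1$-skeleton of $(S^2,-\id)$; the \emph{strict} pullback $Q$ of the same diagram is identified with the symmetric unitary matrices, and hence with $U(n)/O(n)$ via the Autonne--Takagi factorisation. The induced comparison map $U(n)/O(n)\to B\g^*(P,\tilde\sigma)\to U(n)/O(n)$ is then the explicit map $AO(n)\mapsto AA^tO(n)$, and the crux of the argument is Harris's theorem that this map is a $p$-local equivalence for $n$ odd and $p\neq 2$. This is what produces the $p$-local retraction of $U(n)/O(n)$ off $B\g^*(P,\tilde\sigma)$; the complementary fibre is then identified via a second pullback involving $\Map^*(\mathbb{R}P^2,BU(n))$. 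Cases~(2) and~(3) follow the same template with the cylinder and Klein bottle (resp.\ the fixed-circle restriction) in place of the disc and $\mathbb{R}P^2$. Your claim that $U(n)\simeq_p O(n)\times U(n)/O(n)$ ``follows from the fibration'' elides exactly this point: the splitting is not formal, it is Harris's theorem, and its role is to split $B\g^*(P,\tilde\sigma)$ directly rather than to trivialise a pre-existing $k$-invariant.
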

This result relies upon a self map of $U(n)/O(n)$ as studied in \cite{Harrisonthehomotopygroupsoftheclassicalgroups}, which is a $p$-local homotopy equivalence if and only if $n$ is odd.  Hence, it seems to be difficult to provide such satisfactory decompositions in the even rank case.

We move on to some integral homotopy decompositions for unpointed gauge groups.  The reader is invited to compare the tables of Theorem \ref{thmd} and Theorem \ref{thmb}.

\begin{theorem}\label{thmd} 
 Let $(P,\tin)$ be of class $(c,w_1,w_2,\dots,w_r)$ then there are integral homotopy decompositions
\begin{enumerate}\item
$\begin{array}{|c || c |}\hline 
\mbox{ Type } & \mbox{ Decompositions for } \g(P,\tin)  \\

\hline\hline 
 (g,r,0) &  \hyperref[propintegralunpointed]{ \g((r-1,r,0);(c,w_1,\dots,w_r))\times \underset{g-r+1}{\prod} \Omega U(n)} \\ \hline
 \begin{gathered}\begin{array}{c}
(g,r,1) \\[-9pt]
g-r \mbox{ even}
\end{array}\end{gathered}  &   \hyperref[propintegralunpointed]{\g((r,r,1);(c,w_1,\dots,w_r))\times \underset{g-r}{\prod} \Omega U(n)} \\ \hline
 \begin{gathered}\begin{array}{c}
(g,r,1) \\[-9pt]
g-r \mbox{ odd}
\end{array}\end{gathered}  &  \hyperref[propintegralunpointed]{\g((r+1,r,1);(c,w_1,\dots,w_r))\times \underset{g-r-1}{\prod} \Omega U(n) }\\  \hline
\end{array}
$
\vspace{2pt}
\item $ \hyperref[proofoftheoremd3]{\g((2,1,1);(c,w_1))\simeq \g((1,1,1);(c,w_1))\times \Omega U(n)}. $
\end{enumerate} 
Further, for $r\geq1$ and when $(P,\tilde\sigma)$ is of class $(c,w_1,0,\dots,0)$ or $n$ is odd, there are integral homotopy decompositions
\begin{enumerate}
\item[(3)]
$\begin{array}{|c || c |}\hline 
\mbox{ Type } & \mbox{ Decompositions for } \g(P,\tilde\sigma)  \\

\hline\hline 
 (r-1,r,0) &  \hyperref[proofoftheoremd3]{\g((0,1,0);(c,\Sigma w_i))\times \underset{r-1}{\prod} \Omega O(n) \times \underset{r-1}{\prod} \Omega (U(n)/O(n))}  \\ \hline
(r,r,1) &  \hyperref[proofoftheoremd3]{\g((1,1,1);(c,\Sigma w_i))\times \underset{r-1}{\prod} \Omega O(n) \times \underset{r-1}{\prod}\Omega (U(n)/O(n)) } \\ \hline
(r+1,r,1) &  \hyperref[proofoftheoremd3]{\g((2,1,1);(c,\Sigma w_i))\times \underset{r-1}{\prod} \Omega O(n) \times \underset{r-1}{\prod}\Omega (U(n)/O(n)) } \\ \hline

\end{array}
$
\end{enumerate}
\end{theorem}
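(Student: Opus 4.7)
The plan is to prove parts (1), (2), and (3) in sequence by producing $\mathbb{Z}_2$-equivariant decompositions of the underlying Real surface and translating each one into a fibration of gauge groups that is then shown to split. For Part (1), a Real surface $(X,\sigma)$ of type $(g,r,a)$ with $g$ strictly larger than the minimum compatible with $(r,a)$ admits a $\mathbb{Z}_2$-equivariant splitting as a surface of smaller genus joined to a \emph{free handle pair} --- two 1-handles interchanged by $\sigma$. Applying the classifying space functor $B\g(-)$ (modelled as a component of the equivariant mapping space into $BU(n)$ with its conjugation action) turns this decomposition into a fibration whose fibre contains a factor of $\Omega U(n)$, since equivariant maps out of a freely acted pair of circles reduce to ordinary maps out of a single circle. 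The parity constraint in Theorem \ref{thmweichold} ensures this reduction can be iterated down to the minimal $g$, producing exactly the $g-r+1$ (resp.\ $g-r$, $g-r-1$) factors claimed.

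For Part (2), the type $(2,1,1)$ surface is a minimum case for the pattern of Part (1) and cannot be reduced further by the generic handle-pair argument; nevertheless, it admits an additional $\mathbb{Z}_2$-equivariant splitting as a type $(1,1,1)$ surface with an equivariantly attached handle pair, and the same cofibration-to-fibration translation then produces the claimed extra $\Omega U(n)$. For Part (3), the hypothesis that $(P,\tilde\sigma)$ is of class $(c,w_1,0,\ldots,0)$ --- combined, when $n$ is odd, with Proposition \ref{propunpointedcomponentequiv} --- lets us reduce to the canonical class where only $w_1$ is possibly non-zero. I would then peel off fixed circles one at a time: for each $2 \leq i \leq r$, the circle $X_i$ lies in an equivariant tubular neighbourhood in $X$, and excising this neighbourhood gives an equivariant cofibration whose associated fibration contributes $\Omega O(n)$ from evaluation along the fixed circle (whose equivariant maps into $BU(n)$ factor through $BO(n)$) and $\Omega(U(n)/O(n))$ from the normal directions in which the involution acts non-trivially. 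Iterating $r-1$ times then yields the claimed decomposition.

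The main obstacle will be exhibiting splittings of these fibrations. For Part (1), splitting a handle-pair fibration should follow from an equivariant retraction of $X$ back onto the reduced subsurface, together with the H-space structure on $\Omega U(n)$. Part (3) is more delicate: the sections must be chosen compatibly across successive peelings and must interact correctly with the Real structure. In particular, verifying that the $\Omega(U(n)/O(n))$ factor genuinely splits off, rather than merely appearing as a fibration fibre, will likely require a close analysis of the local model around a fixed circle and, in the odd $n$ case, a careful use of a self-equivalence of $U(n)/O(n)$ analogous to the one invoked in Theorem \ref{thmc}.
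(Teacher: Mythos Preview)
Your overall strategy --- decompose the Real surface equivariantly, translate to a fibration of mapping spaces, then split --- matches the paper's, but the mechanisms you propose for the splittings are not quite right, and in one place would not give an integral result.

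For Part (1), you claim the splitting comes from ``an equivariant retraction of $X$ back onto the reduced subsurface''. No such retraction exists: a closed genus $g$ surface does not retract onto a closed subsurface of smaller genus. What the paper actually uses is the $\mathbb{Z}_2$-cofibration
\[
X_{\alpha\beta}\hookrightarrow X \xrightarrow{q} X' \xrightarrow{\mu} \Sigma X_{\alpha\beta},
\]
where $X_{\alpha\beta}$ is the wedge of handle circles $\alpha_i,\beta_i,\sigma(\alpha_i),\sigma(\beta_i)$. The connecting map $\mu$ is $\mathbb{Z}_2$-nullhomotopic because the attaching map of the top cells is, on these circles, a product of commutators (Whitehead products), which suspend trivially (Proposition~\ref{propgtrivial}). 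This gives a splitting of $\Sigma X$, hence of the \emph{pointed} mapping space. Passing to the unpointed gauge group then requires a further argument: the paper compares the evaluation fibrations over $X$ and $X'$ via $q^*$ and uses $\partial_P = q^*\partial_{P'}$ to build a diagram in which the nullhomotopy of $\mu^*$ produces a section of the map $h\colon \g(P,\tin)\to \Mappe(\Sigma X_{\alpha\beta},BU(n))$ (Proposition~\ref{propintegralunpointed}). Your proposal does not address this pointed-to-unpointed step.

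For Part (3), your final paragraph is where the real gap lies. You suggest that splitting off the $\Omega(U(n)/O(n))$ factors ``will likely require \ldots\ a self-equivalence of $U(n)/O(n)$ analogous to the one invoked in Theorem~\ref{thmc}''. But that Harris self-map $AO(n)\mapsto AA^tO(n)$ is only a $p$-local equivalence for odd primes $p$ and odd $n$; invoking it here would give at best a $p$-local splitting, whereas Theorem~\ref{thmd}(3) is integral. The paper instead constructs explicit $\mathbb{Z}_2$-equivariant left inverses at the surface level: the subcomplex $X_\gamma$ of arcs joining the basepoint to the other fixed circles admits a genuine retraction $j_r\colon X\to X_\gamma$ (Proposition~\ref{propgtrivialfortype1}), and similarly for the $\delta$ cells (Proposition~\ref{propreducetohesphere}). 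These retractions, together with the identification $\Mappe((S^1,\he),BU(n))\simeq U(n)/O(n)$ from a pullback over $BO(n)\hookrightarrow BU(n)$ (Lemma~\ref{claim:righthandfactor}), give the $\Omega(U(n)/O(n))$ and $\Omega O(n)$ factors integrally. The restriction to class $(c,w_1,0,\dots,0)$ arises because collapsing a fixed circle $\delta_i$ forces $w_i=0$ on the induced map of components, not because of any $p$-local obstruction.
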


The remaining unfamiliar spaces in Theorem \ref{thmd} seem to be integrally indecomposable, however localising at particular primes permits further decompositions.
\begin{theorem}\label{thme} Let $n$ be a positive integer and let $p$ be a prime with $p\nmid n$.
 \begin{enumerate}
  \item Let the following be gauge groups of rank $n$ then there are $p$-local homotopy equivalences 
  \begin{enumerate}
   \item $\hyperref[proofofthmea]{\g((g,1,a);(c,0))\simeq_p O(n)\times \g^*((g,1,a);(c,0))}$;
   \end{enumerate}
  further if $p\neq 2$ and $n$ is odd, then there are $p$-local homotopy equivalences
   \begin{enumerate}
   \item[(b)] $\hyperref[proofoftheoreme34]{\g((0,0,1);c)\simeq_p SO(n)\times \Omega^2 (U(n)/SO(n))}$;
   \item[(c)] $\hyperref[proofoftheoreme56]{\g((1,0,1);c)\simeq_p SO(n)\times \Omega^2 (U(n)/SO(n)) \times \Omega U(n)} $.
   
  \end{enumerate}

  \item Let the following be gauge groups of rank $p$ then there are $p$-local homotopy equivalences
  \begin{enumerate}
  \item $\hyperref[proofofthmea]{\g((g,1,a);(c,0))\simeq_p O(p)\times \g^*((g,1,a);(c,0))}$;
  \end{enumerate}
  further if $p\neq 2$, then there are $p$-local homotopy equivalences
  \begin{enumerate}
  \item[(b)] $\hyperref[proofoftheoreme34]{\g((0,0,1);c)\simeq_p SO(p)\times \Omega^2 (U(p)/SO(p))}$;
  \item[(c)] $\hyperref[proofoftheoreme56]{\g((1,0,1);c)\simeq_p SO(p)\times \Omega^2 (U(p)/SO(p)) \times \Omega U(p)} $.
  \end{enumerate}
  
 \end{enumerate}
\end{theorem}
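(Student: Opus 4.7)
The plan is to treat all six statements through a single evaluation fibration
\[
\g^*(P,\tin) \to \g(P,\tin) \xrightarrow{\ev} O(n).
\]
When $r \geq 1$ (parts (a)), this is evaluation at a $\bin$-fixed basepoint; the fibre is $O(n)$ since a $\tin$-equivariant automorphism restricts over such a basepoint to a matrix in $U(n)$ fixed by complex conjugation. When $r = 0$ (parts (b) and (c)), the involution $\tin$ acts freely on $P$, so $P/\tin \to X/\bin$ is an honest principal $O(n)$-bundle whose ordinary gauge group is canonically $\g(P,\tin)$, and we take $\ev$ to be the usual evaluation on the quotient surface.

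For parts (1)(a) and (2)(a) the goal is to produce a $p$-local homotopy section of $\ev$, equivalently to show that the connecting map $\partial\colon \Omega O(n) \to \g^*(P,\tin)$ is $p$-locally null. I would identify $\partial$ as a Samelson-type product determined by the classifying data $(c,w_1)$ of the bundle together with the inclusion of constant gauge transformations on a trivializing neighbourhood. The point is that this product carries an $n$-fold (or, in part (2), $p$-fold) scaling reflecting the rank of the bundle; under the hypothesis $p \nmid n$ the scaling becomes invertible after $p$-localization, and $\partial$ is therefore null $p$-locally. The splittings $\g \simeq_p O(n) \times \g^*$ follow.

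Parts (b) and (c) proceed by applying this same splitting to the quotient $O(n)$-bundles of types $(0,0,1)$ and $(1,0,1)$, and then invoking Theorem \ref{thmc} to decompose the pointed factor $\g^*$ further. The remaining work is to match the output with the forms stated in the theorem. At odd primes two ingredients suffice: first, the double cover $U(n)/SO(n) \to U(n)/O(n)$ induces $p$-local equivalences $\Omega^k(U(n)/SO(n)) \simeq_p \Omega^k(U(n)/O(n))$ for $k \geq 1$ since the deck group has order $2$; second, the real Bott-type fibration $\Omega(U(n)/O(n)) \to O(n) \to U(n)$, together with the odd-primary decomposition of $U(n)$ available when $p \nmid n$, allows the spurious $\Omega(U(n)/O(n))$ factor produced by Theorem \ref{thmc} to be absorbed with $O(n)$ into $SO(n)$ (part (b)) or into $SO(n) \times \Omega U(n)$ (part (c)).

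The main obstacle is the first step, establishing that the connecting map of the evaluation fibration is $p$-locally null. Over a Riemann surface without a real structure, the analogous splitting is part of the Atiyah--Bott picture and is largely formal, but here one must carry out the Samelson-product identification within the $\bin$-equivariant category, checking that the $n$- or $p$-fold scaling persists when the structure group is restricted to the real locus $O(n) \subset U(n)$ over the fixed set. Part (2) at rank $n=p$ is the most delicate, since the scaling becomes precisely $p$ and one needs a finer argument — exploiting that the relevant homotopy groups of $O(p)$ have no $p$-torsion in the relevant range — to conclude that the connecting map remains null after $p$-localization.
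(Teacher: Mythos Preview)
Your approach to parts (1a) and (2a) is in the right spirit: the paper likewise uses the evaluation fibration and shows the boundary map $\partial_P\colon O(n)\to B\g^*(P,\tin)$ is $p$-locally null. However, rather than carrying out a Samelson-product analysis in the equivariant category, the paper reduces to Theriault's non-equivariant result by inserting the intermediate $\mathbb{Z}_2$-space $Y=(S^2\vee S^2,\sw)$: one compares $\partial_P$ with Theriault's $\partial_d\colon U(n)\to\Map^*(S^2,BU(n);d)$ through a commutative ladder, and the comparison map on pointed mapping spaces admits an explicit inverse. This is cleaner than redoing Theriault's argument equivariantly, and it also handles the rank-$p$ case (your part (2a)) by importing the finer $p$-local analysis already available in that paper.

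For parts (b) and (c) there are two genuine errors. First, when $r=0$ the basepoint is \emph{not} $\sigma$-fixed and the involution $\tin$ intertwines the $U(n)$-action with conjugation rather than commuting with it; the quotient $P/\tin\to X/\sigma$ has fibre $U(n)$, not $O(n)$, and is not a principal $O(n)$-bundle. Accordingly the evaluation fibration lands in $U(n)$, and the paper's strategy is to decompose $U(n)\simeq_p SO(n)\times U(n)/SO(n)$ and show that $\partial_c$ is null on the $SO(n)$-factor (so $SO(n)$ lifts to $\g$) while on the $U(n)/SO(n)$-factor it is a split injection into $B\g^*$ (this is exactly the Harris-map calculation underlying Theorem~\ref{thmc}). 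Second, your ``absorption'' step cannot work: at an odd prime $O(n)\simeq_p SO(n)$ already, so $O(n)\times\Omega(U(n)/O(n))$ has strictly larger homotopy than $SO(n)$ and no fibration manipulation can collapse the extra $\Omega(U(n)/O(n))$ factor. The discrepancy is real: splitting off $O(n)$ and then applying Theorem~\ref{thmc} would overcount by precisely that factor, because in the paper's argument the $U(n)/SO(n)$-piece of $U(n)$ is \emph{absorbed into} $B\g^*$ rather than contributing to the fibre $\g$.
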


\subsection{Main Results for Quaternionic Bundles}\label{sectionmainresultsforquatbundles}
To distinguish the notation of Quaternionic gauge groups from the Real case we will use a subscript $Q$, for example $\gq(P,\tin)$. Further, to ease notation we will sometimes use the following
\begin{itemize}
\item $\gq((g,r,a);c)$ to represent the unpointed gauge group of a Quaternionic bundle of class $c$ over a Real surface of type $(g,r,a)$;
\item $\gq^*((g,r,a);c)$ to represent the single-pointed gauge group of the Quaternionic bundle as above;
\item $\gq^{*(r+a)}((g,r,a);c)$ to represent the $(r+a)$-pointed gauge group of the Quaternionic bundle as above.
\end{itemize}
We present results in the same order as we did in the Real case. In the Quaternionic case, the homotopy types of the pointed and $(r+a)$-pointed gauge groups are independent of the class of the bundle.
  \begin{prop}\label{propquatcomponentsequivalence}
   Let $(X,\sigma)$ be a Real surface of fixed type $(g,r,a)$. Let $(P,\tilde \sigma)$ and $(P',\sigma')$ be Quaternionic principal $U(2n)$-bundles over $(X,\sigma)$, then there are homotopy equivalences
   \begin{enumerate}
    \item $B\gq^*(P,\tilde \sigma)\simeq B\gq^*(P', \sigma')$;
    \item $B\gq^{*(r+a)}(P,\tilde \sigma)\simeq B\gq^{*(r+a)}(P', \sigma')$.
   \end{enumerate}
  \end{prop}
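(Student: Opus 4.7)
\emph{Plan.} The approach is the Atiyah--Bott--style mapping space argument (the same one I expect proves Proposition~\ref{proprealpointedpathequal}), with the structural simplification that Proposition~\ref{propquatbundlesclass} classifies Quaternionic rank-$2n$ bundles by a single integer invariant, namely the (even) Chern class. First I would identify
\[
B\gq^*(P,\tilde\sigma) \simeq \Mappe(X, BU(2n))_{[P]}, \qquad B\gq^{*(r+a)}(P,\tilde\sigma) \simeq \Map^{*(r+a)}_{\mathbb{Z}_2}(X, BU(2n))_{[P]},
\]
as path components of equivariant pointed mapping spaces, where $BU(2n)$ carries the involution $A\mapsto J\overline{A}J^{-1}$ (for $J$ a symplectic form, so the fixed subspace is $BSp(n)$) and equivariant homotopy classes of maps $X\to BU(2n)$ biject with isomorphism classes of rank-$2n$ Quaternionic bundles on $(X,\sigma)$. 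The problem then reduces to showing different components of these mapping spaces are homotopy equivalent.

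Next, I would equip $(X,\sigma)$ with an equivariant CW structure whose $0$-skeleton contains the $r+a$ designated basepoints and whose $1$-skeleton $X^{(1)}$ contains $X^\sigma$. The cofibration $X^{(1)}\hookrightarrow X \to X/X^{(1)}$ of pointed $\mathbb{Z}_2$-spaces, on applying $\Mappe(-,BU(2n))$, produces a homotopy fibration whose fibre $\Mappe(X/X^{(1)},BU(2n))$ is an equivariant double loop space, hence an H-space whose path components are permuted transitively by translation. Since the invariants classifying Quaternionic rank-$2n$ bundles live in $H^2$, the restrictions of the classifying maps of $(P,\tilde\sigma)$ and $(P',\sigma')$ to $X^{(1)}$ are equivariantly homotopic, so $[P]$ and $[P']$ lie over a common component of the base. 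Translation in the fibre by a loop realising the Chern class difference $c(P')-c(P)$ then induces the desired homotopy equivalence $B\gq^*(P,\tilde\sigma)\simeq B\gq^*(P',\sigma')$. The $(r+a)$-pointed case is handled identically: the additional basepoint constraints only further restrict the base factor but leave the fibre argument intact.

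The main obstacle will be identifying $\Mappe(X/X^{(1)},BU(2n))$ as an H-space with transitive component translation, and doing so type-by-type in $(g,r,a)$: depending on whether the top cell is free, fixed, or a $\mathbb{Z}_2$-orbit pair, one must recognise the resulting equivariant double loop space as a familiar object (such as $\Omega^2(U(2n)/Sp(n))$ or a companion factor in the Bott tower) whose $\pi_0$ is detected by the Chern class. Verifying that the restriction to $X^{(1)}$ does not distinguish Quaternionic bundles of the same rank is a secondary check, reducing to the vanishing of Quaternionic characteristic classes on a $1$-dimensional equivariant complex.
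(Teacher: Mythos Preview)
Your approach is correct and is essentially equivalent to the paper's, but you have over-engineered it. The paper's proof is a single sentence: invoke the action of $\pi_2(BU(2n))$ on the equivariant pointed mapping space exactly as in the proof of Proposition~\ref{proprealpointedpathequal}. Concretely, one pinches a small free disk in $X$ (and hence, by equivariance, its $\sigma$-image) to obtain an equivariant map $X\to X\vee(S^2\vee S^2,\sw)$; composing a classifying map $f$ with $f\vee\alpha\vee\varsigma_Q\alpha$ and folding shifts the Chern class by $2\alpha$, and since Quaternionic $U(2n)$-bundles are classified by an even Chern class alone (Proposition~\ref{propquatbundlesclass}), this already connects all components. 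No $\pi_1(BO)$-style action is needed here because $BU(2n)^{\varsigma_Q}=BSp(n)$ is $3$-connected, so there are no $1$-dimensional invariants to adjust.

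Your fibration formulation via $X^{(1)}\hookrightarrow X\to X/X^{(1)}$ recovers the same mechanism: the ``translation in the fibre'' you describe \emph{is} the coaction $X\to X\vee(X/X^{(1)})$ coming from the Puppe sequence of the attaching map, i.e.\ the pinch map. So the argument is sound, but the anticipated type-by-type identification of $\Mappe(X/X^{(1)},BU(2n))$ is unnecessary --- you only need that a single free $S^2\vee S^2$ summand sits inside $X/X^{(1)}$ and that the coaction on that summand already changes $c$ by any even integer. Your ``secondary check'' that restriction to $X^{(1)}$ does not distinguish Quaternionic bundles is genuinely needed in your framing and is immediate from the $3$-connectivity of $BSp(n)$ (fixed cells) and the connectivity of $U(2n)$ (swapped cells), but the paper's pinch argument sidesteps it entirely.
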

For the unpointed case, we have an analogue of Proposition \ref{propunpointedimmediatefromnonequiv}.
 \begin{prop}\label{propquatunpointedimmediate}
  Let $(X,\sigma)$ be a Real surface of fixed type $(g,r,a)$ and let the following be gauge groups of Quaternionic bundles of rank $2n$. Then for any even integer $c$, there is an isomorphism of topological groups
  \[ \gq((g,r,a);c))\cong \gq((g,r,a);c+4n).\]
 \end{prop}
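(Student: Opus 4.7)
The plan is to mimic the tensor product argument suggested for Proposition \ref{propunpointedimmediatefromnonequiv}. The key observation is that the tensor product of a Quaternionic bundle with a Real line bundle is again Quaternionic: if $\tilde\sigma^2=-\id$ on $P$ and $\tilde\tau^2=\id$ on a Real line bundle $L$, then on $P\otimes L$ the tensored antilinear involution satisfies $(\tilde\sigma\otimes\tilde\tau)^2=-\id$. Moreover, tensoring a rank $2n$ complex bundle with a line bundle of first Chern class $d$ shifts the first Chern class by $2nd$. Since we wish to shift the class by $4n$, we should tensor with a Real line bundle of Chern class $2$.

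By Proposition \ref{proprealbundleclass} applied with $n=1$, there exists a Real principal $U(1)$-bundle $L\to (X,\sigma)$ of class $(2,0,\dots,0)\in\mathbb{Z}\times\prod_r\mathbb{Z}/2$, since the compatibility relation $2\equiv 0\pmod 2$ is satisfied. Form the principal $U(2n)$-bundle $P\otimes L$, either through associated vector bundles or directly as $(P\times_X L)/U(1)$ with the $U(1)$-action $(p,\ell)\cdot z=(p\cdot zI_{2n},\,\ell\cdot z^{-1})$. By the preceding observation together with Proposition \ref{propquatbundlesclass}, $P\otimes L$ is a Quaternionic principal $U(2n)$-bundle of class $c+4n$.

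Define $\Phi\colon\gq((g,r,a);c)\to \gq((g,r,a);c+4n)$ by $\phi\mapsto\phi\otimes\id_L$. This is clearly a continuous group homomorphism, and the equivariance condition
\[(\phi\otimes\id_L)(\tilde\sigma\otimes\tilde\tau)=(\tilde\sigma\otimes\tilde\tau)(\phi\otimes\id_L)\]
reduces immediately to $\phi\tilde\sigma=\tilde\sigma\phi$. The same construction applied with the dual line bundle $L^{-1}$, together with the canonical identification $(P\otimes L)\otimes L^{-1}\cong P$, supplies a two-sided inverse to $\Phi$. Hence $\Phi$ is an isomorphism of topological groups.

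No serious obstacle is anticipated: the argument is largely formal once the Real line bundle of Chern class $2$ is produced, and its existence is immediate from the classification. The only portion that warrants some care is making the principal-bundle tensor product construction precise and verifying that it intertwines the Quaternionic and Real antilinear involutions in the manner asserted above.
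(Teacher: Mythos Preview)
Your proposal is correct and follows essentially the same approach as the paper: tensor the Quaternionic bundle with a Real $U(1)$-bundle of class $(2,0,\dots,0)$, observe that the result is again Quaternionic with Chern class shifted by $4n$, and define the gauge-group isomorphism by $\phi\mapsto\phi\otimes\id$. The only cosmetic difference is that the paper describes the inverse via the conjugate inclusion $U(1)\hookrightarrow U(2n)$, $a\mapsto \bar a I_{2n}$, whereas you phrase it as tensoring with the dual line bundle $L^{-1}$; for $U(1)$ these are the same operation.
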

 
 We now present homotopy decompositions for pointed gauge groups in the Quaternionic case. The reader is invited to compare the following results to their Real analogues.

\begin{theorem}\label{thmqa} Let $(P,\tilde\sigma)$ be a Quaternionic principal $U(2n)$-bundle of class $c$ then there are integral homotopy decompositions
\[
\begin{array}{|c || c |}\hline 
\mbox{ Type } & \mbox{ Decompositions for } \gq^{*(r+a)}(P,\tilde\sigma)  \\

\hline\hline 
 (g,0,1) \mbox{ for } g \mbox{ even}  & \gq^*((0,0,1);0)\times \underset{g}{\prod} \Omega U(2n) \\ \hline
 (g,0,1) \mbox{ for } g \mbox{ odd} &  \gq^*((1,0,1);0)\times \underset{g-1}{\prod} \Omega U(2n) \\ \hline
 (g,r,0) &  \Omega^2(U(2n)/Sp(n))\times \underset{g}{\prod} \Omega U(2n) \times \underset{r-1}{\prod} \Omega Sp(n) \\ \hline
(g,r,1) \mbox{ for } g-r \mbox{ even} &  \gq^*((1,1,1);0)\times \underset{g}{\prod} \Omega U(2n) \times \underset{r-1}{\prod}\Omega Sp(n)  \\ \hline
(g,r,1) \mbox{ for } g-r \mbox{ odd} &  \gq^*((1,1,1);0)\times \underset{g}{\prod} \Omega U(2n) \times \underset{r-1}{\prod}\Omega Sp(n)  \\ \hline

\end{array}
\]
\end{theorem}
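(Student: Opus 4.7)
The plan is to parallel the proof of Theorem \ref{thma} for Real bundles, with $O(n)$ replaced by $Sp(n)$ and $U(n)/O(n)$ replaced by $U(2n)/Sp(n)$ throughout. By Proposition \ref{propquatcomponentsequivalence}, all $(r+a)$-pointed Quaternionic gauge groups over a fixed surface type are homotopy equivalent, so it suffices to work with a representative bundle of class $c=0$ for each surface type $(g,r,a)$.

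The approach is to use the standard identification $B\gq^{*(r+a)}(P,\tin) \simeq \Mappe^{*(r+a)}((X,\bin), BU(2n))_P$, where the involution on $BU(2n)$ is induced by combining complex conjugation with the twist by $-I_{2n}$. The fixed subspace of this involution on $BU(2n)$ is $BSp(n)$, which accounts for the $Sp(n)$ factors appearing in the decompositions. I realise each surface $(X,\bin)$ of type $(g,r,a)$ by equivariant handle attachments to a base surface of type $(0,0,1)$, $(1,0,1)$, $(0,1,0)$ or $(1,1,1)$, according to the Weichold classification of Theorem \ref{thmweichold}.

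There are two families of handles to consider. A \emph{free handle} consists of a $\mathbb{Z}_2$-free pair of $1$- and $2$-cells attached away from the fixed locus $X^\bin$; restricting mapping spaces from the extended surface to the base yields a fibration whose fibre is $\Omega U(2n)$. An \emph{expansion of the fixed locus} attaches an equivariant $1$-cell along $X^\bin$ producing an additional fixed circle, and the associated restriction fibration has fibre $\Omega Sp(n)$. In each case the $(r+a)$ designated basepoints supply a section of the restriction fibration, so each attachment produces a direct factor in the homotopy type of $B\gq^{*(r+a)}(P,\tin)$. Iterating these attachments reduces $(X,\bin)$ to a base case and yields the stated decompositions, with the residual $\Omega^2(U(2n)/Sp(n))$ factor in the $(g,r,0)$ case emerging as $B\gq^{*}((0,1,0);0)$ once all free handles and extra fixed circles have been stripped away.

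The main obstacle is explicitly constructing the required sections at each step. Although the Quaternionic setting is simpler than the Real case due to the simple connectedness of $Sp(n)$ (contrast with $\pi_1 O(n) = \mathbb{Z}/2$), one must carefully verify that the $(r+a)$ basepoints suffice to split every fibration that arises. This simple connectedness also explains the pleasing uniformity of the Quaternionic decompositions in the parity of $g-r$, whereas the Real analogue in Theorem \ref{thma} exhibits parity-dependent behaviour driven precisely by the nontrivial $\pi_0$ and $\pi_1$ of $O(n)$.
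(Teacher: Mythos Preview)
Your high-level plan --- parallel Theorem \ref{thma}, replace $O(n)$ by $Sp(n)$ and $U(n)/O(n)$ by $U(2n)/Sp(n)$, note that $BU(2n)^{\varsigma_Q}=BSp(n)$, and exploit the high connectivity of $BSp(n)$ to avoid the Stiefel--Whitney complications --- is exactly what the paper does, and its proof of Theorem \ref{thmqa} is literally a one-line pointer back to Sections \ref{subsectionintegraldecompositions}--\ref{subsectionr0a2}.

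Where your sketch diverges is in the \emph{mechanism} you propose for the splittings. You frame the argument in terms of equivariant handle attachments and restriction fibrations, asserting that ``the $(r+a)$ designated basepoints supply a section of the restriction fibration.'' This is not what the paper does, and as stated it is a gap: there is no reason a priori why having designated basepoints should split a restriction fibration. The paper's splittings come from a different source. One works with the explicit $\mathbb{Z}_2$-CW structures of Section \ref{sectionkleinsurfacesasz2complexes} and considers cofibration sequences
\[
A \hookrightarrow X \rightarrow X/A \xrightarrow{\mu} \Sigma A
\]
for various subcomplexes $A$ (the cells $\alpha_i,\beta_i$; the cells $\gamma_i$; the cells $\delta_i$). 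The point is that $\mu$ is $\mathbb{Z}_2$-nullhomotopic: for the $\alpha_i,\beta_i$ cells this reduces, via the collapse map of Proposition \ref{propgtrivial}, to the classical fact that Whitehead products suspend trivially; for the $\gamma_i$ and $\delta_i$ cells one produces an explicit left homotopy inverse to the inclusion (Propositions \ref{propgtrivialfortype1}, \ref{propreducetohesphere}, \ref{propgtrivialfortype2}, \ref{propgtrivialfixedtype2}). The nullhomotopy of $\mu$ gives a stable splitting $\Sigma X \simeq \Sigma A \vee \Sigma(X/A)$, and then Lemma \ref{lemma:adjointeqloopsus} converts this into the product decomposition of the looped mapping space, i.e.\ of $\gq^{*(r+a)}(P,\tin)$.

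So your intuition about \emph{which} pieces split off is correct, but the splittings are driven by the triviality of suspended attaching maps, not by sections arising from basepoints. If you rewrite your argument to run through the cofibration sequences and the Whitehead-product nullhomotopies (exactly as in the Real case), it goes through without change in the Quaternionic setting --- and, as you correctly note, is actually easier, since $\pi_0(Sp(n))=\pi_1(Sp(n))=0$ removes the need to track the classes $w_i$.
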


For the cases where $\gq^{*(r+a)}(P,\tilde\sigma)\neq \gq^*(P,\tilde\sigma)$, that is when $r+a>1$, we have the results in Theorem \ref{thmqb}.
\setlength{\arraycolsep}{5pt}
\begin{theorem}\label{thmqb} For $(P,\tin)$ of arbitrary class $c$, there are integral homotopy decompositions\small
\[\begin{array}{|c || c |}\hline 
\mbox{ Type }& \mbox{ Decompositions for }\gq^*(P,\tilde\sigma)  \\
\hline\hline 
 (g,r,0) &  \Omega^2(U(2n)/Sp(n))\times \underset{g-r+1}{\prod} \Omega U(2n) \times \underset{r-1}{\prod} \Omega Sp(n) \times  \underset{r-1}{\prod} \Omega (U(2n)/Sp(n)) \\ \hline
 \begin{gathered}\begin{array}{c}
(g,r,1) \\[-7pt]
g-r \mbox{ even}
\end{array}\end{gathered}&  \gq^*((1,1,1);0)\times \underset{g-r}{\prod} \Omega U(2n) \times \underset{r-1}{\prod}\Omega Sp(n) \times \underset{r-1}{\prod} \Omega( U(2n)/Sp(n)) \\ \hline
 \begin{gathered}\begin{array}{c}
(g,r,1) \\[-7pt]
g-r \mbox{ odd}
\end{array}\end{gathered} &  \gq^*((1,1,1);0)\times \underset{g-r}{\prod} \Omega U(2n) \times \underset{r-1}{\prod}\Omega Sp(n) \times \underset{r-1}{\prod} \Omega (U(2n)/Sp(n))    \\ \hline

\end{array}
\]
\end{theorem}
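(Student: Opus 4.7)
The strategy is to deduce Theorem \ref{thmqb} from Theorem \ref{thmqa} by analysing the evaluation fibration that relates the single-pointed and $(r+a)$-pointed gauge groups. By Proposition \ref{propquatcomponentsequivalence}, the homotopy type of both $\gq^*(P,\tin)$ and $\gq^{*(r+a)}(P,\tin)$ is independent of the class $c$, so throughout we may fix any convenient representative bundle.

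The key fibration arises from evaluating an automorphism at the extra $r + a - 1$ basepoints $*_2, \ldots, *_{r+a}$:
\begin{equation*}
\gq^{*(r+a)}(P,\tin) \longrightarrow \gq^*(P,\tin) \xrightarrow{\;\ev\;} \underset{r-1}{\prod} Sp(n) \times \underset{a}{\prod} U(2n).
\end{equation*}
At a fixed basepoint $*_i$ with $2 \leq i \leq r$, the value must lie in the centraliser inside $U(2n)$ of the Quaternionic structure on the fibre $P_{*_i}$, which is a copy of $Sp(n)$; at the free basepoint $*_{r+1}$ (present only when $a=1$), the value lies freely in $U(2n)$.

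The plan is then to analyse the associated connecting map
\begin{equation*}
\delta \colon \Omega\bigl(\underset{r-1}{\prod} Sp(n)\bigr) \times \Omega\bigl(\underset{a}{\prod} U(2n)\bigr) \longrightarrow \gq^{*(r+a)}(P,\tin),
\end{equation*}
and show that, with respect to the splitting of $\gq^{*(r+a)}(P,\tin)$ furnished by Theorem \ref{thmqa}, each $\Omega Sp(n)$ factor of the domain maps into $\gq^{*(r+a)}(P,\tin)$ as the composite $\Omega Sp(n) \hookrightarrow \Omega U(2n) \hookrightarrow \gq^{*(r+a)}(P,\tin)$, where the second arrow is the inclusion of a distinguished $\Omega U(2n)$ summand; similarly, each $\Omega U(2n)$ factor in the domain maps as the inclusion of a distinct $\Omega U(2n)$ summand. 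Granting this factorisation, the looped fibration $\Omega Sp(n) \to \Omega U(2n) \to \Omega (U(2n)/Sp(n))$ obtained from $Sp(n) \hookrightarrow U(2n) \to U(2n)/Sp(n)$ replaces each matched $\Omega Sp(n)$--$\Omega U(2n)$ pair with a single $\Omega (U(2n)/Sp(n))$ factor, and each matched $\Omega U(2n)$--$\Omega U(2n)$ pair cancels. Tallying, we lose $r - 1 + a$ copies of $\Omega U(2n)$ and gain $r - 1$ copies of $\Omega (U(2n)/Sp(n))$ relative to Theorem \ref{thmqa}, recovering exactly the counts in the three rows of Theorem \ref{thmqb}.

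The main obstacle will be establishing the precise factorisation of $\delta$. This requires tracking how the explicit splitting sections for $\gq^{*(r+a)}(P,\tin)$ in Theorem \ref{thmqa} (constructed by extending bundle automorphisms supported in neighbourhoods of specific circles of $(X,\bin)$) interact with evaluation at the extra basepoints. The analogous Real argument, Theorem \ref{thmb}, is precisely of this form with $O(n) \subset U(n)$ playing the role of $Sp(n) \subset U(2n)$, so I expect the Quaternionic case to follow by the same template with the formal substitutions $O(n) \leftrightarrow Sp(n)$ and $U(n) \leftrightarrow U(2n)$.
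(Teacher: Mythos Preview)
Your approach via the evaluation fibration at the extra basepoints is genuinely different from the paper's. The paper proves Theorems \ref{thmqa} and \ref{thmqb} simultaneously by transporting the $\mathbb{Z}_2$-CW analysis from the Real case (Sections \ref{subsectionintegraldecompositions}--\ref{subsectionr0a2}): one splits $\Sigma X$ (respectively $\Sigma\overline{X}$) equivariantly using the retractions of Propositions \ref{propgtrivial}, \ref{propgtrivialfortype1}, \ref{propreducetohesphere} and their type-$(g,r,1)$ analogues, and then reads off the mapping-space factors directly. The distinction between the single-pointed and $(r+a)$-pointed decompositions arises solely from the Quaternionic analogue of Lemma \ref{claim:righthandfactor}: the wedge $X_\gamma=\bigvee_{r-1}(S^1,\he)$ contributes $\prod_{r-1}\Omega(U(2n)/Sp(n))$ to $\gq^*$, whereas its quotient $\overline{X}_\gamma=\bigvee_{r-1}(S^1\vee S^1,\sw)$ contributes $\prod_{r-1}\Omega U(2n)$ to $\gq^{*(r+a)}$. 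No evaluation fibration between the two gauge groups is invoked.

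Your plan has a real gap. You correctly flag the factorisation of $\delta$ as the main obstacle but do not carry it out, and your justification that ``the analogous Real argument, Theorem \ref{thmb}, is precisely of this form'' is mistaken: the paper's proof of Theorem \ref{thmb} proceeds via the CW splitting just described, not via any evaluation fibration. Moreover, even granting your factorisation of $\delta$, deducing a product decomposition of $\gq^*$ requires more than identifying the connecting map component-wise: you would need the fibration $\gq^{*(r+a)}\to\gq^*\to\prod Sp(n)\times\prod U(2n)$ itself to split as a product of principal fibrations, which amounts to producing maps $B\gq^*\to U(2n)/Sp(n)$ lifting the evaluations at each $*_i$. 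Constructing such lifts is essentially equivalent to building the retractions $j_r$ of Proposition \ref{propgtrivialfortype1}, so your route would rediscover the paper's geometric input rather than bypass it. Your closing intuition---that the Quaternionic case follows the Real template under $O(n)\leftrightarrow Sp(n)$, $U(n)\leftrightarrow U(2n)$---is exactly what the paper asserts, but the template in question is the CW-splitting argument, not the fibration analysis you outline.
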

\setlength{\arraycolsep}{6pt}

Again, the remaining cases seem to integrally indecomposable, however we will obtain the following localised decompositions.

\begin{theorem} \label{thmqc}
Let $p\neq 2 $ be prime, then there are $p$-local homotopy equivalences
\begin{enumerate}
\item $\gq^*((0,0,1);0)\simeq_p \Omega^2 (U(2n)/Sp(n))\times \Omega (U(2n)/Sp(n)); $
\item $\gq^*((1,0,1);0) \simeq_p \Omega^2 (U(2n)/Sp(n))\times \Omega (U(2n)/Sp(n)) \times \Omega U(2n)$;
\item $\gq^*((1,1,1);0)\simeq_p \Omega^2 (U(2n)/Sp(n)) \times \Omega (U(2n)/Sp(n))\times \Omega Sp(n) $.
\end{enumerate}
\end{theorem}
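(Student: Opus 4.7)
The plan is to parallel the proof of the Real analogue Theorem~\ref{thmc}, replacing $U(n)/O(n)$ throughout by the Quaternionic symmetric space $U(2n)/Sp(n)$. The essential technical input is a self-map of $U(2n)/Sp(n)$ of the kind studied in \cite{Harrisonthehomotopygroupsoftheclassicalgroups} which is a $p$-local homotopy equivalence at every odd prime $p$. In contrast to the Real case, no parity hypothesis on the rank is needed here: at odd primes $U(2n)/Sp(n)$ is $p$-locally a product of odd-dimensional spheres $S^5, S^9, \ldots, S^{4n+1}$, and the relevant power map acts on each such sphere as multiplication by an integer coprime to $p$.

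For each of the three cases $(0,0,1)$, $(1,0,1)$, $(1,1,1)$, I would first produce a principal fibration of the form
\[ F \longrightarrow \gq^*(P,\tin) \longrightarrow B \]
whose fibre and base together account for the loop factors on the right-hand side of the theorem (products of $\Omega U(2n)$, $\Omega Sp(n)$, $\Omega(U(2n)/Sp(n))$ and $\Omega^2(U(2n)/Sp(n))$), and whose classifying map $B \to BF$ factors through an iterated loop space of $U(2n)/Sp(n)$. These fibrations are the Quaternionic versions of those appearing in the proof of Theorem~\ref{thmc}; I would obtain them by applying $\Mappe(-,U(2n))$ to appropriate $\mathbb{Z}_2$-cofibre sequences of the underlying Real surface, with $\mathbb{Z}_2$ acting on $U(2n)$ by the Quaternionic involution $g \mapsto J\overline{g}J^{-1}$ whose fixed subgroup is $Sp(n)$.

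Granting these fibrations, each decomposition would then follow by showing that the classifying map becomes null-homotopic after $p$-localisation. For this I would precompose with a sufficiently high iterate of the Harrison self-map on $U(2n)/Sp(n)$; this multiplies the homotopy class of the classifying map by an integer, which for an appropriate iterate is a $p$-local unit killing any obstruction class. Because this self-map is a $p$-local equivalence at every odd prime irrespective of $n$, no rank constraint is required, which is exactly the reason Theorem~\ref{thmqc} is cleaner than its Real counterpart Theorem~\ref{thmc}.

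The main obstacle is constructing the three fibrations precisely enough to identify the classifying maps as compositions that factor through $U(2n)/Sp(n)$, so that the Harrison self-map can act on them. Once this compatibility is established, the null-homotopy argument is routine and parts (1), (2) and (3) follow simultaneously by the same mechanism, differing only in the bookkeeping of additional $\Omega U(2n)$ or $\Omega Sp(n)$ factors contributed by the genus and by the fixed circle in type $(1,1,1)$.
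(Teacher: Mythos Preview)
Your overall strategy is right: one does parallel the Real case (Theorem~\ref{thmc}), replacing $U(n)/O(n)$ by $U(2n)/Sp(n)$, and you correctly observe that Harris's self-map of $U(2n)/Sp(n)$ is a $p$-local equivalence at every odd prime regardless of $n$, which is why no parity hypothesis appears. However, the mechanism you describe for how this self-map produces the splitting is confused, and as written the argument would not go through.

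You propose to split the fibration by showing its classifying map is $p$-locally null, and to do this by ``precomposing with a sufficiently high iterate of the Harrison self-map \ldots\ this multiplies the homotopy class of the classifying map by an integer, which for an appropriate iterate is a $p$-local unit killing any obstruction class.'' But precomposing with a $p$-local equivalence cannot turn a non-null map into a null one, and multiplying by a $p$-local \emph{unit} does not kill anything --- quite the opposite. The Harris map is not being used as a power operation to annihilate torsion obstructions. What actually happens (see the proof of Theorem~\ref{thmsphereantipointsplitting} and its Quaternionic analogue around Lemma~\ref{lemmaQhomeomorphictoU(n)sp(n)}) is this: one realises $B\gq^*((0,0,1);0)$ as a homotopy pullback of the square
\[
\xymatrix{ & U(2n) \ar[d]^{\Delta^{-1}} \\ U(2n) \ar[r]^-{\Delta^{Q}} & U(2n)\times U(2n),}
\]
so the \emph{strict} pullback $Q$ maps into it. One then identifies $Q\cong U(2n)/Sp(n)$ and checks that the composite
\[
U(2n)/Sp(n)\xrightarrow{\;\;\cong\;\;} Q \longrightarrow B\gq^*((0,0,1);0) \xrightarrow{\;r\;} U(2n) \xrightarrow{\;q\;} U(2n)/Sp(n)
\]
is precisely Harris's map $ASp(n)\mapsto A\,\hat\varsigma_Q(A)^{-1}Sp(n)$. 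Because that map is a $p$-local equivalence, the arrow $Q\to B\gq^*$ is a $p$-local \emph{section} of $qr$, and the fibration over $U(2n)/Sp(n)$ splits. The Harris map is the composite (section)$\circ$(projection), not something one applies to the connecting map. Your proposal is missing this pullback construction and the identification of the section; the ``$\Mappe(-,U(2n))$ applied to cofibre sequences'' viewpoint alone does not produce it. Parts (2) and (3) then follow as in Theorems~\ref{thmtorusantipointsplitting} and~\ref{thm111pointed}, with the extra $\Omega U(2n)$ and $\Omega Sp(n)$ factors arising exactly as you describe.
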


We now present homotopy decompositions for the unpointed case. 

\begin{theorem}\label{thmqd} 
For $(P,\tin)$ of arbitrary class $c$, there are integral homotopy decompositions
\[
\begin{array}{|c || c |}\hline 
\mbox{ Type } & \mbox{ Decompositions for } \gq(P,\tilde\sigma)  \\

\hline\hline 

 \begin{gathered}\begin{array}{c}
(g,0,1) \\[-9pt]
g \mbox{ even}
\end{array}\end{gathered}&  \gq((0,0,1);c)\times \underset{g}{\prod} \Omega U(n) \\ \hline
 \begin{gathered}\begin{array}{c}
(g,0,1) \\[-9pt]
g \mbox{ odd}
\end{array}\end{gathered}&  \gq((1,0,1);c)\times \underset{g-1}{\prod} \Omega U(n) \\ \hline
  (g,r,0) &  \gq((0,1,0);c)\times \underset{r-1}{\prod} \Omega Sp(n) \times \underset{r-1}{\prod} \Omega (U(2n)/Sp(n)) \times \underset{g-r+1}{\prod} \Omega U(n) \\ \hline
(g,r,1) &  \gq((1,1,1);c)\times \underset{r-1}{\prod} \Omega Sp(n) \times \underset{r-1}{\prod}\Omega (U(2n)/Sp(n)) \times \underset{g-r}{\prod} \Omega U(n) \\ \hline

\end{array}
\]
\end{theorem}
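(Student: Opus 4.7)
I propose to prove Theorem \ref{thmqd} by induction on the complexity of the underlying Real surface $(X,\sigma)$, reducing each type $(g,r,a)$ to one of the four base types $(0,0,1)$, $(1,0,1)$, $(0,1,0)$, or $(1,1,1)$ by splitting off loop-space factors associated to equivariant $1$-handles. The overall structure mirrors the pointed arguments behind Theorems \ref{thmqa} and \ref{thmqb}, but extra care is required to control the basepoint evaluation on each component of the unpointed gauge group.

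I would begin by fixing an equivariant CW decomposition of $(X,\sigma)$ in which a subsurface $(X_0,\sigma_0)$ of the next-smaller type appears as a subcomplex, with complement a single equivariant $1$-handle. The collapse map yields a cofibration $X_0\hookrightarrow X\to X/X_0$, where $X/X_0$ is a $\mathbb{Z}_2$-equivariant $2$-sphere whose action depends on the handle type (free, contained in a fixed circle, or crossing between two fixed components). Applying the functor $\Map_{\mathbb{Z}_2}(-,BU(2n))$ (with the $\mathbb{Z}_2$-action on $BU(2n)$ chosen to classify Quaternionic bundles) and restricting to the $[P]$-component produces a fibration
\[
F_{\mathrm{handle}}\longrightarrow B\gq(P,\tin)\xrightarrow{\iota^*} B\gq(P_0,\tin_0),
\]
where the fibre $F_{\mathrm{handle}}$ is identified with one or two of $\Omega U(2n)$, $\Omega Sp(n)$, and $\Omega(U(2n)/Sp(n))$ according to the handle type. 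These identifications are the Quaternionic analogues of the fibre computations already implicit in Theorem \ref{thmqb}.

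The central step is producing a homotopy section of $\iota^*$. I would construct such a section by extending an automorphism of $(P_0,\tin_0)$ via the identity on the $1$-handle, using an equivariant trivialisation of $P$ over the handle guaranteed by Proposition \ref{propquatbundlesclass}. Proposition \ref{propquatcomponentsequivalence} allows reduction to a convenient representative pointed class, and Proposition \ref{propquatunpointedimmediate} permits free shifting of $c$, so the choice of trivialisation does not affect the resulting homotopy type. Iterating the splitting and counting the total contributions per type---$g-r+1$ free handles and $r-1$ fixed-component handles for $(g,r,0)$, $g-r$ free and $r-1$ fixed for $(g,r,1)$, and free handles alone for the $r=0$ cases---yields the required decomposition.

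The main obstacle I expect is the unpointed splitting itself. In the pointed setting the basepoint rigidifies the identity-extension, but here one must extend coherently over the evaluation fibre $U(2n)$ (when $r=0$) or $U(2n)/Sp(n)$ (when $r>0$). I would handle this by fitting the reduction into a commutative ladder of evaluation fibrations $\gq^*\to\gq\to F$, and invoking the pointed splittings of Theorem \ref{thmqb} to verify that the section descends coherently from the fibre to the total space.
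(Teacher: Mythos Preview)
Your approach diverges substantially from the paper's, and the setup contains a genuine geometric gap.

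The paper's proof of Theorem~\ref{thmqd} is a one-line observation: the argument of Section~\ref{subsectionunpointedintegraldecomp} (Proposition~\ref{propintegralunpointed} together with the proof of Theorem~\ref{thmd}(2),(3)) is independent of the fixed-point set of the involution on the target, so it applies verbatim with $(BU(2n),\varsigma_Q)$ in place of $(BU(n),\varsigma)$. Moreover, the condition $w_i=0$ needed in the Real case (so that collapsing a fixed $1$-cell $\delta_i$ lands in the correct component) is vacuous here because $BU(2n)^{\varsigma_Q}=BSp(n)$ is simply connected, whence the decomposition holds for arbitrary $c$. Concretely, the paper includes a wedge of $1$-cells $X_1\hookrightarrow X$, collapses to obtain the smaller Real surface $X'=X/X_1$, and uses the $\mathbb{Z}_2$-nullhomotopy of the connecting map $\mu\colon X'\to\Sigma X_1$ (Propositions~\ref{propgtrivial}, \ref{propgtrivialfortype1}, \ref{propreducetohesphere}, \ref{propgtrivialfortype2}, \ref{propgtrivialfixedtype2}, \ref{prop211integral}). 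A diagram-chase in the style of \cite{toddprimary} then produces a section of the map $h$ in the proof of Proposition~\ref{propintegralunpointed}, yielding the splitting of the \emph{unpointed} gauge group directly.

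Your proposed cofibration $X_0\hookrightarrow X$ with $X_0$ a \emph{closed} Real surface of the next-smaller type does not exist: a closed surface cannot sit inside another closed surface as a proper subcomplex with complement a single $1$-handle. The paper's cofibration runs in the opposite direction---the subcomplex is the wedge of extra $1$-cells, and the smaller surface is the \emph{quotient}, not a subspace. Consequently your restriction fibration $B\gq(P,\tin)\xrightarrow{\iota^*}B\gq(P_0,\tin_0)$ is not available as written; if you replace $X_0$ by a bordered subsurface you no longer land in the gauge group over a closed surface of the stated type. Your section argument (``extend by the identity over the handle'') and the ladder-of-evaluation-fibrations fix are therefore built on a mis-specified fibration, and the paper's mechanism---the nullhomotopy of $\mu$ producing a section of $(\Sigma i)^*$ and hence of $h$---is exactly the missing idea you would need to replace them.
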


The remaining unfamiliar spaces in Theorem \ref{thmqd}
seem to be integrally fundamental, however localising at a particular prime permits further decompositions.
\begin{theorem}\label{thmqe}
 Let $n$ be a positive integer and let $p$ be a prime such that $p\nmid 2n$.  Let the following be gauge groups of a Quaternionic bundle of rank $2n$, then there are $p$-local homotopy equivalences
\begin{enumerate}
  \item $\gq((g,1,a);c)\simeq_p Sp(n)\times B\gq^*((g,1,a);c)$;  
  \item ${\gq((0,0,1);c)\simeq_p Sp(n)\times \Omega^2 \big(U(2n)/Sp(n)\big)}$;
  \item ${\gq((1,0,1);c)\simeq_p Sp(n)\times \Omega^2 \big(U(2n)/Sp(n)\big) \times \Omega U(2n)} $.
  \end{enumerate}
 
\end{theorem}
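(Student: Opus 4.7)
The plan is to handle part~(1) using the evaluation fibration at the designated fixed basepoint and then derive parts~(2) and~(3) by combining an analogous evaluation fibration (at an unfixed basepoint) with the pointed $p$-local splittings of Theorem~\ref{thmqc}.

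For part~(1), since $r=1$, the designated basepoint $*_X$ lies on the unique fixed circle, and the relation $\tin^2 = -I$ endows $P_{*_X}$ with a Quaternionic vector space structure whose $\tin$-equivariant $U(2n)$-automorphisms form $Sp(n)$. Evaluation at $*_X$ therefore yields a principal fibration
\[
\gq^*((g,1,a);c) \longrightarrow \gq((g,1,a);c) \xrightarrow{\;ev\;} Sp(n).
\]
To $p$-locally split this, I would construct a candidate section $s\colon Sp(n) \to \gq((g,1,a);c)$ by the standard ``extension by identity'' recipe: given $A \in Sp(n)$, define a gauge transformation supported in a $\tin$-invariant trivialising neighbourhood of $*_X$ whose value at $*_X$ is $A$, and which is interpolated to the identity outside the neighbourhood. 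Following the Atiyah--Bott / Crabb--Sutherland style of argument used for the Real analog Theorem~\ref{thme}(1)(a), the composite $ev \circ s$ is homotopic to a power map on $Sp(n)$ whose degree divides $2n$ (the complex rank of the underlying bundle). Since $p \nmid 2n$, this power map is a $p$-local self-equivalence of $Sp(n)$, and composing with its $p$-local inverse yields the required section.

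For parts~(2) and~(3), I would use the analogous evaluation fibration at the designated basepoint of the antipodal $S^2$ (respectively the type $(1,0,1)$ surface). Since this basepoint is not $\sigma$-fixed there is no Quaternionic constraint on $P_{*_X}$, so evaluation produces
\[
\gq^*((g,0,1);c) \longrightarrow \gq((g,0,1);c) \longrightarrow U(2n).
\]
Combining this with the pointed splittings of Theorem~\ref{thmqc}(1),(2), the $\Omega(U(2n)/Sp(n))$ factor appearing in $\gq^*$ together with the base $U(2n)$ can be repackaged into a single $Sp(n)$ factor at primes $p \nmid 2n$ by appealing to the $p$-local splitting of the principal fibration $Sp(n) \to U(2n) \to U(2n)/Sp(n)$ (James--Mimura--Toda style). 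What remains is the $\Omega^2(U(2n)/Sp(n))$ factor appearing in both cases, together with the extra $\Omega U(2n)$ factor that Theorem~\ref{thmqc}(2) provides for part~(3).

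The main obstacle is part~(1): specifically, identifying the composite $ev \circ s$ as a power map on $Sp(n)$ of degree dividing $2n$. This requires a careful computation of the adjoint bundle invariants in the Quaternionic setting, patterned on the classical Atiyah--Bott calculation but with $Sp(n)$ replacing $SU(n)$, and it is here that the precise hypothesis $p \nmid 2n$ (rather than $p \nmid n$, as in the Real case) enters. Once this step is secured, the rearrangements in parts~(2) and~(3) are comparatively routine, though one must still check compatibility of the $p$-local splitting of $Sp(n) \to U(2n) \to U(2n)/Sp(n)$ with the evaluation fibration of the gauge group.
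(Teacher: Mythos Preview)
Your approach to part~(1) differs from the paper's. Rather than constructing an explicit section $s$ and computing $ev\circ s$ as a power map, the paper works one step back along the evaluation fibration sequence and shows directly that the boundary map
\[
\partial_P\colon Sp(n)\longrightarrow B\gq^*((g,1,a);c)
\]
is $p$-locally null. This is done by comparison: collapsing the $1$-skeleton gives a map $q\colon X\to (S^2\vee S^2,\sw)$, and the resulting map of evaluation fibrations factors $\partial_P$ through the boundary $\overline\partial_{2d}\colon Sp(n)\to\Mappq((S^2\vee S^2,\sw),BU(2n);2d)$, which in turn factors (via restriction to one wedge summand, an equivalence) through Theriault's non-equivariant $\partial_d\colon U(2n)\to \Map^*(S^2,BU(2n);d)$. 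Since $p\nmid 2n$, Theriault's theorem makes $\partial_d$ $p$-locally null, hence so is $\partial_P$. This sidesteps entirely the Atiyah--Bott style power-map computation you flag as the main obstacle; no such computation is needed.

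For parts~(2) and~(3), your ``repackaging'' argument has a genuine gap. Knowing that $\gq^*$ splits $p$-locally as $\Omega^2(U(2n)/Sp(n))\times\Omega(U(2n)/Sp(n))$ and that $Sp(n)\to U(2n)\to U(2n)/Sp(n)$ splits $p$-locally does \emph{not} by itself let you cancel $\Omega(U(2n)/Sp(n))$ against the $U(2n)$ base in the fibration $\gq^*\to\gq\to U(2n)$: you must know how the connecting map $U(2n)\xrightarrow{\partial_c} B\gq^*$ interacts with these splittings. The paper does exactly this. It decomposes $U(2n)\simeq_p Sp(n)\times U(2n)/Sp(n)$ and analyses $\partial_c$ on each factor separately: on $Sp(n)$ it is $p$-locally null by the same $(S^2\vee S^2,\sw)$ comparison as above (this is Proposition~\ref{propquattrivialthentriv}(2)), while on $U(2n)/Sp(n)$ the composite $U(2n)/Sp(n)\hookrightarrow U(2n)\xrightarrow{\partial_c}B\gq^*\xrightarrow{u}U(2n)\to U(2n)/Sp(n)$ is identified with Harris' map $A\,Sp(n)\mapsto A\hat\varsigma_Q(A)^{-1}Sp(n)$, which is a $p$-local equivalence for $p\neq 2$. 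Thus $Sp(n)$ retracts off $\gq$ and $U(2n)/Sp(n)$ retracts off $B\gq^*$, and the product decomposition follows by acting with the section. Your compatibility check is precisely this analysis of $\partial_c$; without it the argument does not close.
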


 \renewcommand{\arraystretch}{1.0}

\section{Proofs of Statements}\label{chapterresults}
For the sake of clarity, we focus on the proofs of statements in the Real case, and then we elaborate on some of the details in the Quaternionic case in Section \ref{sectionthequaternioniccase}.  We look to decompose the gauge groups by studying an equivariant mapping space as provided in \cite{bairdmodulispace}. 

Throughout our analysis, we think of Real surfaces as $\mathbb{Z}_2$-spaces.  For $\mathbb{Z}_2$-spaces $Y$ and $Z$, let $\Mape(Y,Z)$ denote the space of $\mathbb{Z}_2$-maps from $Y$ to $Z$.  We note that the fixed points of $Y$ must be mapped to the fixed points of $Z$. If $Y$ and $Z$ are pointed, we denote a pointed version of this mapping space by $\Mappe(Y,Z)$. Further, recall the `basepoints' $*_i$ of $(X,\sigma)$ just before Definition \ref{defragg}.  Let \[A:=\amalg_{i=1}^{r+1} *_i \amalg \bin(*_{r+1})\] and let $\Map_{\mathbb{Z}_2}^{*(r+a)}(X, Z)$ denote the subspace of $\Map_{\mathbb{Z}_2}(X, Z)$ whose elements send $A$ to $*_{Z}$\footnote{Of course, it may be necessary to assume that $*_Z$ is fixed by the $\mathbb{Z}_2$-action.}.  Let $\overline{X}$ denote the cofibre of  $A \hookrightarrow X$ and notice that there is a homeomorphism
\[\Map^{*{(r+a)}}_{\mathbb{Z}_2} (X, Z)\cong \Mappe(\overline{X}, Z).\]
A universal Real principal $U(n)$-bundle is given by \[(EU(n),\tilde\varsigma)\rightarrow (BU(n),\varsigma)\]
where $\varsigma$ is induced by complex conjugation and hence $BU(n)^\varsigma=BO(n)$.  Using this $\mathbb{Z}_2$-structure, \cite{bairdmodulispace} provides the following theorem.

\begin{theorem}[Baird]\label{thmbaird}
There are homotopy equivalences
\begin{enumerate}
\item $B\g(P,\tilde\sigma)\simeq \Mape(X,BU(n);P)$;
\item $B\g^*(P,\tilde\sigma)\simeq \Mappe(X,BU(n);P)$;
\item $B\g^{*(r+a)}(P,\tilde\sigma)\simeq \Map^{*{(r+a)}}_{\mathbb{Z}_2} (X, BU(n);P)\cong \Mappe(\overline{X}, BU(n);P)$;
\end{enumerate}
where on the right hand side we pick the path component of $\Mape(X,BU(n))$ that induces $(P,\tilde\sigma)$. \qed
\end{theorem}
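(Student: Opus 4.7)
The plan is to lift the classical Atiyah--Bott argument (which identifies $B\g(P)$ with the relevant component of $\Map(X,BU(n))$ for an ordinary principal bundle) to the $\mathbb{Z}_2$-equivariant setting. I would introduce the auxiliary space $E$ of $U(n)$-equivariant maps $f\colon P\to EU(n)$ satisfying $f\circ \tin = \tilde\varsigma\circ f$. Right composition with gauge automorphisms produces a free action of $\g(P,\tin)$ on $E$, while adjunction sends each such $f$ to a $\mathbb{Z}_2$-equivariant classifying map $\bar f\colon X\to BU(n)$ for the Real bundle obtained by pulling back $(EU(n),\tilde\varsigma)$. This lands in the component of $\Mape(X,BU(n))$ classifying $(P,\tin)$, so one obtains
\[
\rho\colon E\longrightarrow \Mape(X,BU(n);P).
\]

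The bulk of the work is to show that $\rho$ realises the orbit map of the free $\g(P,\tin)$-action and that $E$ is weakly contractible. For the first point, two equivariant lifts descend to the same classifying map if and only if they differ by a gauge automorphism. For contractibility, fix $f_0\in E$ and identify $E$ with the space of $\mathbb{Z}_2$-equivariant sections of the associated bundle $(P\times_{U(n)} EU(n),\tin\times\tilde\varsigma)\to (X,\bin)$. Choosing a Milnor-join model for $EU(n)$ in which $\tilde\varsigma$ acts by entry-wise complex conjugation makes both the fibre $EU(n)$ and its $\tilde\varsigma$-fixed subspace (a model for $EO(n)$) contractible, so the fibre is equivariantly contractible. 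Equivariant obstruction theory applied to the natural $\mathbb{Z}_2$-CW structure on $X$ (free $2$-cells off $X^\bin$, fixed $1$-cells assembling the circles $X_i$) then forces the section space to be contractible, and $\rho$ becomes a model for $E\g(P,\tin)\to B\g(P,\tin)$. This yields part (1). Parts (2) and (3) follow from the same machinery after restricting $E$ to those $f$ that send chosen fibre points of $P$ (above $*_X$, respectively above the full basepoint set $A=\bigsqcup_i *_i\sqcup *_{r+1}\sqcup\bin(*_{r+1})$) to a fixed basepoint of $EU(n)$: this cuts the gauge group down to $\g^*(P,\tin)$ or $\g^{*(r+a)}(P,\tin)$, the mapping space down to $\Mappe(X,BU(n);P)$ or $\Map^{*(r+a)}_{\mathbb{Z}_2}(X,BU(n);P)$, and the final homeomorphism with $\Mappe(\overline X, BU(n);P)$ is the universal property of the cofibre $\overline X = X/A$.

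The main obstacle is the equivariant contractibility claim. It hinges on the choice of a model of $EU(n)$ whose $\mathbb{Z}_2$-fixed set is a genuine model for $EO(n)$, which goes back to Atiyah \cite{atiyahktheoryandreality}, and on verifying that obstructions to extending equivariant sections vanish on every $\mathbb{Z}_2$-cell of $X$. Obstructions on free cells vanish because the fibre is contractible, while obstructions on fixed $1$-cells vanish because its fixed subspace is contractible; there are no higher obstructions for dimensional reasons. With this equivariant input in place, the three statements of the theorem follow uniformly.
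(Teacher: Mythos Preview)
The paper does not give its own proof of this theorem: it is attributed to Baird and cited from \cite{bairdmodulispace}, marked with a \qed. So there is no in-paper argument to compare against; your proposal is supplying a proof where the paper simply quotes one.

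Your outline is the correct one and is essentially the argument Baird gives: lift the Atiyah--Bott identification $B\g(P)\simeq \Map(X,BG;P)$ to the $\mathbb{Z}_2$-equivariant world by working with the space $E$ of equivariant lifts $P\to EU(n)$, observe that the gauge group acts freely with orbit space the equivariant mapping space, and show $E$ is contractible. The pointed variants and the passage to $\overline X$ are handled exactly as you say.

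One small correction: your phrase ``there are no higher obstructions for dimensional reasons'' is misleading. To show the section space $E$ is \emph{contractible} (not merely nonempty), you must kill $\pi_k(E)$ for all $k$, which amounts to extending equivariant sections over $X\times D^{k+1}$ relative to $X\times S^k$. The relevant $\mathbb{Z}_2$-CW complex now has cells of arbitrarily high dimension, so dimension alone buys nothing. The obstructions vanish for the reason you already stated earlier in the paragraph: the fibre $EU(n)$ is $\mathbb{Z}_2$-equivariantly contractible (both it and its fixed set $EO(n)$ are contractible), hence every equivariant obstruction group is zero regardless of the cell dimension. Drop the dimensional remark and the argument is clean.
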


The following lemma can be shown by adapting the proof in the non-equivariant case. We will frequently require this lemma throughout the paper.
\begin{lemma}\label{lemma:adjointeqloopsus}
  Let $Y$ and $Z$ be $\mathbb{Z}_2$-spaces with basepoints fixed by the action, and with $Y$ locally compact Hausdorff. Then there are equivalences
  \begin{enumerate}
  \item $\Omega \Map_{\mathbb{Z}_2}^{*}(X,Y)\cong \Map_{\mathbb{Z}_2}^{*}(\Sigma X, Y)$;
  \item $\Map_{\mathbb{Z}_2}^{*}(X,\Omega Y)\cong \Map_{\mathbb{Z}_2}^{*}(\Sigma X, Y)$. \qed
  \end{enumerate}
  
\end{lemma}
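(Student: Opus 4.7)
The plan is to derive both homeomorphisms from a single equivariant exponential law, obtained by verifying that the classical adjunction homeomorphism is compatible with the conjugation $\mathbb{Z}_2$-actions on mapping spaces.

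First I would recall the standard non-equivariant adjunction: for pointed spaces $A,B,C$ with $B$ locally compact Hausdorff, there is a natural homeomorphism
\[\Map^*(A\wedge B, C) \xrightarrow{\cong} \Map^*(A, \Map^*(B,C)), \quad f\mapsto \bigl(a\mapsto (b\mapsto f(a\wedge b))\bigr).\]
When $A,B,C$ are pointed $\mathbb{Z}_2$-spaces with fixed basepoints, each mapping space carries the conjugation $\mathbb{Z}_2$-action $(\sigma\cdot f)(x) = \sigma_{\text{target}}(f(\sigma_{\text{source}}^{-1}(x)))$, and a direct chase shows the above bijection is $\mathbb{Z}_2$-equivariant with respect to these actions and the diagonal action on $A\wedge B$. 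Passing to $\mathbb{Z}_2$-fixed points then yields
\[\Mappe(A\wedge B, C) \cong \bigl(\Map^*(A, \Map^*(B,C))\bigr)^{\mathbb{Z}_2},\]
the equivariant exponential law I will apply twice.

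For statement (2), I would specialise to $A=X$, $C=Y$ and $B=S^1$ with the trivial $\mathbb{Z}_2$-action. The conjugation action on $\Map^*(S^1,Y)$ is then just post-composition by $\sigma_Y$, so $\Map^*(S^1,Y)$ recovers $\Omega Y$ as a pointed $\mathbb{Z}_2$-space, and the fixed points of $\Map^*(X,\Omega Y)$ under the induced conjugation action are exactly $\Mappe(X,\Omega Y)$. Combined with $X\wedge S^1 \cong \Sigma X$, the exponential law gives the desired homeomorphism $\Mappe(\Sigma X, Y)\cong \Mappe(X,\Omega Y)$.

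For statement (1), I would instead take $A=S^1$ with trivial action and $B=X$, $C=Y$. Since $A$ has trivial $\mathbb{Z}_2$-action, a conjugation-equivariant pointed map $S^1\to \Map^*(X,Y)$ is precisely an ordinary pointed map whose image lies in the fixed-point subspace $\Mappe(X,Y)$, so
\[\bigl(\Map^*(S^1, \Map^*(X,Y))\bigr)^{\mathbb{Z}_2} \cong \Map^*\bigl(S^1, \Mappe(X,Y)\bigr) = \Omega\,\Mappe(X,Y),\]
and the exponential law identifies this with $\Mappe(S^1\wedge X, Y) = \Mappe(\Sigma X, Y)$. The main obstacle is really only the equivariance check in the first step: one must carefully track how the source and target $\mathbb{Z}_2$-actions interact under the adjunction, and check that the diagonal action on $A\wedge B$ matches with the conjugation action on $\Map^*(A,\Map^*(B,C))$. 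Once this routine but fiddly naturality is established, both parts of the lemma drop out as specialisations with $S^1$ carrying the trivial action.
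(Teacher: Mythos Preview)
Your proposal is correct and follows exactly the approach the paper indicates: the paper does not give a proof but simply asserts that the lemma ``can be shown by adapting the proof in the non-equivariant case'' and marks it with a \qed, and your argument carries out precisely this adaptation via the equivariant exponential law and passage to fixed points.
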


Throughout this section, there are a number of $\mathbb{Z}_2$-spaces that will often appear; here we provide a dictionary:

\begin{itemize}
\item $(X, \id)$ - any space $X$ with the trivial involution;
\item $(X \vee X, \sw)$ - the wedge $X\vee X$ equipped with the involution that swaps the factors;
\item $(S^n, -\id)$ - the sphere $S^n$ equipped with the antipodal involution;
\item $(S^n, \he)$ - the sphere $S^n$ equipped with the involution that reflects along the equator.
\end{itemize}

\subsection{Real Surfaces as \texorpdfstring{$\mathbb{Z}_2$}{Ztwo}-complexes}\label{sectionkleinsurfacesasz2complexes}
 In order to provide homotopy decompositions for the gauge groups, it will prove useful to provide a $\mathbb{Z}_2$ $CW$-complex structure for Real surfaces.  The following is essentially a restatement of the structures provided in \cite{biswasstablevectorbundles}. We let $\Sigma_{p,q}$ denote a Riemann surface of genus $p$ with $q$ open discs removed.
 \vspace{2pt}
 
\noindent\textbf{Type $(g,0,1)$.}
   We first study the case where $g$ is even.  We can think of $X$ as two copies of $\Sigma_{g/2,1}$ glued  along their boundary components; each a copy of $S^1$.  The involution restricted to $S^1$ is the antipodal map and extends to swap the two copies of $\Sigma_{g/2,1}$.
 
 We give a $CW$-structure of $X$ as follows, let $X^0$ be $2$ zero-cells; $*$ and $\sigma(*)$.  There are $2g+2$ one-cells 
 \begin{gather*}
\alpha_1,\dots,\alpha_{g/2},\beta_1,\dots,\beta_{g/2}, \gamma \mbox{ and}\\
\sigma(\alpha_1),\dots,\sigma(\alpha_{g/2}),\sigma(\beta_1),\dots,\sigma(\beta_{g/2}), \sigma(\gamma).
\end{gather*}
 The boundaries of $\alpha_i, \beta_i$ are glued to $*$ and the boundaries of $\sigma(\alpha_i),\sigma(\beta_i)$ are glued to $\bin(*)$.  One end of $\gamma$ is glued to $*$ and the other to $\sigma(*)$, whilst the same is done for $\sigma(\gamma)$ with the opposite orientation.  There are $2$ two-cells glued on, one with attaching map
\[\alpha_1\beta_1\alpha_1^{-1}\beta_1^{-1}\cdots\alpha_{g/2}\beta_{g/2}\alpha_{g/2}^{-1}\beta_{g/2}^{-1}\gamma\sigma(\gamma)\]
and the other with the same attaching map but with $\alpha_i, \beta_i$ replaced with $\sigma(\alpha_i),\sigma({\beta_i})$ and $\gamma\sigma(\gamma)$ replaced with $\sigma(\gamma)\gamma$. 

As the notation suggests, the involution swaps cells that differ by $\sigma$.  In particular, this is a $\sigma$-equivariant $CW$-structure and hence descends to a $CW$-structure of $X/\sigma$. 

Now assume that $g$ is odd and let $g'=(g-1)$.  We see that $X$ can be thought of as two copies of $\Sigma_{g'/2,2}$ glued along their boundaries; two copies of $ S^1$ in $X$. The involution swaps these copies of $S^1$ but reverses orientations, and it extends to $X$ to swap the two copies of $\Sigma_{g'/2,2}$. 

There are 2 zero-cells, $*$ and $\sigma(*)$ and $2g$ one-cells 
\begin{gather*}
\alpha_1,\dots,\alpha_{g'/2},\beta_1,\dots,\beta_{g'/2}, \gamma,\delta \mbox{ and}\\
\sigma(\alpha_1),\dots,\sigma(\alpha_{g'/2}),\sigma(\beta_1),\dots,\sigma(\beta_{g'/2}), \sigma(\gamma),\sigma(\delta)
\end{gather*}
where $\alpha_i,\beta_i,\sigma(\alpha_i),\sigma(\beta_i),\gamma,\sigma(\gamma)$ are glued as before but the boundary of $\delta$ is glued to $*$ and $\sigma(\delta)$ to $\sigma(*)$.  Now there are $2$ two-cells, one with boundary map
\[\alpha_1\beta_1\alpha_1^{-1}\beta_1^{-1}\cdots\alpha_{g'/2}\beta_{g'/2}\alpha_{g'/2}^{-1}\beta_{g'/2}^{-1}\delta\gamma\sigma(\delta)\gamma^{-1}\]
and the other glued equivariantly.  The cells $\delta$ and $\sigma(\delta)$ correspond to the copies of $S^1$ above and here $\gamma$ is a cell joining these copies of $S^1$.  
 \vspace{2pt}
 
\noindent \textbf{Type $(g,r,0)$.}  Let the involution fix $r$ circles and let $g'=(g-r+1)/2$, then $X/\sigma$ is a $\Sigma_{g',r}$ and $X$ can be thought of as two copies of $\Sigma_{g',r}$ glued along the $r$ boundary components.  

In this case, the basepoint is preserved under $\sigma$, however $X^0$ is given $r$ zero-cells; one for each fixed component.  The one cells are then 
\begin{gather*}
\alpha_1,\dots,\alpha_{g'},\beta_1,\dots,\beta_{g'}, \gamma_2,\dots,\gamma_r,\delta_1,\dots,\delta_r \mbox{ and}\\
\sigma(\alpha_1),\dots,\sigma(\alpha_{g'}),\sigma(\beta_1),\dots,\sigma(\beta_{g'}), \sigma(\gamma_2),\dots,\sigma(\gamma_r)
\end{gather*}
where $\alpha_i,\beta_i$ are as before and $\gamma_i$ joins the basepoint to the $i$-th fixed component which is represented by $\delta_i$.  One of the $2$ two-cells has attaching map
\[\alpha_1\beta_1\alpha_1^{-1}\beta_1^{-1}\cdots\alpha_{g'}\beta_{g'}\alpha_{g'}^{-1}\beta_{g'}^{-1}\delta_1\gamma_2\delta_2\gamma_2^{-1}\cdots\gamma_r\delta_r\gamma_r^{-1}\]
and we again define the other one equivariantly.
 \vspace{2pt}
 
\noindent \textbf{Type $(g,r,1)$ for $r>0$.} 
Let the involution fix $r$ circles. We first consider the case where $g\equiv r \mod 2$.  Let $g'=(g-r)/2$, then $X$ can be thought of as two copies of $\Sigma_{g',r+1}$ glued along the boundary components.  The involution fixes the first $r$ of these components whilst restricting to the antipodal map on the extra copy of $S^1$.

Now, $X^0$ is given $(r+2)$ zero-cells $*_i$; one for each fixed component and two for the extra $S^1$.  The one cells are then 
\begin{align*}
\alpha_1,&\dots,\alpha_{g'},\beta_1,\dots,\beta_{g'}, \gamma_2,\dots,\gamma_{r+1},\delta_1,\dots,\delta_{r},\delta \mbox{ }\mbox{and}\\
\sigma(\alpha_1),&\dots,\sigma(\alpha_{g'}),\sigma(\beta_1),\dots,\sigma(\beta_{g'}), \sigma(\gamma_2),\dots,\sigma(\gamma_{r+1}),\sigma(\delta)
\end{align*}
where $\alpha_i,\beta_i$ are as before and $\gamma_i$ joins the basepoint to the $i$-th boundary circle. Each fixed component is represented by $\delta_i$ and $\delta$ joins $*_{r+1}$ to $*_{r+2}$ and therefore $\delta\sigma(\delta)$ represents the extra copy of $S^1$.  One of the $2$ two-cells has attaching map
\[\alpha_1\beta_1\alpha_1^{-1}\beta_1^{-1}\cdots\alpha_{g'}\beta_{g'}\alpha_{g'}^{-1}\beta_{g'}^{-1}\delta_1\gamma_2\delta_2\gamma_2^{-1}\cdots\gamma_r\delta_r\gamma_r^{-1}\gamma_{r+1}\delta\sigma(\delta)\gamma^{-1}_{r+1}\]
and we again define the other one equivariantly.

For the case where, $g\equiv r+1 \mod 2$, we let $g'=(g-r-1)/2$.  Now $X$ can be thought of as two copies $\Sigma_{g',r+2}$ glued along the boundary components.  Again, the involution fixes $r$ of these components, whilst swapping the final two copies of $S^1$ but reversing orientation.

Again $X^0$ is given $(r+2)$ zero-cells; one for each fixed component and one for each of the extra two copies of $S^1$.  The one cells are then
\begin{align*}
\alpha_1,&\dots,\alpha_{g'},\beta_1,\dots,\beta_{g'}, \gamma_2,\dots,\gamma_{r+2},\delta_1,\dots,\delta_{r+1} \mbox{ }\mbox{and}\\
\sigma(\alpha_1),&\dots,\sigma(\alpha_{g'}),\sigma(\beta_1),\dots,\sigma(\beta_{g'}), \sigma(\gamma_2),\dots,\sigma(\gamma_{r+2}),\sigma(\delta_{r+1})
\end{align*}
where $\alpha_i,\beta_i$ are as before and $\gamma_i$ joins the basepoint to the $i$-th boundary circle. Each fixed component is represented by $\delta_i$ for $i\leq r$, and $\delta_{r+1}$ and $\sigma(\delta_{r+1})$ represent the extra copies of $S^1$.
One of the $2$ two-cells has attaching map
\[\alpha_1\beta_1\alpha_1^{-1}\beta_1^{-1}\cdots\alpha_{g'}\beta_{g'}\alpha_{g'}^{-1}\beta_{g'}^{-1}\delta_1\gamma_2\delta_2\gamma_2^{-1}\cdots\gamma_{r+1}\delta_{r+1}\gamma^{-1}_{r+1}\gamma_{r+2}\sigma(\delta_{r+1})\gamma^{-1}_{r+2}\]
and we again define the other one equivariantly.

\subsection{Equivalent Components of Mapping Spaces}\label{sectionequivalentcomponentsofmappingsspaces}
In this section we aim to prove Propositions \ref{proprealpointedpathequal}--\ref{propunpointedcomponentequiv}. The proofs are motivated from the analysis of non-equivariant mapping spaces found in \cite{sfunctionspaces}.

 \begin{proof}[Proof of Proposition \ref{proprealpointedpathequal}]\label{proofofproprealpointedpathequal}
  We study the actions of $\pi_2 (BU(n))$ and $\pi_1(BO(n))$ on the components of $\Mappe(\overline{X}, BU(n))$.  In \cite{sfunctionspaces}, an action of $\pi_2(BU(n))$ on $\Map(X,BU(n))$ was defined via
  \begin{equation}\label{eq:pinchaction} X\xrightarrow{\mbox{\footnotesize pinch}} X\vee S^2 \xrightarrow{f\vee \alpha}BU(n)\vee BU(n) \xrightarrow{\mbox{\footnotesize fold}} BU(n)\end{equation}
  with $\alpha \in \pi_2(BU(n))$ and $f\in \Map^*(X,BU(n))$. 
  
  We now consider the equivariant case for $r=0$.  Let $S^1$ be the loop that is pinched under $\overline X\rightarrow \overline X\vee S^2$; similar to the first map in equation (\ref{eq:pinchaction}).  Due to equivariance, we are also forced to pinch the loop $\sigma(S^1)$ producing an extra factor of $S^2$ and the action becomes
    \[ \overline X\xrightarrow{\mbox{\footnotesize pinch}}\overline X\vee S^2\vee \sigma(S^2) \xrightarrow{f\vee \alpha\vee \overline{\alpha}}BU(n)\vee BU(n)\vee BU(n) \xrightarrow{\mbox{\footnotesize fold}} BU(n).\]
    where $\overline \alpha= \varsigma \alpha$.
Since $\sigma$ and $\varsigma$ are both orientation reversing, the action of \[\alpha\in\pi_2(BU(n))\cong \mathbb{Z}\] alters the class $[f]$ by $2\alpha$.  Hence for $2c\in [\overline X,BU(n)]_{\mathbb{Z}_2}\cong 2\mathbb{Z}$, this action gives homotopy equivalences \[
\Mappe(X,BU(n);2c)\simeq \Mappe(X,BU(n);2c+2\alpha).\]
In particular this gives the required homotopy equivalences for the case when $r=0$.

When $r>0$, the path components of $\Mappe(\overline X, BU(n))$ are classified by the tuple
\[(c,w_1,w_2,\dots,w_r)\in \mathbb{Z}\times \prod_{r}\mathbb{Z}_2\]
subject to $c\equiv \sum_{i=1}^r w_i \mod 2$. We wish to construct an action of $\pi_1 (BO(n))$ to alter each $w_i$.  For $\beta \in \pi_1 (BO(n))$, we note that the inclusion of the image of $\beta$ into $BU(n)$ is nullhomotopic, so there is an extension $\beta'\colon D^2 \rightarrow BU(n)$ of $\beta$.  Now, consider $(S^2,\he)$ and denote the fixed equator by $E$, the upper hemisphere by $U$ and the lower hemisphere by $L$.  We can extend $\beta$ to a map $\tilde \beta\colon (S^2,\he) \rightarrow BU(n)$ where \[\tilde \beta \mid_U = \beta' \mbox{ and }\tilde \beta \mid_L = \varsigma\beta'\] and therefore $\tilde \beta \mid_E = \beta$.  Due to the discussion preceding Proposition $4.1$\footnote{We note that Proposition 4.1 in \cite{biswasstablevectorbundles} is stated as Proposition \ref{proprealbundleclass} in this paper.} in \cite{biswasstablevectorbundles}, the extension $\tilde \beta$ can be chosen so that the class $[\tilde\beta] \in \mathbb{Z}\times \mathbb{Z}_2$ is $(0,0)$ if $\beta$ is trivial or $(\
pm 1,1)$ otherwise.

Let $(S^1,\he)\hookrightarrow \overline X$ be an inclusion such that the fixed points of $(S^1,\he)$ are mapped to the $i$-th fixed component $X_i$ of $\overline X$.
As in equation (\ref{eq:pinchaction}) we apply the pinch map to this copy of $(S^1,\he)$ in $\overline X$, and hence produce a factor of $(S^2,\he)$.  Now the action becomes
  
  \begin{equation*} \overline X\xrightarrow{\mbox{\footnotesize pinch}} \overline X\vee (S^2,\he) \xrightarrow{f\vee \tilde\beta}BU(n)\vee BU(n) \xrightarrow{\mbox{\footnotesize fold}} BU(n).\end{equation*}
  For $\tilde \beta$ of class $(\pm 1,1)$, we conclude that this action gives a homotopy equivalence between the components $(c,w_1,w_2,\dots,w_r)$ and $(c\pm 1, w_1,\dots,w_i+1,\dots,w_r)$.
   Combining the actions of $\pi_2 (BU(n))$ and $\pi_1(BO(n))$ gives homotopy equivalences between all the components of $\Mappe(\overline X,BU(n))$.
 \end{proof}
 
 \begin{proof}[Proof of Proposition \ref{propsinglepointedcomponent}] \label{proofofpropsinglepointedcomponent} Recall, from the preamble to Definition \ref{defragg}, that we chose $*_1$ as the basepoint of $(X,\bin)$.  We define actions of $\pi_2 (BU(n))$ and $\pi_1(BO(n))$ on $\Mappe(X,BU(n))$ in a similar fashion to the proof of Proposition \ref{proprealpointedpathequal}, and this obtains the result.  We cannot extend this result as in the $(r+a)$-pointed case due to the `unpointed' fixed circles. 
 \end{proof}
 
 We cannot hope to use the actions of $\pi_1$ and $\pi_2$ on the unpointed mapping space due to the lack of basepoint.  But, by tensoring the bundle $(P,\tin)$ with a Real $U(1)$-bundle, we can provide some equivalences between components.
 
\begin{proof}[Proof of Proposition \ref{propunpointedimmediatefromnonequiv}]\label{proofofprop19}
 Let $\pi \colon (P,\tin)\rightarrow (X,\bin)$ be a Real principal $U(n)$-bundle of class $(c,w_1,w_2,\dots,w_r)$ over a Real surface of type $(g,r,a)$.  The idea will be to tensor $P$ with a Real $U(1)$-bundle $\pi_Q\colon (Q,\tau)\rightarrow (X,\bin)$ of class $(2,0,\dots,0)$. 
 
 Using the inclusion of the centre $U(1)\hookrightarrow U(n)$, there is a $U(1)$-action on $(P,\tin)$. In the principal bundle setting, the tensor of $(P,\tin)$ and $(Q,\tau)$ is the pullback
 \[\xymatrix{ (\Delta^*(P \times_{U(1)} Q), \Delta^*(\tin \times \tau)) \ar[r] \ar[d] & (P \times_{U(1)} Q, \tin \times \tau) \ar[d]^{\tilde \pi} \\
 (X,\bin) \ar[r]^-{\Delta} & (X,\bin)\times (X,\bin).
 }\]
 where $\Delta$ is the diagonal map and $\tilde \pi=\pi\times \pi_Q$. In a similar fashion to the discussion preceding Proposition 4.1 in \cite{biswasstablevectorbundles}, we calculate that the class of the pullback $(\Delta^*(P \times_{U(1)} Q), \Delta^*(\tin \times \tau))$ is $(c+2n,w_1,w_2,\dots,w_r)$.
 
 We then define
 \[\Theta \colon \g(P,\tin)\rightarrow \g(\Delta^*(P \times_{U(1)} Q), \Delta^*(\tin \times \tau))\]
 to be the map that sends $\phi\colon P\rightarrow P$ to $\Delta^*(\phi\times \id)$. Then an inverse to $\Theta$ is defined in the same way as $\Theta$, except that we replace the inclusion $U(1) \hookrightarrow U(n)$ with the conjugate inclusion defined via
 \[ 
 a  \mapsto  \left(\begin{array}{cccc}
 \overline{a} & 0 & \cdots & 0 \\
 0 & \overline a & \cdots & 0 \\
 \vdots &\vdots & \ddots & \vdots \\
 0 & 0 & \cdots & \overline a
 \end{array}\right) \hspace{-7pt}\begin{array}{l} \vphantom{\vdots}\\ \vphantom{\vdots}\\ \vphantom{0} \\ .\end{array}
 \]
 \end{proof}
 
 \begin{proof}[Proof of Proposition \ref{propunpointedcomponentequiv}]\label{proofofpropunpointedcomponentequiv}
 Let $\pi \colon (P,\tin)\rightarrow (X,\bin)$ be a Real principal $U(n)$-bundle of class $(c,w_1,w_2,\dots,w_r)$ over a Real surface of type $(g,r,a)$. The statement is proven using the same method as Proposition \ref{propunpointedimmediatefromnonequiv}, except that we tensor with a Real $U(1)$-bundle $(\tilde Q,\tilde \tau)$ of class $(0,\sum_{i=2}^r w_i,w_2,\dots,w_r)$.  If $n$ is odd, the class of the pullback $(\Delta^*(P \times_{U(1)} \tilde Q), \Delta^*(\tin \times \tilde \tau))$ is then $(c,\sum_{i=1}^r w_i,0,\dots,0)$.  
 
 An isomorphism $\Theta \colon \g(P,\tin)\rightarrow \g(\Delta^*(P \times_{U(1)} \tilde Q), \Delta^*(\tin \times \tilde \tau))$ is then defined in the same way as for Proposition \ref{propunpointedimmediatefromnonequiv}.
 \end{proof}

\subsection{Pointed Gauge Groups}\label{pointedgaugegroups}
 In the following analysis, it will be necessary to distinguish the following types of Real surfaces
 \begin{enumerate}
 \item[\textbf{0.}]  $r=0$ $(\Rightarrow a=1)$ 
 \item[\textbf{1.}]  $r>0$ and $a=0$
 \item[\textbf{2.}]  $r>0$ and $a=1$.
 \end{enumerate}
Generally we will analyse the gauge groups in order of ease; we first analyse the $(r+a)$-pointed gauge group and then the single pointed gauge group.  Our results for the single pointed gauge groups will then be used to analyse the unpointed case.

\subsubsection{Integral Decompositions}\label{subsectionintegraldecompositions}
For the underlying Riemann surface $X$ of a Real surface $(X,\sigma)$, the attaching map $f\colon S^1 \rightarrow \vee_{2g} S^1$ of the top cell is a sum of Whitehead products and hence the suspension $\Sigma f$ is nullhomotopic. 
In the Real surface case, we see Whitehead products appearing in the attaching maps of Section \ref{sectionkleinsurfacesasz2complexes}. Therefore, we still see trivialities appearing in the suspension of these attaching maps and these trivialities will provide a large class of homotopy decompositions. 

We will use the notation as defined in Section \ref{sectionkleinsurfacesasz2complexes} and furthermore we require the following notation in this section. Let $g'$ denote the number of one-cells of $X$ which are of the form $\alpha_i, \beta_i$ in $X$. Explicitly
\[g'=\begin{cases} 
(g-r+1)  &\mbox{ when } a=0; \\
(g-r)  & \mbox{ when } a=1\mbox{ and } g-r \mbox{ even;} \\
(g-r-1)  &\mbox{ when } a=1\mbox{ and } g-r \mbox{ odd.} \\
\end{cases}\]
\begin{prop}\label{propgtrivial}
Let $X_{\alpha\beta}=\vee S^1$ be the $1$-cells $\alpha_i,\bin(\alpha_i), \beta_i, \bin(\beta_i)$ in the decomposition of $(X, \bin)$.  Then the map $\mu$ in the $\mathbb{Z}_2$-cofibration sequence
\[X_{\alpha\beta}\hookrightarrow X \rightarrow X' \xrightarrow{\mu} \Sigma(X_{\alpha\beta})\]
is $\mathbb{Z}_2$-nullhomotopic.
\end{prop}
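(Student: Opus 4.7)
The plan is to factor the $\mathbb{Z}_2$-cofibration $X_{\alpha\beta}\hookrightarrow X$ through the $1$-skeleton $X^1$ and to show that $\mu$ vanishes equivariantly on each stage, using two ingredients: (i) an equivariant retraction $X^1 \to X_{\alpha\beta}$ from a wedge decomposition, and (ii) the classical fact that the suspension of a Whitehead product is null-homotopic. Reading off the cellular structures of Section \ref{sectionkleinsurfacesasz2complexes}, the $\alpha_i,\beta_i$ and their $\sigma$-conjugates share a common attaching $\sigma$-orbit of $0$-cells, so $X^1$ splits $\mathbb{Z}_2$-equivariantly as a wedge $X^1 \simeq X_{\alpha\beta} \vee Y$, where $Y$ is the sub-complex spanned by the remaining $0$- and $1$-cells (the $\gamma_i,\delta_i,\gamma,\delta$ and their $\sigma$-conjugates). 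Collapsing $Y$ supplies a $\mathbb{Z}_2$-equivariant retraction $q\colon X^1 \to X_{\alpha\beta}$, so the connecting map of the sub-cofibration $X_{\alpha\beta}\hookrightarrow X^1$ is already $\mathbb{Z}_2$-null-homotopic.

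Next I would extend across the two $2$-cells of $X$, attached via $\phi$ and $\sigma\phi$. From the explicit attaching words in Section \ref{sectionkleinsurfacesasz2complexes}, the composite
\[q\circ\phi\colon S^1 \xrightarrow{\phi} X^1 \xrightarrow{q} X_{\alpha\beta}\]
is (up to based homotopy) a product of commutators $\prod_i [\alpha_i,\beta_i]$. The Puppe boundary identifies $\mu$, restricted to each $2$-sphere of $X/X_{\alpha\beta}$, with $-\Sigma(q\circ\phi)$; since $\Sigma$ converts products to sums and each $\Sigma[\alpha_i,\beta_i]\simeq \ast$ canonically, this yields a null-homotopy of $\mu$ on each $2$-cell, and the $\sigma$-translate provides the corresponding null-homotopy on the $\sigma$-conjugate cell.

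The main obstacle will be gluing the null-homotopies from the two stages compatibly along the boundary circles of the $2$-cells in $X^1/X_{\alpha\beta}$ while preserving $\mathbb{Z}_2$-equivariance. Since both null-homotopies arise from canonical constructions --- the wedge retraction on the one hand, and the standard identity $\Sigma[f,g]\simeq \ast$ on the other --- compatibility can be arranged, and carrying out the extensions one $\sigma$-orbit of cells at a time keeps the whole null-homotopy equivariant.
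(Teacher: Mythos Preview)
Your approach and the paper's use the same two ingredients --- a collapse map killing the non-$\alpha\beta$ one-cells, and the vanishing of suspended Whitehead products --- but the paper organises them so that the gluing step you flag as ``the main obstacle'' never arises.

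Instead of building a null-homotopy of $\mu$ cell-by-cell, the paper extends your retraction $q$ to a $\mathbb{Z}_2$-map on all of $X$,
\[
s\colon X \longrightarrow (\Sigma_{g'/2}\vee\Sigma_{g'/2},\sw),
\]
obtained by collapsing every $1$-cell outside $X_{\alpha\beta}$; the attaching words of the two $2$-cells become, after this collapse, exactly the standard attaching maps of a genus-$g'/2$ surface and its $\sigma$-image. This yields a map of $\mathbb{Z}_2$-cofibration sequences whose bottom connecting map is $\Sigma f \vee \overline{\Sigma f}$ with $f$ the Riemann-surface attaching map, so $\mu$ \emph{factors} as $(\Sigma f \vee \overline{\Sigma f})\circ s'$ and is null because $\Sigma f\simeq *$. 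No patching of separately constructed null-homotopies is needed.

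Your argument can be completed, but ``both null-homotopies are canonical'' does not by itself guarantee they agree on the boundary circles; the clean fix is precisely to observe that $q$ extends to $s$, at which point you have recovered the paper's proof. One further wrinkle: in the $r=0$ types the cells $\alpha_i,\beta_i$ and their $\sigma$-translates sit at \emph{distinct} $0$-cells $*$ and $\sigma(*)$, and collapsing $Y$ (which contains the edge $\gamma$ joining them) identifies these points, so your map $q$ is not literally a retraction onto $X_{\alpha\beta}$ there. This is harmless for the factored argument but would need attention in your formulation.
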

\begin{proof} We recall that the attaching map of one of the two-cells in a Real surface of type $(g,r,0)$ is
\[\alpha_1\beta_1\alpha_1^{-1}\beta_1^{-1}\cdots\alpha_{g'}\beta_{g'}\alpha_{g'}^{-1}\beta_{g'}^{-1}\delta_1\gamma_2\delta_2\gamma_2^{-1}\cdots\gamma_r\delta_r\gamma_r^{-1}.\] 
The attaching map involving the cells $\alpha_i$ and $\beta_i$ is a sum of Whitehead products.  The idea is to collapse the rest of the cells.

Now in the general case, let $X$ be a type $(g,r,a)$ Real surface, let $\Sigma_{g'/2}$ be a Riemann surface of genus $g'/2$ and let \[s\colon X\rightarrow (\Sigma_{g'/2}\vee \Sigma_{g'/2},\sw)\] be the map that collapses the $1$-skeleton of $X$ other than the cells $\alpha_i,\bin(\alpha_i), \beta_i$ and $\bin(\beta_i)$. 

An example for the map $s$ is illustrated in Figure \ref{figurecollapseexceptalphabeta}. Note that four of the `holes' are undisturbed by $s$; these correspond to the one-cells of the form $\alpha_i, \bin(\alpha_i), \beta_i $ and $\bin(\beta_i)$.
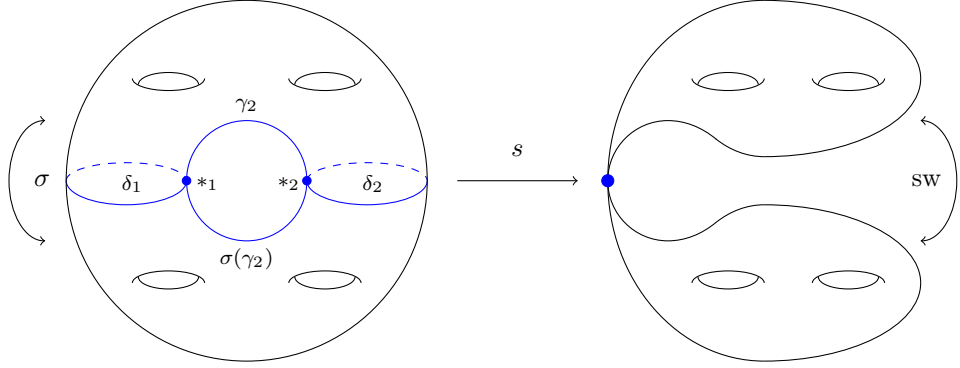
\begin{figure}[ht]  \caption{For a type $(5,2,0)$ Real surface, the map $s$ collapses the one-cells coloured in blue: $\delta_1$, $\delta_2$, $\gamma_2$ and $\bin(\gamma_2)$. } \label{figurecollapseexceptalphabeta}
\centering
\begin{tikzpicture}[scale=0.8]
	
\draw[dashed,color=colour1] (0,0) arc (180:0:1 and 0.3);
	\draw[color=colour1](0,0) arc (180:360:1 and 0.4);
	
\draw[dashed,color=colour1] (4,0) arc (180:0:1 and 0.3);
	\draw[color=colour1] (4,0) arc (180:360:1 and 0.4);
	
\draw[colour1] (3,0) circle [x radius=1cm, y radius=1cm];;	

\draw[black] (3,0) circle [x radius=3cm, y radius=3cm];

\draw[black] (1.2,1.60) arc (180:0:0.5 and 0.2);
	\draw[black] (1.1,1.7) arc (180:360:0.6 and 0.2);
	
\draw[black] (1.2,-1.60) arc (-180:0:0.5 and 0.2);
	\draw[black] (1.1,-1.7) arc (-180:-360:0.6 and 0.2);

\draw[black] (3.8,1.60) arc (180:0:0.5 and 0.2);
	\draw[black] (3.7,1.7) arc (180:360:0.6 and 0.2);
	
\draw[black] (3.8,-1.60) arc (-180:0:0.5 and 0.2);
	\draw[black] (3.7,-1.7) arc (-180:-360:0.6 and 0.2);

\draw(-0.35,1) [<->] arc (90:-90:-0.6 and 1);
\draw (-0.4,0) node {$\sigma$};

\filldraw[colour1] (2,0) circle (0.07);
\filldraw[colour1] (4,0) circle (0.07);

\draw (2.35,-0.05) node {\footnotesize $*_1$};
\draw (3.7,-0.05) node {\footnotesize $*_2$};
\draw (1.1,-0.05) node {\footnotesize $\delta_1$};
\draw (5.1,-0.05) node {\footnotesize $\delta_2$};
\draw (3,1.25) node {\footnotesize $\gamma_2$};
\draw (3,-1.3) node {\footnotesize $\sigma(\gamma_2)$};
\draw[->] (6.5,0)--(8.5,0);
\draw (7.5,0.5) node {$s$};

\draw[black] (11.6,3) arc (90:-90:2.6 and 1.3);
\draw[black] (9,0) arc (180:90:2.6 and 3);
\draw[black] (9,0) arc (180:90:1 and 1);
\draw[black] (10,1) arc (90:60:1.6 and 2.3);
\draw[black] (11.6,0.4) arc (-90:-120:1.61 and 2.21);

\draw[black] (11.6,-0.4) arc (90:-90:2.6 and 1.3);
\draw[black] (9,0) arc (180:270:2.6 and 3);
\draw[black] (9,0) arc (180:270:1 and 1);
\draw[black] (10,-1) arc (-90:-60:1.6 and 2.3);
\draw[black] (11.6,-0.4) arc (90:120:1.61 and 2.21);

\draw[black] (10.5,1.60) arc (180:0:0.5 and 0.2);
	\draw[black] (10.4,1.7) arc (180:360:0.6 and 0.2);
	
\draw[black] (10.5,-1.60) arc (-180:0:0.5 and 0.2);
	\draw[black] (10.4,-1.7) arc (-180:-360:0.6 and 0.2);

\draw[black] (12.5,1.60) arc (180:0:0.5 and 0.2);
	\draw[black] (12.4,1.7) arc (180:360:0.6 and 0.2);
	
\draw[black] (12.5,-1.60) arc (-180:0:0.5 and 0.2);
	\draw[black] (12.4,-1.7) arc (-180:-360:0.6 and 0.2);
	
\draw(14.2,1) [<->] arc (90:-90:0.6 and 1);
\draw (14.3,0) node {$\sw$};

\filldraw[colour1] (9,0) circle (0.1);

\end{tikzpicture}
\end{figure}

There is a commutative diagram
\[\xymatrix{ X_{\alpha\beta} \ar@{=}[d]\ar[r] &X \ar[r] \ar[d]^s &X' \ar[d]^{s'} \ar[r]^-\mu & \Sigma(X_{\alpha\beta}) \ar@{=}[d] \\ X_{\alpha\beta}\ar[r] &  (\Sigma_{g'/2}\vee \Sigma_{g'/2},\sw) \ar[r] &(S^2\vee S^2,\sw) \ar[r]^-{\Sigma f\vee \overline{\Sigma f}} & \Sigma(X_{\alpha\beta})}\]
where the rows are $\mathbb{Z}_2$-cofibration sequences, $s'$ is an induced map on cofibers and $f$ is the attaching map of the Riemann surface $\Sigma_{g'/2}$.  The $\mathbb{Z}_2$-triviality of $\mu$ therefore follows from the triviality of $\Sigma f$.
\end{proof}

We deduce the following theorem which contributes a lot of results to Theorems \ref{thma} and \ref{thmb}.
\begin{theorem}\label{thm:generalsplitting}
 With notation as above, there are homotopy equivalences
 \begin{enumerate}\item $\g^*(P,\tilde\sigma)\simeq \g^*((g-{g'},r,a);(c,w_1,\dots,w_r))\times \prod_{g'} \Omega U(n);$
\item $\g^{(r+a)*}(P,\tilde\sigma)\simeq \g^{(r+a)*}((g-{g'},r,a);(c,w_1,\dots,w_r))\times \prod_{g'} \Omega U(n). $
\end{enumerate}
\end{theorem}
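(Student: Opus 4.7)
The plan is to apply the contravariant functor $\Mappe(-, BU(n))$ to the pointed $\mathbb{Z}_2$-cofibration sequence
\[X_{\alpha\beta} \hookrightarrow X \to X' \xrightarrow{\mu} \Sigma X_{\alpha\beta}\]
supplied by Proposition~\ref{propgtrivial}. Since $\mu$ is $\mathbb{Z}_2$-nullhomotopic, the projection $X \to X'$ admits a pointed $\mathbb{Z}_2$-section, and a $\mathbb{Z}_2$-equivariant Puppe-type argument promotes this to a pointed $\mathbb{Z}_2$-homotopy equivalence $X \simeq X_{\alpha\beta}\vee X'$. Applying $\Mappe(-, BU(n))$ then turns the equivariant wedge into a product:
\[\Mappe(X, BU(n))\simeq \Mappe(X_{\alpha\beta}, BU(n))\times \Mappe(X', BU(n)).\]

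I would next identify each factor explicitly. The subcomplex $X_{\alpha\beta}$ is a wedge of $g'$ copies of $(S^1\vee S^1, \sw)$; a pointed $\mathbb{Z}_2$-map out of one swap-wedge is determined by its restriction to a single $S^1$ summand, the other being forced by the involution, so
\[\Mappe(X_{\alpha\beta}, BU(n))\simeq \prod_{g'} \Map^*(S^1, BU(n))\simeq \prod_{g'} U(n).\]
For the other factor, a case-by-case inspection of the $\mathbb{Z}_2$-CW structures of Section~\ref{sectionkleinsurfacesasz2complexes} shows that collapsing all cells of the form $\alpha_i, \beta_i, \bin(\alpha_i), \bin(\beta_i)$ leaves exactly the attaching word prescribed for a Real surface of type $(g-g', r, a)$ --- for instance $\delta_1 \gamma_2 \delta_2 \gamma_2^{-1} \cdots \gamma_r \delta_r \gamma_r^{-1}$ in the $a=0$ case --- so $X'$ is $\mathbb{Z}_2$-homotopy equivalent to such a surface.

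Invoking Baird's Theorem~\ref{thmbaird}, restricting to the component classified by $P$, and noting that $U(n)$ is connected so that all bundle class data is carried by the $\Mappe(X', BU(n))$-factor, produces
\[B\g^*(P,\tin)\simeq B\g^*((g-g', r, a); (c, w_1, \ldots, w_r))\times \prod_{g'} U(n);\]
looping then gives statement~(1). For statement~(2), the $(r+a)$ designated basepoints of $A$ all lie in the $X'$-summand, so the $\mathbb{Z}_2$-splitting descends to $\overline X \simeq X_{\alpha\beta}\vee \overline{X'}$, and the identical mapping-space computation with $\overline X, \overline{X'}$ in place of $X, X'$ yields the result.

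The main obstacle is the first step: one must promote the $\mathbb{Z}_2$-nullhomotopy of $\mu$ to a genuine pointed $\mathbb{Z}_2$-equivariant splitting of the cofibre sequence, rather than merely an underlying non-equivariant splitting. Once this equivariant splitting is in hand, the identification of factors, matching of components, and the passage from classifying spaces to gauge groups via Baird's theorem and Lemma~\ref{lemma:adjointeqloopsus} are routine.
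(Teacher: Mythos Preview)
Your proposal contains a genuine gap at its very first step. You claim that the $\mathbb{Z}_2$-nullhomotopy of $\mu$ forces the projection $q\colon X\to X'$ to admit a pointed $\mathbb{Z}_2$-section, and hence that $X\simeq X_{\alpha\beta}\vee X'$. Neither of these implications holds. In a cofibre sequence $A\hookrightarrow X\to X'\xrightarrow{\mu}\Sigma A$, nullhomotopy of $\mu$ does \emph{not} produce a section of $q$; the coexactness of the Puppe sequence controls $[\,-\,,Y]$, not $[Y,-]$. Concretely, take a type $(2,1,0)$ Real surface: then $X'$ is $(S^2,\he)$ and $X$ is a genus-$2$ surface, which is aspherical, so every map $S^2\to X$ is null and $q$ cannot possibly have a section. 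Equivalently, the cup-product structure in $H^*(X)$ obstructs any wedge decomposition $X\simeq X_{\alpha\beta}\vee X'$.

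What \emph{does} follow from $\mu\simeq *$ is a retraction of $\Sigma i\colon \Sigma X_{\alpha\beta}\to\Sigma X$ (lift $\id_{\Sigma X_{\alpha\beta}}$ along the coexact sequence $[\Sigma X,\Sigma X_{\alpha\beta}]\to[\Sigma X_{\alpha\beta},\Sigma X_{\alpha\beta}]\to[X',\Sigma X_{\alpha\beta}]$), and hence a $\mathbb{Z}_2$-splitting after one suspension: $\Sigma X\simeq \Sigma X_{\alpha\beta}\vee \Sigma X'$. That suffices, since $\g^*(P,\tin)\simeq\Omega\Mappe(X,BU(n);P)\cong\Mappe(\Sigma X,BU(n);P)$ by Lemma~\ref{lemma:adjointeqloopsus}. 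The paper's own proof sidesteps the space-level splitting entirely: it applies $\Mappe(-,BU(n))$ directly to the cofibre sequence, obtaining a homotopy fibration
\[
\g^*(P,\tin)\longrightarrow \Omega\Mappe(X_{\alpha\beta},BU(n))\xrightarrow{\ \mu^*\ }\Mappe(X',BU(n);(c,w_1,\dots,w_r)),
\]
and uses that $\mu^*$ is null to identify the fibre as the product of the source with the loop space of the target. Your identification of the factors and the passage to $\overline X$ for part (2) are fine; only the splitting step needs to be rerouted through the suspension (or through the mapping-space fibration, as the paper does).
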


\begin{proof}
 We use the notation of Proposition \ref{propgtrivial} and run through details for part $(1)$. By Theorem \ref{thmbaird}, there is a homotopy fibration sequence
 \[ \g(P,\tin) \rightarrow \Omega \Mappe(X_{\alpha\beta},BU(n)) \xrightarrow{\mu^*} \Mappe (X', BU(n);(c,w_1,\dots w_r))\]
 and, by Lemma \ref{lemma:adjointeqloopsus}, we can see that $\mu^*$ is induced from $\mu$ in Proposition \ref{propgtrivial}. But Proposition \ref{propgtrivial} showed that $\mu$ is nullhomotopic and the result follows.  The proof for $(2)$ is similar.
\end{proof}
We note that for Real surfaces of type $(g,0,1)$, Theorem \ref{thm:generalsplitting} leaves only types $(0,0,1)$ and $(1,0,1)$ to consider.  The gauge groups of these types seem to be integrally indecomposable and so we leave their analysis until later.  

\subsubsection{Case: \texorpdfstring{$r>0$, $a=0$}{r>0,a=0}}
Although we restrict to the case $a=0$, we will see that many of the methods in this section will also transfer to the case when $a=1$.  

Due to Theorem \ref{thm:generalsplitting} we restrict to the case when $(X,\bin)$ is of type $(r-1,r,0)$. For $(P,\tin)$ of class $(0,0,\dots,0)$, we utilise Theorem \ref{thmbaird} and Lemma \ref{lemma:adjointeqloopsus} and obtain the equivalences
  \begin{align*} \g^{*r}(P,\tilde\sigma) & \simeq \Map^*_{\mathbb{Z}_2}(\Sigma (\overline{X}), BU(n));\\
\g^{*}(P,\tilde\sigma) &\simeq \Map^*_{\mathbb{Z}_2}(\Sigma (X), BU(n)).\end{align*}
The aim of this section is to prove Theorems \ref{thma}
and \ref{thmb}
for types $(g,r,0)$ which is restated below.
\begin{theorem}\label{thmsplittingtype1fixedcircles}
Let $(P,\tilde\sigma)$ be a Real bundle of class $(c,w_1,\dots,w_r)$ over a Real surface $(X,\bin)$ of type $(r-1,r,0)$.  Then 
\begin{enumerate}
\item there is a homotopy equivalence \[\g^{*r}(P,\tilde\sigma)\simeq \Omega^2 (U(n)/O(n))\times \prod_{r-1} \Omega O(n) \times \prod_{r-1} \Omega U(n);\]
\item if $w_i=0$ for all $i>1$ or if $n$ is odd then there is a homotopy equivalence \[\g^*(P,\tilde\sigma)\simeq \Omega^2(U(n)/O(n))\times \prod_{r-1} \Omega O(n) \times \prod_{r-1} \Omega(U(n)/O(n)).\]
\end{enumerate}
\end{theorem}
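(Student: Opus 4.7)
The plan is to apply Theorem \ref{thmbaird} and Lemma \ref{lemma:adjointeqloopsus} to identify $\g^{*r}(P,\tilde\sigma)\simeq \Mappe^*(\Sigma\overline X,BU(n);P)$ and $\g^*(P,\tilde\sigma)\simeq \Mappe^*(\Sigma X,BU(n);P)$ on the relevant components. I will produce equivariant wedge decompositions of $\Sigma\overline X$ and $\Sigma X$ working on the trivial component; the hypothesis in part~(2) (either $w_i=0$ for $i>1$ or $n$ odd) then lets us transport the product decomposition to every component via Propositions \ref{propsinglepointedcomponent} and \ref{propunpointedcomponentequiv}(2).

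From the $\mathbb{Z}_2$-$CW$-structure of Section~\ref{sectionkleinsurfacesasz2complexes} the attaching word of one top 2-cell in type $(r-1,r,0)$ is
\[ \delta_1\gamma_2\delta_2\gamma_2^{-1}\cdots\gamma_r\delta_r\gamma_r^{-1} = \delta_1\cdot[\gamma_2,\delta_2]\delta_2\cdots[\gamma_r,\delta_r]\delta_r, \]
and after a single suspension the Whitehead commutators $[\gamma_i,\delta_i]$ vanish, so the suspended class reduces to $\delta_1+\delta_2+\cdots+\delta_r$. Because the two top cells are swapped by $\bin$, the equivariant suspended attaching map $(S^2\vee S^2,\sw)\to \Sigma \overline X^{\,1}$ factors as the fold $(S^2\vee S^2,\sw)\to (S^2,\id)$ followed by the diagonal $\delta_1+\cdots+\delta_r\colon (S^2,\id)\to \bigvee_r (S^2,\id)$ into the wedge summands coming from the $\delta_i$'s. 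A change of basis in the cogroup $\bigvee_r (S^2,\id)$ replaces this diagonal by the inclusion of a single summand, and attaching $(D^3\vee D^3,\sw)$ to that copy of $(S^2,\id)$ through the fold realises $(S^3,\he)$: the two 3-cells are the hemispheres swapped by the reflection across the fixed equator. All other wedge summands split off, giving
\[\Sigma\overline X\simeq_{\mathbb{Z}_2} (S^3,\he)\vee \bigvee_{r-1}(S^2,\id)\vee \bigvee_{r-1}(S^2\vee S^2,\sw).\]

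For part~(2) one works with $X$ instead of $\overline X$. The new feature is that each pair $\gamma_i,\sigma(\gamma_i)$ now traces a copy of $(S^1,\he)$ joining the two fixed vertices $*_1$ and $*_i$, rather than a copy of $(S^1\vee S^1,\sw)$. After a single suspension the non-basepoint fixed vertices $*_i$ become indistinguishable from the basepoint, so the suspended 1-skeleton is
\[\Sigma X^{\,1}\simeq_{\mathbb{Z}_2} \bigvee_r(S^2,\id)\vee \bigvee_{r-1}(S^2,\he).\]
The same fold and change-of-basis argument then yields
\[\Sigma X\simeq_{\mathbb{Z}_2}(S^3,\he)\vee \bigvee_{r-1}(S^2,\id)\vee \bigvee_{r-1}(S^2,\he).\]

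Finally, applying $\Mappe^*(-,BU(n))$ termwise completes the proof, using $\Mappe^*((S^2,\id),BU(n))\simeq\Omega O(n)$ (since $BU(n)^\varsigma=BO(n)$), $\Mappe^*((S^2\vee S^2,\sw),BU(n))\simeq \Omega U(n)$, and $\Mappe^*((S^2,\he),BU(n))\simeq \Omega(U(n)/O(n))$ (identified as the homotopy fibre of $\Omega BO(n)\to \Omega BU(n)$), together with $\Mappe^*((S^3,\he),BU(n))\simeq \Omega^2(U(n)/O(n))$ obtained by looping once more. The most delicate step I expect is the equivariant identification of $\Sigma X^{\,1}$ in part~(2), since the non-basepoint fixed vertices $*_i$ obstruct an obvious pointed wedge decomposition of $X^1$ itself; this should be handled via the cofibration sequences $(S^1,\id)\hookrightarrow Y_i\to (S^1,\he)$ associated to the ``theta pieces'' $Y_i=\delta_i\cup\gamma_i\cup\sigma(\gamma_i)$, whose connecting maps become $\mathbb{Z}_2$-nullhomotopic after a single suspension.
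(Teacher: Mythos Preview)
Your argument is correct and reaches the same equivariant wedge decompositions of $\Sigma\overline X$ and $\Sigma X$ as the paper, but by a genuinely different route. The paper proceeds geometrically: it constructs, by an explicit induction with pictures (Propositions~\ref{propgtrivialfortype1} and~\ref{propreducetohesphere}), equivariant retractions $X\to X_\gamma$ and $\tilde X\to X_\delta$ at the space level, so that the splittings hold before applying $\Mappe(-,BU(n))$ and without any analysis of attaching maps. You instead work algebraically with the suspended attaching map: using that equivariant maps out of $(S^2\vee S^2,\sw)$ are detected on one wedge summand, you reduce $\Sigma f$ to the sum $\sum_i\delta_i$ landing in the fixed summands, then perform a co-$H$ change of basis and identify the resulting cone as $(S^3,\he)$. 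Your approach is slicker for part~(1) and makes the appearance of $(S^3,\he)$ transparent; the paper's approach has the advantage that the retraction $j_r$ already exists unstably, which is what makes part~(2) go through cleanly without having to separately decompose $\Sigma X^{1}$.

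On that last point, your treatment of part~(2) is the only place that needs tightening. The cofibration you want for the theta piece $Y_i=\delta_i\cup\gamma_i\cup\sigma(\gamma_i)$ is
\[
(S^1,\he)\hookrightarrow Y_i \longrightarrow (S^1,\id),
\]
including the $\gamma$-circle (which is basepointed at $*_1$) and collapsing it to leave $\delta_i$; this admits an obvious equivariant retraction (collapse $\delta_i$ to $*_i$), so $\Sigma Y_i\simeq (S^2,\he)\vee(S^2,\id)$ immediately. Your stated cofibration $(S^1,\id)\hookrightarrow Y_i\to(S^1,\he)$ has the inclusion of $\delta_i$ failing to preserve the basepoint $*_1$, and there is no equivariant retraction onto $\delta_i$; the suspended connecting map does happen to vanish (any equivariant map $(S^2,\he)\to(S^3,\id)$ factors through $S^2/\he\cong D^2$), but the other direction is both simpler and closer in spirit to the paper's retraction $j_r$.
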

 Recall the $\mathbb{Z}_2$-structure of a type $(g,r,0)$ surface in Section \ref{sectionkleinsurfacesasz2complexes}. In the following $X_\gamma$ will be the sub-complex of the one-cells of $X$ that are denoted by either $\gamma_i$ or $\sigma(\gamma_i)$.
 \begin{prop}\label{propgtrivialfortype1}
 Let $(X,\bin)$ be as above, then in the $\mathbb{Z}_2$-cofibration sequence
 \[X_\gamma \xrightarrow{\iota} X \rightarrow \tilde X \xrightarrow{\mu'} \Sigma(X_\gamma)\]
 there is a left $\mathbb{Z}_2$-homotopy inverse to $\iota$.  In particular $\mu'$ is $\mathbb{Z}_2$-nullhomotopic.
 \end{prop}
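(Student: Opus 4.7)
The plan is to produce an explicit $\mathbb{Z}_2$-equivariant retraction $\rho : X \to X_\gamma$; since $X_\gamma \hookrightarrow X$ is an inclusion of $\mathbb{Z}_2$-$CW$-complexes, such a $\rho$ gives a left $\mathbb{Z}_2$-homotopy inverse to $\iota$, and a standard cofibre argument then forces $\mu'$ to be $\mathbb{Z}_2$-nullhomotopic. Using the cell structure of Section~\ref{sectionkleinsurfacesasz2complexes} for a type $(r-1,r,0)$ Real surface, I first note that every $0$-cell of $X$ is an endpoint of some $\gamma_j$ and so already lies in $X_\gamma$; consequently the cells of $X$ outside $X_\gamma$ consist only of the pointwise-$\bin$-fixed $1$-cells $\delta_1,\dots,\delta_r$ and the two top $2$-cells swapped freely by $\bin$.

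First I would build $X_1 = X_\gamma \cup \bigcup_{i=1}^{r} \delta_i$ and define $\rho_1 : X_1 \to X_\gamma$ as the identity on $X_\gamma$ while collapsing each loop $\delta_i$ to its basepoint $*_i$. Because each $\delta_i$ is fixed pointwise by $\bin$ and each $*_i$ is fixed, $\rho_1$ is automatically $\mathbb{Z}_2$-equivariant.

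Next I would extend $\rho_1$ over the two $2$-cells of $X$. In $X_1$, the attaching map of one of the $2$-cells is the word
\[ \delta_1 \gamma_2 \delta_2 \gamma_2^{-1} \cdots \gamma_r \delta_r \gamma_r^{-1}, \]
whose composition with $\rho_1$ reduces to $\gamma_2 \gamma_2^{-1} \cdots \gamma_r \gamma_r^{-1}$ and is therefore null-homotopic in $X_\gamma$. I pick any such null-homotopy to define $\rho$ on this $2$-cell, and declare $\rho$ on the $\bin$-conjugate $2$-cell to be the $\bin$-conjugate of this choice. Because the two top $2$-cells are swapped freely by $\bin$, this equivariant prescription is well-defined and yields the required $\mathbb{Z}_2$-equivariant retraction $\rho : X \to X_\gamma$.

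I expect the main obstacle to be the equivariant extension over the $2$-cells, but as just noted, the free $\bin$-action on these top cells means no coherence needs to be checked along a fixed locus, and the extension descends formally from a single arbitrary null-homotopy. Once $\rho$ is in place, the nullhomotopy of $\mu'$ follows from the standard observation that a retraction of a $\mathbb{Z}_2$-cofibration $\mathbb{Z}_2$-splits the Puppe sequence, giving $X \simeq_{\mathbb{Z}_2} X_\gamma \vee \tilde X$ up to $\mathbb{Z}_2$-homotopy equivalence under which $\mu' : \tilde X \to \Sigma X_\gamma$ factors through a point.
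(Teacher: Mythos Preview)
Your argument is correct and cleaner than the paper's. You build the retraction $\rho$ directly at the cellular level: collapse each fixed loop $\delta_i$ to its basepoint $*_i$, observe that the attaching word $\delta_1\gamma_2\delta_2\gamma_2^{-1}\cdots\gamma_r\delta_r\gamma_r^{-1}$ becomes the trivial word $\gamma_2\gamma_2^{-1}\cdots\gamma_r\gamma_r^{-1}$ in $X_\gamma$, and then extend over the two top cells equivariantly using that $\sigma$ swaps their interiors freely. The paper instead proceeds by induction on $r$: for $r=2$ it identifies $X_2$ with the product $(S^1,\id)\times(S^1,\he)$ and takes $j_2$ to be projection onto the second factor; for the inductive step it collapses an embedded $(S^1\vee S^1,\sw)$ to split $X_{l+1}$ as $X_2/\delta_1\vee X_l$ and then applies $\tilde j\vee j_l$. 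Your approach avoids this induction and the auxiliary geometric pictures entirely, at the cost of being less visual. The paper's explicit maps $j_r$ are later quoted verbatim in the proof of Proposition~\ref{propgtrivialfortype2}, but your retraction $\rho$ serves equally well there, so nothing downstream is lost.
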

 \begin{proof}
    We will use induction on $r$; the number of fixed circles of $X$.  Let $X_r$ denote a Real surface of type $(r-1,r,0)$ and let $(X_r)_\gamma$ be the sub-complex of $X_r$ with one-cells denoted by either $\gamma_i$ or $\sigma(\gamma_i)$.  We aim to define left homotopy inverses $j_r\colon X_r \rightarrow (X_r)_\gamma$ of $\iota$ for each $r$. 

Note that the space $(X_r)_\gamma$ is the wedge $\bigvee_{r-1}(S^1,\he)$ and hence the first non-trivial case is when $r=2$. In this case, one can see that $X_2$ is the product
\[(S^1,\id)\times (S^1,\he).\] We define $j_2$ to be the projection onto the second factor and Figure \ref{figurethemapj2} illustrates this map.

\begin{figure}[ht] \caption{The map $j_2$ projects to the factor $(S^1,\he)$ and $j_2$ factors through $X_2/\delta_1$.} \label{figurethemapj2}
\centering
\begin{tikzpicture}[scale=0.7]
	
\draw[dashed,color=colour1] (0,0) arc (180:0:1 and 0.3);
	\draw[color=colour1](0,0) arc (180:360:1 and 0.4);
	
\draw[dashed,color=black] (4,0) arc (180:0:1 and 0.3);
	\draw[color=black] (4,0) arc (180:360:1 and 0.4);
	
\draw[black] (3,0) circle [x radius=1cm, y radius=1cm];	

\draw[black] (3,0) circle [x radius=3cm, y radius=3cm];

\draw(-0.35,1) [<->] arc (90:-90:-0.6 and 1);
\draw (-0.4,0) node {$\sigma$};

\filldraw[colour1] (2,0) circle (0.07);
\filldraw[black] (4,0) circle (0.07);

\draw (1.1,-0.05) node {\textcolor{colour1}{\footnotesize $\delta_1$}};
\draw (3,1.25) node {\footnotesize $\gamma_1$};
\draw (3,-1.3) node {\footnotesize $\sigma(\gamma_1)$};

\draw (15,1.25) node {\footnotesize $\gamma_1$};
\draw (15,-1.3) node {\footnotesize $\sigma(\gamma_1)$};
\draw (13,0.4) node {\footnotesize $\tilde j$};

\draw[->] (6.5,0)--(7.5,0);
\draw (7,0.4) node {\scriptsize Collapse};
\draw (7,-0.4) node {\scriptsize $\delta_1$};
\draw (12,-2.3) node {\scriptsize $X_2/\delta_1$};
\draw[black] (9,0) circle [x radius=1cm, y radius=1cm];
\draw[black] (10,0) circle [x radius=2cm, y radius=2.3cm];

\draw[dashed,color=black] (10,0) arc (180:0:1 and 0.3);
	\draw[color=black] (10,0) arc (180:360:1 and 0.4);
	
\filldraw[colour1] (8,0) circle (0.07);
\filldraw[black] (10,0) circle (0.07);


\draw[->] (12.5,0)--(13.5,0);

\draw[black] (15,0) circle [x radius=1cm, y radius=1cm];
\filldraw[black] (14,0) circle (0.07);
\filldraw[black] (16,0) circle (0.07);

\draw(16.3,0.8) [<->] arc (90:-90:0.6 and 0.8);
\draw (16.4,0) node {\footnotesize $\he$};

\draw (9.7,3.2) node { $j_2$};

\draw(5.3,2.3) [->] arc (150:10:5 and 2.5);
\draw[->] (12.5,0)--(13.5,0);
\end{tikzpicture}
\end{figure}
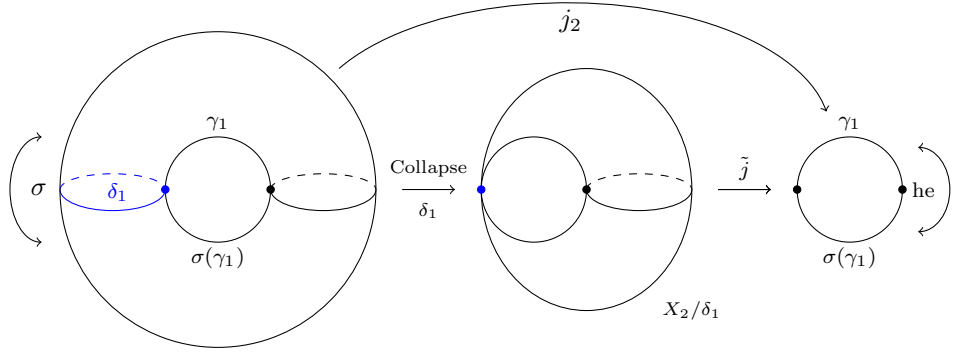
For $r=l$, we assume that $j_l$ exists. For $r=l+1$, we first use a map $j_{l+1}'$ that collapses a copy of $(S^1 \vee S^1, \sw)$ in $X_{l+1}$ such that the image is homeomorphic to $X_{l}\vee X_2/\delta_1$ where $X_2/\delta_1$ is a copy $X_2$ with the $1$-cell $\delta_1$ collapsed.  The map $j_{l+1}'$ is illustrated in Figure \ref{figurejtilde}.

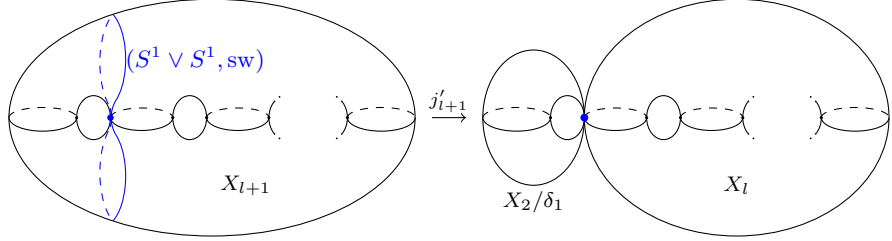
\begin{figure}[ht]\caption{Collapse a copy of $(S^1 \vee S^1,\sw)$ to obtain the wedge $X_2/\delta_1\vee X_l$.}\label{figurejtilde}
\centering
\begin{tikzpicture}[scale=0.45]
	
\draw[dashed,color=black!100] (0,0) arc (180:0:1 and 0.3);
	\draw[color=black!100](0,0) arc (180:360:1 and 0.4);
	
\draw[dashed,color=black!100] (3,0) arc (180:0:0.92 and 0.285);
	\draw[color=black!100] (3,0) arc (180:360:0.92 and 0.38);
	
\draw[dashed,color=black!100] (5.84,0) arc (180:0:0.92 and 0.285);
	\draw[color=black!100] (5.84,0) arc (180:360:0.92 and 0.38);
	
\draw[dashed,color=black!100] (10,0) arc (180:0:1 and 0.3);
	\draw[color=black!100] (10,0) arc (180:360:1 and 0.4);

\draw[black] (2.5,0) circle [x radius=0.5cm, y radius=0.65cm];;	
\draw[black] (5.34,0) circle [x radius=0.5cm, y radius=0.65cm];;
\draw[black] (7.68,0) arc (180:135:0.5 and 0.65);
\draw[dotted, semithick, color=black] (7.83,0.46) arc (135:100:0.5 and 0.7);
\draw[black] (7.68,0) arc (180:225:0.5 and 0.65);
\draw[dotted, semithick, black] (7.83,-0.46) arc (-135:-100:0.5 and 0.7);
\draw[black] (10,0) arc (0:45:0.5 and 0.65);
\draw[dotted, semithick,black] (9.85,-0.46) arc (-45:-80:0.5 and 0.7);
\draw[black] (10,0) arc (0:-45:0.5 and 0.65);
\draw[dotted, semithick, black] (9.85,0.46) arc (45:80:0.5 and 0.7);

        \draw[color=colour1] (3.02,0) arc (180:150: 1 and 1);
	\draw[dashed,color=colour1] (3.02,0.05) arc (0:20:1 and 1);
	
	\draw[color=colour1] (3.151,0.495) arc (-55:62:0.7 and 1.5);
	\draw[dashed,color=colour1] (2.937,0.40) arc (-130:-242:0.7 and 1.6);
	
	  \draw[color=colour1] (3.02,0) arc (180:150: 1 and -1);
	\draw[dashed,color=colour1] (3.02,-0.05) arc (0:20:1 and -1);
	
	\draw[color=colour1] (3.151,-0.495) arc (-55:62:0.7 and -1.5);
	\draw[dashed,color=colour1] (2.937,-0.40) arc (-130:-242:0.7 and -1.6);

\draw[black] (6,0) circle [x radius=6cm, y radius=3.5cm];

\filldraw[colour1] (3,0) circle (0.08);
\filldraw[black] (2,0) circle (0.03);
\filldraw[black] (4.84,0) circle (0.03);
\filldraw[black] (5.84,0) circle (0.03);
\filldraw[black] (7.68,0) circle (0.03);
\filldraw[black] (10,0) circle (0.03);

\draw[->] (12.5,0)--(13.5,0);

\draw[dashed,color=black!100] (0+14,0) arc (180:0:1 and 0.3);
	\draw[color=black!100](0+14,0) arc (180:360:1 and 0.4);
	
\draw[dashed,color=black!100] (3+14,0) arc (180:0:0.92 and 0.285);
	\draw[color=black!100] (3+14,0) arc (180:360:0.92 and 0.38);
	
\draw[dashed,color=black!100] (5.84+14,0) arc (180:0:0.92 and 0.285);
	\draw[color=black!100] (5.84+14,0) arc (180:360:0.92 and 0.38);
	
\draw[dashed,color=black!100] (10+14,0) arc (180:0:1 and 0.3);
	\draw[color=black!100] (10+14,0) arc (180:360:1 and 0.4);
	
\draw[black] (2.5+14,0) circle [x radius=0.5cm, y radius=0.65cm];;	
\draw[black] (5.34+14,0) circle [x radius=0.5cm, y radius=0.65cm];;
\draw[black] (7.68+14,0) arc (180:135:0.5 and 0.65);
\draw[dotted, semithick, color=black] (7.83+14,0.46) arc (135:100:0.5 and 0.7);
\draw[black] (7.68+14,0) arc (180:225:0.5 and 0.65);
\draw[dotted, semithick, black] (7.83+14,-0.46) arc (-135:-100:0.5 and 0.7);
\draw[black] (10+14,0) arc (0:45:0.5 and 0.65);
\draw[dotted, semithick,black] (9.85+14,-0.46) arc (-45:-80:0.5 and 0.7);
\draw[black] (10+14,0) arc (0:-45:0.5 and 0.65);
\draw[dotted, semithick, black] (9.85+14,0.46) arc (45:80:0.5 and 0.7);

\draw[black] (7.5+14,0) circle [x radius=4.5cm, y radius=3.5cm];
\draw[black] (1.5+14,0) circle [x radius=1.5cm, y radius=2cm];

\filldraw[colour1] (3+14,0) circle (0.1);
\filldraw[black] (2+14,0) circle (0.03);
\filldraw[black] (4.84+14,0) circle (0.03);
\filldraw[black] (5.84+14,0) circle (0.03);
\filldraw[black] (7.68+14,0) circle (0.03);
\filldraw[black] (10+14,0) circle (0.03);

\draw (7,-2) node {\footnotesize$X_{l+1}$};
\draw (21.5,-2) node {\footnotesize$X_{l}$};
\draw (15.5,-2.5) node {\footnotesize$X_{2}/\delta_1$};
\draw (13,0.5) node {\scriptsize$j_{l+1}'$};
\draw (5.5,1.7) node {\textcolor{colour1}{\small $(S^1 \vee S^1,\sw)$}};
\end{tikzpicture}
\end{figure}
Figure \ref{figurethemapj2} also shows that $j_2$ factors through the space $X_2/\delta_1$.  We therefore define $j_{l+1}$ to be the composition
\[X_{l+1}\xrightarrow{j_{l+1}'} X_2/\delta_1\vee X_l \xrightarrow{\tilde j\vee j_l} (X_{l+1})_\gamma \]
where $\tilde{j}$ is defined in Figure \ref{figurethemapj2}.
 \end{proof}
As an easy consequence of Proposition \ref{propgtrivialfortype1} we obtain the following homotopy equivalences
\begin{equation*}
\Sigma X \simeq  \Sigma\tilde X \vee \Sigma X_\gamma \mbox{ and } \Sigma \overline X \simeq  \Sigma\tilde X \vee \Sigma \overline X_\gamma.
\end{equation*}
In the following, we shall see that the factors $\Sigma X_\gamma$ and $\Sigma \overline X_\gamma$ give the factors $\overset{r-1}{\prod} \Omega U(n)$ and $\overset{r-1}{\prod} \Omega(U(n)/O(n))$ respectively in Theorem \ref{thmsplittingtype1fixedcircles} and that the factor $\Sigma \tilde X$ produces the factors $\Omega^2(U(n)/O(n))\times \prod_{r-1} \Omega O(n)$.
However, the map $j_r$ automatically induces a map 
\[\Mappe (X_\gamma,BU(n))\rightarrow \Mappe(X,BU(n);(0,0,\dots,0))\]
hence we only obtain a splitting on the level of mapping spaces in this trivial case. 

We now restrict to this trivial case for the rest of this section.  For the other cases, Proposition \ref{proprealpointedpathequal} will then give results for Theorem \ref{thmsplittingtype1fixedcircles} (1) and Propositions \ref{propsinglepointedcomponent} and \ref{propunpointedcomponentequiv} will give results for Theorem \ref{thmsplittingtype1fixedcircles} (2). We provide further decompositions at the level of the Real surface to continue the proof of Theorem \ref{thmsplittingtype1fixedcircles}. 

\begin{prop}\label{propreducetohesphere} Let $X_\delta$ be the $1$-cells in $\tilde X$ denoted by $\delta_2 , \dots,\delta_r$ then in the $\mathbb{Z}_2$-cofibration
\[ X_\delta \xrightarrow{\iota'} \tilde X \rightarrow (S^2,\he) \xrightarrow{\mu''} \Sigma(X_\delta)\]
the map $\mu''$ is $\mathbb{Z}_2$-nullhomotopic.
\end{prop}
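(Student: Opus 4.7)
The approach is to exploit the fact that the target $\Sigma X_\delta$ carries the \emph{trivial} $\mathbb{Z}_2$-action. Each $\delta_i$ for $2\le i\le r$ parametrises a fixed component of the involution, so $X_\delta=\bigvee_{i=2}^r(S^1,\id)$ and hence $\Sigma X_\delta=\bigvee_{i=2}^r(S^2,\id)$, both with trivial action.

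Any $\mathbb{Z}_2$-equivariant pointed map into a trivially-acted space is constant on $\mathbb{Z}_2$-orbits, and therefore factors through the orbit space of its source. Applied to $\mu''$, this yields a factorisation
\[(S^2,\he)\xrightarrow{q}(S^2,\he)/\mathbb{Z}_2\xrightarrow{\bar\mu''}\Sigma X_\delta,\]
where $q$ is the orbit projection. The involution $\he$ reflects $S^2$ across its equator, so $(S^2,\he)/\mathbb{Z}_2$ is homeomorphic to the closed hemisphere $D^2$. Choosing the basepoint of $(S^2,\he)$ on the fixed equator places the basepoint of $D^2$ on its boundary, and $D^2$ admits a pointed deformation retraction onto that basepoint. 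Hence $\bar\mu''$ is pointed nullhomotopic, and precomposing such a pointed null-homotopy with $q\times \id_I$ produces the required $\mathbb{Z}_2$-equivariant pointed null-homotopy of $\mu''$.

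There is essentially no substantive obstacle, as the argument rests entirely on the triviality of the action on $\Sigma X_\delta$ together with the contractibility of $D^2$; the only care needed is in matching basepoints, which is handled by placing the basepoint of $(S^2,\he)$ on the fixed equator. An alternative, more structural route would establish an equivariant splitting $\tilde X\simeq_{\mathbb{Z}_2} X_\delta\vee(S^2,\he)$ directly, by using a Nielsen-type self-equivalence of the trivially-acted 1-skeleton $\bigvee_{i=1}^r S^1$ sending $\delta_1\mapsto\delta_1\delta_2\cdots\delta_r$ to replace the attaching loop of the top cells by $\delta_1$; this self-equivalence is automatically $\mathbb{Z}_2$-equivariant (the source and target have trivial action), and the resulting wedge splitting at once implies the $\mathbb{Z}_2$-nullhomotopy of $\mu''$. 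However, the direct orbit-space argument above is the cleanest route.
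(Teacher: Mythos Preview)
Your argument is correct, but it takes a genuinely different route from the paper's proof. The paper does not use the orbit-space factorisation at all; instead it constructs an explicit $\mathbb{Z}_2$-equivariant left inverse to $\iota'$. Concretely, the paper observes that $\tilde X$ is $(S^2,\he)$ with $r$ equatorial points identified, writes $\tilde X=U\cup\sigma(U)$ with $U$ the upper ``hemisphere'' (a disc with $r$ boundary points identified), chooses a deformation retraction $H$ of $U$ onto $\bigvee_{i=2}^{r}\delta_i$, and then defines a retraction $\tilde X\to X_\delta$ by $x\mapsto H(x,1)$ on $U$ and $x\mapsto H(\sigma(x),1)$ on $\sigma(U)$. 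The existence of this left inverse forces $\mu''$ to be $\mathbb{Z}_2$-null.

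Your approach exploits the single observation that $\Sigma X_\delta$ carries the trivial $\mathbb{Z}_2$-action, so that any equivariant map from $(S^2,\he)$ into it factors through $(S^2,\he)/\mathbb{Z}_2\cong D^2$, which is contractible. This is cleaner for the bare statement that $\mu''$ is null, and it requires no geometric input beyond identifying the orbit space. The paper's argument, by contrast, yields the strictly stronger conclusion that $\iota'$ admits an equivariant retraction at the level of $\tilde X$ itself (not just after one suspension); this unsuspended splitting is in the spirit of the surrounding Propositions~\ref{propgtrivialfortype1} and~\ref{propgtrivialfixedtype2}, where such retractions are the organising device. Your alternative ``Nielsen-type'' remark is essentially a repackaging of that same retraction idea.
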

\begin{proof}
The space $\tilde X$ is the quotient of a type $(r-1,r,0)$ Real surface with the one-cells denoted by $\gamma_2,\dots,\gamma_r$ collapsed to a point. Recall that the attaching map of $X$ is
\begin{equation}\label{eq:attachingmaptype1} \delta_1 \gamma_2 \delta_2 \gamma_2^{-1}\gamma_3 \delta_3 \gamma_3^{-1}\cdots \gamma_r \delta_r \gamma_r^{-1}   \end{equation}
and the induced attached map in $\tilde X$ becomes
\[\delta_1\delta_2\cdots \delta_r.\]  
We conclude that $\tilde X$ is a sphere $(S^2,\he)$ with $r$ of its fixed points identified.

Let $U$ denote the upper `hemisphere' of $\tilde X$; it is homeomorphic to a disc with $r$ of its boundary points identified and notice that $\tilde X=U \cup \bin (U)$. Now, there is a deformation retract $H\colon U\times I \rightarrow U$ of $U$ onto the wedge $\bigvee_{i=2}^{r}\delta_i$. Therefore, we define a left inverse to the map $\iota'$ via
\[x\mapsto 
\begin{cases}
H(x,1) & \mbox{for } x\in U \\
H(\sigma_{\tilde X}(x),1) & \mbox{for } x\in \sigma_{\tilde X}(U).
\end{cases}\]
and the result follows.
\end{proof}
We deduce that \[\Sigma \tilde X \simeq \Sigma X_\delta \vee \Sigma(S^2,\he).\]  The factor $ \Sigma X_\delta = \overset{r-1}{\bigvee} (S^1,\id)$ provides the factor $\overset{r-1}{\prod} \Omega O(n)$ for both cases in Theorem \ref{thmsplittingtype1fixedcircles}. We now show that the spaces $\Sigma(S^2,\he)$ and $\Sigma X_\gamma$ provide the other factors.
\begin{lemma}\label{claim:righthandfactor} There are homotopy equivalences
\begin{enumerate}\item $\Mappe(\Sigma X_\gamma,BU(n)) \simeq \prod_{r-1} \Omega (U(n)/O(n)); $
\item $\Mappe (\Sigma \overline X_\gamma,BU(n)) \simeq \prod_{r-1} \Omega U(n).$
\end{enumerate}
\end{lemma}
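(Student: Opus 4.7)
The plan is to first identify $X_\gamma$ and $\overline{X}_\gamma$ explicitly as wedges of elementary $\mathbb{Z}_2$-spaces, then use the fact that pointed mapping spaces turn wedges into products, and finally compute the individual mapping spaces out of each wedge summand using the suspension/loop adjunction from Lemma \ref{lemma:adjointeqloopsus}.

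\textbf{Step 1: $\mathbb{Z}_2$-structure of $X_\gamma$ and $\overline{X}_\gamma$.} From the cell structure in Section \ref{sectionkleinsurfacesasz2complexes}, for each $2\leq i\leq r$ the pair of $1$-cells $\gamma_i, \sigma(\gamma_i)$ both join $*_1$ to $*_i$. Their union $\gamma_i\cup\sigma(\gamma_i)$ is therefore a circle, on which $\sigma$ acts by swapping the two arcs and fixing $*_1$ and $*_i$. This is a copy of $(S^1,\he)$, pointed at the fixed point $*_1$. These $r-1$ circles are wedged at $*_1$ (which is a fixed point), so
\[X_\gamma\;\cong\;\bigvee_{i=2}^{r}(S^1,\he).\]
Passing to $\overline{X}_\gamma$ identifies all basepoints $*_1,\dots,*_r$ to a single point. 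For each $i$, this collapses the two fixed points of $(S^1,\he)_i$ together, producing a figure eight on which $\sigma$ swaps the two lobes; that is, it produces $(S^1\vee S^1,\sw)$. Hence
\[\overline{X}_\gamma\;\cong\;\bigvee_{i=2}^{r}(S^1\vee S^1,\sw).\]

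\textbf{Step 2: Splitting the mapping space.} Since pointed mapping spaces convert wedges into products, and since Lemma \ref{lemma:adjointeqloopsus} gives $\Mappe(\Sigma Y, BU(n))\simeq \Omega\,\Mappe(Y,BU(n))$ for $\mathbb{Z}_2$-spaces with fixed basepoint, both identifications reduce to computing $\Mappe$ of a single summand. Concretely,
\[\Mappe\bigl(\Sigma X_\gamma,BU(n)\bigr)\;\simeq\;\prod_{r-1}\Omega\,\Mappe\bigl((S^1,\he),BU(n)\bigr),\]
\[\Mappe\bigl(\Sigma\overline{X}_\gamma,BU(n)\bigr)\;\simeq\;\prod_{r-1}\Omega\,\Mappe\bigl((S^1\vee S^1,\sw),BU(n)\bigr).\]

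\textbf{Step 3: Identifying the factors.} For $(S^1\vee S^1,\sw)$, a pointed $\mathbb{Z}_2$-map is determined by its restriction to one of the two wedge summands, and this restriction is an arbitrary pointed map $S^1\to BU(n)$. Hence $\Mappe((S^1\vee S^1,\sw),BU(n))\cong\Map^*(S^1,BU(n))=\Omega BU(n)\simeq U(n)$, which yields part $(2)$ after looping.

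For $(S^1,\he)$, parameterising the circle so that the two fixed points are $\theta=0$ (basepoint) and $\theta=\pi$, equivariance forces a pointed map to be determined by its values on the upper semicircle $[0,\pi]$. The constraint is that this semicircle is sent to a path in $BU(n)$ starting at the basepoint of $BU(n)$ and ending at a point of $BU(n)^{\varsigma}=BO(n)$. The space of such paths is by definition the homotopy fibre of the inclusion $BO(n)\hookrightarrow BU(n)$, which by the standard fibration $U(n)/O(n)\to BO(n)\to BU(n)$ is $U(n)/O(n)$. Therefore $\Mappe((S^1,\he),BU(n))\simeq U(n)/O(n)$, and part $(1)$ follows after looping.

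The main subtlety is Step $3$ for $(S^1,\he)$: one must correctly read off that the fixed set of $(S^1,\he)$ maps to $BO(n)$, and then recognise the resulting path space as the homotopy fibre of $BO(n)\hookrightarrow BU(n)$. Once this fibration is invoked, the rest is a formal consequence of the wedge/product and suspension/loop adjunctions.
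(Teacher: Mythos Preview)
Your proof is correct and follows essentially the same approach as the paper: identify $X_\gamma$ and $\overline{X}_\gamma$ as wedges of $(S^1,\he)$ and $(S^1\vee S^1,\sw)$ respectively, reduce to a single summand via the wedge--product and suspension--loop adjunctions, and then compute each factor. The only stylistic difference is in Step~3 for $(S^1,\he)$: the paper packages your path-space description as a formal pullback square over $\Map^*(S^0,BU(n))$ along the restriction to the fixed set, whereas you argue directly that the space of such paths is the homotopy fibre of $BO(n)\hookrightarrow BU(n)$; these are the same identification.
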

\begin{proof}
The space $\Sigma (X_\gamma)$ is the same as the wedge $ \bigvee_{r-1}\Sigma (S^1, \he)$. Looking at the $r$-pointed case, the $0$-skeleton of $\Sigma(X_\gamma)$ is collapsed and the space  $\Sigma\overline{(X_\gamma)}$ becomes the wedge $\Sigma\bigvee_{r-1}(S^1\vee S^1,\sw)$.  This shows part $(2)$ of the lemma.

For part $(1)$, we introduce a pullback similar to the pullbacks used in \cite{bairdmodulispace}. The space $\Mappe((S^1,\he),BU(n))$ fits into the following pullback diagram
\[ \xymatrix{\Mappe((S^1,\he),BU(n)) \ar[d]^{\tilde r} \ar[r]^-{\tilde u} & \Map^*( D^1, BU(n)) \ar[d]^r\\
\Mappe((S^0,\id),BU(n)) \ar[r]^-u & \Map^*(S^0,BU(n)).}\]
Here $\tilde r$ restricts to the fixed points of $(S^1,\he)$ and $\tilde u$ restricts to the upper hemisphere of $(S^1,\he)$  and then forgets about equivariance. Since \[\Mappe((S^0,\id),BU(n))\simeq BO(n) \] the map $u$ is just the inclusion $BO(n)\hookrightarrow BU(n)$ and hence the homotopy fibre of $u$ is $U(n)/O(n)$.  Since $r$ is a fibration, the square is also a homotopy pullback. We note that the space $\Map^*( D^1, BU(n))$ is contractible and so the result follows.
\end{proof}
\begin{lemma}\label{claimsphere}
There is a homotopy equivalence
\[\Mappe((S^2, \he),BU(n);(0,0))\simeq \Omega(U(n)/O(n))_0\]
where $\Omega (U(n)/O(n))_0$ denotes the connected component of $\Omega(U(n)/O(n))$ containing the basepoint.

\end{lemma}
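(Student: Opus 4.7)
The plan is to realise $(S^2, \he)$ as the equivariant pushout $U \cup_E L$, where $U$ and $L$ are the closed upper and lower hemispheres and $E \cong S^1$ is the fixed equator, with the involution swapping $U$ and $L$ pointwise fixing $E$. An equivariant pointed map $(S^2, \he) \to BU(n)$ is then completely determined by its restriction to $U \cong D^2$, subject to the constraint that the restriction to the equator $E$ lands in the fixed set $BU(n)^\varsigma = BO(n)$ (since equivariance forces the values on $L$ from those on $U$). Choosing the basepoint on the equator, this produces a pullback square exactly analogous to the one used in the proof of Lemma \ref{claim:righthandfactor},
\[\xymatrix{\Mappe((S^2,\he),BU(n)) \ar[r] \ar[d] & \Map^*(D^2, BU(n)) \ar[d]^-r \\
\Map^*(S^1, BO(n)) \ar[r]^-u & \Map^*(S^1, BU(n)),}\]
where $r$ is restriction to the boundary and $u$ is induced by the inclusion $BO(n) \hookrightarrow BU(n)$.

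Next, I would observe that since $S^1 \hookrightarrow D^2$ is a cofibration, $r$ is a fibration and the square is a homotopy pullback. Since $\Map^*(D^2, BU(n))$ is contractible, this identifies $\Mappe((S^2,\he), BU(n))$ with the homotopy fibre of $u \colon \Omega BO(n) \to \Omega BU(n)$. Looping the fibration sequence $U(n)/O(n) \to BO(n) \to BU(n)$ then identifies this homotopy fibre with $\Omega(U(n)/O(n))$.

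Finally, I would pick out the component labelled $(0,0)$. The trivial Real bundle over $(S^2,\he)$ is represented by the constant map, and under the chain of equivalences above this corresponds to the constant loop in $\Omega(U(n)/O(n))$. Hence the $(0,0)$ component is exactly the basepoint component $\Omega(U(n)/O(n))_0$, giving the stated equivalence. The step requiring the most care is the component bookkeeping: one must verify that the identification coming from the homotopy pullback sends the correct component of $\Mappe((S^2,\he),BU(n))$ to the basepoint component of the loop space, but this follows once one notes that the constant map lies in the $(0,0)$ component by the classification in Proposition \ref{proprealbundleclass}.
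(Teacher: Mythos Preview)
Your proposal is correct and follows essentially the same approach as the paper: the paper sets up the identical pullback square (writing the bottom-left corner as $\Mappe((S^1,\id),BU(n))$, which is your $\Map^*(S^1,BO(n))$), notes that $r$ is a fibration so the square is a homotopy pullback, and uses contractibility of $\Map^*(D^2,BU(n))$ to identify the equivariant mapping space with the homotopy fibre $\Omega(U(n)/O(n))$. Your component bookkeeping is slightly more explicit than the paper's, which simply restricts to the $(0,0)$ component at the end.
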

 \begin{proof}
  There is a similar pullback as in Lemma \ref{claim:righthandfactor} 
 \[ \xymatrix{\Mappe((S^2,\he),BU(n)) \ar[d]^{\tilde r} \ar[r]^-{\tilde u} & \Map^*( D^2, BU(n)) \ar[d]^r\\
\Mappe((S^1,\id),BU(n)) \ar[r]^-u & \Map^*(S^1,BU(n)).}\]
 This time the map $u$ is homotopic to the inclusion $O(n) \hookrightarrow U(n)$ and so the homotopy fibre of $u$ is $\Omega (U(n)/O(n))$. The space $\Map^*(D^2, BU(n))$ is contractible and so there is an equivalence
 \[\Mappe((S^2, \he),BU(n))\simeq \Omega(U(n)/O(n))\]
 and the result follows.
 \end{proof}
 
 \begin{proof}[Proof of Theorem  \ref{thmsplittingtype1fixedcircles}] For $(1)$, it is enough to deal with the trivial component of $\Mappe(X,BU(n))$ by Proposition \ref{proprealpointedpathequal}.  Using a similar method  to the proof of Theorem \ref{thm:generalsplitting}, we have that Proposition \ref{propgtrivialfortype1} and Lemma  \ref{claim:righthandfactor} contribute the factor $\prod_{r-1} \Omega U(n)$, Proposition \ref{propreducetohesphere} contributes the factor $\prod_{r-1} \Omega O(n)$ and Lemma \ref{claimsphere} contributes the factor $\Omega^2(U(n)/O(n))$.
 
 For $(2)$, the proof is similar, but one has to be careful with the non-trivial components.
 \end{proof}
 \subsubsection{Case: \texorpdfstring{$r>0$, $a=1$}{r0,a1}} \label{subsectionr0a2}
 We use the techniques and notation of the previous section.  In particular, let $(P,\tilde\sigma)$ be a bundle of class $(0,0,\dots, 0)$ over a Real surface  $(X,\bin)$ of type $(g,r,1)$.  We first note that by Proposition \ref{propgtrivial} we can restrict to the cases
 \begin{equation}\label{eqncasesforg}
  g=r \hspace{10pt} \mbox{ or } \hspace{10pt} g=r+1.
 \end{equation}
 With these cases in mind, the main aim will be to prove the following theorem which is a restatement of Theorems \ref{thma}
 and \ref{thmb}
 for Real surfaces of type $(g,r,1)$.

 \begin{theorem}\label{thmtype2mainsplitting}
For the notation as above and $g$ as in (\ref{eqncasesforg}), there are homotopy equivalences
\begin{enumerate} \item $\g^*(P,\tilde\sigma)\simeq \g^*((g-r+1,1,1);(0,0))\times \prod\limits_{r-1} \Omega O(n) \times \prod\limits_{r-1} \Omega(U(n)/O(n));$
\item $\g^{*r+1}(P,\tilde\sigma)\simeq \g^{*2}((g-r+1,1,1);(0,0))\times \prod\limits_{r-1} \Omega O(n) \times \prod\limits_{r-1} \Omega U(n).$
\end{enumerate}
\end{theorem}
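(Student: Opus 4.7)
The plan is to adapt the arguments of Theorem \ref{thmsplittingtype1fixedcircles} to the type $(g,r,1)$ setting. By Proposition \ref{proprealpointedpathequal} in the $(r+1)$-pointed case and by Propositions \ref{propsinglepointedcomponent} and \ref{propunpointedcomponentequiv} (or the auxiliary hypothesis $w_i = 0$ for $i>1$) in the single-pointed case, we may restrict to the Real bundle of trivial class $(0,\ldots,0)$. Theorem \ref{thmbaird} together with Lemma \ref{lemma:adjointeqloopsus} then turn the problem into producing a $\mathbb{Z}_2$-equivariant wedge decomposition of $\Sigma X$ (and of $\Sigma \overline{X}$) whose image under $\Mappe(-, BU(n))$ is the claimed product.

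The central step is an analogue of Proposition \ref{propgtrivialfortype1}. Let $X_\gamma \subset X$ denote the subcomplex formed by the $1$-cells $\gamma_i, \sigma(\gamma_i)$ for $2 \le i \le r$, so that $X_\gamma \cong \bigvee_{r-1} (S^1, \he)$. I would prove that the inclusion $\iota \colon X_\gamma \hookrightarrow X$ admits a $\mathbb{Z}_2$-homotopy left inverse by induction on $r$, precisely paralleling the construction of the maps $j_r$ in Proposition \ref{propgtrivialfortype1}: the inductive step collapses an equivariant $(S^1 \vee S^1, \sw)$ inside $X$ to exhibit it as the $\mathbb{Z}_2$-wedge of a type $(g-1, r-1, 1)$ surface and the pair-of-pants piece $X_2/\delta_1$ already employed there, and then applies the inductive hypothesis on the first factor together with the map $\tilde j$ of Proposition \ref{propgtrivialfortype1} on the second. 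Crucially, the additional antipodal $S^1$ (when $g = r$) or swapping $S^1 \vee S^1$ (when $g = r+1$) attaches to the rest of $X$ only through the cells $\gamma_{r+1}$ (and $\gamma_{r+2}$), neither of which lies in $X_\gamma$, so these pieces ride passively along with the type $(g-1, r-1, 1)$ factor under the induction.

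Writing $\tilde X$ for the cofibre of $\iota$, this first step yields $\Sigma X \simeq \Sigma X_\gamma \vee \Sigma \tilde X$ equivariantly. In $\tilde X$ the cells $\delta_2, \ldots, \delta_r$ have become loops based at $*_1$, and the top cell attaches by a word of the form $\delta_1 \delta_2 \cdots \delta_r$ followed by the suffix that builds the extra antipodal or swapping piece. In analogy with Proposition \ref{propreducetohesphere}, I would then show that the inclusion $X_\delta := \bigvee_{i=2}^{r} \delta_i \cong \bigvee_{r-1}(S^1,\id)$ into $\tilde X$ admits a $\mathbb{Z}_2$-homotopy left inverse, obtained by deformation retracting the upper `hemisphere' of the top cell onto the wedge of $X_\delta$ with the suffix region. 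The remaining cofibre $\check X$ then carries the attaching word of precisely a type $(g-r+1,1,1)$ Real surface, giving the equivariant decomposition
\[ \Sigma X \simeq \Sigma X_\gamma \vee \Sigma X_\delta \vee \Sigma \check X, \]
and analogously for $\Sigma \overline{X}$.

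Applying $\Mappe(-, BU(n))$ then delivers the desired product. Lemma \ref{claim:righthandfactor}(1) identifies the $X_\gamma$-piece with $\prod_{r-1} \Omega(U(n)/O(n))$ in the single-pointed case, while Lemma \ref{claim:righthandfactor}(2) identifies it with $\prod_{r-1} \Omega U(n)$ in the $(r+1)$-pointed case; the $X_\delta$-piece contributes $\prod_{r-1} \Omega^2 BO(n) \simeq \prod_{r-1} \Omega O(n)$ since $(S^2,\id)$ carries the trivial action and $BU(n)^{\mathbb{Z}_2} = BO(n)$; and the $\check X$-piece returns $\g^*((g-r+1,1,1);(0,0))$ or $\g^{*2}((g-r+1,1,1);(0,0))$ on reversing Theorem \ref{thmbaird} and Lemma \ref{lemma:adjointeqloopsus}. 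The main obstacle is the inductive construction of the $\mathbb{Z}_2$-homotopy left inverse to $\iota$: the difficulty is not conceptual but combinatorial, in verifying that the equivariant collapse genuinely isolates the extra antipodal or swapping piece while remaining compatible with the $\sigma$-action on the surrounding $\gamma_{r+1}$ (and $\gamma_{r+2}$) cells at each stage of the induction.
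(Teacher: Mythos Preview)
Your proposal is correct and follows essentially the same architecture as the paper's proof: split off $X_\gamma$, then $X_\delta$, then identify the pieces via Lemma \ref{claim:righthandfactor} and Theorem \ref{thmbaird}.

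There is one minor simplification worth noting. For the left inverse to $\iota\colon X_\gamma \hookrightarrow X$ you set up a fresh induction on $r$ through type $(g,r,1)$ surfaces, carrying the extra antipodal or swapping piece along at each stage. The paper (Proposition \ref{propgtrivialfortype2}) avoids this by first collapsing the cells $\gamma_{r+1},\sigma(\gamma_{r+1}),\delta_{r+1},\sigma(\delta_{r+1})$ (and $\gamma_{r+2},\sigma(\gamma_{r+2})$ when $g=r+1$); the quotient is then $\mathbb{Z}_2$-homeomorphic to a type $(r-1,r,0)$ surface, and one simply invokes the map $j_r$ already built in Proposition \ref{propgtrivialfortype1}. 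This sidesteps the combinatorial bookkeeping you flag as the main obstacle. Your inductive route works too, but the paper's reduction to the $a=0$ case is cleaner.
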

We note that after we have proven the above theorem, the only cases we have left to analyse will be gauge groups over Real surfaces of type $(2,1,1)$ and type $(1,1,1)$.

For the proof of the theorem, we will essentially follow the methods of the previous section.  Let $X_\gamma$ denote the sub-complex of $X$ consisting of the $1$-cells denoted by either $\gamma_i$ or $\bin( \gamma_i)$ for $2\leq i \leq r$.
  \begin{prop}\label{propgtrivialfortype2}
 Let $(X,\bin)$ be as above, then in the $\mathbb{Z}_2$-cofibration sequence
 \[X_\gamma \xrightarrow{\kappa} X \rightarrow \tilde X \xrightarrow{\nu} \Sigma(X_\gamma)\]
the map $\nu$ is $\mathbb{Z}_2$-nullhomotopic.
 \end{prop}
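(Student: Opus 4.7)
The plan is to prove this by induction on $r$, following closely the strategy of Proposition \ref{propgtrivialfortype1}. Specifically, I will construct a left $\mathbb{Z}_2$-homotopy inverse $j_r \colon X \rightarrow X_\gamma$ to $\kappa$. Once such a $j_r$ is in hand, the $\mathbb{Z}_2$-cofibration sequence splits up to $\mathbb{Z}_2$-homotopy equivalence as $X \simeq_{\mathbb{Z}_2} X_\gamma \vee \tilde X$, and hence the connecting map $\nu$ is $\mathbb{Z}_2$-nullhomotopic, which is the assertion we need.

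For the base case $r = 1$, the subcomplex $X_\gamma$ contains no cells (the indexing set $\{2, \dots, r\}$ is empty), so $X_\gamma$ reduces to the basepoint, the inclusion $\kappa$ is the basepoint inclusion, and a left inverse exists trivially.

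For the inductive step (with $r \geq 2$), I mimic the construction of $j_{l+1}$ from Proposition \ref{propgtrivialfortype1}. Using the explicit cell structure provided in Section \ref{sectionkleinsurfacesasz2complexes} for type $(g, r, 1)$ with $g = r$ or $g = r+1$, one can locate a $\mathbb{Z}_2$-equivariant copy of $(S^1 \vee S^1, \sw)$ associated to the cells $\gamma_r$ and $\bin(\gamma_r)$ (together with their attaching structure at $\delta_r$), such that collapsing this wedge produces a space of the form $X_2/\delta_1 \vee X'$, where $X'$ is a Real surface of type $(g-1, r-1, 1)$. Crucially, since $g - r$ and $(g-1)-(r-1)$ have the same parity, $X'$ still falls within the cases of (\ref{eqncasesforg}), so the inductive hypothesis applies. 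Composing yields
\[ j_r \colon X \xrightarrow{j_r'} X_2/\delta_1 \vee X' \xrightarrow{\tilde j \vee j_{r-1}} X_\gamma, \]
where $\tilde j$ is the map from the proof of Proposition \ref{propgtrivialfortype1} (noting $(X_2/\delta_1)_\gamma \hookrightarrow X_\gamma$), and $j_{r-1}$ is supplied by the induction applied to $X'$, using $(X')_\gamma \subset X_\gamma$.

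The main obstacle will be verifying that the extra handle appearing at the end of the attaching word (either the $\gamma_{r+1}\delta\sigma(\delta)\gamma_{r+1}^{-1}$ piece when $g - r$ is even, or the $\gamma_{r+1}\delta_{r+1}\gamma_{r+1}^{-1}\gamma_{r+2}\sigma(\delta_{r+1})\gamma_{r+2}^{-1}$ piece when $g - r$ is odd) does not obstruct the $\mathbb{Z}_2$-equivariant collapse, and that the resulting $X'$ genuinely inherits the $\mathbb{Z}_2$ $CW$-structure of a type $(g-1, r-1, 1)$ Real surface with the antipodal or swap circle intact. Both parity sub-cases should be handled in parallel, with the collapse performed away from the extra handle so that it descends unchanged to $X'$.
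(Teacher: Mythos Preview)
Your approach is plausible and should work, but it is not the route the paper takes. You propose a fresh induction on $r$ within the type $(g,r,1)$ family, peeling off one handle at a time and carrying the extra antipodal/swap circle along for the ride in $X'$. The paper instead performs a single reduction to the already-solved type $(g,r,0)$ case: it collapses precisely the cells that distinguish a type $(g,r,1)$ surface from a type $(r-1,r,0)$ surface---namely $\gamma_{r+1},\bin(\gamma_{r+1}),\delta_{r+1},\bin(\delta_{r+1})$ (and $\gamma_{r+2},\bin(\gamma_{r+2})$ in the odd-parity case)---and observes that the quotient is $\mathbb{Z}_2$-homeomorphic to a type $(r-1,r,0)$ Real surface. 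Since none of these collapsed cells lie in $X_\gamma$, the composite of this collapse with the map $j_r$ already constructed in Proposition~\ref{propgtrivialfortype1} gives the desired left inverse to $\kappa$ immediately.

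The trade-off is this: the paper's argument is a two-line reduction that reuses Proposition~\ref{propgtrivialfortype1} wholesale, avoiding any new induction and sidestepping exactly the obstacle you flag---verifying that the extra handle survives the equivariant collapse and that $X'$ inherits the correct $\mathbb{Z}_2$ $CW$-structure. Your approach would require checking this geometric claim carefully in both parity sub-cases, whereas the paper simply disposes of the extra handle first. Your argument is self-contained, but the paper's is shorter and cleaner given what has already been established.
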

 
 \begin{proof}
 We define a left inverse to $\kappa$.  First in $X$ collapse the cells \[\gamma_{r+1}, \bin(\gamma_{r+1}), \delta_{r+1}, \bin(\delta_{r+1})\] and the cells $\gamma_{r+2}, \bin(\gamma_{r+2})$ if they exist.  We are left with a space $\mathbb{Z}_2$-homeomorphic to a Real surface of type $(r-1,r,0)$, we now use the map $j_r$ as defined in the proof of Proposition \ref{propgtrivialfortype1}.
 \end{proof}
The proof of the next proposition is identical to that of Proposition \ref{propreducetohesphere} except we exchange $(S^2,\he)$ for a Real surface $X'$ of type either $(2,1,1)$ or $(1,1,1)$.
 \begin{prop}\label{propgtrivialfixedtype2} Let $X_\delta$ be the $1$-cells in $\tilde X$ denoted by $\delta_2 , \dots,\delta_r$ then in the $\mathbb{Z}_2$-cofibration
\[ X_\delta \xrightarrow{\kappa'} \tilde X \rightarrow X' \xrightarrow{\nu'} \Sigma(X_\delta)\]
the map $\nu'$ is $\mathbb{Z}_2$-nullhomotopic. \qed
\end{prop}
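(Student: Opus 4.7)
The plan is to mirror the proof of Proposition \ref{propreducetohesphere} by producing an explicit $\mathbb{Z}_2$-equivariant left homotopy inverse to $\kappa'$; this forces $\nu'$ to be $\mathbb{Z}_2$-nullhomotopic in the Puppe sequence of the cofibration.

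First I would unpack $\tilde X$ using the $\mathbb{Z}_2$-$CW$ structure of Section \ref{sectionkleinsurfacesasz2complexes} together with (\ref{eqncasesforg}). Since $g-r\in\{0,1\}$, the parameter $g'$ of that section equals $0$, so $X$ is two copies of $\Sigma_{0,r+1}$ (when $g=r$) or $\Sigma_{0,r+2}$ (when $g=r+1$) glued along their boundary circles, and no $\alpha_i,\beta_i$ cells occur. Collapsing $X_\gamma$ identifies $*_1,\dots,*_r$ to a single basepoint $*$, and the attaching map of one of the two-cells in $\tilde X$ simplifies to
\[\delta_1\delta_2\cdots\delta_r\,\gamma_{r+1}\,\delta\,\sigma(\delta)\,\gamma_{r+1}^{-1} \qquad \mbox{or} \qquad \delta_1\delta_2\cdots\delta_r\,\gamma_{r+1}\,\delta_{r+1}\,\gamma_{r+1}^{-1}\gamma_{r+2}\,\sigma(\delta_{r+1})\,\gamma_{r+2}^{-1},\]
respectively. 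Reading off the result after further collapsing $X_\delta=\bigvee_{i=2}^r\delta_i$, we see that $\tilde X$ is homeomorphic to a Real surface $X'$ of type $(1,1,1)$ or $(2,1,1)$ with $r$ of its fixed points identified to $*$, recovering $X'$ on the quotient.

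Next I would let $U$ denote the image in $\tilde X$ of one of the two glued copies of $\Sigma_{0,r+1}$ (respectively $\Sigma_{0,r+2}$), so $\tilde X = U\cup \sigma_{\tilde X}(U)$. Exactly as in Proposition \ref{propreducetohesphere}, $U$ is a planar region with $r$ of its boundary points identified to $*$, decorated now with an additional contractible collection of arcs ($\gamma_{r+1}$ and $\delta$ when $g=r$; $\gamma_{r+1}, \delta_{r+1}, \gamma_{r+2}$ when $g=r+1$) which meet $\sigma_{\tilde X}(U)$ only along the $\mathbb{Z}_2$-orbit of $\delta$ (respectively $\delta_{r+1}$). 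One can therefore build a deformation retract $H\colon U\times I \to U$ onto $\bigvee_{i=2}^r\delta_i = X_\delta$ that first collapses this auxiliary subtree to $*$ and then retracts the disc onto the remaining boundary loops, absorbing $\delta_1$ via the attaching relation. Defining
\[x \mapsto \begin{cases} H(x,1) & \mbox{for } x\in U,\\ H(\sigma_{\tilde X}(x),1) & \mbox{for } x\in\sigma_{\tilde X}(U),\end{cases}\]
then gives a $\mathbb{Z}_2$-equivariant left inverse to $\kappa'$.

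The only genuinely new technical point beyond Proposition \ref{propreducetohesphere} is verifying that the extra arcs $\gamma_{r+1}, \delta$ (and $\gamma_{r+2}$) can be collapsed in a manner compatible with the $\mathbb{Z}_2$-action on the overlap $U\cap \sigma_{\tilde X}(U)$. This is straightforward: these arcs form a contractible tree attached to $U$ whose only fixed points under $\sigma_{\tilde X}$ lie at $*$ and on the $\mathbb{Z}_2$-orbit itself, so the tree collapses equivariantly to $*$. Once this preliminary collapse is performed the rest of the construction of $H$ is formally identical to the type $(g,r,0)$ case treated in Proposition \ref{propreducetohesphere}.
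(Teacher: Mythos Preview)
Your overall strategy—mirror Proposition \ref{propreducetohesphere} by constructing a $\mathbb{Z}_2$-equivariant left inverse to $\kappa'$—is exactly what the paper intends, and your identification of $X'$ as a type $(1,1,1)$ or $(2,1,1)$ Real surface is correct. However, there is a genuine gap in your description of $U$ and in the claimed deformation retract.

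You assert that $U$ is the Proposition \ref{propreducetohesphere} disc ``decorated with an additional contractible collection of arcs $\gamma_{r+1}$ and $\delta$''. This omits $\sigma(\delta)$. The boundary word of the upper two-cell in $\tilde X$ is $\delta_1\cdots\delta_r\,\gamma_{r+1}\,\delta\,\sigma(\delta)\,\gamma_{r+1}^{-1}$, so the closure $U$ of that cell contains \emph{both} $\delta$ and $\sigma(\delta)$; together they form the $(r+1)$-th boundary circle of the copy of $\Sigma_{0,r+1}$. Thus the ``auxiliary'' piece $\gamma_{r+1}\cup\delta\cup\sigma(\delta)$ is a circle with a tail, not a tree, and $U\simeq\bigvee_r S^1$ rather than $\bigvee_{r-1}S^1$. (The same issue arises when $g=r+1$: $U$ then contains the two extra boundary circles $\delta_{r+1}$ and $\sigma(\delta_{r+1})$, giving $U\simeq\bigvee_{r+1}S^1$.) Consequently no deformation retract $H\colon U\times I\to U$ onto $X_\delta=\bigvee_{i=2}^r\delta_i$ can exist.

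The repair is small: you only need a \emph{retraction} $H_1\colon U\to X_\delta$, not a deformation retract. Define $H_1$ by sending the entire extra boundary circle $\delta\cup\sigma(\delta)$ (respectively $\delta_{r+1}\cup\sigma(\delta_{r+1})$) and the arc $\gamma_{r+1}$ (and $\gamma_{r+2}$) to $*$, then retract the remaining disc with identifications onto $\bigvee_{i=2}^r\delta_i$ exactly as in Proposition \ref{propreducetohesphere}. The compatibility $H_1(x)=H_1(\sigma_{\tilde X}(x))$ on $U\cap\sigma_{\tilde X}(U)$ is immediate, since on the fixed circles $\delta_i$ it is tautological and on the extra circle both sides equal $*$. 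Your equivariant extension formula then produces the required left inverse to $\kappa'$.
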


\begin{proof}[Proof of Theorem \ref{thmtype2mainsplitting}]
This follows from Lemma \ref{claim:righthandfactor} and Propositions \ref{propgtrivialfixedtype2} and \ref{propgtrivialfortype2}.
\end{proof}

From Theorem \ref{thmtype2mainsplitting}, we reduce our study to the gauge groups 
\begin{gather*}\g^*((1,1,1);(0,0)) \mbox{ and } \g^*((2,1,1);(0,0)); \\ \g^{*2}((1,1,1);(0,0))\mbox{ and } \g^{*2}((2,1,1);(0,0)). \end{gather*}
The following theorem provides the remaining integral homotopy decompositions that we can obtain for these gauge groups. The theorem contributes to results in the last two rows of Theorem \ref{thma} and the last row in Theorem \ref{thmb}.

\begin{theorem}\label{thm211111integral}
There are integral homotopy equivalences
\begin{enumerate}
\item $\g^{*2}((1,1,1);(0,0))\simeq \g^{*}((1,1,1);(0,0))\times U(n)$;
\item $\g^{*2}((2,1,1);(0,0))\simeq \g^{*}((1,1,1);(0,0)) \times U(n) \times U(n) $;
\item $\g^{*}((2,1,1);(0,0)) \simeq \g^{*}((1,1,1);(0,0))\times U(n)$.
\end{enumerate}
\end{theorem}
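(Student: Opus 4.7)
The plan is to apply Baird's theorem (Theorem \ref{thmbaird}) to translate each gauge group into an equivariant mapping space, and then exhibit $\mathbb{Z}_2$-equivariant stable wedge splittings of the Real surfaces $X_{(1,1,1)}$ and $X_{(2,1,1)}$ following the template of Propositions \ref{propgtrivial}, \ref{propgtrivialfortype1}, and \ref{propgtrivialfortype2}. By Propositions \ref{proprealpointedpathequal} and \ref{propsinglepointedcomponent} the bundle can be taken of trivial class $(0,0)$, so throughout the argument we work with the basepoint component.

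For part (1), I would set $A=\{*_1,*_2,\sigma(*_2)\}\subset X_{(1,1,1)}$; as a pointed $\mathbb{Z}_2$-space, $A\cong(S^0\vee S^0,\sw)$ with $*_1$ fixed. The inclusion gives a pointed $\mathbb{Z}_2$-cofibration $A\hookrightarrow X_{(1,1,1)}\to \overline{X}_{(1,1,1)}$, and the central step is to show that the connecting map $\overline{X}_{(1,1,1)}\to \Sigma A$ is $\mathbb{Z}_2$-nullhomotopic, yielding the stable splitting $\Sigma X_{(1,1,1)}\simeq_{\mathbb{Z}_2}\Sigma \overline{X}_{(1,1,1)}\vee \Sigma(S^0\vee S^0,\sw)$. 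This is carried out by using the CW structure from Section \ref{sectionkleinsurfacesasz2complexes}: the equivariant paths $\gamma_2$ and $\sigma(\gamma_2)$ furnish a $\mathbb{Z}_2$-equivariant contraction of a neighbourhood of $A$ onto $A$, which suspends to a splitting. Applying $\Mappe(-,BU(n))$, invoking Lemma \ref{lemma:adjointeqloopsus}, and observing that $\Mappe((S^0\vee S^0,\sw),BU(n))\simeq BU(n)$ via evaluation at $*_2$, part (1) then follows upon looping.

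For part (3), I would identify a $\mathbb{Z}_2$-subcomplex of $X_{(2,1,1)}$ involving the pair of extra 1-cells $\gamma_3$ and $\sigma(\gamma_3)$ absent from $X_{(1,1,1)}$, and establish a stable $\mathbb{Z}_2$-equivariant wedge splitting
\[\Sigma X_{(2,1,1)}\simeq_{\mathbb{Z}_2}\Sigma X_{(1,1,1)}\vee \Sigma(S^0\vee S^0,\sw).\]
The crux is to show that the resulting connecting map is $\mathbb{Z}_2$-nullhomotopic, which proceeds by a Whitehead-product argument akin to Proposition \ref{propgtrivial}: the 2-cell attaching map of $X_{(2,1,1)}$ decomposes, after suspension, into a piece matching that of $X_{(1,1,1)}$ and a commutator-type contribution involving the swapped pair $\{\gamma_3,\sigma(\gamma_3)\}$ that vanishes upon suspension. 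Applying $\Mappe(-,BU(n))$ and looping then produces the $U(n)$ factor.

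For part (2), I would apply the cofibration argument of part (1) to the Real surface $X_{(2,1,1)}$ with $A=\{*_1,*_2,\sigma(*_2)\}\subset X_{(2,1,1)}$ to obtain the intermediate equivalence $\g^{*2}((2,1,1);(0,0))\simeq \g^{*}((2,1,1);(0,0))\times U(n)$, and then substitute the decomposition of $\g^{*}((2,1,1);(0,0))$ given by part (3). The main obstacle throughout is verifying the $\mathbb{Z}_2$-nullhomotopy of the connecting maps, and particularly in part (3): this is a delicate combinatorial step that requires tracking the 2-cell attaching map from Section \ref{sectionkleinsurfacesasz2complexes} through the suspension and establishing the vanishing of the relevant equivariant Whitehead-type products in a wedge of $\mathbb{Z}_2$-spheres.
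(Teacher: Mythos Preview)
Your proposal contains a genuine error that propagates through all three parts.

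\textbf{Part (1).} The connecting map $\overline{X}_{(1,1,1)}\to\Sigma A$ is \emph{not} $\mathbb{Z}_2$-nullhomotopic: identifying the three $0$-cells creates new $H_1$-classes in $\overline{X}$, and the connecting map carries these isomorphically onto $H_1(\Sigma A)\cong\mathbb{Z}^2$. What the paths $\gamma_2,\sigma(\gamma_2)$ actually witness is that the inclusion $A\hookrightarrow X$ is $\mathbb{Z}_2$-null, which yields the splitting in the \emph{opposite} direction, namely $\overline{X}\simeq_{\mathbb{Z}_2} X\vee\Sigma A$. Your stated splitting $\Sigma X\simeq\Sigma\overline{X}\vee\Sigma A$ would produce $\g^{*}\simeq\g^{*2}\times U(n)$, the reverse of what is claimed. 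The paper obtains the correct direction by working inside $\overline{X}$: the images of $\gamma_2,\sigma(\gamma_2)$ become loops $\overline{X}_\gamma\cong(S^1\vee S^1,\sw)$, and an explicit retraction $\overline{X}\to\overline{X}_\gamma$ (modelled on $j_2$ from Proposition~\ref{propgtrivialfortype1}) gives $\Sigma\overline{X}\simeq\Sigma\overline{X}_\gamma\vee\Sigma(\overline{X}/\overline{X}_\gamma)$ with $\overline{X}/\overline{X}_\gamma\simeq X_{(1,1,1)}$.

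\textbf{Part (3).} The subcomplex $\gamma_3\cup\sigma(\gamma_3)$ is two arcs meeting at $*_1$ and is $\mathbb{Z}_2$-contractible, so collapsing it cannot split off a nontrivial summand; indeed your proposed decomposition $\Sigma X_{(2,1,1)}\simeq\Sigma X_{(1,1,1)}\vee\Sigma(S^0\vee S^0,\sw)$ fails on $H_1$. The paper (Proposition~\ref{prop211integral}) instead takes the full subcomplex $X'_\gamma=\gamma_2\cup\gamma_3\cup\sigma(\gamma_2)\cup\sigma(\gamma_3)\simeq(S^1\vee S^1,\sw)$ and builds a geometric left inverse to $X'_\gamma\hookrightarrow X'$ by first collapsing $\delta_1$ and a copy of $(S^1\vee S^1,\sw)$ to reach a wedge of tori, then projecting --- this is a $j_3$-type argument, not a Whitehead-product vanishing. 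The quotient $X'/X'_\gamma$ is then shown to have the same $\mathbb{Z}_2$-CW description as $X_{(1,1,1)}$, yielding $\Sigma X_{(2,1,1)}\simeq\Sigma(S^1\vee S^1,\sw)\vee\Sigma X_{(1,1,1)}$. Your Whitehead-product heuristic from Proposition~\ref{propgtrivial} does not apply here, since the $\gamma$-cells do not enter the attaching word as commutators.
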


We analyse the structure of a type $(2,1,1)$ Real surface $X'$.
\begin{prop}\label{prop211integral} Let $X'$ be a type $(2,1,1)$ Real Surface and let $X'_\gamma$ be the $1$-cells $\gamma_{2}, \gamma_{3},\bin(\gamma_{2}),\bin(\gamma_{3})$ of $X'$. Then in the $\mathbb{Z}_2$-cofibration
\[ X'_\gamma \xrightarrow{\kappa''} X' \rightarrow X'/X'_\gamma \xrightarrow{\nu''} \Sigma(X'_\gamma)\]
the map $\nu''$ is $\mathbb{Z}_2$-nullhomotopic.
\end{prop}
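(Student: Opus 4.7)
The plan is to construct a left $\mathbb{Z}_2$-homotopy inverse $\rho \colon X' \to X'_\gamma$ to $\kappa''$, following the pattern of Propositions \ref{propgtrivialfortype1}--\ref{propgtrivialfixedtype2}; the existence of such a $\rho$ forces $\nu''$ to be $\mathbb{Z}_2$-nullhomotopic via the standard splitting of the cofibre sequence.

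From the cell structure in Section \ref{sectionkleinsurfacesasz2complexes}, a Real surface of type $(2,1,1)$ lies in the $g-r$ odd case with $g' = 0$, so its two $2$-cells are attached by the $\mathbb{Z}_2$-pair of words
\[\delta_1\,\gamma_2\,\delta_2\,\gamma_2^{-1}\,\gamma_3\,\bin(\delta_2)\,\gamma_3^{-1} \quad \mbox{ and its } \bin\mbox{-image}.\]
Let $D = \delta_1 \cup \delta_2 \cup \bin(\delta_2)$, which is a $\bin$-invariant $1$-subcomplex of $X'$ (since $\delta_1$ is fixed and $\delta_2 \leftrightarrow \bin(\delta_2)$). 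I would collapse $D$ equivariantly to obtain $q \colon X' \to X'/D$. Under $q$, each appearance of a $\delta$ in the two attaching words becomes constant, so both attaching maps reduce to loops of the form $\gamma_2 \gamma_2^{-1} \gamma_3 \gamma_3^{-1}$ and its $\bin$-image, based at the fixed point $*_1$ and obviously nullhomotopic in $X'_\gamma$. Consequently there is a $\mathbb{Z}_2$-homotopy equivalence
\[X'/D \;\simeq_{\mathbb{Z}_2}\; X'_\gamma \vee (S^2 \vee S^2,\sw),\]
the two $2$-spheres being swapped by $\bin$ and wedged at $*_1$.

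Defining $\rho$ as the composition of $q$ with the above equivalence and the equivariant projection onto the first wedge summand yields the desired map: since $X'_\gamma$ is disjoint from the interiors of the collapsed $1$-cells of $D$, the composite $\rho \circ \kappa''$ is $\mathbb{Z}_2$-homotopic to the identity on $X'_\gamma$. The one point to handle carefully is the assembly of the two $2$-cell trivialisations into a genuine $\mathbb{Z}_2$-equivariant splitting, but this is automatic because the attaching words form a $\bin$-pair: any nullhomotopy of $\gamma_2 \gamma_2^{-1} \gamma_3 \gamma_3^{-1}$ transports under $\bin$ to furnish a compatible nullhomotopy of its $\bin$-image, mirroring the equivariant trivialisations used in Propositions \ref{propgtrivialfortype1} and \ref{propgtrivialfortype2}.
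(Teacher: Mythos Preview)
Your argument is correct and follows the same overall strategy as the paper --- build a $\mathbb{Z}_2$-retraction $X'\to X'_\gamma$ by collapsing the $\delta$-cells --- but the execution differs. The paper collapses only $\delta_1$, then pinches a further copy of $(S^1\vee S^1,\sw)$ so that the result becomes a wedge $((\Sigma_1/\!\sim)\vee(\Sigma_1/\!\sim),\sw)$ of two (pinched) tori, and finally invokes the torus projection $\tilde j$ from Proposition~\ref{propgtrivialfortype1}; in other words, it recycles the map $j_3$ wholesale. You instead collapse all of $\delta_1,\delta_2,\bin(\delta_2)$ at once and read off directly that the induced attaching words $\gamma_2\gamma_2^{-1}\gamma_3\gamma_3^{-1}$ (and its $\bin$-image) are null, giving the splitting $X'/D\simeq_{\mathbb{Z}_2} X'_\gamma\vee(S^2\vee S^2,\sw)$ without reference to the earlier $j_r$ machinery. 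Your route is a little more self-contained; the paper's has the virtue of making explicit the parallel with the type $(r-1,r,0)$ case. Either way the retraction lands on the same $X'_\gamma$, so the conclusions coincide.
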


\begin{proof}
 We define a left inverse to $\kappa''$.  In $X'$ collapse the cell $\delta_{1}$ and then collapse a copy of $(S^1\vee S^1,\sw)$ so that $X'/\sim$ is the wedge $((\Sigma_1/\sim) \vee (\Sigma_1/\sim), \sw)$ where $(\Sigma_1/\sim)$ is a torus with $\delta_1$ collapsed. We now project to $(S^1\vee S^1, \sw)$ as we did in the proof of Proposition \ref{propgtrivialfortype1}, in fact, the left inverse is similar to the map $j_3$ from this proposition.
 \end{proof}
  In the following we show that the space $X'/X'_\gamma$ is $\mathbb{Z}_2$-homotopy equivalent to a $(1,1,1)$ Real surface $(X,\bin)$. We first recall the $\mathbb{Z}_2$-decomposition of $(X,\bin)$. The $0$-skeleton $X^0$ is given $3$ zero-cells $*_i$ for $1\leq i \leq 3$.  The one cells are then 
\begin{align*}
\delta_1,\delta,\sigma(\delta), \gamma_2,\sigma(\gamma_2)
\end{align*}
where the fixed circle is represented by $\delta_1$ and $\delta$ joins $*_{2}$ to $*_{3}$ and therefore $\delta\sigma(\delta)$ represents the copy of $(S^1,-\id)$.  The $1$-cell $\gamma_2$ joins $*_1$ to $*_2$ and $\bin(\gamma_2)$ joins $*_1$ to $*_3$.  One of the $2$ two-cells has attaching map
\[\delta_1\gamma_{2}\delta\sigma(\delta)\gamma^{-1}_{2}\]
and we define the other one equivariantly.

On the other hand, the space $X'/X'_\gamma$ has an induced $\mathbb{Z}_2$-complex structure as follows.  There is $1$ zero-cell $*$, to which we attach the one-cells
 \[\delta_1', \delta'\mbox{ and } \bin(\delta').\]
 There are $2$ two-cells, one of which is attached to the above $1$-skeleton via
 \[\delta_1\delta'\bin(\delta')\]
 and the other is glued equivariantly.
However, the sub-complex given by $\gamma_2\cup \bin(\gamma_2)$ of $(X,\sigma)$ is $\mathbb{Z}_2$-contractible and therefore $(X,\sigma)$ is homotopy equivalent to $\mathbb{Z}_2$-complex structure of $X'/X'_\gamma$.

 \begin{proof}[Proof of Theorem \ref{thm211111integral} (2) and (3)] By Proposition \ref{prop211integral}, we obtain the following homotopy equivalences
 \begin{align*}\Sigma X'&\simeq\Sigma X'_\gamma \vee\Sigma X'/X'_\gamma \\
 \Sigma \overline X'&\simeq\Sigma \overline X'_\gamma \vee\Sigma X'/X'_\gamma.
 \end{align*}
 In the first case the factor $\Sigma X'_\gamma$ is the same as the suspension of $(S^1\vee S^1,\sw)$.  We see that collapsing the $0$-skeleton of $\Sigma X'_\gamma$ provides the suspension of $\bigvee_2(S^1\vee S^1,\sw)$ and hence this corresponds to the factor $\Sigma \overline X'_\gamma$ in the second equivalence.  The result follows.
 \end{proof}
 
 \begin{proof}[Proof of Theorem \ref{thm211111integral} (1)]
 We use the $\mathbb{Z}_2$-structure provided after Proposition \ref{prop211integral}. In this $2$-pointed case, we identify the three $0$-cells $*_1,*_2,*_3$ to produce $\overline X$.  Let \[X_\gamma=\gamma_2 \cup \bin (\gamma_2)\] and let $\overline X_\gamma$ be the image in the quotient $\overline X$.  There is a left inverse to the inclusion 
 \[ \overline X_\gamma \hookrightarrow \overline X\]
 using a similar map to $j_2$ in the proof of Proposition \ref{propgtrivialfortype1}.  Therefore there is a homotopy equivalence
 \[\Sigma \overline X \simeq \Sigma \overline X_\gamma \vee \Sigma(\overline X/\overline X_\gamma)\]
 but by the comments after Proposition \ref{prop211integral} the factor $\Sigma(\overline X/\overline X_\gamma )$ is $\mathbb{Z}_2$-homotopy equivalent to the suspension of a Real surface of type $(1,1,1)$. This finishes the proof.
 \end{proof}
\subsubsection{Non-integral Decompositions}\label{subsectionnonintegraldecomp}
By the previous sections, we have reduced our study of the pointed gauge groups to those over Real Surfaces of the following types
\[(0,0,1) \mbox{, }(1,0,1) \mbox{ and } (1,1,1).\]
These spaces seem fundamental in some way and for the single-pointed case we do not obtain any further integral decompositions.  

However, one may expect these spaces to become easier to examine when we choose to invert $2$ since the involution has order $2$ and the $2$-torsion in $O(n)$ vanishes. This turns out to be the case and we will find that localising at a prime $p\neq2$ will prove particularly fruitful. 

In the coming sections we aim to prove Theorem \ref{thmc}, dealing with each part in turn.  The proof of each part is quite laborious, but we only provide full details for part $(1)$.  We outline the main parts of the proof of Theorem \ref{thmc} $(1)$

\begin{itemize}
 \item The existence of the pullback (\ref{diagramhomotopypullbackgg}) gives the existence of the map (\ref{eq:symmetricmatrixfactoring});
 \item We use an argument of \cite{Harrisonthehomotopygroupsoftheclassicalgroups} to prove that (\ref{eq:symmetricmatrixfactoring}) is a $p$-local homotopy equivalence for primes $p\neq 2$;
 \item We calculate the homotopy fibre of $qr$ in (\ref{eq:symmetricmatrixfactoring}).
\end{itemize}
The proofs of Theorem \ref{thmc} (2) and (3) will then invoke similar methods.

\vspace{0.15cm}

\noindent\textbf{Case: $(0,0,1)$}

\vspace{0.1cm}
Let $(S^2,-\id)$ be a Real surface of type $(0,0,1)$. By Proposition \ref{propsinglepointedcomponent}, all of the pointed gauge groups over $(S^2,-\id)$ are homotopy equivalent, so we assume that $(P,\tin)$ is of class $0$. In this section, we aim to prove the following theorem which is a restatement of Theorem \ref{thmc} $(1)$.

\begin{theorem}\label{thmsphereantipointsplitting}
Let $p\neq 2$ be prime and let $n$ be odd, then there is a $p$-local homotopy equivalence
\begin{equation*} \g^*(P,\tilde\sigma)\simeq_p \Omega (U(n)/O(n))\times \Omega^2 (U(n)/O(n)).\end{equation*}
\end{theorem}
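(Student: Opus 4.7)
The plan is to follow the approach of Lemmas \ref{claim:righthandfactor} and \ref{claimsphere}, adapted to the antipodal $\mathbb{Z}_2$-action on $S^2$. First I would express $(S^2, -\id)$ as a $\mathbb{Z}_2$-pushout $D^2 \cup_{(S^1, -\id)} D^2$, with $\mathbb{Z}_2$ swapping the two discs and acting antipodally on the common equator. By Proposition \ref{propsinglepointedcomponent} it suffices to treat a bundle of class $0$. Applying $\Mappe(-, BU(n))$ to the pushout yields a homotopy pullback
\[\xymatrix{\Mappe((S^2, -\id), BU(n)) \ar[r] \ar[d] & \Map^*(D^2, BU(n)) \simeq * \ar[d] \\ \Mappe((S^1, -\id), BU(n)) \ar[r]^-{u} & \Map^*(S^1, BU(n))}\]
and hence by Theorem \ref{thmbaird} identifies $B\g^*((S^2, -\id), P)$ with the homotopy fibre of the forgetful map $u$.

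Next I would identify this map. A pointed equivariant map $(S^1, -\id) \to (BU(n), \varsigma)$ is determined by a path in $BU(n)$ from $*$ to $\varsigma(*)=*$, so $\Mappe((S^1, -\id), BU(n)) \simeq \Omega BU(n) \simeq U(n)$, and similarly $\Map^*(S^1, BU(n)) \simeq U(n)$. Under these identifications $u$ corresponds to the self-map $g \mapsto g\bar g$ of $U(n)$, obtained by doubling a half loop $\gamma$ to the full loop $\gamma * \varsigma\gamma$. This map factors through the symmetric unitaries: writing $r \colon U(n)/O(n) \hookrightarrow U(n)$ for the inclusion of symmetric unitary matrices and $q \colon U(n) \to U(n)/O(n)$ for the projection $g \mapsto g g^T$, the composite $qr \colon U(n)/O(n) \to U(n)/O(n)$ is the squaring map $s \mapsto s^2$. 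An argument of Harrison \cite{Harrisonthehomotopygroupsoftheclassicalgroups} shows that this squaring is a $p$-local homotopy equivalence precisely when $p \neq 2$ and $n$ is odd, which is the source of both parity hypotheses of the theorem.

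Finally I would compute $\operatorname{hofib}(u)$ $p$-locally. Writing $u = m \circ (\id, \varsigma)$ and using primitivity of the generators $x_k \in H^*(U(n))$, one finds that $u^*$ acts as multiplication by $2$ on the $\varsigma$-fixed generators and as zero on the $\varsigma$-anti-fixed ones. Under the standard $p$-local splitting $U(n) \simeq_p O(n) \times U(n)/O(n)$ the $\varsigma$-fixed generators correspond to $H^*(O(n))$ and the $\varsigma$-anti-fixed ones to $H^*(U(n)/O(n))$, and since doubling on $O(n)$ is a $p$-local equivalence for $p \neq 2$, the map $u$ is $p$-locally homotopic to the composite $U(n) \twoheadrightarrow O(n) \hookrightarrow U(n)$. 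The homotopy fibre then fits into a fibration sequence
\[U(n)/O(n) \to \operatorname{hofib}(u) \to \Omega(U(n)/O(n))\]
which splits $p$-locally via Harrison's equivalence for $qr$, giving $\operatorname{hofib}(u) \simeq_p U(n)/O(n) \times \Omega(U(n)/O(n))$; taking loops delivers the desired splitting of $\g^*((S^2, -\id), P)$. The hard part will be producing this $p$-local splitting honestly at the space level rather than just on homotopy groups, which is exactly where Harrison's equivalence for $qr$ is used essentially.
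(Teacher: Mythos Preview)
Your overall architecture is right --- the pullback presentation of $B\g^*(P,\tilde\sigma)$ as a homotopy fibre over $U(n)$, and the appeal to Harris's self-map of $U(n)/O(n)$ --- but two steps do not go through as written.

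First, the map $u(g)=g\bar g$ does \emph{not} land in the symmetric unitaries (a $2\times 2$ check already fails), so ``$u$ factors through the symmetric unitaries'' is false as stated, and the subsequent identification of $u$ as $p$-locally homotopic to $U(n)\twoheadrightarrow O(n)\hookrightarrow U(n)$ is not justified: your cohomology computation of $u^*$ is correct, but a rational or cohomological identification of a self-map of $U(n)$ does not determine its $p$-local homotopy class. What the paper does instead is observe that the \emph{strict} pullback $Q=\{g\in U(n):\bar g=g^{-1}\}$ --- which \emph{is} exactly the symmetric unitaries --- maps canonically into the homotopy pullback $B\g^*(P,\tilde\sigma)$, and that $Q\cong U(n)/O(n)$ via $AO(n)\mapsto AA^t$ (Lemma~\ref{lemmaQhomeomorphictoU(n)/O(n)}). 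The composite $U(n)/O(n)\to B\g^*(P,\tilde\sigma)\xrightarrow{r} U(n)\xrightarrow{q} U(n)/O(n)$ is then literally $AO(n)\mapsto AA^tO(n)$, so Harris's theorem gives the $p$-local section directly, without any need to identify $u$ itself.

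Second, even once $U(n)/O(n)$ is split off, you still owe an identification of the complementary fibre $F$ of $qr$, and your proposed fibration $U(n)/O(n)\to \operatorname{hofib}(u)\to \Omega(U(n)/O(n))$ presupposes the unproved claim about $u$. The paper handles this independently (Proposition~\ref{thmidentifyF}): $F$ is the relative equivariant mapping space $\Mappe\big(((S^2,-\id),(S^1,-\id)),(BU(n),BO(n))\big)$, and a second pullback identifies it with the homotopy fibre of a map $\Map^*(\mathbb{R}P^2,BU(n))\to U(n)$ over $O(n)\hookrightarrow U(n)$. Since $\mathbb{R}P^2$ is $p$-locally contractible for $p\neq 2$, one reads off $F\simeq_p \Omega(U(n)/O(n))$. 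This geometric identification via $\mathbb{R}P^2$ is the ingredient your sketch is missing.
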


Let $u \colon B\g^*(P,\tilde\sigma) \rightarrow \Map^{*2}(D^2,BU(n))$ be the map that restricts to the upper hemisphere of $(S^2,-\id)$ and forgets about equivariance.  Let \[r\colon B\g^*(P,\tilde\sigma) \rightarrow \Mappe((S^1\vee S^1,\sw),BU(n))\] be the map restricting to the $1$-skeleton of $(S^2,-\id)$.  These maps fit into the following pullback
\begin{equation}\label{diagramhomotopypullbackgg}\begin{gathered}\xymatrix{B\g^*(P,\tilde\sigma)\ar[r]^-{u} \ar[d]^-r & \Map^{*2}(D^2,BU(n)) \ar[d]^-{r'} \\ \Mappe((S^1\vee S^1,\sw),BU(n)) \ar[r]^-{u'} & \Map^*(S^1\vee S^1, BU(n))}\end{gathered}\end{equation}
where $r'$ restricts to the $1$-skeleton and $u'$ forgets about equivariance.

Let $\hat\varsigma\colon U(n)\rightarrow U(n)$ denote complex conjugation and note that $u'$ is homotopic to the map $\overline{\Delta}\colon U(n) \rightarrow U(n)\times U(n)$ where $\overline{\Delta}(\alpha)= (\alpha, \hat\varsigma{\alpha})$. Also note that the map $r'$ is homotopic to the map $\Delta^{-1}\colon U(n)\rightarrow U(n)\times U(n)$ where $\Delta^{-1}(\alpha)=(\alpha,\alpha^{-1})$. Let $Q$ be the strict pullback of $\overline{\Delta}$ and $\Delta^{-1}$ as in the following diagram
\begin{equation*}\begin{gathered}\xymatrix{ Q\ar[d]_{\pi_1} \ar[r]^{\pi_2} & U(n) \ar[d]^-{\Delta^{-1}} \\  U(n) \ar[r]^-{\overline{\Delta}} & U(n)\times U(n).}\end{gathered}\end{equation*}
We will see that $Q$ retracts off $B\g^*(P,\tin)$ after inverting the prime $2$.

The map $r'$ in diagram (\ref{diagramhomotopypullbackgg}) is a fibration, and hence this diagram is a homotopy pullback.  Therefore, there is an induced homotopy commuting diagram
\begin{equation}\label{diagram:pullbackmap} \begin{gathered}\xymatrix{Q \ar@/_/[ddr]_{\pi_1}\ar@/^/[drr]^{\pi_2} \ar@{.>}[dr]^{\tilde\pi}\\&B\g^*(P,\tilde\sigma)\ar[r]^-{u} \ar[d]^-r & U(n) \ar[d]^-{\Delta^{-1}} \\ & U(n) \ar[r]^-{\overline{\Delta}} & U(n)\times U(n)}\end{gathered}\end{equation}
where we have replaced the pullback square (\ref{diagramhomotopypullbackgg}) with a homotopy equivalent square.
\begin{lemma}\label{lemmaQhomeomorphictoU(n)/O(n)}
 The pullback $Q$ is homeomorphic to $U(n)/O(n)$.
\end{lemma}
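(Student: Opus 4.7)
The plan is to unwind the pullback definition and then invoke a classical matrix-theoretic identification of $U(n)/O(n)$ with the space of symmetric unitary matrices.

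First I would rewrite the fibred-product condition. The equation $\overline{\Delta}(\alpha) = \Delta^{-1}(\beta)$ reads $(\alpha,\overline{\alpha}) = (\beta,\beta^{-1})$, which forces $\alpha = \beta$ together with $\overline{\alpha} = \alpha^{-1}$. Projecting onto the first factor therefore identifies $Q$ with the subspace $\{\alpha \in U(n) : \overline{\alpha} = \alpha^{-1}\}$ of $U(n)$. Exploiting the defining unitary relation $\alpha\overline{\alpha}^T = I$ gives $\alpha^{-1} = \overline{\alpha}^T$, so the condition $\overline{\alpha} = \alpha^{-1}$ is equivalent to $\alpha^T = \alpha$. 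Hence $Q$ is precisely the space $\mathrm{Sym}(U(n))$ of symmetric unitary matrices.

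It remains to identify $\mathrm{Sym}(U(n))$ with $U(n)/O(n)$ via the map induced by $\alpha \mapsto \alpha\alpha^T$. Well-definedness on cosets is immediate from $gg^T = I$ for $g \in O(n)$, and continuity is clear. For injectivity, if $\alpha\alpha^T = \beta\beta^T$ then $\gamma := \beta^{-1}\alpha$ lies in $U(n)$ and satisfies $\gamma\gamma^T = I$; combining this with $\gamma\overline{\gamma}^T = I$ forces $\overline{\gamma} = \gamma$, placing $\gamma$ in $O(n)$.

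The main obstacle is surjectivity, for which I would invoke the Autonne--Takagi factorization: any complex symmetric matrix $A$ may be written as $A = U\Sigma U^T$ with $U$ unitary and $\Sigma$ a non-negative real diagonal of singular values; when $A$ is itself unitary, the singular values are all $1$, so $\Sigma = I$ and $A = UU^T$ lies in the image. Since $U(n)/O(n)$ is compact and $\mathrm{Sym}(U(n))$ is Hausdorff, the resulting continuous bijection is automatically a homeomorphism, and the lemma follows.
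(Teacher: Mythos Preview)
Your proof is correct and follows essentially the same approach as the paper: both identify $Q$ with the symmetric unitary matrices, use the map $A \mapsto AA^T$ (which coincides with the paper's $A\hat\varsigma(A)^{-1}$ for unitary $A$), invoke Autonne--Takagi for surjectivity, and finish with the compact--Hausdorff argument. Your surjectivity step is marginally cleaner, observing directly that the singular values of a unitary matrix are all $1$ so that $\Sigma = I$, whereas the paper carries along a real diagonal $D$ and takes a square root; but this is a cosmetic difference, not a different route.
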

\begin{proof}
 The pullback $Q$ is the space \[\{ A\in U(n)\mid A^{-1}=\hat\varsigma(A)\}.\] 
 Let $f\colon U(n)\rightarrow Q$ be defined by $f(A)=A\hat\varsigma(A)^{-1}$. For matrices $A\in U(n)$ and $W\in U(n)^{\hat\varsigma}=O(n)$ we have
 \[ (AW)\hat\varsigma(AW)^{-1}=AW\hat\varsigma(W^{-1})\hat\varsigma(A^{-1})=A\hat\varsigma(A)^{-1}\]
 since $\hat\varsigma$ is a homomorphism. Hence $f$ induces a map $f'\colon U(n)/O(n)\rightarrow Q$.  
 
 We show that $f'$ is a bijection. For injectivity,  let $A,B \in U(n)$ and suppose that $A\hat\varsigma (A)^{-1}=B\hat\varsigma(B)^{-1}$, then
 \[ I_n=B^{-1} A\hat\varsigma (A)^{-1} \hat\varsigma(B)=(B^{-1} A)\hat\varsigma({B}^{-1}A)^{-1}\]
 for $I_n\in U(n)$ the identity matrix.  Hence $B^{-1}A\in U(n)^{\hat\varsigma}$ and so $AU(n)^{\hat\varsigma} \equiv BU(n)^{\hat\varsigma}$.  
 
For surjectivity, let $A\in Q$ then $A$ is symmetric and, due to the Autonne–Takagi factorisation (see \cite{youlaanormalform}), there is a unitary matrix $P$ such that $A=PDP^t$ where $D$ is a diagonal matrix with real entries.  Let $\sqrt{D}$ be a diagonal (hence an element of $Q$) matrix in $U(n)$ such that $\sqrt{D}^2 =D$.  We have \[A=P\sqrt{D} \sqrt{D} P^t=P\sqrt{D}\hat\varsigma(P\sqrt{D})^{-1}\] and therefore $f'((P\sqrt{D})O(n))=A$.

The map $f'$ is therefore a continuous bijection, and since $U(n)/O(n)$ is compact and $Q$ is Hausdorff it is a homeomorphism.
\end{proof}

The above diagram and Lemma \ref{lemmaQhomeomorphictoU(n)/O(n)} give the following composition
\begin{equation}\label{eq:symmetricmatrixfactoring} \varphi \colon U(n)/O(n) \xrightarrow{f'} Q \xrightarrow{\tilde \pi} B\g^*(P,\tilde\sigma) \xrightarrow{r} U(n) \xrightarrow{q} U(n)/O(n)\end{equation}
for $q$ the quotient map.  From the properties of $\pi_1$ we see that $\varphi$ is homotopic to a map that sends an element $AO(n)$ to $AA^tO(n)$. For odd $n$, \cite{Harrisonthehomotopygroupsoftheclassicalgroups} showed that the related map 
\begin{equation}\label{eqnharrismap}\begin{gathered}\begin{aligned} SU(n)/SO(n)& \rightarrow SU(n)/SO(n) \\ ASO(n)& \mapsto AA^tSO(n)\end{aligned}\end{gathered}\end{equation}
is a homotopy equivalence when localised at a prime $p\neq 2$. Our aim is to show that the same is true for $\varphi$.

\begin{lemma}\label{lemmaunonunson}
For a prime $p\neq 2$, there is an $p$-local homotopy equivalence
\[U(n)/O(n)\simeq_p U(n)/SO(n).\]
\end{lemma}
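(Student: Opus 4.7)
The plan is to realise the map as a double cover and then apply standard $p$-local algebra. Since $SO(n)$ is a normal subgroup of $O(n)$ with quotient $\mathbb{Z}/2$ (via the determinant), one obtains a fibre bundle
\[ \mathbb{Z}/2 \longrightarrow U(n)/SO(n) \xrightarrow{\ \pi\ } U(n)/O(n). \]

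First I would record the long exact sequence in homotopy groups of this fibration and observe that $\pi_k(\mathbb{Z}/2) = 0$ for all $k \geq 1$. This immediately forces $\pi_k(U(n)/SO(n)) \cong \pi_k(U(n)/O(n))$ for every $k \geq 2$, and for $k=1$ leaves the short exact sequence
\[ 0 \to \pi_1(U(n)/SO(n)) \to \pi_1(U(n)/O(n)) \to \mathbb{Z}/2 \to 0, \]
while on $\pi_0$ both sides are trivial because $U(n)$ is connected. In particular, the kernel and cokernel of $\pi_*(\pi)$ in each degree are $2$-torsion.

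Next I would localise at an odd prime $p$. Because $(\mathbb{Z}/2)_{(p)} = 0$ when $p \neq 2$, the localised long exact sequence shows that $\pi_k(\pi)_{(p)}$ is an isomorphism for all $k$. Both $U(n)/SO(n)$ and $U(n)/O(n)$ are homogeneous spaces of connected compact Lie groups, hence are nilpotent spaces, so $p$-localisation is well-behaved and a map inducing isomorphisms on all $p$-local homotopy groups is a $p$-local homotopy equivalence. Applying this to $\pi$ gives the required $U(n)/SO(n) \simeq_p U(n)/O(n)$.

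The main conceptual point is simply the folklore fact that a double cover becomes an equivalence after inverting $2$. The only subtlety is the standard verification that both quotient spaces are nilpotent (so as to promote a $p$-local isomorphism on homotopy groups to a $p$-local equivalence of spaces); this is immediate from the fact that both are homogeneous spaces of compact connected Lie groups.
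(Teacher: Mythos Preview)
Your argument is correct and follows essentially the same idea as the paper: exploit the double cover $U(n)/SO(n)\to U(n)/O(n)$ and the vanishing of $\mathbb{Z}/2$ after inverting~$2$. The paper packages this slightly differently, writing the classifying fibration
\[
U(n)/SO(n)\longrightarrow U(n)/O(n)\longrightarrow K(\mathbb{Z}/2,1)
\]
(obtained by pulling back $BSO(n)\to BO(n)\to K(\mathbb{Z}/2,1)$ along $U(n)/O(n)\to BO(n)$) and then observing that the base is $p$-locally contractible for odd~$p$; you instead look at the total space side and run the long exact sequence of the $\mathbb{Z}/2$-cover directly. These are equivalent viewpoints. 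Your version is a little more explicit about the localisation step (invoking nilpotence of the homogeneous spaces to pass from a $\pi_*$-isomorphism to a $p$-local equivalence), which the paper leaves implicit in the phrase ``the result immediately follows''; conversely, the paper's pullback diagram makes the comparison with $BSO(n)\to BO(n)$ transparent and avoids having to discuss nilpotence separately.
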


\begin{proof}
Consider the following pullback diagram where the downward arrows represent taking universal covers
\[ \xymatrix{ U(n)/SO(n) \ar[r] \ar[d] & BSO(n)\ar[d] \ar[r] & BU(n) \ar@{=}[d] \\ U(n)/O(n) \ar[r] \ar[d] & BO(n) \ar[d] \ar[r] & BU(n) \\ K(\mathbb{Z}_2, 1) \ar@{=}[r] & K(\mathbb{Z}_2, 1)}.\]
The result immediately follows.
\end{proof}
We now show that $U(n)/SO(n)$ further decomposes into the product \[SU(n)/SO(n)\times S^1.\]The map $BSO(n)\rightarrow BU(n)$ factors through $BSU(n)$. Hence, we obtain the following commutative diagram which defines the maps $i$ and $j$
\begin{equation}\label{eqnunsondecomposition} \begin{gathered}\xymatrix{ & U(n)\ar[d] \ar@{=}[r] & U(n) \ar[d]^-{f} \\ SU(n)/SO(n)\ar@{=}[d] \ar[r]^-i & U(n)/SO(n) \ar[d]\ar[r]^-j & S^1 \ar[d] \\ SU(n)/SO(n) \ar[r] & BSO(n) \ar[d] \ar[r] & BSU(n) \ar[d] \\ & BU(n) \ar@{=}[r] & BU(n).}\end{gathered}\end{equation}
It is not too much more work to show the following lemma.

\begin{lemma}\label{lemmaunsonsunsons1}
There is a homotopy equivalence
\[ \eta\colon  SU(n)/SO(n)\times S^1\xrightarrow{\simeq} U(n)/SO(n) .\]
\end{lemma}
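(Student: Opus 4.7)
The plan is to split the fibration sequence
\[SU(n)/SO(n)\xrightarrow{i} U(n)/SO(n)\xrightarrow{j} S^1\]
appearing as the middle row of diagram (\ref{eqnunsondecomposition}) by exhibiting a continuous section of $j$ and combining it with $i$ via left multiplication. Observe that $j$ is induced by the determinant $f\colon U(n)\to S^1$, which factors through the quotient by $SO(n)$ since $SO(n)\subset\ker f=SU(n)$; its fibre over $1\in S^1$ is exactly $SU(n)/SO(n)$.

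Explicitly, I would pick the set-theoretic section $s\colon S^1\to U(n)$ of $f$ given by $s(z)=\mathrm{diag}(z,1,\dots,1)$, and define
\[\eta\colon SU(n)/SO(n)\times S^1\to U(n)/SO(n),\qquad \eta(xSO(n),z):=s(z)x\cdot SO(n).\]
This is continuous and well-defined, since left-multiplication by a fixed element of $U(n)$ descends to the right-coset quotient: replacing $x$ by $xh$ with $h\in SO(n)$ yields $s(z)(xh)\cdot SO(n)=s(z)x\cdot SO(n)$. By construction $s(1)=I_n$, so $\eta$ restricts to $i$ on $SU(n)/SO(n)\times\{1\}$, while $j\circ\eta(xSO(n),z)=f(s(z))f(x)=z\cdot 1=z$.

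Hence $\eta$ fits into a morphism of fibration sequences
\[\xymatrix{SU(n)/SO(n)\ar@{=}[d]\ar[r] & SU(n)/SO(n)\times S^1\ar[d]^-{\eta}\ar[r]^-{\mathrm{pr}_2} & S^1\ar@{=}[d]\\ SU(n)/SO(n)\ar[r]^-{i} & U(n)/SO(n)\ar[r]^-{j} & S^1,}\]
whose outer vertical maps are identities. Applying the five-lemma to the resulting ladder of long exact sequences of homotopy groups shows that $\eta_*$ is an isomorphism in every degree, and since both source and target are CW complexes (quotients of compact Lie groups) $\eta$ is a homotopy equivalence. The only mild subtlety is the insistence on \emph{left} (rather than right) multiplication by $s(z)$ so that the formula respects the right-coset structure of $U(n)/SO(n)$; once this is done, the verification is routine.
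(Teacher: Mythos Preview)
Your proof is correct and follows essentially the same approach as the paper: the paper too takes a right inverse $l$ of the determinant $f\colon U(n)\to S^1$, composes $l\times i$ with the left $U(n)$-action on $U(n)/SO(n)$, and declares the result a homotopy equivalence. You have simply made the section explicit, verified the morphism of fibrations, and invoked the five-lemma where the paper leaves these details to the reader.
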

\begin{proof}
There is a right inverse $l$ to the map $f$ and there is an action of $U(n)$ on $U(n)/SO(n)$, hence the composition
\[\eta \colon S^1 \times SU(n)/SO(n) \xrightarrow{l\times i} U(n) \times U(n)/SO(n) \xrightarrow{\mbox{\footnotesize `action'}} U(n)/SO(n)\]
is the required homotopy equivalence.
\end{proof}

Let $\varphi$ be the composition in equation (\ref{eq:symmetricmatrixfactoring}) and then define \[s\colon U(n)/SO(n) \rightarrow U(n)/SO(n)\] to be the composition
\[U(n)/SO(n)\xrightarrow{\simeq} U(n)/O(n) \xrightarrow{\varphi} U(n)/O(n)\xrightarrow{\simeq}  U(n)/SO(n)\]
Our aim is to show that $s$ restricts to the factors $SU(n)/SO(n)$ and $S^1$ in a nice enough way.  
\begin{lemma}\label{lemmathereexistmaps}
There exist maps \begin{align*}s''\colon SU(n)/SO(n) &\rightarrow SU(n)/SO(n) \\ s'\colon S^1&\rightarrow S^1\end{align*} such that the following is a homotopy commuting square
\[\xymatrix{SU(n)/SO(n)\times S^1 \ar[r]^{s''\times s'} \ar[d]^{\eta}& SU(n)/SO(n)\times S^1 \ar[d]^\eta\\
U(n)/SO(n)  \ar[r]^s & U(n)/SO(n).} \]
Furthermore, these maps can be chosen such that $s''$ is homotopic to the map
\[ ASO(n)\mapsto AA^t SO(n)\]
and $s'$ is homotopic to the map $x\mapsto x^2$.
\end{lemma}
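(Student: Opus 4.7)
The plan is to directly compute $\tilde s := \eta^{-1}\circ s \circ \eta$ using an explicit choice of section $l$, and then verify that $\tilde s$ splits up to homotopy as a product of two self-maps.  Take $l\colon S^1 \to U(n)$ to be the group homomorphism $l(x) = \operatorname{diag}(x,1,\ldots,1)$, which is a right inverse to $\det$ and satisfies $l(x)^t = l(x)$ and $l(x)l(y) = l(xy)$.  With this choice $\eta([A],x) = [l(x)A]$, and applying the formula $s([B]) = [BB^t]$ yields
\[
s\bigl(\eta([A],x)\bigr) \;=\; [l(x)\,AA^t\,l(x)] \;=\; [l(x^2)\cdot l(x)^{-1}AA^tl(x)] \;=\; \eta\bigl([l(x)^{-1}AA^tl(x)],\,x^2\bigr),
\]
using that $l(x)^{-1}AA^tl(x) \in SU(n)$ since conjugation preserves determinant.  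Thus $\tilde s([A],x) = \bigl([l(x)^{-1}AA^tl(x)],\,x^2\bigr)$.

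Set $s'(x) = x^2$ and $s''([A]) = [AA^t]$; these are exactly the maps asserted by the lemma.  The $S^1$-coordinate of $\tilde s$ coincides with $s'$ on the nose, so the remaining task is to produce a homotopy between the maps $F, F_0 \colon SU(n)/SO(n)\times S^1 \to SU(n)/SO(n)$ given by $F([A],x) = [l(x)^{-1}AA^tl(x)]$ and $F_0([A],x) = [AA^t]$.  Both maps already agree on the sub-wedge $\bigl(SU(n)/SO(n)\times\{1\}\bigr) \cup \bigl(\{[I]\}\times S^1\bigr)$, so their difference is classified by an element of the group $[\Sigma(SU(n)/SO(n)),\,SU(n)/SO(n)]$.

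The main step is to trivialize this obstruction class.  Factor $F([A],x) = q\bigl(C_{l(x)}(g([A]))\bigr)$, where $g\colon SU(n)/SO(n)\to SU(n)$, $g([A]) = AA^t$, is well-defined (since $(AR)(AR)^t = AA^t$ for $R\in SO(n)$), $q\colon SU(n)\to SU(n)/SO(n)$ is the quotient, and $C_B$ denotes conjugation by $B$.  The obstruction is then the class of the loop $x \mapsto q\circ C_{l(x)}\circ g$ in the space of self-maps of $SU(n)/SO(n)$, and this loop is the image of the class of $l$ in $\pi_1(PU(n))$ under the canonical map $PU(n) \to \Map(SU(n)/SO(n),SU(n)/SO(n))$ built from $g$ and $q$.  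The hard part will be exhibiting an explicit nullhomotopy of this image loop.  The strategy is to use the simple connectedness of $SU(n)/SO(n)$ (which follows from the fibration $SO(n) \to SU(n) \to SU(n)/SO(n)$ together with $\pi_1(SU(n)) = 0$) and a smooth contraction within $SU(n)$ of the one-parameter conjugation family, restricted to the symmetric-matrix locus $g(SU(n)/SO(n))$, to absorb the conjugation twist into the quotient $q$.  Once this nullhomotopy is in place it supplies the desired homotopy $F \simeq F_0$ and completes the identification $\tilde s \simeq s''\times s'$.
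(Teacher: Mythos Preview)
Your explicit computation of $\tilde s$ is correct and is a nice concrete version of what the paper leaves implicit: with $l(x)=\operatorname{diag}(x,1,\dots,1)$ one does get $\tilde s([A],x)=\bigl([\,l(x)^{-1}AA^{t}l(x)\,],\,x^{2}\bigr)$, and the $S^{1}$-coordinate is $s'$ on the nose.  The gap is in the last step.  You correctly identify that what remains is a homotopy between $F([A],x)=[\,l(x)^{-1}AA^{t}l(x)\,]$ and $F_{0}([A],x)=[AA^{t}]$ as maps $SU(n)/SO(n)\times S^{1}\to SU(n)/SO(n)$, and you formulate this as the nullhomotopy of a loop in $\operatorname{Map}(SU(n)/SO(n),SU(n)/SO(n))$ obtained as the image of $[l]\in\pi_{1}(PU(n))$.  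But you do not actually produce that nullhomotopy; the sentence beginning ``The strategy is to use the simple connectedness of $SU(n)/SO(n)$ \dots'' is only a plan, and simple connectedness of the \emph{target} does not by itself kill elements of $\pi_{1}$ of a mapping space into it (for instance $\pi_{1}\bigl(\operatorname{Map}(S^{2},S^{2})_{d}\bigr)$ is nonzero).  Your claim that the obstruction lives in $[\Sigma(SU(n)/SO(n)),SU(n)/SO(n)]$ is essentially right for a simple target, but you neither compute that class nor give an argument for its vanishing.

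The paper avoids this issue entirely by proving less.  It defines $s''$ and $s'$ as the restrictions of $\tilde s$ to the two wedge factors $SU(n)/SO(n)\hookrightarrow SU(n)/SO(n)\times S^{1}\hookleftarrow S^{1}$ (projected back), and checks that these restrictions are the maps $[A]\mapsto[AA^{t}]$ and $x\mapsto x^{2}$.  That, together with the observation that the $S^{1}$-projection of $\tilde s$ is $s'\circ\mathrm{pr}_{S^{1}}$, is exactly what is required to set up the ladder of fibrations in diagram~(\ref{eq:restrictstofactors}) and hence to conclude that $s$ is a $p$-local equivalence.  In other words, the paper never needs---and does not actually prove---the full product decomposition $\tilde s\simeq s''\times s'$; it only needs the two squares of the ladder to commute.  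If you want to complete \emph{your} approach, you would have to genuinely trivialise the conjugation loop, and that is an extra (and nontrivial) piece of work beyond what the application requires.
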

\begin{proof}
Let $\tilde s\colon SU(n)/SO(n)\times S^1 \rightarrow SU(n)/SO(n)\times S^1$ be the composition 
\[ SU(n)/SO(n)\times S^1 \xrightarrow{\eta} U(n)/SO(n) \xrightarrow{s} U(n)/SO(n) \xrightarrow{\eta^{-1}} SU(n)/SO(n)\times S^1\]
for a homotopy inverse $\eta^{-1}$ of $\eta$.  Let $\iota\colon SU(n)/SO(n)\rightarrow SU(n)/SO(n) \times S^1$ and $\kappa \colon S^1 \rightarrow SU(n)/SO(n)\times S^1$ be the inclusions.  We note that $\iota$ is homotopic to
\[SU(n)/SO(n)\xrightarrow{i} U(n)/SO(n)\xrightarrow{\eta^{-1}} SU(n)/SO(n)\times S^1\]
where $i$ is as in diagram (\ref{eqnunsondecomposition}).
By the way the homotopy equivalences are defined in Lemmas \ref{lemmaunonunson} and \ref{lemmaunsonsunsons1}, we see that the composition $si$ is homotopic to 
\[BSO(n)\mapsto BB^tSO(n) \mbox{ for } B\in SU(n)\]
and hence the image of this map lands in the image of $i$. We deduce that $\tilde s \iota$ has image in $SU(n)/SO(n)$ and we define
\[s''= \tilde s \iota.\]
 Similarly $\tilde s\kappa$ has image in $S^1$ and we define $s'=\tilde s\kappa$. We see that $s''$ is homotopic to a map defined via $ASO(n)\mapsto AA^tSO(n)$ and that $s'$ is homotopic to the map $x\mapsto x^2$.
\end{proof}
We immediately obtain the following homotopy commuting diagram where the rows are homotopy fibrations
\begin{equation}\label{eq:restrictstofactors} \begin{gathered}\xymatrix{SU(n)/SO(n) \ar[d]^{s''} \ar[r]^i &U(n)/SO(n) \ar[d]^{s} \ar[r]& S^1\ar[d]^{s'} \\ SU(n)/SO(n) \ar[r]^i & U(n)/SO(n) \ar[r] & S^1}\end{gathered}\end{equation}
 By Lemma \ref{lemmathereexistmaps}, the map $s''$ is homotopic to the map in equation (\ref{eqnharrismap}) and hence it is a $p$-local equivalence when $n$ is odd and $p\neq 2$ is a prime. We note that $s'$ is also a $p$-local equivalence. Finally, the spaces in $(\ref{eq:restrictstofactors})$ are connected, hence $s$ is also a $p$-local equivalence.  We are now able to deduce the following.

\begin{prop}\label{proppointedtype0retract}
With notation as in equation (\ref{eq:symmetricmatrixfactoring}), let $F$ be the homotopy fibre of $qr\colon B\g^*(P,\tilde\sigma)\rightarrow U(n)/O(n)$. Then for $n$ odd and for any prime $p\neq 2$, there is a $p$-local homotopy equivalence
\begin{equation*}{\g^*(P,\tilde\sigma)}\simeq_p \Omega(U(n)/O(n))\times \Omega F.\end{equation*}
\end{prop}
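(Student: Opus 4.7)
The plan is to use the composition $\varphi$ from~(\ref{eq:symmetricmatrixfactoring}), which has just been shown to be a $p$-local homotopy equivalence for $n$ odd and $p\neq 2$, in order to peel the factor $\Omega(U(n)/O(n))$ off of $\g^*(P,\tilde\sigma)$. First I apply the loop functor to the homotopy fibration $F \to B\g^*(P,\tilde\sigma) \xrightarrow{qr} U(n)/O(n)$, obtaining
\[\Omega F \xrightarrow{\Omega i} \g^*(P,\tilde\sigma) \xrightarrow{\Omega(qr)} \Omega(U(n)/O(n)),\]
and observe that the composite $\Omega(qr)\circ\Omega(\tilde\pi\circ f')$ equals $\Omega\varphi$, a $p$-local equivalence. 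In particular $\Omega(\tilde\pi\circ f')$ furnishes a $p$-local section of $\Omega(qr)$ up to a self-equivalence of the base.

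Next I exploit the topological group structure on $\g^*(P,\tilde\sigma)$ to combine the fibre inclusion and the partial section into a single map
\[\Phi\colon \Omega F \times \Omega(U(n)/O(n)) \longrightarrow \g^*(P,\tilde\sigma), \qquad (\alpha,\beta)\longmapsto \Omega i(\alpha)\cdot\Omega(\tilde\pi\circ f')(\beta),\]
where the dot denotes pointwise multiplication. This is the candidate splitting, and the goal is to show it is a $p$-local homotopy equivalence. Because the gauge group is a topological group, its homotopy groups are abelian and $\Phi_*$ is the sum $(\Omega i)_* + (\Omega(\tilde\pi\circ f'))_*$ on each $\pi_n$. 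Since $(\Omega qr)_{*,(p)}\circ(\Omega(\tilde\pi\circ f'))_{*,(p)} = (\Omega\varphi)_{*,(p)}$ is an isomorphism, the map $(\Omega qr)_{*,(p)}$ is surjective on every $\pi_n$, so the connecting homomorphisms of the long exact sequence of the fibration vanish after $p$-localisation, producing split short exact sequences
\[0 \to \pi_n(\Omega F)_{(p)} \xrightarrow{(\Omega i)_*} \pi_n(\g^*(P,\tilde\sigma))_{(p)} \xrightarrow{(\Omega qr)_*} \pi_n(\Omega(U(n)/O(n)))_{(p)} \to 0.\]

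A short diagram chase using $(\Omega\varphi)_{*,(p)}^{-1}$ on the right-hand factor then shows $\Phi_{*,(p)}$ is bijective on every $\pi_n$: if $\Phi_*(\alpha,\beta)=0$, applying $(\Omega qr)_*$ forces $\beta=0$, and then $\alpha=0$ by injectivity of $(\Omega i)_{*,(p)}$; surjectivity follows by writing any $x$ as $(\Omega i)_*(\alpha) + (\Omega(\tilde\pi\circ f'))_*(\beta)$ with $\beta=(\Omega\varphi)_{*,(p)}^{-1}((\Omega qr)_*x)$. Since all three spaces are loop spaces, hence nilpotent CW complexes, standard $p$-local Whitehead theory promotes this to a $p$-local homotopy equivalence. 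The main obstacle is the homotopy-group comparison, but once the vanishing of the connecting maps is established from the existence of the partial section $\Omega(\tilde\pi\circ f')$, the remainder is routine.
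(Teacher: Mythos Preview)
Your proof is correct and follows essentially the same approach as the paper. The paper's own argument is a one-sentence observation that $\tilde\pi f'$ gives a $p$-local homotopy section of the fibration $F \to B\g^*(P,\tilde\sigma) \xrightarrow{qr} U(n)/O(n)$, from which the product decomposition ``follows''; you have simply unpacked what ``follows'' means, looping once and using the $H$-space multiplication on $\g^*(P,\tilde\sigma)$ together with the $p$-local Whitehead theorem to build the explicit equivalence.
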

\begin{proof}
 Recall the maps $f'$ and $\tilde \pi$ from equation ($\ref{eq:symmetricmatrixfactoring}$).  Then the above discussion has shown that $\tilde \pi f'$ provides a $p$-local homotopy section to the homotopy fibration
 \[ F \rightarrow B\g^*(P,\tilde\sigma)\xrightarrow{qr} U(n)/O(n)\]
 and the result follows.
\end{proof}

Therefore, to prove Theorem \ref{thmsphereantipointsplitting} it only remains to identify the fibre $F$. 
\begin{prop} \label{thmidentifyF}
For any prime $p\neq 2$, there is a $p$-local homotopy equivalence
\[ F\simeq_p \Omega (U(n)/O(n)).\]
\end{prop}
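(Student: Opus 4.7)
The plan is to identify $F$ by first exhibiting $r\colon B\g^*(P,\tilde\sigma)\to U(n)$ as a homotopy fibre, then pulling this fibre sequence back along $O(n)\hookrightarrow U(n)$, and finally exploiting the $p$-local behaviour of the squaring map on $O(n)$.

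First, I would observe that $\Delta^{-1}\colon U(n)\to U(n)\times U(n)$, $\beta\mapsto(\beta,\beta^{-1})$, is an honest fibre over the identity of the multiplication map $m\colon U(n)\times U(n)\to U(n)$, and hence also a homotopy fibre. Since the square in diagram (\ref{diagram:pullbackmap}) is a homotopy pullback, it follows that $r$ is the homotopy fibre of the composite $\tau:=m\circ\overline{\Delta}\colon U(n)\to U(n)$, which is given explicitly by $\tau(\alpha)=\alpha\,\hat\varsigma(\alpha)$. Because $O(n)$ is itself the fibre of $q\colon U(n)\to U(n)/O(n)$, pulling $r$ back along the inclusion $i_1\colon O(n)\hookrightarrow U(n)$ yields a space homotopy equivalent to $F$, and exhibits $F$ as the homotopy fibre of $\tau\circ i_1\colon O(n)\to U(n)$.

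Second, since $\hat\varsigma$ restricts to the identity on $O(n)$, the composite $\tau\circ i_1$ is simply the map $\alpha\mapsto\alpha^2$, and so factors as
\[
O(n)\xrightarrow{\,\mathrm{sq}\,} O(n)\xrightarrow{\,i_1\,} U(n).
\]
The standard composition rule for homotopy fibres then yields a fibration $\mathrm{fib}(\mathrm{sq})\to F\to\mathrm{fib}(i_1)$, and looping the fibration $O(n)\to U(n)\to U(n)/O(n)$ identifies $\mathrm{fib}(i_1)$ with $\Omega(U(n)/O(n))$.

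Finally, the key step is to show that $\mathrm{sq}\colon O(n)\to O(n)$ is a $p$-local homotopy equivalence whenever $p\neq 2$. This follows because squaring on any topological group is the composite $\mu\circ\Delta$, and therefore induces multiplication by $2$ on $\pi_n$ for $n\geq 1$; on $O(n)$ this is a $p$-local isomorphism when $p$ is odd, and the $\mathbb{Z}/2$ in $\pi_0(O(n))$ is killed $p$-locally. Consequently $\mathrm{fib}(\mathrm{sq})$ is $p$-locally weakly contractible, and the fibration above forces $F\simeq_p\Omega(U(n)/O(n))$. The only real subtlety I anticipate is carefully tracking which squares are honest pullbacks and which are only homotopy pullbacks when propagating the fibre identification out of (\ref{diagram:pullbackmap}), but this is bookkeeping rather than a conceptual hurdle.
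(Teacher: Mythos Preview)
Your argument is correct and takes a genuinely different route from the paper's. The paper first identifies $F$ geometrically as the relative equivariant mapping space $\Mappe\big(((S^2,-\id),(S^1,-\id)),(BU(n),BO(n));0\big)$, and then builds a second homotopy pullback square whose upper-right corner is $\Mappe((S^2,-\id),(BU(n),\id);0)\cong\Map^*(\mathbb{R}P^2,BU(n);0)$; since $\mathbb{R}P^2$ is $p$-locally contractible for odd $p$, that corner vanishes and $F$ is left as the fibre of $O(n)\hookrightarrow U(n)$. You instead read the square in diagram~(\ref{diagram:pullbackmap}) as exhibiting $r$ itself as the homotopy fibre of $\tau=m\circ\overline{\Delta}$, and then chase fibres purely algebraically to reach $F\simeq\mathrm{hofib}(i_1\circ\mathrm{sq})$, finishing with the observation that the $H$-space squaring map on $O(n)$ is multiplication by $2$ on homotopy. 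The paper's approach keeps the orbit space $\mathbb{R}P^2=S^2/(-\id)$ visible, which is what lets it generalise transparently to the type $(1,0,1)$ case (Theorem~\ref{thmtorusantipointsplitting}), where the Klein bottle plays the analogous role. Your approach is shorter, avoids the auxiliary map $T$ and the second pullback entirely, and needs only standard fibre-of-a-composite manipulations; it would also adapt to the $(1,0,1)$ case, though the bookkeeping in identifying the analogue of $\tau$ there is a little heavier. Both arguments ultimately amount to the same ``mod~$2$ obstruction disappears at odd primes'' phenomenon, expressed in different language.
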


\begin{proof}
The map $qr$ from equation (\ref{eq:symmetricmatrixfactoring}) is defined as a composition, hence there is a homotopy commutative diagram
\[\xymatrix{F \ar[d] \ar[r] & B\g^*(P,\tilde\sigma)  \ar[r]^{qr} \ar[d]^r & U(n)/O(n) \ar@{=}[d] \\ O(n) \ar[r] & U(n) \ar[r]^-q & U(n)/O(n)}\]
where the left square is a homotopy pullback square. The map $r$ is a fibration since it is induced by $i\colon (S^1,-\id) \hookrightarrow (S^2,-\id)$; the inclusion of the meridian copy of $(S^1,-\id)$ into $(S^2,-\id)$. Therefore the space $F$ is homotopy equivalent to the strict pullback of $O(n)\rightarrow U(n) \xleftarrow{r} B\g^*(P,\tin)$, which is the relative mapping space \[\Mappe\Big(\big((S^2,-\id),(S^1,-\id)\big),\big(BU(n),BO(n)\big);0\Big).\]

We will associate another pullback square with this description of $F$.  There is a map $T\colon F\rightarrow \Mappe((S^2,-\id),(BU(n),\id);0)$ given by
\[T(f)(x)=\begin{cases} f(x) & \text{for } x \text{ in the upper hemisphere including equator;} \\
f(-\id(x)) & \text{for } x \text{ in the lower hemisphere excluding equator.} \\
\end{cases} \]
Let $i\colon (S^1,-\id) \hookrightarrow (S^2,-\id)$ be defined as above, then $i$ induces the following homotopy pullback diagram
\[ \xymatrix{ F\ar[r]^-T \ar[d]_{i^*} \pullbackcorner &\Mappe((S^2,-\id),(BU(n),\id);0) \ar[d]^{i^*} \\ O(n) \ar@{^{(}->}[r] & U(n)}.\]
There is a homeomorphism \[\Mappe((S^2,-\id),(BU(n),\id);0)\cong \Map^*(\mathbb{R}P^2,BU(n);0)\]
but for a prime $p\neq 2$ the space $\mathbb{R}P^2$ is $p$-locally contractible.  Therefore, $p$-locally, we have identified the space $F$ as the fibre of the inclusion $O(n)\rightarrow U(n)$ and the result follows.
\end{proof}
\begin{proof}[Proof of Theorem \ref{thmsphereantipointsplitting}]
Use Propositions \ref{proppointedtype0retract} and \ref{thmidentifyF}.
\end{proof}

\vspace{0.15cm}

\noindent\textbf{Case: $(1,0,1)$}

\vspace{0.1cm}
Let $(T,\tau)$ be a Real surface of type $(1,0,1)$ and since all pointed gauge groups over $(T,\tau)$ are homotopy equivalent, we restrict to the case where $(P,\tilde\sigma)$ is a bundle of class $0$ over $(T,\tau)$.  We will use similar techniques to the even genus case to obtain the following theorem, which is a restatement of Theorem \ref{thmc} $(2)$.
\begin{theorem}\label{thmtorusantipointsplitting}
For any prime $p\neq 2$ and $n$ odd, there is a $p$-local homotopy equivalence
\begin{equation*} \g^*(P,\tilde\sigma)\simeq_p \Omega (U(n)/O(n))  \times \Omega^2 (U(n)/O(n)) \times \Omega U(n).\end{equation*}
\end{theorem}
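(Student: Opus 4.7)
The plan is to reduce Theorem \ref{thmtorusantipointsplitting} to the already-proven sphere case (Theorem \ref{thmsphereantipointsplitting}) via an integral $\mathbb{Z}_2$-equivariant stable splitting of $(T,\tau)$. More precisely, I would first establish the integral equivalence
\[
\g^*((1,0,1);0) \simeq \Omega U(n)\times \g^*((0,0,1);0),
\]
and then invoke Theorem \ref{thmsphereantipointsplitting} on the sphere factor.

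Recall from Section \ref{sectionkleinsurfacesasz2complexes} that $(T,\tau)$ has $\mathbb{Z}_2$-CW structure with 0-skeleton $\{*,\sigma(*)\}$, 1-cells $\gamma,\sigma(\gamma),\delta,\sigma(\delta)$ (with $\gamma$ joining the two 0-cells and $\delta$ a loop at $*$), and two 2-cells attached equivariantly by the word $\delta\gamma\sigma(\delta)\gamma^{-1}$. Set $\bar T = T/\{*,\sigma(*)\}$ and consider the $\mathbb{Z}_2$-subcomplex $\bar A = \gamma\vee\sigma(\gamma)\cong (S^1\vee S^1,\sw)$. The first step is to construct a $\mathbb{Z}_2$-equivariant retraction $\rho\colon \bar T \to \bar A$ of the inclusion in two stages: (i) collapse the $\mathbb{Z}_2$-subspace $\delta\vee\sigma(\delta)$; the attaching word then reduces to the null-homotopic $\gamma\gamma^{-1}$, so the quotient is $\mathbb{Z}_2$-homotopy equivalent to $\bar A\vee(S^2\vee S^2,\sw)$; (ii) collapse the remaining $(S^2\vee S^2,\sw)$. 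Both stages are quotients by $\mathbb{Z}_2$-subspaces and fix $\bar A$ pointwise.

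The second step is to identify $\bar T/\bar A$ with $\overline{(S^2,-\id)}$ as $\mathbb{Z}_2$-CW complexes: collapsing $\bar A$ in $\bar T$ produces one basepoint, two loops $\delta,\sigma(\delta)$, and two 2-cells attached equivariantly by $\delta\sigma(\delta)$, which is precisely the $\mathbb{Z}_2$-CW structure of $\overline{(S^2,-\id)}$ after relabelling $\delta\leftrightarrow\gamma$. The existence of $\rho$ then implies, via the standard Puppe-sequence argument, a pointed $\mathbb{Z}_2$-equivariant stable wedge splitting $\Sigma\bar T\simeq_{\mathbb{Z}_2}\Sigma\bar A\vee\Sigma(\bar T/\bar A)$. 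Applying $\Mappe^*(-,BU(n))$ together with Lemma \ref{lemma:adjointeqloopsus} converts this into the integral product decomposition
\[
\g^*((1,0,1);0)\simeq \Omega\Mappe^*(\bar A,BU(n))\times \g^*((0,0,1);0)\simeq \Omega U(n)\times \g^*((0,0,1);0).
\]
Path components line up correctly since $[\bar A,BU(n)]_{\mathbb{Z}_2,*}=0$, so the bundle class $c\in 2\mathbb{Z}$ is carried entirely by the sphere factor.

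Finally, applying Theorem \ref{thmsphereantipointsplitting} $p$-locally (for $p\neq 2$ and $n$ odd) to the sphere factor yields the claimed decomposition. The hardest step will be verifying the $\mathbb{Z}_2$-equivariance of the retraction and tracking the reduction of the attaching map after each collapse; these are straightforward from the explicit $\mathbb{Z}_2$-CW structures but merit care. An alternative route, more faithful to the preamble of Theorem \ref{thmsphereantipointsplitting}, would be to parallel the pullback construction there directly — identifying the analogue of $Q$ as $\{(A,B)\in U(n)^2 : AB\bar AB^{-1}=I\}$, using Harris's $p$-local self-equivalence $A\mapsto A\bar A$ of $U(n)/O(n)$ together with an unconstrained $\gamma$-factor to produce a $p$-local retraction, and computing the fibre — but the stable-splitting reduction above seems cleaner.
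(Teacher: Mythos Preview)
Your approach is correct and genuinely different from the paper's. The paper proves Theorem~\ref{thmtorusantipointsplitting} by paralleling the pullback construction of the sphere case directly: it restricts to the upper cylinder $C\subset(T,\tau)$ to build a homotopy pullback, identifies the strict pullback $Q$ again as the symmetric unitary matrices $\cong U(n)/O(n)$, uses Harris's $p$-local self-equivalence to retract $U(n)/O(n)$ off $B\g^*((1,0,1);0)$, and then identifies the fibre $F$ via a second pullback and the observation that the Klein bottle $K$ is $p$-locally an $S^1$, so that $\Omega\Map^*(K,BU(n);0)\simeq_p\Omega U(n)$.

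By contrast, you obtain a strictly stronger \emph{integral} statement
\[
\g^*((1,0,1);0)\ \simeq\ \Omega U(n)\times \g^*((0,0,1);0)
\]
by exhibiting a $\mathbb{Z}_2$-retraction $\bar T\to(S^1\vee S^1,\sw)$ and recognising the cofibre as $\overline{(S^2,-\id)}$, then simply quoting Theorem~\ref{thmsphereantipointsplitting}. This is cleaner: it avoids re-running the Harris argument and the Klein-bottle fibre computation entirely, and it shows that the two rows of Theorem~\ref{thma} for type $(g,0,1)$ actually coincide, contrary to the paper's remark that $(1,0,1)$ ``seems fundamental'' and admits no further integral decomposition. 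What the paper's route buys is uniformity of method with the $(0,0,1)$ case and a template that generalises to the Quaternionic setting; what yours buys is an integral improvement with essentially no new ingredients. Your closing alternative is indeed close in spirit to what the paper does, though the paper's $Q$ is again just the symmetric matrices in $U(n)$ rather than a subset of $U(n)^2$.
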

\begin{proof}
Let $u \colon B\g^*(P,\tilde\sigma) \rightarrow \Map^*(C,BU(n))$ be the map that forgets about equivariance and restricts to the upper half of $(T,\tau)$ which is homeomorphic to a cylinder $C$.  Let $i$ be the inclusion of the boundary circles of $C$, then $i$ induces a pullback
\begin{equation}\label{eqnhomotopypullback101}\begin{gathered}\xymatrix{B\g^*(P,\tilde\sigma)\ar[r]^-{u} \ar[d]^-r & \Map^{*}(C,BU(n)) \ar[d]^-{r'} \\ \Mappe((S^1\vee S^1,\sw),BU(n)) \ar[r]^-{u'} & \Map^*(S^1\amalg S^1, BU(n))}\end{gathered}\end{equation}
where $r'=i^*$ and $r$ is the restriction to the one-skeleton of $(X,\bin)$.

In a similar fashion to the way we obtained diagram $(\ref{diagram:pullbackmap})$, we replace $(\ref{eqnhomotopypullback101})$ with a homotopy equivalent square and obtain the diagram
\begin{equation*}\begin{gathered}\xymatrix{Q \ar@/_/[ddr]\ar@/^/[drr] \ar@{.>}[dr]\\&B\g^*(P,\tilde\sigma)\ar[r]^-{u} \ar[d]^-r & U(n) \ar[d]^-{\Delta^{-1}} \\ & U(n) \ar[r]^-{\overline{\Delta}} & U(n)\times LBU(n).}\end{gathered}\end{equation*}
Here $LBU(n)$ denotes the free loop space of $U(n)$ and $Q$ is the strict pullback of the diagram
\[ U(n) \xrightarrow{\overline\Delta} U(n)\times LBU(n) \xleftarrow{\Delta^{-1}} U(n)\]
and hence $Q$ is again the symmetric matrices in $U(n)$. We deduce that $U(n)/O(n)$ also $p$-locally retracts off $B\g^*(P,\tilde\sigma)$. 

It is clear that, as in the even case, there is similar description for the fibre $F$ of the map $B\g^*(P,\tilde\sigma) \rightarrow U(n)/O(n)$. The space $F$ fits into the following pullback diagram
\[ \xymatrix{ F\ar[r] \ar[d] \pullbackcorner &\Mappe((T,\tau),(BU(n),\id);0) \ar[d]^{\bar{r}} \\
O(n) \ar@{^{(}->}[r] & U(n).}\]

We note that if we let $K$ be a Klein bottle then there is an homeomorphism \[\Mappe((T,\tau),(BU(n),\id);0)\cong \Map^*(K,BU(n);0)\]
The map $\bar{r}$ is induced by the inclusion $S^1\hookrightarrow K$ which on fundamental groups induces the quotient 
\begin{align*}\mathbb{Z} & \rightarrow  \mathbb{Z}\times \mathbb{Z}_2 \\ a & \mapsto (0,[a]_2)\end{align*} onto the right factor.  We see that for a prime $p\neq 2$, the map $\bar{r}$ is $p$-locally nullhomotopic and we obtain
\[ \Omega F \simeq_p \Omega^2 (U(n)/O(n)) \times \Omega\Map^*(K,BU(n);0).\]
Now for $p\neq 2$ prime we have a $p$-local homotopy equivalence $K\simeq_p S^1$ because $K$ is a $K(\mathbb{Z}\times \mathbb{Z}_2,1)$. Therefore, the space $\Omega \Map^*(K,BU(n);0)$ is homotopy equivalent to $\Omega U(n)$ when localised away from $2$ and Theorem \ref{thmtorusantipointsplitting} follows.
\end{proof}
\vspace{0.15cm}

\noindent\textbf{Case: $(1,1,1)$}

\vspace{0.1cm}
Let $(X,\bin)$ be a Real surface of type $(1,1,1)$.  For convenience we choose $(P,\tilde\sigma)$ to be a bundle of class $(0,0)$ over $(X,\bin)$. We use a very similar method to the previous sections to prove the following theorem. This theorem is a more general statement than Theorem \ref{thmc} $(3)$, whose statement claims to only be valid for odd $n$.

\begin{theorem}\label{thm111pointed}
For any prime $p\neq 2$, there is a $p$-local homotopy equivalence
\begin{equation*} \g^*(P,\tilde\sigma)\simeq_p \g^*((S^2,-\id);0)\times \Omega O(n).\end{equation*}
\end{theorem}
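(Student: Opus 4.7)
The plan is to exploit a $\mathbb{Z}_2$-cofibration that collapses the fixed circle $\delta_1$ of $(X,\bin)$, producing a homotopy fibration of gauge groups, and then to split this fibration $p$-locally using a map supplied by the Möbius band structure of $X/\bin$. By Proposition \ref{propsinglepointedcomponent} we may assume throughout that $(P,\tin)$ is of class $(0,0)$.

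The fixed set of a type $(1,1,1)$ surface is the single circle $\delta_1$, and collapsing it yields a pointed $\mathbb{Z}_2$-cofibration
\[ (S^1,\id) \hookrightarrow (X,\bin) \twoheadrightarrow \bar X. \]
Passing to $\mathbb{Z}_2$-quotients, $\bar X/\bin$ is the Möbius band $X/\bin$ with its boundary $\delta_1$ collapsed, hence $\mathbb{R}P^2$. Since the only non-trivial double cover of $\mathbb{R}P^2$ is $(S^2,-\id)$, the cofibre $\bar X$ is $\mathbb{Z}_2$-equivalent to $(S^2,-\id)$. Applying $\Mappe(-,BU(n);0)$ and invoking Theorem \ref{thmbaird} then gives a homotopy fibration
\[ B\g^*((0,0,1);0) \to B\g^*(P,\tin) \xrightarrow{r} \Mappe((S^1,\id),BU(n);0), \]
whose base is the identity component of $\Omega BO(n)$ and hence homotopy equivalent to $SO(n)$.

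Next, construct an equivariant map $\lambda\colon (X,\bin)\to (S^1,\id)$ as the composite
\[ X \twoheadrightarrow X/\bin = M \xrightarrow{\rho} S^1, \]
where $\rho$ is the deformation retraction of the Möbius band $M$ onto its core circle. Because the boundary $\partial M = \delta_1$ double-covers the core, the restriction $\lambda|_{\delta_1}$ is the degree-$2$ self-map of $S^1$. Setting $\psi=\lambda^*$, the composite $r\circ\psi$ is induced by this degree-$2$ map, which under the identification with $SO(n)$ is precisely the squaring map $g\mapsto g^2$.

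For any prime $p\neq 2$, squaring on the connected H-space $SO(n)$ induces multiplication by $2$ on every $\pi_k$, which is invertible in $\mathbb{Z}_{(p)}$; hence $r\circ\psi$ is a $p$-local self-equivalence and $\psi$ is a $p$-local homotopy section of $r$. Since $\g^*(P,\tin) = \Omega B\g^*(P,\tin)$ is a group-like H-space, the standard splitting argument gives a $p$-local equivalence
\[ \g^*((0,0,1);0) \times \Omega O(n) \xrightarrow{\simeq_p} \g^*(P,\tin), \qquad (f,g) \mapsto f \cdot \Omega\psi(g), \]
whose restriction to each factor is an equivalence onto, respectively, the fibre and a section of the looped fibration. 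The main obstacle is verifying rigorously from the CW description of Section \ref{sectionkleinsurfacesasz2complexes} that collapsing $\delta_1$ really produces $(S^2,-\id)$; thereafter the construction of $\lambda$ and the squaring-map analysis are routine. Notably, no parity hypothesis on $n$ is needed, which is why this statement strengthens Theorem \ref{thmc} (3) by dispensing with the odd-$n$ restriction.
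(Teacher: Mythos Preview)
Your overall architecture matches the paper's: both restrict to the fixed circle $\delta_1$ to obtain a fibration with base $SO(n)$ and fibre $B\g^*((0,0,1);0)$, and both split it $p$-locally by producing a map whose composite with $r$ is the squaring map on $SO(n)$. Your section is built geometrically from the M\"obius retraction $X\to X/\bin=M\to S^1$, whereas the paper constructs its section $f_3$ abstractly from the homotopy pullback property of a square over $U(n)\times U(n)\times U(n)$, reading off that $p_1 r f_3(X)=X^{-2}$ from explicit matrix formulae. Your construction is more transparent and avoids that pullback machinery.

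There is, however, a genuine error in your identification of the cofibre. You argue that $\bar X/\bin\cong\mathbb{R}P^2$ and then invoke ``the only non-trivial double cover of $\mathbb{R}P^2$ is $(S^2,-\id)$'' to conclude $\bar X\simeq_{\mathbb{Z}_2}(S^2,-\id)$. But the involution on $\bar X=X/\delta_1$ is not free: the image of $\delta_1$ is a single \emph{fixed} point, so $\bar X\to\bar X/\bin$ is not a covering map and $\bar X$ cannot be $\mathbb{Z}_2$-equivalent to the fixed-point-free space $(S^2,-\id)$. What is true is that $\bar X$ coincides with $\overline{(S^2,-\id)}$, the type $(0,0,1)$ surface with its two zero-cells $*,\bin(*)$ identified: in the simplified CW structure both have one fixed $0$-cell, one-cells $\delta,\bin(\delta)$ swapped by the involution, and two-cells attached along $\delta\bin(\delta)$. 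Since $\g^*((0,0,1);0)=\g^{*(r+a)}((0,0,1);0)$ (here $r+a=1$), Theorem \ref{thmbaird} gives $\Mappe(\bar X,BU(n);0)\simeq B\g^*((0,0,1);0)$ and your fibration is correct. So the conclusion survives, but the double-cover justification must be replaced by this CW comparison (or by the paper's one-line observation that the fibre of ``restrict to the fixed set'' is $B\g^*((0,0,1);0)$).
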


\begin{proof}
We first recall the $\mathbb{Z}_2$-decomposition of $(X,\bin)$. The $0$-skeleton $X^0$ is given $3$ zero-cells $*_i$ for $1\leq i \leq 3$.  The one cells are then 
\begin{align*}
\delta_1,\delta,\sigma(\delta), \gamma_2,\sigma(\gamma_2)
\end{align*}
where the fixed circle is represented by $\delta_1$ and $\delta$ joins $*_{2}$ to $*_{3}$ and therefore $\delta\sigma(\delta)$ represents the copy of $(S^1,-\id)$.  The $1$-cell $\gamma_2$ joins $*_1$ to $*_2$ and $\bin(\gamma_2)$ joins $*_1$ to $*_3$.  One of the $2$ two-cells has attaching map
\[\delta_1\gamma_{2}\delta\sigma(\delta)\gamma^{-1}_{2}\]
and we define the other one equivariantly.

Since the subspace $\gamma_2\cup\bin(\gamma_2)$ is $\mathbb{Z}_2$-contractible, we amend the above decomposition to have only $3$ one-cells $\delta_1, \delta, \bin(\delta)$ and amend the attaching map to
\[\delta_1\delta\bin(\delta).\]

We obtain a pullback similar to that of the previous section
\[\xymatrix{B\g^*(P,\tilde\sigma)\ar[r]^-{u} \ar[d]^-r & \Map^{*3}(D^2,BU(n)) \ar[d]^-{r'} \\ \Mappe((S^1,\id)\vee (S^1\vee S^1,\sw),BU(n);w_1) \ar[r]^-{u'} & \Map^*(S^1\vee S^1\vee S^1, BU(n))}\]
where $r$ is the restriction to the one-skeleton of $(X,\bin)$ and $u$ restricts to one of the two-cells and forgets about equivariance.

In a similar fashion to the way we obtained diagram (\ref{diagram:pullbackmap}), we obtain the diagram
\begin{equation}\label{eqn111pullback}\xymatrix{O(n) \ar@/_/[ddr]_{f_1} \ar@/^/[drr]^{f_2} \ar@{.>}[dr]^{f_3}\\&B\g^*(P,\tilde\sigma)\ar[r]^-{u} \ar[d]^-r & U(n)\times U(n) \ar[d]^-{r'} \\ & SO(n)\times U(n) \ar[r]^-{u'} & U(n)\times U(n)\times U(n)}\end{equation}
where $f_1, f_2$ and $f_3$ are to be defined momentarily.

The map $r'\colon U(n) \times U(n) \rightarrow U(n) \times U(n) \times U(n)$ is the map
\[r'(A,B)=(B^{-1}A^{-1},A,B)\]
and the map $u'\colon SO(n)\times U(n) \rightarrow U(n)\times U(n) \times U(n)$ is the map
\[u'(C,D)=(C,D, \overline{D}).\]
We can therefore define maps $f_1 \colon O(n) \rightarrow SO(n)\times U(n)$ and $f_2\colon O(n) \rightarrow U(n) \times U(n)$ by
\[ f_1(X)=(X^{-2}, X) \mbox{ and } f_2(Y)=(Y,Y)\]
such that $u'f_1=r'f_2$.  Since \ref{eqn111pullback} is a homotopy pullback, there exists a map \[f_3 \colon O(n) \rightarrow B\g^*(P,\tilde\sigma)\] such that the composition
\[ \chi \colon O(n) \xrightarrow{f_3} B\g^*(P,\tilde\sigma) \xrightarrow{r} O(n)\times U(n)\xrightarrow{p_1}O(n)\]
sends an element $X$ to $X^{-2}$.  Then observe that $\chi$ has image lying in $SO(n)$ and therefore when $\chi$ is restricted to $SO(n)$, it is the inverse of the $H$-space squaring map.  We conclude that restriction of $\chi$ to $SO(n)$ is a $p$-local homotopy equivalence for $p\neq 2$ and therefore $SO(n)$ retracts off $B\g(P,\tilde\sigma)$.  

The map $p_1r$ is just the restriction to the fixed points of the involution.  Hence the fibre of this map is the space $B\g^*((0,0,1);0)$ which we have already studied. We finish by noting that $\Omega SO(n)$ and $\Omega O(n)$ are homeomorphic.
\end{proof}
\subsection{Unpointed Gauge Groups}\label{sectionunpointedgaugegroups}
In the last section, we showed that certain trivialities of the attaching map of the top cells of $X$ led to homotopy decompositions in the pointed case. We will see that these decompositions somewhat extend to the unpointed case.

\subsubsection{Integral Decompositions}\label{subsectionunpointedintegraldecomp}
Let $(X,\bin)$ be a Real surface of type $(g,r,a)$.  In the following proposition $g'$ will denote the number of $\alpha_i$ and $\beta_i$ cells in the description of $(X,\sigma)$ in Section \ref{sectionkleinsurfacesasz2complexes}. Explicitly
\[g'=\begin{cases} 
(g-r+1)  &\mbox{ when } a=0; \\
(g-r)  & \mbox{ when } a=1\mbox{ and } g-r \mbox{ even;} \\
(g-r-1)  &\mbox{ when } a=1\mbox{ and } g-r \mbox{ odd.} \\
\end{cases}\]
We now present Proposition \ref{propintegralunpointed} which is a restatement of Theorem \ref{thmd} $(1)$.  

\begin{prop}\label{propintegralunpointed}
There are homotopy equivalences
\[ \g((g,r,a);(c,w_1,\dots,w_r))\simeq \g((g-g',r,a);(c,w_1,\dots,w_r))\times \prod_{g'} \Omega U(n).\]
\end{prop}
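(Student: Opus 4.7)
\emph{Plan.} I would follow the blueprint of Theorem~\ref{thm:generalsplitting} using Baird's unpointed equivalence (Theorem~\ref{thmbaird}(1)), and lift the pointed splitting to the unpointed case by comparing evaluation fibrations.

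First I would verify that the cofibre $X'=X/X_{\alpha\beta}$ from Proposition~\ref{propgtrivial} is $\mathbb{Z}_2$-homotopy equivalent to a Real surface $X''$ of type $(g-g',r,a)$. Inspecting the attaching words in Section~\ref{sectionkleinsurfacesasz2complexes}, collapsing $X_{\alpha\beta}$ deletes the commutator prefix $\alpha_1\beta_1\alpha_1^{-1}\beta_1^{-1}\cdots$ from the top-cell attaching map, leaving precisely the attaching word of $X''$. The quotient $q\colon X\to X''$ has degree one and preserves the Stiefel--Whitney data on the fixed circles; moreover $P|_{X_{\alpha\beta}}$ is trivial because $X_{\alpha\beta}$ is a free one-dimensional $\mathbb{Z}_2$-complex. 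Hence $P$ descends along $q$ to a bundle of class $(c,w_1,\dots,w_r)$ on $X''$, isomorphic to $P''$, and pullback along $q$ yields a continuous group homomorphism $\iota\colon \g(P'',\tin'')\to \g(P,\tin)$.

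Next I would combine this with Theorem~\ref{thm:generalsplitting}(1), which gives $\g^*(P,\tin)\simeq \g^*(P'',\tin'')\times \prod_{g'}\Omega U(n)$. The second factor identifies with $\Mappe(\Sigma X_{\alpha\beta},BU(n))$ and thus consists of \emph{pointed} maps, so the associated gauge transformations are the identity at $*_X\in X_{\alpha\beta}$. This gives a subgroup inclusion $\iota'\colon \prod_{g'}\Omega U(n)\hookrightarrow \g^*(P,\tin)\subset \g(P,\tin)$, and combining via the topological group structure produces
\[
\Phi\colon \g(P'',\tin'')\times \textstyle\prod_{g'}\Omega U(n) \longrightarrow \g(P,\tin),\qquad \Phi(\gamma,\alpha)=\iota(\gamma)\cdot\iota'(\alpha).
\]

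Finally I would show $\Phi$ is a homotopy equivalence by comparing the evaluation fibrations $\g^*\to\g\to V$, where $V=O(n)$ when $*_X$ is fixed and $V=U(n)$ otherwise, in the diagram
\[
\xymatrix{\g^*(P'',\tin'')\times \prod_{g'}\Omega U(n) \ar[r] \ar[d]^{\simeq} & \g(P'',\tin'')\times \prod_{g'}\Omega U(n) \ar[r] \ar[d]^{\Phi} & V \ar@{=}[d] \\ \g^*(P,\tin) \ar[r] & \g(P,\tin) \ar[r] & V.}
\]
The left vertical arrow is the equivalence of Theorem~\ref{thm:generalsplitting}(1); the right square commutes because $q$ preserves the basepoint and $\iota'(\alpha)$ is trivial at $*_X$, so $\Phi(\gamma,\alpha)(*_X)=\gamma(*_{X''})$. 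The five-lemma applied to the long exact sequences in homotopy then yields that $\Phi$ is a homotopy equivalence. The main obstacle is this compatibility check: one must verify that the splitting of Theorem~\ref{thm:generalsplitting}(1) is natural with respect to the basepoint evaluation, which should follow from the fact that the splitting is induced by the cofibre sequence $X_{\alpha\beta}\to X\to X'$ and the functoriality of Baird's construction.
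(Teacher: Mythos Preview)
Your approach is correct and reaches the same conclusion, but the paper argues somewhat differently. Rather than constructing a map $\Phi$ \emph{into} $\g(P,\tin)$ and invoking the five-lemma against the pointed splitting, the paper (following Theriault \cite{toddprimary}) directly constructs a retraction \emph{out of} $\g(P,\tin)$. The key step is to compare the two evaluation fibrations over $B$ (for $X$ and $X'$) via $q^*$, obtaining the factorisation $\varphi_{(c,\overline w)} = q^*\circ \partial_{(c,\overline w)}$ of the boundary map. This factorisation yields a map $h\colon \g(P,\tin)\to \Mappe(\Sigma X_{\alpha\beta},BU(n))\simeq \prod_{g'}\Omega U(n)$ fitting into a diagram in which $h$ sits over $(\Sigma i)^*$ and the latter sits over $\mu^*$; since $\mu^*$ is null by Proposition~\ref{propgtrivial}, $(\Sigma i)^*$ admits a section, and hence so does $h$, with homotopy fibre $\g((g-g',r,a);(c,\overline w))$. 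The paper's route therefore avoids the compatibility check you flag as ``the main obstacle'': it never needs to verify that the pointed splitting of Theorem~\ref{thm:generalsplitting} is natural for the evaluation map, because the section to $h$ is produced directly from the nullity of $\mu^*$ at the unpointed level. Your argument is conceptually clean and would also work once that compatibility is established (and it does hold, since $\iota|_{\g^*}$ is $\Omega q^*$ and $\iota'$ is precisely the section used in Theorem~\ref{thm:generalsplitting}), but the paper's diagram chase is shorter and sidesteps the five-lemma entirely.
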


\begin{proof}
In essence, we follow a proof in \cite{toddprimary}.  For convenience, we write \[(c,\overline w):=(c,w_1,\dots,w_r).\] Let $X_{\alpha\beta}=\bigvee_{g'} (S^1\vee S^1,\sw)$ be sub-complex of $X$ represented by $\alpha_i, \sigma(\alpha_i), \beta_i, \sigma(\beta_i)$. Recall the $\mathbb{Z}_2$-cofibration sequence of Proposition \ref{propgtrivial}
\[X_{\alpha\beta} \hookrightarrow X \xrightarrow{q} X' \xrightarrow{\mu} \Sigma(X_{\alpha\beta}).\]
Then the map $q$ induces the following diagram
\[\xymatrix{
\Omega B \ar@{=}[d] \ar[r]^-{\partial_{(c,\overline w)}} & \Mappe(X',BU(n);(c, \overline w))\ar[d]^{q^*} \ar[r]&  \Mape(X',BU(n); (c,\overline w)) \ar[r]^-{ev} \ar[d]^{q^*} & B\ar@{=}[d]\\
\Omega B \ar[r]^-{\varphi_{ (c, \overline w)}} & \Mappe(X,BU(n);(c, \overline w)) \ar[r] &   \Mape(X,BU(n);(c, \overline w))\ar[r]^-{ev} & B}\] 
where \[B=\begin{cases}BU(n) \mbox{ if } r=0; \\ BO(n) \mbox{ otherwise.}\end{cases} \]
The fact that $\varphi_{(c, \overline w)}=q^*\partial_{(c, \overline w)}$ obtains the following diagram which defines the maps $h$ and $h'$
\begin{equation*}\label{eqintegralsplitting}\xymatrix{ & \Map^*(\Sigma(X),BU(n);(c, \overline w))\ar@{=}[r] \ar[d] &  \Mappe(\Sigma X ,BU(n);(c, \overline w))\ar[d]^{(\Sigma i)^*}  \\
\g(g-g')\ar[r]^{h'}\ar@{=}[d] &  \g((g,r,a);(c, \overline w))\ar[r]^-h \ar[d] & \Mappe(\Sigma (X_{\alpha\beta}),BU(n))\ar[d]^{\mu^*}\\
 \g(g-g')\ar[r]&   \Omega B \ar[r]^-{\partial_{(c, \overline w)}} \ar[d]^-{\varphi_{(c, \overline w)}} & \Mappe(X',BU(n);(c, \overline w)) \ar[d]^{q^*} \\
  & \Mappe(X,BU(n);(c, \overline w)) \ar@{=}[r] & \Mappe(X,BU(n);(c, \overline w))
 }\end{equation*}
and in which $\g(g-g'):=\g((g-g',r,a);(c,\overline w))$. By Proposition \ref{propgtrivial} the map $\mu^*$ is trivial. Hence there is a section to the map $ (\Sigma i)^*$, so there is also a section to $h$ and the result follows.
\end{proof}

The quotient map $q$ in Proposition \ref{propintegralunpointed} induced an isomorphism on $\pi_0$ between \[\Mappe(X,BU(n);(c, \overline w)) \mbox{ and } \Mappe(X',BU(n);(c, \overline w)).\]
However, for a fixed cell $\delta_i$ of $(X,\sigma)$, the quotient map $\tilde q\colon X\rightarrow X/\delta_i$ automatically induces the map 
\[ \Mape(X/\delta_i,BU(n))\xrightarrow{q^*} \Mape(X,BU(n);0)\]
hence the requirement for $w_i=0$ in Theorem \ref{thmd} (3).
Whilst there is an equivalence
\[\Mappe(X,BU(n);(c,0))\simeq \Mappe(X,BU(n);(c,1))\]
there is not necessarily an equivalence in the unpointed case in general.  Hence, there is not enough information to guarantee the commutativity of the diagram needed to induce a homotopy decomposition.  

Omitting such non-trivialities allows further splittings; let $X_1$ be a subset of the $1$-cells of $X$ such that 
\begin{enumerate}
\item if there is a fixed cell $\delta_i \subset X_1$ then $w_i=0$ and;
\item for appropriate components the induced map
\[g^*\colon \Mappe(\Sigma X_1,BU(n);(\overline w))\rightarrow\Mappe (X/X_1,BU(n);(c,\overline w))\] 
is $\mathbb{Z}_2$-nullhomotopic.
\end{enumerate}
Under these assumptions, it is clear that the methods in the previous proposition would yield further homotopy decompositions.

\begin{proof}[Proof of Theorem \ref{thmd} \textit{(2) and (3)}.]\label{proofoftheoremd3}
The above conditions apply to the $1$-cells considered in Propositions \ref{propgtrivialfortype1}, \ref{propgtrivialfortype2} and \ref{prop211integral} for bundles of arbitrary type. 

Additionally the conditions are satisfied by the $1$-cells considered in Propositions \ref{propreducetohesphere} and \ref{propgtrivialfixedtype2} for bundles of type $(c,w_1,0,\dots,0)$. 
When $n$ is odd we can take advantage of Proposition \ref{propunpointedcomponentequiv} to obtain the table in Theorem \ref{thmd} $(3)$. We have now finished the proof of Theorem \ref{thmd}.
\end{proof}

\subsubsection{Analysing the Boundary Map}\label{subsectionanalysingboundary} 
Let $(P,\tilde\sigma)\rightarrow (X,\bin)$ be a Real bundle of class $(c,w_1,\dots,w_r)$ over a Real surface $(X, \sigma)$ of type $(g,r,a)$. Let 
\[B=\begin{cases}
BO(n) & \mbox{if } r>0 \\
BU(n) & \mbox{otherwise}
\end{cases}\]
and consider the homotopy fibration sequence induced from the map that evaluates at the basepoint of $X$
\begin{equation}\label{equevaluationfibration} \g(P,\tin) \rightarrow \Omega B \xrightarrow{\partial_P} \Map_{\mathbb{Z}_2}^*(X, BU(n);P)\rightarrow \Map_{\mathbb{Z}_2}(X, BU(n);P) \rightarrow B\end{equation}

Since $\g(P,\tin)$ appears as the homotopy fibre of the boundary map $\partial_P$, we aim to gather information about $\g(P,\tin)$ by studying $\partial_P$. Our method will involve comparing $\partial_P$ to a map arising from a similar homotopy fibration sequence found in \cite{tggoverriemannsurfaces}. This approach is particularly fruitful when $X^\sigma$ is nonempty, that is, when $r > 0$. We reserve analysis of the $r=0$ cases unhandled by Section \ref{subsectionunpointedintegraldecomp} to later sections, however, we will require discussion from this section and Section \ref{subsectionnonintegraldecomp}.

Note that \[\pi_0\big(\Map(S^2,BU(n))\big)\cong \mathbb{Z}\] and for $d\in \mathbb{Z}$ we obtain a fibration sequence
\begin{equation}\label{equnoneqeval}U(n) \xrightarrow{\partial_d} \Map^*(S^2,BU(n);d) \rightarrow \Map(S^2,BU(n);d) \rightarrow BU(n).\end{equation}
The trivialities of the map $\partial_d$ were extensively studied in \cite{tggoverriemannsurfaces}.  We state the relevant results from this paper.

\begin{theorem}[Theriault]\label{thm:tgghomotopydecomp}
Let $p$ be a prime and let 
\[\partial_d \colon U(n) \rightarrow \Map^*(S^2,BU(n);d)\]
be as in (\ref{equnoneqeval}). Then
  \begin{enumerate}
  \item if $p\nmid n$, then $\partial_d$ is $p$-locally trivial.
  \item if $n=p$ with $p\mid d$, then $\partial_d$ is $p$-locally trivial.
 \end{enumerate}
\end{theorem}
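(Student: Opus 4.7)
The plan is to identify (the adjoint of) $\partial_d$ with a Samelson product and then exploit the fact that the diagonal circle sits in the centre of $U(n)$.

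First, I would rewrite $\partial_d$ homotopically. Using that $\Map^*(S^2,BU(n);d) \simeq \Omega^2 BU(n) \simeq \Omega U(n)$, the map $\partial_d$ may be viewed as a map $U(n) \to \Omega U(n)$, whose adjoint is a map $S^1 \wedge U(n) \to U(n)$. A standard calculation, obtained by presenting the bundle of class $d$ via a clutching function $S^1 \to U(n)$ of degree $d$ and unravelling the connecting map in the evaluation fibration (\ref{equnoneqeval}), identifies this adjoint with the Samelson product
\[\langle d\omega,\, \mathrm{id}_{U(n)}\rangle \colon S^1 \wedge U(n) \longrightarrow U(n),\]
where $\omega \colon S^1 \to U(n)$ represents a generator of $\pi_1 U(n) \cong \mathbb{Z}$. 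Using bilinearity of the Samelson product in its co-$H$ variable $S^1$, this equals $d \cdot \langle \omega, \mathrm{id}_{U(n)}\rangle$.

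Next I would exploit centrality. The inclusion $\Delta \colon U(1) \hookrightarrow U(n)$, $z \mapsto zI_n$, has image in the centre of $U(n)$, so the pointwise commutator defining the Samelson product is trivial, and hence $\langle \Delta, \mathrm{id}_{U(n)} \rangle = 0$. Since $[\Delta] = n\omega$ in $\pi_1 U(n)$, bilinearity yields the key relation
\[n \cdot \langle \omega, \mathrm{id}_{U(n)}\rangle \;=\; \langle n\omega, \mathrm{id}_{U(n)}\rangle \;=\; 0.\]
Part (1) is then immediate: at primes $p \nmid n$ the integer $n$ is a unit, so $\langle \omega, \mathrm{id}_{U(n)}\rangle$ vanishes $p$-locally, and hence so does $\partial_d = d\cdot \langle \omega, \mathrm{id}_{U(n)}\rangle$. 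For part (2), the relation shows that $\langle \omega, \mathrm{id}_{U(n)}\rangle$ has order dividing $p = n$, and the hypothesis $p \mid d$ then forces $d\cdot \langle \omega, \mathrm{id}_{U(n)}\rangle = 0$ $p$-locally.

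The principal obstacle is the adjoint identification $\partial_d \simeq \langle d\omega, \mathrm{id}_{U(n)}\rangle$: this requires a careful trip through the dictionary between $B\g(P)$ and $\Map(S^2, BU(n); d)$ for the bundle of class $d$, together with tracking the evaluation fibration through the standard loops-versus-adjoints manipulations. Once this identification is in place, the remainder of the argument reduces to the bilinearity of Samelson products and the classical centrality observation, and needs no further input beyond the inclusion $U(1) \hookrightarrow U(n)$.
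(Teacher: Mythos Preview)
The paper does not prove this theorem itself; it is quoted from \cite{tggoverriemannsurfaces} as an external input, so there is no proof in the paper to compare against directly. Your argument is nonetheless correct and self-contained: the identification of the adjoint of $\partial_d$ with the Samelson product $\langle d\omega,\mathrm{id}_{U(n)}\rangle$ is standard (it goes back to Lang), and the centrality trick $\langle \Delta,\mathrm{id}_{U(n)}\rangle=0$ together with $[\Delta]=n\omega$ gives the relation $n\cdot\langle\omega,\mathrm{id}_{U(n)}\rangle=0$ exactly as you describe. Both parts then follow by bilinearity, and in fact your argument for part~(2) gives integral triviality, not merely $p$-local triviality.

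From the use the paper makes of Theriault's results later (diagram~(\ref{eqncreal}) in the proof of Proposition~\ref{propcthentrivial}), one can infer that the original argument in \cite{tggoverriemannsurfaces} proceeds differently: it factors $\partial_d$ through $\partial_1$ via the $d$-th power map on $\Omega U(p)_0$, and then analyses $\partial_1$ using Serre's $p$-local splitting $U(p)\simeq_p\prod_{i=0}^{p-1}S^{2i+1}$ and an explicit map $\alpha\colon S^{2p-1}\to\Omega S^3$. Your Samelson--centrality argument is more conceptual and uniform in $n$, and sidesteps the sphere decompositions entirely. What Theriault's route buys is finer control in the residual case $n=p\nmid d$ (mentioned just after the statement in the paper), where $\partial_d$ is \emph{not} null and one needs the explicit factorisation through the sphere factors to identify its homotopy fibre; your method says nothing about that case.
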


 The case $n = p \nmid d$ was also studied in \cite{tggoverriemannsurfaces}; the map $\partial_d$ is not $p$-locally trivial but the homotopy fibre was identified. The following two propositions adapt some of the trivialities of $\partial_d$ to our setting.
 
 \begin{prop} \label{proptrivialthentrivial}
  Fix $d\in \mathbb{Z}$ and let $\partial_d$ be the boundary map in (\ref{equnoneqeval}). Let $(P,\tin)$ be a Real principal $U(n)$-bundle of class $(2d,0,\dots,0)$ over a Real surface of type $(g,r,a)$. Let
  \[\partial_P\colon \Omega B \rightarrow B\g^*((g,r,a);(2d,0,\dots,0))\] be the boundary map of the evaluation fibration in (\ref{equevaluationfibration}). 
  For a prime $q$, if $\partial_d$ is (q-locally) trivial then
  \begin{enumerate}
  \item if $r>0$ then $\partial_P$ is (q-locally) trivial;
  \item if $r=0$ then the composition 
  \[O(n)\hookrightarrow U(n) \xrightarrow{\partial_P}  B\g^*((g,r,a);(2d,0,\dots,0))\]
  is (q-locally) trivial.
  \end{enumerate}
 \end{prop}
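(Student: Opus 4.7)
The plan is to factor the equivariant classifying map of $(P,\tin)$ through the quotient $p\colon X \to X/X^{(1)}$, identify the target with $(S^2\vee S^2,\sw)$, and then use naturality of the evaluation fibration to reduce $\partial_P$ to a composite involving $\partial_d$.

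The main step will be to show that the restriction to $X^{(1)}$ of an equivariant classifying map $f\colon X\to BU(n)$ for $(P,\tin)$ is equivariantly null-homotopic. By Proposition \ref{proprealbundleclass}, the equivariant pointed homotopy classes of maps from $X^{(1)}$ to $BU(n)$ are captured by $(w_1,\dots,w_r)$: each fixed $1$-cell $\delta_i$ contributes its $\mathbb{Z}_2$-invariant $w_i\in \pi_1(BO(n))$, whilst orbit pairs of non-fixed $1$-cells (of the various forms $(S^1,-\id)$, $(S^1,\he)$, $(S^1\vee S^1,\sw)$, or two intervals swapped by $\sigma$) carry no invariants, since the associated equivariant pointed mapping spaces into $BU(n)$ reduce to based paths or loops in the simply connected space $BU(n)$ or in the connected space $U(n)/O(n)$, each having trivial $\pi_0$. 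Since every $w_i=0$ by hypothesis, $f|_{X^{(1)}}$ is equivariantly null-homotopic, and the equivariant homotopy extension property yields an equivariant factorisation $f\simeq \bar f\circ p$ for some $\bar f\colon Y:=X/X^{(1)}\to BU(n)$.

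Each Real surface carries exactly two $2$-cells swapped by $\sigma$, so $Y\cong (S^2\vee S^2,\sw)$ with the (fixed) wedge point as basepoint. Using that $\sigma$ is orientation-reversing (so the swap on $Y$ has degree $-1$ on each factor) and that $\varsigma$ acts as $-1$ on $\pi_2(BU(n))$, a brief degree computation forces the two components of $\bar f$ to have equal degree, and this common degree must be $d$ because the total Chern class is $2d$. An equivariant map out of $(S^2\vee S^2,\sw)$ is determined by its restriction to the first factor, identifying $\Mappe(Y,BU(n);\bar f)\cong \Map^*(S^2,BU(n);d)$; and equivariance forces the value at the wedge-point to lie in $BU(n)^{\mathbb{Z}_2}=BO(n)$. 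This exhibits the equivariant evaluation fibration for $Y$ as the pullback of the non-equivariant evaluation fibration for $S^2$ along $BO(n)\hookrightarrow BU(n)$, so $\partial_Y = \partial_d\circ i$, where $i\colon O(n)\hookrightarrow U(n)$.

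Finally, naturality of the evaluation fibration along the pointed equivariant map $p$ produces a commutative square
\[
\xymatrix{
O(n) \ar[r]^-{\partial_Y} \ar[d]^-{j} & \Mappe(Y,BU(n);\bar f) \ar[d]^-{p^*} \\
\Omega B \ar[r]^-{\partial_P} & \Mappe(X,BU(n);P)
}
\]
in which $j$ is the identity when $r>0$ (so $\Omega B = O(n)$ and the basepoint of $X$ is fixed) and is the inclusion $i$ when $r=0$ (so $\Omega B = U(n)$ and the basepoint is unfixed). Combining with $\partial_Y=\partial_d\circ i$ gives $\partial_P\circ j = p^*\circ \partial_d\circ i$, from which ($q$-local) triviality of $\partial_d$ yields ($q$-local) triviality of $\partial_P$ when $r>0$ and of $\partial_P\circ i$ when $r=0$, as required. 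The hardest part will be the first step: verifying case-by-case for each Real surface type that the equivariant classification of maps out of the $1$-skeleton into $BU(n)$ is exhausted by the $w_i$-invariants.
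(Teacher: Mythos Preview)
Your approach is correct and follows the same overall strategy as the paper: compare $\partial_P$ to $\partial_d$ through the intermediate space $Y=(S^2\vee S^2,\sw)$ using the collapse map and the identification $\Mappe(Y,BU(n))\cong\Map^*(S^2,BU(n))$. The naturality square you write down and the identification $\partial_Y=\partial_d\circ i$ match the paper's two commutative diagrams exactly.

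However, what you flag as the ``hardest part''---the case-by-case verification that $f|_{X^{(1)}}$ is equivariantly null-homotopic so that the classifying map factors through $p$---is unnecessary, and the paper avoids it entirely. You do not need any particular classifying map to factor through $p$; you only need the map $p^*$ between pointed equivariant mapping spaces to carry the component $2d$ of $\Mappe(Y,BU(n))$ into the component $(2d,0,\dots,0)$ of $\Mappe(X,BU(n))$. This is immediate from the definitions: any map in the image of $p^*$ is constant on $X^{(1)}$, hence on each fixed circle $\delta_i$, so all $w_i$ vanish; and the Chern class is read off from the $2$-cells, which $p$ preserves. Once that is observed, naturality of the evaluation fibration along $p$ gives your square directly, with no obstruction-theoretic analysis of the $1$-skeleton required. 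So your argument works, but you can excise the entire first step and the final caveat.
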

 \begin{proof}
 The key will be to compare both maps to another evaluation boundary map involving the  $\mathbb{Z}_2$-space $Y=(S^2\vee S^2,\sw)$.  We note that components of $\Map_{\mathbb{Z}_2}^*(Y,BU(n))$ are classified by even integers.
 
 Let $S^2\xrightarrow{i_1} S^2 \vee S^2=Y$ be the inclusion onto the left factor, and note that this is not a $\mathbb{Z}_2$-map.  The following diagram commutes
 \begin{equation} \label{eqntheriaultgeneralisation} \begin{gathered}\xymatrix{O(n)\ar[r]^-{\overline{\partial}_{2d}} \ar@{^{(}->}[d] & \Map_{\mathbb{Z}_2}^*(Y,BU(n);2d)\ar[d]^{i_1^*} \ar[r] & \Map_{\mathbb{Z}_2}(Y,BU(n);2d) \ar[d] \ar[r] & BO(n)\ar@{^{(}->}[d] \\
U(n) \ar[r]^-{\partial_d} & \Map^*(S^2,BU(n);d) \ar[r] & \Map(S^2,BU(n);d) \ar[r]& BU(n).}\end{gathered}\end{equation}

Now, there is an inverse to $i_1^*$ which sends a map $f$ in $\Map^*(S^2,BU(n);d)$ to the composition 
\[S^2 \vee S^2 \xrightarrow{f\vee f} BU(n)\vee BU(n) \xrightarrow{\id\vee \sigma_{BU(n)}}BU(n) \vee BU(n) \xrightarrow{\mbox{\footnotesize fold}} BU(n)\]
which is $\mathbb{Z}_2$-equivariant because the involution on $S^2\vee S^2$ swaps the factors.
Note that the map induced on the unpointed mapping spaces does not have an inverse because the basepoint of $Y$ must land in $BO(n)$. We conclude that if $\partial_d$ is $q$-locally trivial then so is $\overline{\partial}_{2d}$.  

Let $q\colon X \rightarrow Y$ be the map that collapses the $1$-skeleton of the Real surface $(X,\sigma)$. We obtain the following commutative diagram
 \[ \xymatrix{O(n)\ar[r]^-{\overline{\partial}_{2d}} \ar[d]^-f& \Map_{\mathbb{Z}_2}^*(Y,BU(n);2d)\ar[d]^{q^*} \ar[r] & \Map_{\mathbb{Z}_2}(Y,BU(n);2d) \ar[d] \ar[r] & BO(n)\ar[d] \\
\Omega B \ar[r]^-{\partial_P} & \Map_{\mathbb{Z}_2}^*(X,BU(n);P) \ar[r] & \Map_{\mathbb{Z}_2}(X,BU(n);P) \ar[r]& B}\]
The map $f$ is an equivalence if $r>0$ and is the inclusion $O(n)\hookrightarrow U(n)$ otherwise.  Since $\overline{\partial}_{2d}$ is (q-locally) trivial, the result follows.
\end{proof}

\begin{prop} \label{propcthentrivial} Let $p$ be a prime such that $p\nmid d$ and let $(P,\tin)$ be a Real principal $U(p)$-bundle of class $(2d,0,\dots,0)$ over a Real surface of type $(g,r,a)$.
  Let 
  \[\partial_P\colon \Omega B \rightarrow B\g^*((g,r,a);(2d,0,\dots,0))\]
  be the boundary map of the evaluation fibration. Then
  \begin{enumerate}
  \item if $r>0$ then $\partial_P$ is $p$-locally trivial;
  \item if $r=0$ then the composition 
  \[O(p)\hookrightarrow U(p) \xrightarrow{\partial_P}  B\g^*((g,r,a);(2d,0,\dots,0))\]
  is $p$-locally trivial.
  \end{enumerate}
 \end{prop}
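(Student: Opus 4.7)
The plan is to mirror the proof of Proposition \ref{proptrivialthentrivial} step for step, departing only at the point where that proof invokes Theorem \ref{thm:tgghomotopydecomp}. Setting up the same comparison ladder with $n=p$, the $\mathbb{Z}_2$-quotient $q\colon X \to Y := (S^2 \vee S^2, \sw)$ collapsing the $1$-skeleton of $X$, combined with the homotopy equivalence $i_1^* \colon \Map_{\mathbb{Z}_2}^*(Y, BU(p); 2d) \to \Map^*(S^2, BU(p); d)$ established in the proof of Proposition \ref{proptrivialthentrivial}, yields the homotopy $\partial_P \circ f \simeq q^* \circ \overline\partial_{2d}$, where $f = \id$ when $r > 0$ (giving case (1)) and $f = \iota\colon O(p) \hookrightarrow U(p)$ when $r = 0$ (giving case (2)). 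Both parts therefore reduce to showing that the composition
\[
\eta \colon O(p) \xrightarrow{\iota} U(p) \xrightarrow{\partial_d} \Map^*(S^2, BU(p); d)
\]
is $p$-locally null.

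The main obstacle is that, under the hypothesis $n = p$ and $p \nmid d$, we are in precisely the unresolved case of Theorem \ref{thm:tgghomotopydecomp}, so $\partial_d$ itself is not $p$-locally null and the simple quotation used in Proposition \ref{proptrivialthentrivial} is unavailable. The argument must genuinely exploit the subgroup $O(p) \subset U(p)$. The plan is to invoke the refined analysis of $\partial_d$ from \cite{tggoverriemannsurfaces} in this critical case, where the $p$-local homotopy fibre is explicitly identified; from that identification, one aims to show that $\iota$ lifts $p$-locally through the fibre, equivalently that $\partial_d$ admits a $p$-local factorisation through a space on which $\iota$ becomes null. The most natural candidate is the symmetric-matrix map $A \mapsto A \hat\varsigma(A)^{-1}$ from $U(p)$ to $U(p)/O(p)$ of Lemma \ref{lemmaQhomeomorphictoU(n)/O(n)}, which sends $O(p)$ to the basepoint by construction and which already features prominently in the analysis of Section \ref{subsectionnonintegraldecomp}; a $p$-local factorisation of $\partial_d$ through this map would immediately force $\eta$ to be $p$-locally null.

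Once the nullity of $\eta$ is in hand, the conclusion in both cases follows from the comparison ladder exactly as in Proposition \ref{proptrivialthentrivial}: case (1) corresponds to $r > 0$ with $f = \id$, and case (2) corresponds to $r = 0$ with $f = \iota$. The principal technical difficulty is therefore concentrated in the single step of verifying that the restriction of $\partial_d$ to $O(p)$ is $p$-locally null; identifying the right factorisation or null-homotopy requires the finer structural description of $\partial_d$ that goes beyond Theorem \ref{thm:tgghomotopydecomp}, and it is here that the hypothesis $n = p$ (rather than an arbitrary $n$ coprime to $d$) is used in an essential way.
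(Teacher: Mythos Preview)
Your reduction to the nullity of
\[
\eta\colon O(p) \xrightarrow{\iota} U(p) \xrightarrow{\partial_d} \Map^*(S^2, BU(p); d)
\]
is correct and matches the paper exactly; once this is established, both conclusions follow from the comparison ladder of Proposition~\ref{proptrivialthentrivial} just as you describe.

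The gap is at the crucial step. You propose that $\partial_d$ should factor $p$-locally through the map $A \mapsto A\hat\varsigma(A)^{-1}$ from $U(p)$ to $U(p)/O(p)$, but you do not verify this, and the paper does not proceed this way. The paper's argument is more direct and makes explicit where the hypothesis $n=p$ enters. From \cite{tggoverriemannsurfaces} one has, after localising at $p$, Serre's splittings $U(p)\simeq_p\prod_{i=0}^{p-1}S^{2i+1}$ and $\Omega U(p)_0\simeq_p\prod_{j=1}^{p-1}\Omega S^{2j+1}$, together with a factorisation of $\partial_1$ as
\[
\prod_{i=0}^{p-1} S^{2i+1}\xrightarrow{\ \mathrm{proj}\ } S^{2p-1}\xrightarrow{\ \alpha\ }\Omega S^3\hookrightarrow\prod_{j=1}^{p-1}\Omega S^{2j+1},
\]
while $\partial_d$ differs from $\partial_1$ only by the $d$-th power map on $\Omega U(p)_0$, a $p$-local equivalence since $p\nmid d$. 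For $p\neq 2$ one has $SO(p)\simeq_p\prod_{i=1}^{(p-1)/2}S^{4i-1}$, and under these equivalences the inclusion $O(p)\hookrightarrow U(p)$ is simply the inclusion of those sphere factors; the top one is $S^{2p-3}$, so the projection to $S^{2p-1}$ annihilates $O(p)$ and $\eta$ is $p$-locally null. For $p=2$ the relevant composite has source $O(2)\cong S^1\amalg S^1$ and target $S^3$, so it is null for dimension reasons. The essential content you are missing is this sphere-by-sphere bookkeeping, which is precisely where $n=p$ is used: the factorisation of $\partial_1$ through a \emph{single} top sphere $S^{2p-1}$ is special to rank~$p$.
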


\begin{proof}
We assume that that $p \nmid d$ is a prime and that all spaces and maps are localised at $p$. Let $Y=(S^2\vee S^2,\sw)$ be as above, then there is a homotopy commuting diagram
\begin{equation}\label{eqncreal} \begin{gathered}
\xymatrix{ O(p)\ar[rrr]^{\overline\partial_{2d}} \ar@{^{(}->}[d]^{i} & & & \Mappe(Y,BU(n);2d) \ar[d]^\simeq \\
U(p)\ar[rrr]^{\partial_d} \ar@{=}[d] & & &\Omega U(p)_0 \\
U(p)\ar[rrr]^{\partial_1} \ar[d]^e & & &\Omega U(p)_0 \ar[u]^d \ar[d]^{(\Omega e)_0}\\
\prod_{i=0}^{p-1} S^{2i+1}\ar[r]^-{\mbox{\footnotesize proj}} & S^{2p-1} \ar[r]^\alpha &\Omega S^3 \ar[r]^-{\mbox{\footnotesize incl}} & \prod_{j=1}^{p-1} \Omega S^{2j+1}
}
\end{gathered}
\end{equation}
where the top square is from diagram (\ref{eqntheriaultgeneralisation}) and the bottom two squares are found in \cite{tggoverriemannsurfaces}; specifically in Proposition $4.1$ and the proof of Theorem $1.1$ $(b)$ and $(c)$.

The map $d$-th power map $d\colon \Omega U(p)_0\rightarrow \Omega U(p)_0$ is a homotopy equivalence because $p\nmid d$. Furthermore, the maps $e$ and $(\Omega e)_0$ are homotopy equivalences provided in \cite{Serrehomotopygroups}. Now for $p\neq 2$ prime, there is a $p$-local homotopy equivalence
\[SO(p)\simeq_p \prod_{i=1}^{\frac{p-1}{2}} S^{4i-1} \]
and furthermore the inclusion $O(p)\hookrightarrow U(p)$ is in fact the inclusion of these factors into $\prod_{i=0}^{p-1} S^{2i+1}$.  We conclude that the composition
\begin{equation}\label{eqntrivialmapc} \chi\colon O(p)\hookrightarrow U(p) \rightarrow \prod_{i=0}^{p-1} S^{2i+1} \xrightarrow{\mbox{\footnotesize proj}} S^{2p-1}\end{equation}
is nullhomotopic and therefore so is $\overline\partial_{2d}$.

For $p=2$, the space $O(2)$ is homeomorphic to $S^1\amalg S^1$. Since $\chi$ in $(\ref{eqntrivialmapc})$ has target space $S^3$, we conclude that $\chi$ and hence $\overline \partial_{2d}$ are nullhomotopic in this case too.  The result then follows in a similar way to the last paragraph in the proof of Proposition \ref{proptrivialthentrivial}.
\end{proof}
\begin{proof}[Proof of Theorem \ref{thme} $(1a)$ and $(2a)$]\label{proofofthmea}
Theorem \ref{thm:tgghomotopydecomp} and Proposition \ref{proptrivialthentrivial} immediately obtain $(1a)$. Similarly Theorem \ref{thm:tgghomotopydecomp} and Proposition \ref{proptrivialthentrivial} obtain $(2a)$ when $p\mid d$, and Proposition \ref{propcthentrivial} then gives the remaining case when $p\nmid d$.
\end{proof}

\subsubsection{Case: \texorpdfstring{$(0,0,1)$}{(001)}}
We restrict to analysing gauge groups above Real surfaces of type $(0,0,1)$. Fix an even integer $c$ then we wish to analyse the boundary map $\partial_c$ of the evaluation fibration.  

For a $\mathbb{Z}_2$-space $A$, let $\overline{\Delta}\colon A\rightarrow A\times A$ be the composition
\begin{equation}\label{eqndeltaoverline}A \xrightarrow{\Delta} A\times A \xrightarrow{\id\times \sigma_{A}} A\times A.\end{equation}
Let $u$ be the composition
\[B\g^*((0,0,1);c) \xrightarrow{\simeq} \Mappe(S^2,BU(n);c) \xrightarrow{\tilde u} \Map^{*2}(D^2,BU(n)) \xrightarrow{\simeq} U(n)\] where $\tilde u$ restricts to the upper hemisphere of $(S^2, -\id)$ and forgets about equivariance except at $*$ and $\sigma(*)$. The last equivalence follows since $D^2$ with two points identified is homotopy equivalent to $S^1$. The maps $u$ and $\overline \Delta$ are the same as in equation (\ref{diagram:pullbackmap}) and they fit into the following commutative diagram \small
\[ \xymatrix{ U(n) \ar[r]^-{\partial_c} \ar[d]^-{\overline{\Delta}} & B\g^*((0,0,1);c)\ar[d]^u \ar[r] & B\g((0,0,1);c) \ar[d] \ar[r] & BU(n) \ar[d]^{\overline{\Delta}} \\ U(n)\times U(n)  \ar[r]^-\zeta & U(n) \ar[r] & \Map(D^2,BU(n)) \ar[r]^-{\ev_2}& BU(n)\times BU(n) } \]
\normalsize where $\ev_2$ evaluates at two antipodal points on the boundary of $D^2$ and $\zeta$ is defined via this diagram. 

Since $D^2$ is contractible, the map $\ev_2$ is homotopic to the diagonal map \[\Delta \colon BU(n) \rightarrow BU(n)\times BU(n).\] Therefore, the map $\zeta$ is homotopic to the map defined by $(A,B)\mapsto AB^{-1}$. Let $f\colon U(n)\rightarrow U(n)$ be defined as $f(A)=AA^t$ and we conclude that $u\partial_c \simeq f$. 

After localising the map $f$ at a prime $p\neq 2$, we can yield the composition
\begin{equation} \label{eqnlocalisedfwithprojection}
 SO(n)\times U(n)/SO(n) \xrightarrow{f} SO(n)\times U(n)/SO(n) \xrightarrow{p_2} U(n)/SO(n)
\end{equation}
 where $p_2$ is the projection map. Recall the map $\varphi$ from equation (\ref{eq:symmetricmatrixfactoring}) and compare with $f$. For $p\neq 2$, we showed that $\varphi$ is a $p$-local homotopy equivalence and we conclude that restricting the composition (\ref{eqnlocalisedfwithprojection}) to the factor $U(n)/SO(n)$ also obtains a $p$-local homotopy equivalence. We have shown the following proposition. 

\begin{prop}\label{propunpointed001map}
Let $n$ be odd, then localised at a prime $p\neq 2$ the following composition is a homotopy equivalence
\[U(n)/SO(n)\hookrightarrow U(n) \xrightarrow{\partial_c} B\g^*((0,0,1);c) \xrightarrow{u} U(n) \rightarrow U(n)/SO(n).\]
\qed
\end{prop}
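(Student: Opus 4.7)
The plan is to reduce the claim to the $p$-local equivalence $\varphi$ of equation~(\ref{eq:symmetricmatrixfactoring}). First, I would invoke the homotopy $u\circ\partial_c \simeq f$ established in the commutative diagram preceding the proposition, where $f\colon U(n)\to U(n)$ is given by $f(A)=AA^t$. The composition in the statement is then homotopic to
\[
U(n)/SO(n) \hookrightarrow U(n) \xrightarrow{f} U(n) \rightarrow U(n)/SO(n),
\]
where the outer maps arise from the $p$-local splitting $U(n)\simeq_p SO(n)\times U(n)/SO(n)$ used in equation~(\ref{eqnlocalisedfwithprojection}).

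Next, I would observe that $f(AW)=AA^t$ for every $W\in SO(n)$, so $f$ descends to a map $\bar{f}_S\colon U(n)/SO(n)\to U(n)$ sending $[A]_{SO(n)}\mapsto AA^t$. Consequently the displayed composition is homotopic to $q\circ\bar{f}_S$, namely the self-map $[A]_{SO(n)}\mapsto [AA^t]_{SO(n)}$ of $U(n)/SO(n)$. Under the natural double cover $\pi\colon U(n)/SO(n)\to U(n)/O(n)$, which is a $p$-local homotopy equivalence by Lemma~\ref{lemmaunonunson}, this map fits into a commutative square
\[
\xymatrix{U(n)/SO(n) \ar[r]^-{q\circ\bar{f}_S} \ar[d]_{\pi} & U(n)/SO(n) \ar[d]^{\pi} \\
U(n)/O(n) \ar[r]_-{\varphi} & U(n)/O(n)}
\]
since $\varphi([A]_{O(n)})=[AA^t]_{O(n)}$ by the preceding discussion of equation~(\ref{eq:symmetricmatrixfactoring}).

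Finally, since $\varphi$ is a $p$-local homotopy equivalence for $n$ odd and $p\neq 2$ (established via Lemma~\ref{lemmathereexistmaps} and the result of \cite{Harrisonthehomotopygroupsoftheclassicalgroups} applied in equation~(\ref{eqnharrismap})) and $\pi$ is a $p$-local equivalence, a two-out-of-three argument applied to the square forces $q\circ\bar{f}_S$ to be a $p$-local homotopy equivalence as well. The main piece of bookkeeping is to verify that the formal inclusion $U(n)/SO(n)\hookrightarrow U(n)$ in the statement agrees with a section of the $p$-local splitting used above; this is immaterial because $f$ factors through $U(n)/SO(n)$, so any two sections yield homotopic compositions and the conclusion is independent of the choice.
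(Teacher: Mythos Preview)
Your proof is correct and follows essentially the same route as the paper. The paper's argument (given in the paragraph immediately preceding the proposition) identifies $u\circ\partial_c$ with $f(A)=AA^t$, then observes that restricting the composition~(\ref{eqnlocalisedfwithprojection}) to the $U(n)/SO(n)$ factor recovers the self-map $[A]\mapsto [AA^t]$, which was already shown to be a $p$-local equivalence in the analysis surrounding $\varphi$ and $s$ in diagram~(\ref{eq:restrictstofactors}); your commutative square with $\pi$ and the two-out-of-three argument is just a slightly more explicit phrasing of that same comparison.
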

With this proposition, we have enough ammunition to prove Theorem \ref{thme} $(1b)$ and $(2b)$. 

\begin{proof}[Proof of Theorem \ref{thme} $(1b)$ and $(2b)$]\label{proofoftheoreme34} We first prove part $(1b)$. Localise at a prime $p\neq 2$ such that $p\nmid n$ and reconsider the fibration sequence 
\[\g((0,0,1);c)\rightarrow SO(n)\times U(n)/SO(n)\xrightarrow{\partial_c} B\g^*((0,0,1);c).\]
By Proposition \ref{propunpointed001map} the factor $U(n)/SO(n)$ retracts off $B\g^*((0,0,1);c)$ and by Proposition \ref{proptrivialthentrivial} $(2)$ the factor $SO(n)$ retracts off $\g((0,0,1);c)$ under a lift \[l\colon SO(n)\rightarrow \g((0,0,1);c)\] of the inclusion $SO(n)\hookrightarrow U(n)$.  Then the composition
\begin{multline*} SO(n)\times \Omega^2(U(n)/SO(n)) \xrightarrow{l\times \id}\g((0,0,1);c) \times \Omega^2(U(n)/SO(n)) \\ \xrightarrow{\mbox{ \footnotesize {`action'}}}\g((0,0,1);c) \end{multline*}
 is homotopy equivalence and the result follows. The proof of part $(2b)$ is similar.
\end{proof}

\subsubsection{Case: \texorpdfstring{$(1,0,1)$}{(101)}}
We now analyse unpointed gauge groups above a Real surface $(T,\tau)$ of type $(1,0,1)$. We use a similar method to the $(0,0,1)$ case and adopt some of its notation.

As in the proof of Theorem \ref{thmtorusantipointsplitting}, let $u' \colon B\g^*((1,0,1);c) \rightarrow \Map^*(C,BU(n))$ be the map that forgets about equivariance and restricts to the upper half of $(T,\tau)$ which is homeomorphic to a cylinder $C$. Let $\overline \Delta$ be as in (\ref{eqndeltaoverline}) then we obtain the following diagram \small
\[ \xymatrix{ U(n) \ar[r]^-{\partial_c} \ar[d]^-{\overline{\Delta}} & B\g^*((1,0,1);c)\ar[d]^{u'} \ar[r] & B\g((1,0,1);c) \ar[d] \ar[r] & BU(n) \ar[d]^{\overline{\Delta}} \\ U(n)\times U(n)  \ar[r]^-{\zeta'} & \Map^{*2}(C,BU(n)) \ar[r] & \Map(C,BU(n)) \ar[r]^-{\ev_2}& BU(n)\times BU(n) } \] \normalsize
where $\ev_2$ is another double evaluation map; viewing $C$ as a sub-complex of $(T,\tau)$, the map $\ev_2$ evaluates at the basepoint $*_1$ and its image under the involution $\tau(*_1)$. Again, the map $\zeta'$ is defined via this diagram. 

As in the previous case, we aim to study the homotopy type of the map $\overline \Delta \zeta'$. However, it is not immediately clear on the homotopy type of the `boundary' map $\zeta'$. We note that $C\simeq S^1$ under a deformation retract fixing $*_1$ and taking $\tau(*_1)$ to $*_1$. Therefore, if we let $LU(n)$ be the free loop space of $U(n)$, we deduce that there is a homotopy commutative diagram
\[\xymatrix{\Map(C,BU(n))\ar[d]^{\simeq} \ar[r]^{\ev_2} & BU(n)\times BU(n)\\
LBU(n)\ar[r]^{\ev} & \ar[u]^\Delta BU(n)
}\]
where $\ev$ evaluates at the basepoint $*_1$ and $\Delta$ is the diagonal map.  Given that $\Delta \ev$ is a composition, we obtain the following homotopy commutative diagram
\[ \xymatrix{ & U(n) \times U(n) \ar[d]^{\zeta'} \ar@{=}[r] & U(n)\times U(n) \ar[d]^{\tilde \zeta} \\
U(n) \ar@{=}[d] \ar[r]^-{h'} & \Map^{*2}(C,BU(n)) \ar[d] \ar[r]^-h& U(n) \ar[d]^{*} \\
U(n) \ar[r] & LBU(n) \ar[d]^{\Delta \ev} \ar[r]^\ev& BU(n) \ar[d]^\Delta \\
& BU(n) \times BU(n) \ar@{=}[r]& BU(n) \times BU(n)
}\]
where the map $*$ is the inclusion of the homotopy fibre, which is nullhomotopic. The middle square is a homotopy pullback and hence the maps $h$ and $h'$ are defined using this diagram.  

By the triviality of the middle right vertical, there is a right homotopy inverse $i$ to $h$ and a left inverse $q$ to $h'$. Therefore the space $\Map^{*2}(C,BU(n))$ is homotopy equivalent to the product $U(n)\times U(n)$.\footnote{Of course, this can be seen directly by studying the homotopy type of $C$.} Therefore the homotopy type of \[\zeta'\colon U(n)\times U(n) \rightarrow \Map^{*2}(C,BU(n))\] can be determined by studying $q\zeta'$ and $h\zeta'$.  It is clear that $q\zeta'\sim *$ and $h\zeta'\sim \tilde \zeta$. However, $\tilde \zeta$ is the same as the map $\zeta\colon U(n)\times U(n) \rightarrow U(n)$ in Case $(0,0,1)$ and therefore it homotopic to the map $(A,B)\mapsto AB^{-1}$. 

We conclude that $\zeta'$ is homotopic to a map 
\begin{align*}U(n)\times U(n) & \rightarrow U(n) \times U(n) \\
(A,B) & \mapsto (I_n,AB^{-1}).\end{align*}

\begin{proof}[Proof of Theorem \ref{thme} $(1c)$ and $(2c)$]\label{proofoftheoreme56}
We first prove part $(1c)$. Let $p\neq 2$ be a prime with $p \nmid n$. Then localised at $p$, in the same way as Proposition \ref{propunpointed001map} we see that the factor $U(n)/SO(n)$ in \[U(n)\simeq_p U(n)/SO(n)\times SO(n)\] retracts off $B\g^*((1,0,1);c)$ via
\[U(n)/SO(n)\hookrightarrow U(n) \xrightarrow{\partial_c} B\g^*((1,0,1);c) \xrightarrow{u'} U(n) \rightarrow U(n)/SO(n).\]
Additionally by Proposition \ref{proptrivialthentrivial} $(2)$, the factor $SO(n)$ retracts off the gauge group $\g((1,0,1);c)$.  We find the required homotopy equivalence as in the proof of Theorem \ref{thme} $(1b)$.  The proof of $(2c)$ is similar.
\end{proof}

\subsection{The Quaternionic Case}\label{sectionthequaternioniccase}

From herein we restrict to the Quaternionic case. Again, our method of attack will be to study some mapping spaces related to these gauge groups.  In fact these mapping spaces are the same as in the Real case except $BU(2n)$ is endowed with an involution so that \[(EU(2n),\tilde\varsigma_{Q})\rightarrow (BU(2n),\varsigma_{Q})\] is a universal Quaternionic bundle. 
Recall that in the Real case, the involution $\varsigma$ was induced by complex conjugation $\hat\varsigma\colon U(n) \rightarrow U(n)$.  In this case, the involution $\varsigma_Q$ is induced from the homomorphism
\setlength{\arraycolsep}{2pt}
\[\begin{array}{rccc}
\hat\varsigma_Q\colon & U(n) &\rightarrow& U(n) \\ 
& A & \mapsto &J^{-1} A J
\end{array}\]
\setlength{\arraycolsep}{6pt}
where
\[J=\left( \begin{array}{ccccccc}
   0 & 1 & 0 & 0 & \cdots & 0 & 0\\
  -1 & 0 & 0 & 0 & \cdots & 0 & 0\\
   0 & 0 & 0 & 1 & \cdots & 0 & 0\\
   0 & 0 & -1& 0 & \cdots & 0 & 0\\
   \vdots & \vdots & \vdots& \vdots & \ddots & \vdots & \vdots\\
   0 & 0 & 0 & 0 & \cdots & 0 & 1\\
   0 & 0 & 0 & 0 & \cdots & -1 & 0\\
  \end{array}\right)\hspace{-5pt}\begin{array}{l} \vphantom{\vdots}\\ \\ \\ \\ \\ \\ .\end{array}\]
Most of the results in the Real case come from geometric properties of $(X,\sigma)$, hence we will see that these results transfer to the Quaternionic setting without too much hassle. Furthermore, since $(BG)^{\varsigma_Q}=BSp(n)$ we will see that a number of results will be easier to prove due to the high connectivity of $BSp(n)$.

For the $\mathbb{Z}_2$-space $(BU(2n),\varsigma_Q)$ as above, we write 
\[\Mapq(X,BU(2n)):=\Mape(X,BU(2n))\]
to distinguish from the Real case and use similar notation for the pointed cases. Now let $\overline X$ be as in the preamble to Theorem \ref{thmbaird}, and we state the Quaternionic analogue of Theorem \ref{thmbaird}.
\begin{theorem}\label{thmquatbaird}
Let $(P,\tin)$ be a Quaternionic principal $U(2n)$-bundle of class $c$ over a Real surface $(X,\bin)$ of type $(g,r,a)$. Then there are homotopy equivalences
 \begin{enumerate}
   \item $B\gq(P,\tin)\simeq \Mapq(X,BU(2n);P)$;
   \item $B\gq^*(P,\tin)\simeq \Mappq(X,BU(2n);P)$;
   \item $B\gq^{*(r+a)}(P,\tin)\simeq \Mapq^{*(r+a)}(X,BU(2n);c)\simeq \Mappq(\overline X,BU(2n);P) .$
  \end{enumerate}
where on the right hand side we pick the path component of $\Mapq(X,BU(n))$ that induces $(P,\tilde\sigma)$.
\end{theorem}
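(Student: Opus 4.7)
The plan is to mimic the proof of Theorem~\ref{thmbaird} in the Real case, with the universal Real principal $U(n)$-bundle $(EU(n),\tilde\varsigma)\to (BU(n),\varsigma)$ replaced by $(EU(2n),\tilde\varsigma_Q)\to (BU(2n),\varsigma_Q)$. First I would establish that this latter bundle is indeed universal among Quaternionic principal $U(2n)$-bundles: given any $(P,\tin)\to (X,\bin)$, the non-equivariant classifying map $f\colon X\to BU(2n)$ can be rigidified to a $\mathbb{Z}_2$-equivariant one, and the lift condition $\tin^2(p)=p\cdot(-I_{2n})$ matches on the universal side via the identity $\tilde\varsigma_Q^2 = R_{J^2}=R_{-I_{2n}}$, where $R_g$ denotes right multiplication by $g$. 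This yields a bijection between isomorphism classes of Quaternionic bundles and $\pi_0\Mapq(X,BU(2n))$, consistent with the classification of Proposition~\ref{propquatbundlesclass}.

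Next, I would run the standard Milnor-style argument identifying the classifying space of the automorphism group of a principal bundle with the appropriate component of a mapping space. Concretely, the space of $\mathbb{Z}_2$-equivariant $U(2n)$-bundle maps $(P,\tin)\to (EU(2n),\tilde\varsigma_Q)$ covering equivariant classifying maps is weakly contractible (using that $EU(2n)$ is contractible and the $U(2n)$-actions are principal), the group $\gq(P,\tin)$ acts freely on it by post-composition, and the quotient is precisely the path component of $\Mapq(X,BU(2n))$ determined by $(P,\tin)$. This gives part~(1). Parts~(2) and~(3) follow by restricting to equivariant classifying maps sending the basepoint $*_X$, respectively the designated set $A$, to the basepoint of $BU(2n)$ (chosen in $(BU(2n))^{\varsigma_Q}=BSp(n)$ whenever the source basepoint is $\bin$-fixed). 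The homeomorphism $\Mapq^{*(r+a)}(X,BU(2n))\cong \Mappq(\overline X,BU(2n))$ is formal from the defining property of $\overline X$ as the cofibre of $A\hookrightarrow X$.

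The main obstacle is verifying the universal property of $(EU(2n),\tilde\varsigma_Q)$, which splits into two statements: every Quaternionic bundle admits an equivariant classifying map, and two such maps are $\mathbb{Z}_2$-homotopic if and only if the pullback bundles are isomorphic as Quaternionic bundles. The cleanest route is an equivariant obstruction-theoretic argument over the $\mathbb{Z}_2$-CW structure of $X$ from Section~\ref{sectionkleinsurfacesasz2complexes}, taking advantage of the high connectivity of the fixed point set $BSp(n)$ (which is $3$-connected) to extend and compare lifts over low-dimensional equivariant cells with no obstructions. Alternatively, the statement is essentially a specialisation of the pseudo-Real framework of \cite{bairdmodulispace}, in which the Quaternionic case appears as one of the two possible sign conventions for the square of the lift, so that Baird's proof of the Real analogue transfers almost verbatim once the sign change $+I_{2n}\mapsto -I_{2n}$ is tracked through the argument.
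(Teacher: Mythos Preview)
Your proposal is correct and aligns with the paper's treatment: the paper does not give a separate proof of Theorem~\ref{thmquatbaird} at all, but simply states it as the Quaternionic analogue of Theorem~\ref{thmbaird}, which is itself quoted from \cite{bairdmodulispace}. Your sketch (universal Quaternionic bundle, Milnor-style identification of $B\gq$ with a mapping-space component, then restriction for the pointed versions) is exactly the argument underlying Baird's framework, and your closing remark that this is a specialisation of the pseudo-Real setup in \cite{bairdmodulispace} with the sign $+I_{2n}\mapsto -I_{2n}$ is precisely how the paper implicitly justifies the statement.
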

   We now sketch the proofs for the results in Section \ref{sectionmainresultsforquatbundles}.
    \begin{proof}[Proof of \ref{propquatcomponentsequivalence}]
   We use the action of $\pi_2(BU(2n))$ on $[(X,\sigma),(BU(2n),\varsigma_Q)]_{\mathbb{Z}_2}$ as presented in the proof of Proposition \ref{proprealpointedpathequal}.
 \end{proof}
 
  As in the Real case, the lack of $\pi_2(BU(2n))$ action means that we cannot provide an analogue for $B\gq(P,\tilde\sigma)$.
  \begin{proof}[Proof of \ref{propquatunpointedimmediate}]
     The idea is to tensor the Quaternionic bundle $(P,\tin)$ with a Real $U(1)$-bundle $\pi_Q\colon (Q,\tau)\rightarrow (X,\bin)$ of class $(2,0,\dots,0)$.  The required isomorphism of gauge groups is then defined as in the proof of Proposition \ref{propunpointedimmediatefromnonequiv}.   
 \end{proof}
 We sketch the proofs for the results related to homotopy decompositions of the gauge groups.
 \begin{proof}[Proof of \ref{thmqa} and \ref{thmqb}]
 The proof is similar to those in Sections \ref{subsectionintegraldecompositions}--\ref{subsectionr0a2} except that in this case $BU(2n)^{\varsigma_Q}=BSp(n)$. We recall that decompositions involving fixed circles in the Real case needed to be handled delicately, but this does not occur in the Quaternionic case due to the high connectivity of $BSp(n)$. 
\end{proof}
 Our aim is to now prove Theorem \ref{thmqc}, using a similar method to that of Theorem \ref{thmsphereantipointsplitting}. Localised at a prime $p\neq 2$ and for $n$ odd, we obtained a $p$-local decomposition in the Real case due to the fact that the $p$-local homotopy equivalence 
\begin{align*} U(n)/O(n) & \rightarrow U(n)/O(n) \\
AO(n)& \mapsto AA^t O(n)
\end{align*}
factored through $B\g^*((0,0,1);0)$.  We shall see that a similar map involving $U(2n)/Sp(n)$ also factors through the Quaternionic analogue of this gauge group.  

  Let \[u \colon B\gq^*((0,0,1);0) \rightarrow \Map^{*2}(D^2,BU(2n))\] be the map that restricts to the upper hemisphere of $(S^2,-\id)$ and forgets about equivariance considering the image as landing in $\Map^{*2}(D^2,BU(2n))$.  Let \[r\colon B\gq^*((0,0,1);0) \rightarrow \Mappe((S^1\vee S^1,\sw),BU(2n))\] be the map restricting to the $1$-skeleton of $(S^2,-\id)$. We obtain a similar homotopy commuting diagram to diagram (\ref{diagram:pullbackmap})

\begin{equation*} \begin{gathered}\xymatrix{Q \ar@/_/[ddr]\ar@/^/[drr] \ar@{.>}[dr]\\&B\gq^*((0,0,1);0)\ar[r]^-{u} \ar[d]^-r & U(2n) \ar[d]^-{\Delta^{-1}} \\ & U(2n) \ar[r]^-{{\Delta^Q}} & U(2n)\times U(2n).}\end{gathered}\end{equation*}
where $ \Delta^Q$ is the map $A\mapsto (A, \hat\varsigma_Q A)$.  Here, $Q$ is the strict pullback of the diagram 
\[ U(2n)\xrightarrow{\Delta^Q} U(2n)\times U(2n) \xleftarrow{\Delta^{-1}} U(2n)\]
and $B\gq^*((0,0,1);0)$ is the homotopy pullback of the same diagram. Once again, we aim to show that $Q$ retracts off $B\gq^*((0,0,1);0)$.
\begin{lemma}\label{lemmaQhomeomorphictoU(n)sp(n)}
 The pullback $Q$ is homeomorphic to $U(2n)/Sp(n)$.
\end{lemma}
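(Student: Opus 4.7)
The approach will be to mirror the proof of Lemma \ref{lemmaQhomeomorphictoU(n)/O(n)} from the Real case, substituting an appropriate quaternionic matrix factorisation for the Autonne--Takagi factorisation. Namely, I shall define a map $f\colon U(2n) \to Q$ by $f(A) = A\hat\varsigma_Q(A)^{-1}$, show it descends to a continuous bijection $f'\colon U(2n)/Sp(n) \to Q$, and then conclude by compactness.

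First, since $\hat\varsigma_Q$ is a homomorphism of order two, one computes $\hat\varsigma_Q(f(A)) = \hat\varsigma_Q(A)\cdot A^{-1} = f(A)^{-1}$, so $f$ indeed lands in $Q$. Moreover, for $W \in Sp(n) = U(2n)^{\hat\varsigma_Q}$, the identity $\hat\varsigma_Q(W) = W$ gives $f(AW) = f(A)$, so $f$ factors through $f'\colon U(2n)/Sp(n) \to Q$. For injectivity, if $f(A) = f(B)$ then a short manipulation yields $B^{-1}A = \hat\varsigma_Q(B^{-1}A)$, so $B^{-1}A \in Sp(n)$ and the cosets agree.

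The substantive step is surjectivity. Using $\bar A^{-1} = A^T$ for $A \in U(2n)$, a brief calculation exploiting $J^2 = -I$ shows $f(A)\cdot J = AJA^T$; conversely, the defining relation $\hat\varsigma_Q(C) = C^{-1}$ for $C \in Q$ rearranges to $C^T = J^{-1}CJ$, which in turn forces $CJ$ to be both skew-symmetric and unitary. Surjectivity therefore reduces to showing that every skew-symmetric unitary $2n \times 2n$ matrix $S$ admits a factorisation $S = UJU^T$ with $U \in U(2n)$. This is the standard consequence of Hua's (or Youla's) normal form for complex skew-symmetric matrices: such a matrix is unitarily $T$-congruent to a block diagonal $\Sigma$ with $2\times 2$ blocks $\left(\begin{smallmatrix} 0 & \sigma_k \\ -\sigma_k & 0\end{smallmatrix}\right)$ for $\sigma_k \geq 0$, and the unitarity of $S$ forces $\sigma_k = 1$ so that $\Sigma = J$. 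Taking such a $U$ with $CJ = UJU^T$, one then reads off $f(U) = C$.

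Finally, $f'$ is a continuous bijection from the compact space $U(2n)/Sp(n)$ to the Hausdorff space $Q$, hence a homeomorphism. The only real obstacle is invoking the skew-symmetric normal form in the correct form and carefully tracking the signs produced by $J^2 = -I$; everything else runs parallel to Lemma \ref{lemmaQhomeomorphictoU(n)/O(n)} verbatim.
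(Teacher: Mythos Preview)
Your proposal is correct and follows essentially the same route as the paper: define $f'(ASp(n)) = A\hat\varsigma_Q(A)^{-1}$, observe that membership in $Q$ is equivalent to $CJ$ being skew-symmetric, invoke the Youla normal form to write $CJ = UJU^T$ and hence $C = f'(USp(n))$, and finish with the compact/Hausdorff argument. The paper's own proof simply refers back to Lemma~\ref{lemmaQhomeomorphictoU(n)/O(n)} and spells out only the surjectivity step via Youla, so your write-up is slightly more detailed but not substantively different.
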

\begin{proof} This is essentially the same proof as Lemma \ref{lemmaQhomeomorphictoU(n)/O(n)}, but we must elaborate on the details for surjectivity of the map 
\begin{align*}
f'\colon  U(2n)/Sp(n) & \rightarrow  Q \\
  ASp(n) & \mapsto  A \hat\varsigma_Q(A)^{-1}.
\end{align*}
 It can be shown that a matrix $A$ is in $Q$ if and only if $AJ$ is skew-symmetric and hence due the Youla Lemma \cite{youlaanormalform}, there is a unitary matrix $P$ such that $AJ=PJP^t$. Therefore
\[A=PJP^tJ^{-1}=P(J^{-1}\overline PJ)^{-1}=f'(PSp(n))\]
 and the result follows.
\end{proof}

Similar to the map in (\ref{eq:symmetricmatrixfactoring}), we obtain the following composition
\begin{equation}\label{eqnquatmatrixfact}\varphi \colon U(2n)/Sp(n) \xrightarrow{f'} Q \rightarrow B\gq^*((0,0,1);0) \xrightarrow{r} U(2n) \xrightarrow{q} U(2n)/Sp(n)\end{equation}
where $q$ is the quotient map.  The map $\varphi$ sends an element $ASp(n)$ to the element $A\hat\varsigma_Q(A)^{-1}Sp(n)$. It was shown in \cite{Harrisonthehomotopygroupsoftheclassicalgroups} that the related map 
\begin{equation}\label{eqnsunspnmap}\begin{gathered}\begin{aligned} s'\colon SU(2n)/Sp(n)& \rightarrow SU(2n)/Sp(n) \\ ASp(n)& \mapsto A\hat\varsigma_Q(A)^{-1}Sp(n)\end{aligned}\end{gathered}\end{equation}
is a homotopy equivalence when localised at a prime $p\neq 2$.

It is clear that there are analogue statements to Lemmas \ref{lemmaunsonsunsons1} and \ref{lemmathereexistmaps} and Proposition \ref{thmidentifyF}.
\begin{lemma}\label{lemmaquatunsonsunsons1}
There is a homotopy equivalence
\[ \eta\colon  U(2n)/Sp(n)\times S^1\xrightarrow{\simeq} U(2n)/Sp(n) . \]
\qed
\end{lemma}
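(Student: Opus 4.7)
The plan is to mimic the proof of Lemma \ref{lemmaunsonsunsons1} essentially verbatim, with $O(n)$ replaced by $Sp(n)$; in fact the argument is slightly cleaner here since $Sp(n)$ is already connected, so there is no need for a preliminary step analogous to Lemma \ref{lemmaunonunson}.

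First I would observe that the classifying map $BSp(n) \to BU(2n)$ for the inclusion $Sp(n) \hookrightarrow U(2n)$ factors through $BSU(2n)$, giving the Quaternionic analogue of diagram (\ref{eqnunsondecomposition}):
\[
\xymatrix@R=1.6pc{
 & U(2n) \ar[d] \ar@{=}[r] & U(2n) \ar[d]^{f} \\
SU(2n)/Sp(n) \ar@{=}[d] \ar[r]^-{i} & U(2n)/Sp(n) \ar[d] \ar[r]^-{j} & S^1 \ar[d] \\
SU(2n)/Sp(n) \ar[r] & BSp(n) \ar[d] \ar[r] & BSU(2n) \ar[d] \\
 & BU(2n) \ar@{=}[r] & BU(2n).
}
\]
Here $f \colon U(2n) \to U(2n)/SU(2n) \simeq S^1$ is homotopic to the determinant map, which admits a section $l \colon S^1 \to U(2n)$, e.g.\ $l(z) = \mathrm{diag}(z, 1, \dots, 1)$.

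Next, exactly as in Lemma \ref{lemmaunsonsunsons1}, I would define $\eta$ via the left translation action of $U(2n)$ on $U(2n)/Sp(n)$ through the composition
\[
\eta \colon SU(2n)/Sp(n) \times S^1 \xrightarrow{i \times l} U(2n)/Sp(n) \times U(2n) \xrightarrow{\mathrm{action}} U(2n)/Sp(n).
\]

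To verify $\eta$ is a homotopy equivalence, I would compare the trivial fibration $SU(2n)/Sp(n) \times S^1 \to S^1$ with the middle row fibration $SU(2n)/Sp(n) \to U(2n)/Sp(n) \to S^1$ via the map $\eta$. By construction $\eta$ restricts to $i$ on the fibre, and since $\det \circ l \simeq \mathrm{id}_{S^1}$ the induced map on the bases is the identity up to homotopy. The Five Lemma applied to the long exact sequences of homotopy groups then forces $\eta$ to induce an isomorphism on all $\pi_k$, hence to be a weak equivalence and so a homotopy equivalence of CW complexes. The main obstacle, if any, is purely organisational: the simple connectivity of $Sp(n)$ eliminates the $O(n)$ versus $SO(n)$ subtleties present in the Real case, so no new geometric input is required.
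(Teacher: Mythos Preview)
Your proposal is correct and follows exactly the approach intended by the paper, which simply marks this lemma with a \qed as the direct analogue of Lemma~\ref{lemmaunsonsunsons1}; you have spelled out the details (the diagram analogous to (\ref{eqnunsondecomposition}), the section $l$ of the determinant, and the Five Lemma verification) that the paper leaves implicit. Note also that you have silently corrected the evident typo in the statement: the domain of $\eta$ should be $SU(2n)/Sp(n)\times S^1$, not $U(2n)/Sp(n)\times S^1$, as your argument and the comparison with Lemma~\ref{lemmaunsonsunsons1} make clear.
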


\begin{lemma}\label{lemmaquatthereexistmaps}
There exist maps 
\begin{align*}s''\colon SU(n)/SO(n) & \rightarrow SU(n)/SO(n) \\
 s'\colon S^1 & \rightarrow S^1
\end{align*}
 such that the following is a homotopy commuting square
\[\xymatrix{SU(2n)/Sp(n)\times S^1 \ar[r]^{s''\times s'} \ar[d]^{\eta}& SU(2n)/Sp(n)\times S^1 \ar[d]^\eta\\
U(2n)/Sp(n)  \ar[r]^s & U(2n)/Sp(n).} \]
Furthermore, $s''$ and $s'$ are $p$-local equivalences. \qed
\end{lemma}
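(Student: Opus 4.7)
The plan is to mirror the argument of Lemma \ref{lemmathereexistmaps} from the Real case, substituting the Harrison map (\ref{eqnsunspnmap}) for its orthogonal analogue and using Lemma \ref{lemmaquatunsonsunsons1} in place of Lemma \ref{lemmaunsonsunsons1}. First I would fix a homotopy inverse $\eta^{-1}$ of the equivalence $\eta$ of Lemma \ref{lemmaquatunsonsunsons1} and define
\[\tilde s = \eta^{-1}\circ s \circ \eta \colon SU(2n)/Sp(n)\times S^1 \longrightarrow SU(2n)/Sp(n)\times S^1.\]
Let $\iota\colon SU(2n)/Sp(n)\to SU(2n)/Sp(n)\times S^1$ and $\kappa\colon S^1\to SU(2n)/Sp(n)\times S^1$ be the inclusions of the two factors. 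By the construction of $\eta$ (arising from the quaternionic analogue of diagram (\ref{eqnunsondecomposition})), the inclusion $\iota$ is homotopic to $\eta^{-1}\circ i$, where $i\colon SU(2n)/Sp(n)\to U(2n)/Sp(n)$ is the canonical inclusion; similarly $\kappa$ agrees up to homotopy with $\eta^{-1}$ restricted to the $S^1$-factor.

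Next, I would show that the images of $\tilde s\iota$ and $\tilde s\kappa$ land, up to homotopy, in $SU(2n)/Sp(n)$ and $S^1$ respectively. For the first, the composition $s\circ i$ is, from the definition of $\varphi$ in (\ref{eqnquatmatrixfact}), homotopic to the Harrison-type map $ASp(n)\mapsto A\hat\varsigma_Q(A)^{-1}Sp(n)$ of (\ref{eqnsunspnmap}); a direct computation (or a determinant argument showing the image is a product $A\hat\varsigma_Q(A)^{-1}\in SU(2n)$) shows the image actually lies in $i(SU(2n)/Sp(n))$. For the second, $s$ restricted to the $S^1$-factor of $U(2n)/Sp(n)$ corresponds, via $\eta$ and diagram (\ref{eqnunsondecomposition}) in the quaternionic version, to the squaring map on $S^1$, whose image lies in $S^1$. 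We may therefore set $s'' = \tilde s\iota$ and $s' = \tilde s\kappa$, yielding the commutative square in the statement.

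Finally, by the preceding identifications, $s''$ is homotopic to the Harrison map (\ref{eqnsunspnmap}), and $s'$ is homotopic to the squaring map $x\mapsto x^2$ on $S^1$. Harrison's result in \cite{Harrisonthehomotopygroupsoftheclassicalgroups} gives that $s''$ is a $p$-local equivalence for every odd prime $p$, and the squaring map on $S^1$ is a $p$-local equivalence for $p\neq 2$, so both maps are $p$-local equivalences as required. The main obstacle in this argument is verifying that the restrictions of $\tilde s$ to the two factors indeed land (up to homotopy) in those factors; this amounts to checking that the Harrison-type self-map of $U(2n)/Sp(n)$ respects the splitting produced by $\eta$, which reduces to the determinant computation indicated above together with the naturality of the fibre sequence defining the $S^1$-factor.
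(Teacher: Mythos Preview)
Your proposal is correct and follows exactly the approach the paper intends: the lemma is stated with a \qed\ because its proof is the direct quaternionic analogue of Lemma \ref{lemmathereexistmaps}, and you have faithfully carried out that translation, replacing $O(n)$ by $Sp(n)$, the map $A\mapsto AA^t$ by $A\mapsto A\hat\varsigma_Q(A)^{-1}$, and invoking Harris's result (\ref{eqnsunspnmap}) (which, as the paper notes, requires no parity hypothesis on $n$ in the symplectic case).
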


\begin{prop}
Let $F$ be the homotopy fibre of the composition
\[B\gq^*((0,0,1);0)\xrightarrow{r} U(2n) \xrightarrow{q} U(2n)/Sp(n)\]
then for any prime $p\neq 2$, there is a $p$-local homotopy equivalence 
\[F\simeq_p \Omega (U(2n)/Sp(n)). \]
\qed
\end{prop}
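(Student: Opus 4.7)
The plan is to follow the proof of Proposition \ref{thmidentifyF} essentially verbatim, replacing $(BU(n),\varsigma)$ with $(BU(2n),\varsigma_Q)$ throughout and noting that the fixed-point subspace is now $BSp(n)$ rather than $BO(n)$. First, since $qr$ is by construction a composition, there is a homotopy commutative diagram
\[\xymatrix{F \ar[d] \ar[r] & B\gq^*((0,0,1);0)  \ar[r]^-{qr} \ar[d]^r & U(2n)/Sp(n) \ar@{=}[d] \\ Sp(n) \ar[r] & U(2n) \ar[r]^-q & U(2n)/Sp(n)}\]
in which the left square is a homotopy pullback. The map $r$ is a fibration because it is induced by the equatorial inclusion $i\colon (S^1,-\id) \hookrightarrow (S^2,-\id)$, so $F$ identifies with the strict pullback, i.e.\ with the relative mapping space
\[\Map_{\mathbb{Z}_2}\big(((S^2,-\id),(S^1,-\id)),(BU(2n),BSp(n));0\big).\]

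Next I would introduce a folding map $T\colon F\rightarrow \Map_{\mathbb{Z}_2}^*((S^2,-\id),(BU(2n),\id);0)$ defined by
\[T(f)(x)=\begin{cases} f(x) & \text{for } x \text{ in the upper hemisphere including the equator;} \\ f(-\id(x)) & \text{for } x \text{ in the lower hemisphere excluding the equator.}\end{cases}\]
This is well-defined precisely because the equator lands in $BSp(n)=BU(2n)^{\varsigma_Q}$, so the two branches agree there and $T(f)$ is automatically $\mathbb{Z}_2$-equivariant with respect to the trivial involution on the target. Together with the equatorial restriction, $T$ assembles into a homotopy pullback diagram
\[ \xymatrix{ F\ar[r]^-T \ar[d]_{i^*} \pullbackcorner & \Map_{\mathbb{Z}_2}^*((S^2,-\id),(BU(2n),\id);0) \ar[d]^{i^*} \\ Sp(n) \ar@{^{(}->}[r] & U(2n).}\]

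Finally, there is an evident homeomorphism $\Map_{\mathbb{Z}_2}^*((S^2,-\id),(BU(2n),\id);0)\cong \Map^*(\mathbb{R}P^2,BU(2n);0)$, and $\mathbb{R}P^2$ is $p$-locally contractible for any prime $p\neq 2$. Hence the upper-right corner of the pullback is $p$-locally contractible, and $F$ is $p$-locally equivalent to the homotopy fibre of the inclusion $Sp(n)\hookrightarrow U(2n)$, which is $\Omega(U(2n)/Sp(n))$. There is no genuine obstacle to overcome here: every ingredient already appeared in the Real case, and the high connectivity of $BSp(n)$ only makes things easier. The one detail to verify with a little care is that $T$ lands in the correct component (i.e.\ in the component indexed by $0$), but this is immediate since the restriction of $T(f)$ to any hemisphere agrees up to the antipodal reparametrisation with the restriction of $f$, and $f$ was chosen in the trivial component to begin with.
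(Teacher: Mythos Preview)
Your proof is correct and follows exactly the approach the paper intends: the statement in question is marked with \qed and no proof, the paper having already noted that ``it is clear that there are analogue statements to Lemmas \ref{lemmaunsonsunsons1} and \ref{lemmathereexistmaps} and Proposition \ref{thmidentifyF}'', and your argument is precisely the Quaternionic transcription of the proof of Proposition \ref{thmidentifyF} with $O(n)$ replaced by $Sp(n)$. One cosmetic slip: the relative mapping space describing $F$ should be the \emph{pointed} one, $\Mappe\big(((S^2,-\id),(S^1,-\id)),(BU(2n),BSp(n));0\big)$, matching the fact that $B\gq^*((0,0,1);0)$ is a pointed mapping space.
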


\begin{proof}[Proof of Proposition \ref{thmqc} (1)]
For a prime $p\neq 2$, we have shown that there is a $p$-local section to the principal homotopy fibration
\[ \Omega^2 (U(2n)/Sp(n))\rightarrow \gq^*((0,0,1);0) \xrightarrow{\Omega (qr)} \Omega(U(2n)/Sp(n))\]
and the result follows.
\end{proof}

\begin{proof}[Proof of Theorem \ref{thmqc} (2) and (3)]  These follow using the same proofs as Theorems \ref{thmtorusantipointsplitting} and \ref{thm111pointed}.
\end{proof}

In the unpointed case, the theorems involving integral decompositions follow immediately from the Real case.
\begin{proof}[Proof of Theorem \ref{thmqd}]  The results presented in Section \ref{subsectionintegraldecompositions} do not depend on the fixed point set of the involution on $BU(n)$ and hence Theorem \ref{thmqd} follows immediately.
\end{proof}
We proceed to prove the Quaternionic analogues of Section \ref{subsectionanalysingboundary}. Let
\[B=\begin{cases}
BSp(n) & \mbox{if } r>0 \\
BU(2n) & \mbox{otherwise}
\end{cases}\]
and recall the evaluation fibration
\begin{equation}\label{eqquatevaluationfibration} \Omega B \xrightarrow{\partial_P} \Map_Q^*(X, BU(n);P)\rightarrow \Map_Q(X, BU(n);P) \rightarrow B.\end{equation}
The following proposition can be proven using the same method as Proposition \ref{proptrivialthentrivial}.
\begin{prop}\label{propquattrivialthentriv}
Fix $d \in \mathbb{Z}$ and let $\partial_d$ be the boundary map in (\ref{equnoneqeval}). Let 
  \[\partial_P\colon \Omega B \rightarrow B\gq^*((g,r,a);2d)\] be the boundary map of the evaluation fibration as in (\ref{eqquatevaluationfibration}). 
  If $\partial_d$ is (q-locally) trivial then
  \begin{enumerate}
  \item if $r>0$ then $\partial_P$ is (q-locally) trivial;
  \item if $r=0$ then the composition 
  \[Sp(n)\hookrightarrow U(2n) \xrightarrow{\partial_P}  B\gq^*((g,r,a);2d)\]
  is (q-locally) trivial. \qed
  \end{enumerate}
\end{prop}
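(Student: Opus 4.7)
The plan is to mirror the proof of Proposition \ref{proptrivialthentrivial}, replacing $O(n)$ with $Sp(n)$ and $\varsigma$ with the Quaternionic involution $\varsigma_Q$. Introduce the $\mathbb{Z}_2$-space $Y = (S^2\vee S^2, \sw)$; as in the Real case, the path components of $\Mappq(Y, BU(2n))$ are classified by even integers, so the evaluation fibration provides a boundary map
\[ \overline{\partial}_{2d}\colon Sp(n) \longrightarrow \Mappq(Y, BU(2n); 2d). \]

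The first key step is to build the Quaternionic analogue of diagram (\ref{eqntheriaultgeneralisation}). The non-equivariant inclusion $i_1\colon S^2 \hookrightarrow S^2\vee S^2$ of the left wedge factor induces a vertical comparison between the $Y$-evaluation sequence and the non-equivariant $S^2$-evaluation sequence (\ref{equnoneqeval}). The map $i_1^*$ on pointed mapping spaces has an explicit inverse, sending $f \in \Map^*(S^2, BU(2n); d)$ to the composite
\[ S^2 \vee S^2 \xrightarrow{f \vee f} BU(2n) \vee BU(2n) \xrightarrow{\id \vee \varsigma_Q} BU(2n) \vee BU(2n) \xrightarrow{\mathrm{fold}} BU(2n), \]
which is $\mathbb{Z}_2$-equivariant because $\sw$ interchanges the wedge summands while $\varsigma_Q$ is the involution on the target. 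Therefore (q-local) triviality of $\partial_d$ propagates to $\overline{\partial}_{2d}$.

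The second step is to transfer this triviality across the $\mathbb{Z}_2$-quotient map $q\colon X \rightarrow Y$ that collapses the $1$-skeleton of $(X,\sigma)$. This yields a commutative diagram of evaluation fibrations whose left vertical is a map $f\colon Sp(n) \rightarrow \Omega B$. When $r > 0$ one has $B = BSp(n)$ and $f$ is a homotopy equivalence, giving conclusion (1); when $r = 0$ one has $B = BU(2n)$ and $f$ is the canonical inclusion $Sp(n)\hookrightarrow U(2n)$, giving conclusion (2).

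The one technical check, and the only place where the Quaternionic setting differs in spirit from the Real one, is verifying that the equivariant extension above lands in the correct path component $2d$ of $\Mappq(Y, BU(2n))$ and that the composite is genuinely $\varsigma_Q$-equivariant. This amounts to bookkeeping using the $\varsigma_Q$-action on $\pi_2 BU(2n)$, analogous to the parity argument in the Real case; once it is in hand, the rest of the argument is formal diagram-chasing and the proposition follows.
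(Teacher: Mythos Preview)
Your proposal is correct and follows essentially the same approach as the paper, which simply states that the proposition can be proven using the same method as Proposition~\ref{proptrivialthentrivial}. Your outline faithfully transcribes that argument into the Quaternionic setting with the expected substitutions of $Sp(n)$ for $O(n)$ and $\varsigma_Q$ for $\varsigma$.
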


\begin{proof}[Proof of Theorem \ref{thmqe} (1)]
Let $p$ be a prime such that $p \nmid 2n$, then by Theorem \ref{thm:tgghomotopydecomp}, the map $\partial_{2n}$ is $p$-locally trivial. The result then follows from Proposition \ref{propquattrivialthentriv}.
\end{proof}

\begin{proof}[Proof of Theorem \ref{thmqe} (2) and (3)]
The proof is similar to the proofs of Theorem \ref{thme} $(1b)$ and $(1c)$.  We do require that $p\neq 2$ but this is automatic with the assumption that $p\nmid 2n$.
\end{proof}

\section{Tables of Homotopy Groups}\label{appendixa}
We present homotopy groups of the $(r+a)$-pointed and unpointed gauge groups.  We only present these for the trivial components, that is,
\begin{itemize}
\item $(c,w_1,\dots w_r)=(0,0,\dots,0)$ for Real bundles and;
\item $c=0$ for Quaternionic bundles;
\end{itemize}
with the understanding that results can be obtained for different components using the results in Section \ref{chapterintroduction}.  Specifically, in the $(r+a)$-pointed case, we can obtain results for all of the components using Propositions \ref{proprealpointedpathequal} and \ref{propquatcomponentsequivalence} and, in the unpointed case, we can obtain results for some of the different components using Propositions   \ref{propunpointedimmediatefromnonequiv}, \ref{propunpointedcomponentequiv},  and \ref{propquatunpointedimmediate}.

We first recall the status of the calculation of the homotopy groups before this paper, that is, we present the low dimensional homotopy groups from \cite{biswasstablevectorbundles} in Table \ref{tablebiswas}.

\renewcommand{\arraystretch}{1.8}
\begin{center}
\captionsetup{type=table}
\captionof{table}{Results of \cite{biswasstablevectorbundles} -- the low dimensional homotopy groups of rank~$n$ gauge groups above a Real surface of type $(g,r,a)$. The entries in blue disagree with the author's results.}\label{tablebiswas}\setlength{\arraycolsep}{5pt}
$
\begin{array}{|c || c | c | c | c |}\hline 
 \mbox{ Real } & \pi_0(\g^{*(r+a)}(P,\tin)) & \pi_0(\g(P,\tin))& \pi_1(\g^{*(r+a)}(P,\tin))&\pi_1(\g(P,\tin))\\
\hline\hline 
 n>2& \mathbb{Z}^{g+a}\times (\mathbb{Z}_2)^r& \mathbb{Z}^g\times (\mathbb{Z}_2)^{r+1}& \textcolor{colour1}{\mathbb{Z}} & \textcolor{colour1}{\mathbb{Z}}\times(\mathbb{Z}_2)^r \\ \hline
 n=2& \mathbb{Z}^{g+a+r} & \mathbb{Z}^{g+r} \times \mathbb{Z}_2 & \mathbb{Z}& \mathbb{Z}^{r+1} \\ \hline
 n=1& \mathbb{Z}^{g+a} & \mathbb{Z}^g\times \mathbb{Z}_2  & 0 & 0\\ \hline \hline
 \begin{gathered}\begin{array}{c}
\mbox{Quat.} \\[-9pt]
\mbox{rank }2n
\end{array}\end{gathered} & \mathbb{Z}^{g+a} & \mathbb{Z}^g\times (\mathbb{Z}_2)^a& \textcolor{colour1}{\mathbb{Z}} & \textcolor{colour1}{\mathbb{Z}}\\ \hline
\end{array}
$
\end{center}
\setlength{\arraycolsep}{6pt}

\renewcommand{\arraystretch}{1.35}

From the results in Sections \ref{sectionmainresultsforrealbundles} and \ref{sectionmainresultsforquatbundles}, we can see that our homotopy decompositions usually contain factors involving $U(n)$, $O(n)$ and $Sp(n)$.  Due to Bott periodicity, it is easy to calculate some of the higher homotopy groups for high rank gauge groups. We present such results in Tables \ref{tablehighrank} and \ref{tablequathighrank} where $\eta$ is defined via
\[\eta=\eta(g,r,a)=\begin{cases}
1 & \mbox{if } r>0 \mbox{ and } a=1; \\
0 & \mbox{otherwise}.
\end{cases}\]
Some of the results in Table \ref{tablehighrank} are a consequence of localised homotopy equivalences and hence may provide incomplete information.  To highlight these localised results we use the following notation
\begin{itemize}
\item groups surrounded by $(-)_p$ are understood to have come from $p$-local homotopy equivalences where $p$ and the rank $n$ of the gauge groups satisfy the requirements of Theorems \ref{thmc} and \ref{thme}.
\end{itemize}

\begin{center}
\captionsetup{type=table}
\captionof{table}{{Homotopy groups for high rank gauge groups of Real bundles, that is, the homotopy groups $\pi_i$ when the rank $n>i+2$. The results in blue correspond to the top row in Table \ref{tablebiswas}. }}\label{tablehighrank}
$
\begin{array}{|c||c|c|} 
\hline
  & \g^{*(r+a)}(P,\tin)  & \g(P,\tin) \\
  \hline \hline
  \pi_{8j} & \textcolor{colour1}{\mathbb{Z}^{g-1}\times \mathbb{Z}_2^{r-1} \times (\mathbb{Z}^{1+a})_p\times (\mathbb{Z}_2^{1+\eta})_p}  & \textcolor{colour1}{\mathbb{Z}^{g-1}\times \mathbb{Z}_2^{r-1} \times (\mathbb{Z})_p\times (\mathbb{Z}_2^{1+\eta})_p}  \\
  \hline
  \pi_{8j+1} & \textcolor{colour1}{(\mathbb{Z}_2^{1+a})_p}   & \textcolor{colour1}{\mathbb{Z}_2^{r-1} \times (\mathbb{Z}_2^{2+\eta})_p}   \\
  \hline
   \pi_{8j+2} & \mathbb{Z}^{g+r-2} \times (\mathbb{Z}^{1+\eta})_p\times (\mathbb{Z}_2^{a})_p  &\mathbb{Z}^{g-1}\times \mathbb{Z}_2^{r-1} \times (\mathbb{Z})_p\times (\mathbb{Z}_2^{\eta})_p  \\
  \hline
  \pi_{8j+3} & (\mathbb{Z})_p &    (\mathbb{Z}^2)_p\\
  \hline
  \pi_{8j+4} & \mathbb{Z}^{g-1} \times (\mathbb{Z}^{1+a})_p   & \mathbb{Z}^{g-1}\times  (\mathbb{Z})_p \\
  \hline
  \pi_{8j+5} & 0 & 0 \\
  \hline
   \pi_{8j+6} &\mathbb{Z}^{g+r-2}\times (\mathbb{Z}^{1+\eta})_p & \mathbb{Z}^{g-1}\times (\mathbb{Z}^{1-\eta})_p\\
   \hline
    \pi_{8j+7} & \mathbb{Z}_2^{r-1} \times (\mathbb{Z})_p\times (\mathbb{Z}_2^{\eta})_p & \mathbb{Z}_2^{r-1} \times (\mathbb{Z}^2)_p\times (\mathbb{Z}_2^{\eta})_p\\
    \hline
\end{array}
$
\end{center}

Similarly, some of the results in Table \ref{tablequathighrank} are a consequence of localised homotopy equivalences and hence may provide incomplete information.  To highlight these localised results we use the following notation
\begin{itemize}
\item groups surrounded by $(-)_p$ are understood to have come from $p$-local homotopy equivalences where $p$ is prime and the rank $2n$ of the gauge groups satisfy the requirements of Theorems \ref{thmqc} and \ref{thmqe}.
\end{itemize}

\begin{center}
\captionsetup{type=table}
\captionof{table}{{Homotopy groups for high rank gauge groups of Quaternionic bundles, that is, the homotopy groups $\pi_i$ when the rank $2n>\frac{i+1}{4}$. The results in blue correspond to the bottom row in Table \ref{tablebiswas}. }}\label{tablequathighrank}
$
\begin{array}{|c||c|c|} 
\hline
  & \gq^{*(r+a)}(P,\tin)  & \gq(P,\tin) \\
  \hline \hline
 
  \pi_{8j} & \textcolor{colour1}{\mathbb{Z}^{g-1} \times (\mathbb{Z}^{1+a})_p}   & \textcolor{colour1}{\mathbb{Z}^{g-1}\times  (\mathbb{Z})_p} \\
  \hline
  \pi_{8j+1} &  \textcolor{colour1}{0} & \textcolor{colour1}{0} \\
  \hline
   \pi_{8j+2} &\mathbb{Z}^{g+r-2}\times (\mathbb{Z}^{1+\eta})_p & \mathbb{Z}^{g-1}\times (\mathbb{Z}^{1-\eta})_p\\
   \hline
    \pi_{8j+3} & \mathbb{Z}_2^{r-1} \times (\mathbb{Z})_p\times (\mathbb{Z}_2^{\eta})_p & \mathbb{Z}_2^{r-1} \times (\mathbb{Z}^2)_p\times (\mathbb{Z}_2^{\eta})_p\\
    \hline
     \pi_{8j+4} & \mathbb{Z}^{g-1}\times \mathbb{Z}_2^{r-1} \times (\mathbb{Z}^{1+a})_p\times (\mathbb{Z}_2^{1+\eta})_p  & \mathbb{Z}^{g-1}\times \mathbb{Z}_2^{r-1} \times (\mathbb{Z})_p\times (\mathbb{Z}_2^{1+\eta})_p  \\
  \hline
  \pi_{8j+5} & (\mathbb{Z}_2^{1+a})_p   & \mathbb{Z}_2^{r-1} \times (\mathbb{Z}_2^{2+\eta})_p   \\
  \hline
   \pi_{8j+6} & \mathbb{Z}^{g+r-2} \times (\mathbb{Z}^{1+\eta})_p\times (\mathbb{Z}_2^{a})_p  &\mathbb{Z}^{g-1}\times \mathbb{Z}_2^{r-1} \times (\mathbb{Z})_p\times (\mathbb{Z}_2^{\eta})_p  \\
  \hline
  \pi_{8j+7} & (\mathbb{Z})_p &    (\mathbb{Z}^2)_p\\
  \hline
\end{array}
$
\end{center}

Due to the properties of Bott periodicity, Table \ref{tablequathighrank} is a translation of Table \ref{tablehighrank}. We note that additional calculations can be made for the lower rank cases.  We point the reader to \cite[Section 3.2]{mimurahomotopytheoryofliegroups} where explicit homotopy groups of some of the relevant factors can be found.

We note that the author's results disagree with the $\mathbb{Z}$-summands coloured in blue in Table \ref{tablebiswas}.  In the pointed case, this $\mathbb{Z}$-summand arises in \cite{biswasstablevectorbundles} by studying a fibration arising from restricting the gauge group to the $1$-skeleton of the Real surface. 

For example, the corresponding fibration for a type $(g,r,0)$ Real surface is
\[\Omega^2 U(n)\rightarrow \g^*(P,\tin)\rightarrow \prod^{g} \Omega U(n)\times \prod^{r} \Omega O(n)\]
and we obtain the exact sequence
\[0\rightarrow \pi_2(\g^*(P,\tin))\xrightarrow{\nu} \mathbb{Z}^{g+r}\rightarrow \mathbb{Z}\xrightarrow{\mu} \pi_1(\g^*(P,\tin))\rightarrow 0.\]
The claim in \cite{biswasstablevectorbundles} is that the map $\mu$ can be thought in terms of the classification of bundles over $S^2\wedge X$. Further since $\mu$ is induced by a map that collapses the one skeleton of $X$, the map $\mu$ is essentially providing an identification of the second Chern class, and hence is an isomorphism.

The author agrees that this argument holds in the non-equivariant case. Indeed, if we consider $X$ as a Riemann surface we obtain that $S^2\wedge X$ is a wedge of spheres and then $\mu$ is induced by a map that collapses all but the top copy of $S^4$. 

However, we now demonstrate that $\pi_1(\g^*(P,\tin))$ cannot contain a $\mathbb{Z}$-summand, at least for the type $(0,1,0)$ case.  We assume that $\pi_1(\g^*(P,\tin))$ contains a $\mathbb{Z}$-summand, and that subsequently the map $\mu$ is an isomorphism.  Therefore $\nu$ is an isomorphism and we recall that it is induced by the map $r'$ which restricts to the $1$-skeleton of $X$.  The map~$r'$ fits into the following commutative diagram
\[ \xymatrix{ \g^*(P,\tin) \ar[r]^-{u'} \ar[d]^{r'} & \Omega \Map^*(D^2,BU(n)) \ar[d]^{r} \\ \Omega \Mappe((S^1,\id), BU(n);0) \ar[r]^-u & \Omega \Map^*(S^1, BU(n))} \]
where $u'$ is the map that forgets about equivariance and restricts to the upper hemisphere of $X$.

Now $u$ is homotopic to the inclusion $\Omega O(n)\hookrightarrow \Omega U(n)$ and hence by assumption the induced map 
\[u_*\nu=(ur')_*\colon \mathbb{Z}\cong\pi_2(\g^*(P,\tin))\rightarrow \pi_2(\Omega U(n))\cong \mathbb{Z}\]
is multiplication by $2$.  But $ru'$ is nullhomotopic because it factors through the contractible space $\Omega \Map^*(D^2,BU(n))$ and we obtain a contradiction.  We conclude that $\mu$ cannot be an isomorphism.

It remains to show that the other blue entries in Table \ref{tablebiswas} cannot contain $\mathbb{Z}$-summands.  However, these entries were obtained from the calculation in the pointed case and therefore we argue that these cannot contain $\mathbb{Z}$-summands either.

\providecommand{\bysame}{\leavevmode\hbox to3em{\hrulefill}\thinspace}
\providecommand{\MR}{\relax\ifhmode\unskip\space\fi MR }
\providecommand{\MRhref}[2]{%
  \href{http://www.ams.org/mathscinet-getitem?mr=#1}{#2}
}
\providecommand{\href}[2]{#2}


\begin{thebibliography}{{Bai}15}

\bibitem[Ati66]{atiyahktheoryandreality}
M.~F. Atiyah, \emph{{$K$}-theory and reality}, Quart. J. Math. Oxford Ser. (2)
  \textbf{17} (1966), 367--386. \MR{0206940 (34 \#6756)}

\bibitem[Bai14]{bairdmodulispace}
Thomas Baird, \emph{Moduli {S}paces of {V}ector {B}undles over a {R}eal
  {C}urve: {$\mathbb{Z}/2$}-{B}etti {N}umbers}, Canad. J. Math. \textbf{66}
  (2014), no.~5, 961--992. \MR{3251762}

\bibitem[{Bai}15]{arxivbairdcohommodulispace}
T.~J. {Baird}, \emph{{Cohomology of the moduli space of rank two vector bundles
  over a real curve}}, ArXiv e-prints (2015).

\bibitem[BHH10]{biswasstablevectorbundles}
Indranil Biswas, Johannes Huisman, and Jacques Hurtubise, \emph{The moduli
  space of stable vector bundles over a real algebraic curve}, Math. Ann.
  \textbf{347} (2010), no.~1, 201--233. \MR{2593289 (2011g:14079)}

\bibitem[GZ15]{arxivgeorgievatopogrealbundlepairs}
P.~{Georgieva} and A.~{Zinger}, \emph{{On the Topology of Real Bundle Pairs
  over Nodal Symmetric Surfaces}}, ArXiv e-prints (2015).

\bibitem[Har61]{Harrisonthehomotopygroupsoftheclassicalgroups}
Bruno Harris, \emph{On the homotopy groups of the classical groups}, Ann. of
  Math. (2) \textbf{74} (1961), 407--413. \MR{0131278 (24 \#A1130)}

\bibitem[LS13]{liuyangmills}
Chiu-Chu~Melissa Liu and Florent Schaffhauser, \emph{The {Y}ang-{M}ills
  equations over {K}lein surfaces}, J. Topol. \textbf{6} (2013), no.~3,
  569--643. \MR{3100884}

\bibitem[Mim95]{mimurahomotopytheoryofliegroups}
Mamoru Mimura, \emph{Homotopy theory of {L}ie groups}, Handbook of algebraic
  topology, North-Holland, Amsterdam, 1995, pp.~951--991. \MR{1361904}

\bibitem[Sch11]{schaffhausermodulispace}
Florent Schaffhauser, \emph{Moduli spaces of vector bundles over a {K}lein
  surface}, Geom. Dedicata \textbf{151} (2011), 187--206. \MR{2780745}

\bibitem[Ser53]{Serrehomotopygroups}
Jean-Pierre Serre, \emph{Groupes d'homotopie et classes de groupes ab\'eliens},
  Ann. of Math. (2) \textbf{58} (1953), 258--294. \MR{0059548}

\bibitem[Sut92]{sfunctionspaces}
W.~A. Sutherland, \emph{Function spaces related to gauge groups}, Proc. Roy.
  Soc. Edinburgh Sect. A \textbf{121} (1992), no.~1-2, 185--190. \MR{1169902
  (93i:55011)}

\bibitem[The10]{toddprimary}
Stephen~D. Theriault, \emph{Odd primary homotopy decompositions of gauge
  groups}, Algebr. Geom. Topol. \textbf{10} (2010), no.~1, 535--564.
  \MR{2602840 (2011g:55007)}

\bibitem[The11]{tggoverriemannsurfaces}
\bysame, \emph{Homotopy decompositions of gauge groups over {R}iemann surfaces
  and applications to moduli spaces}, Internat. J. Math. \textbf{22} (2011),
  no.~12, 1711--1719. \MR{2872528}

\bibitem[Wei83]{WKlein}
Guido Weichold, \emph{{Über} symmetrische riemannsche {Flächen} und die
  {Periodizitätsmodulen} der zugerhörigen abelschen normalintegrale erstes
  gattung}, Leipziger dissertation (1883).

\bibitem[You61]{youlaanormalform}
D.~C. Youla, \emph{A normal form for a matrix under the unitary congruence
  group}, Canad. J. Math. \textbf{13} (1961), 694--704. \MR{0132754}

\end{thebibliography}
\end{document}